\newtheorem{thm}{Theorem}[section]
\newtheorem{defi}[thm]{Définition}
\newtheorem{prop}[thm]{Proposition}
\newtheorem{lem}[thm]{Lemme}
\newtheorem{coro}[thm]{Corollaire}
\theoremstyle{remark}
\newtheorem{rem}[thm]{Remarque}
\newtheorem{ex}[thm]{Exemple}
\newcommand{\colim}{\mathrm{colim}}
\newcommand{\C}{\mathbb{C}}
\newcommand{\qq}{\mathbb{Q}}
\newcommand{\ZZ}{\mathbb{Z}}
\newcommand{\Z}{\mathbb{Z}}
\newcommand{\Spec}{\mathrm{Spec}}
\newcommand{\Spa}{\mathrm{Spa}}
\newcommand{\HH}{\mathrm{H}}
\newcommand{\ocal}{\mathcal{O}}
\newcommand{\oscr}{\mathscr{O}}
\title{Faisceaux equivariants sur $\mathbb{P}^1$ et faisceaux automorphes}
\author{Vincent Pilloni}
\date{}
\email{}
\begin{document}

\selectlanguage{french}
\begin{abstract} Inspiré par  \cite{pan2021locally},  on définit et étudie un foncteur qui associe à des faisceaux équivariants sur $\mathbb{P}^1$ des faisceaux automorphes sur les courbes modulaires. 
\end{abstract}
\maketitle

\tableofcontents

\section{Introduction}
Cette note est inspirée par le travail \cite{pan2021locally}. Dans ce travail, Lue Pan utilise la méthode de Sen (et sa généralisation par Berger-Colmez \cite{MR2493221}, \cite{MR3552018}) pour  décrire le sous-faisceau des vecteurs localement analytiques du faisceau structural des courbes modulaires perfectoides.  

Dans cet article, on introduit   un foncteur qui associe à des faisceaux équivariants pour l'action infinitésimale de $\mathrm{GL}_2$ sur des ouverts  de  $\mathbb{P}^1$  (qui est l'espace des périodes de Hodge-Tate pour les courbes modulaires) des faisceaux  sur la tour de courbes modulaires. Ce foncteur envoie les  faisceaux équivariants cohérents de rang $1$  sur $\mathbb{P}^1$   sur les  faisceaux  des formes modulaires  classiques. Il envoie  les  faisceaux équivariants  cohérents    de rang $1$  sur les strates de Bruhat sur les faisceaux de formes modulaires surconvergentes (\cite{MR3265287}, \cite{MR3097946}, \cite{MR3609197}, \cite{Boxer-Pilloni}).  En dérivant le foncteur, on obtient  "naturellement"  la décomposition de Hodge-Tate de la cohomologie \'etale des courbes modulaires. 
En appliquant finalement ce foncteur  au faisceau dual du faisceau des opérateurs différentiels étendus sur $\mathbb{P}^1$,  on obtient   la   description des vecteurs localement analytiques dans la cohomologie complétée donnée dans la section 4 de \cite{pan2021locally}. 

Introduisons quelques notations. Soit $G = \mathrm{GL}_2$, $T \subseteq  B$ un tore maximal inclus dans un Borel. On pose $U$ le radical unipotent de $B$. On note enfin $\mathcal{FL} = B\backslash G = \mathbb{P}^1$.   On fixe un sous-groupe compact ouvert $K^p \subseteq G(\mathbf{A}_f^p)$. Pour tout sous-groupe compact ouvert $K^p \subseteq G(\qq_p)$, on note $X_{K_pK^p} \rightarrow \Spa(\C_p, \ocal_{\C_p})$ la courbe modulaire compactifiée  de niveau $K_pK^p$, vue comme espace adique sur $\C_p$. On note $X_{K^p} = \lim_{K_p} X_{K_pK^p}$ la courbe perfectoide de niveau modéré $K^p$ construite dans \cite{scholze-torsion}. 
On considère  alors  le diagramme (où l'application $\pi_{K_p}$ est la projection vers un niveau fini, et l'application $\pi_{HT}$ est l'application des périodes de Hodge-Tate):
\begin{eqnarray*}
\xymatrix{ &X_{K^p} \ar[ld]_{\pi_{K_p}} \ar[rd]^{\pi_{HT}}& \\
X_{K_pK^p}&&\mathcal{FL}}
\end{eqnarray*}

Pour tout ouvert $U$ de $\mathcal{FL}$, on note  $\mathbf{Coh}_{\mathfrak{g}}(U)$ la catégorie des faisceaux cohérents $\mathfrak{g}$-équivariants sur $U$, où $\mathfrak{g}$ est l'algèbre de Lie de $G$.  

Soit $\mathfrak{n}^0 \subseteq \oscr_{\mathcal{FL}} \otimes_{\C_p} \mathfrak{g}$ la sous-algèbre de Lie nilpotente universelle. 
Pour tout objet $\mathscr{F} \in \mathbf{Coh}_{\mathfrak{g}}(U)$, on possède par définition un morphisme $\mathfrak{g} \rightarrow \mathrm{End}_{\C_p}(\mathscr{F})$ et celui-ci induit un morphisme $\mathfrak{n}^0 \rightarrow \underline{\mathrm{End}}_{\oscr_{U}}(\mathscr{F})$.  La cohomologie de $\mathfrak{n}^0$, notée $\HH^i(\mathfrak{n}^0, \mathscr{F})$, est  la cohomologie du complexe placé en dégré $0$ et $1$ :
$$\mathscr{F} \rightarrow \mathscr{F} \otimes_{\oscr_U} \mathfrak{n^0}^\vee.$$
On note $\mathbf{Coh}_{\mathfrak{g}}(U)^{\mathfrak{n}^0} \subseteq \mathbf{Coh}_{\mathfrak{g}}(U)$ la sous-catégorie pleine des objets vérifiant $\HH^0(\mathfrak{n}^0, \mathscr{F}) = \mathscr{F}$.   Les objets de $\mathbf{Coh}_{\mathfrak{g}}(U)^{\mathfrak{n}^0}$ sont d'ailleurs exactement les objets de $\mathbf{Coh}_{\mathfrak{g}}(U)$  tels que l'action infinitésimale de $G $ s'étend en  une action de l'algèbre des opérateurs différentiels  étendus (voir \cite{MR733805}):  $$\mathcal{D} = \oscr_{\mathcal{FL}} \otimes_{\C_p} \mathcal{U}(\mathfrak{g})/ \mathfrak{n}^0 \oscr_{\mathcal{FL}} \otimes_{\C_p} \mathcal{U}(\mathfrak{g}).$$ 
On note $\hat{\oscr} = (\pi_{HT})_\star \oscr_{X_{K^p}}$. C'est un faisceau $G(\qq_p)$-équivariant.  On note $\oscr^{sm} \subseteq \hat{\oscr}$ le sous-faisceau des vecteurs lisses pour l'action de $G(\qq_p)$, il admet la description suivante : 
$$\oscr^{sm} = \colim_{K_p} \oscr_{K_p}, ~\textrm{où} ~~ \oscr_{K_p} = (\pi_{HT})_\star (\pi_{K_p})^{-1} \oscr_{X_{K_pK^p}}$$
On introduit alors  le  foncteur:
\begin{eqnarray*} 
VB:   \mathbf{Coh}_{\mathfrak{g}}(U) &\rightarrow& \mathbf{Mod}(\oscr^{sm}\vert_U) \\
\mathscr{F} & \mapsto & \colim_{K_p}(\mathscr{F} {\otimes}_{\oscr_{U}} \hat{\oscr}\vert_U)^{K_p}
\end{eqnarray*}
Explicitons la définition. La colimite porte sur les sous-groupes ouverts compacts de $G(\qq_p)$.  Pour tout ouvert $V$ affinoide de $U$, on a $$VB(\mathscr{F})(V) =  \colim_{K_p} \HH^0(K_p, \mathscr{F}(V) {\otimes}_{\oscr_{\mathcal{FL}(V)}} \hat{\oscr}(V)).$$ On montre que l'action de $K_p$ diagonale est bien défini pour tout $K_p$ suffisamment petit.  
\begin{thm}\label{thm-intro}
\begin{enumerate} 
\item Pour tout $\mathscr{F} \in \mathbf{Coh}_{\mathfrak{g}}(U)$, $VB(\mathscr{F})$ est un faisceau localement libre de rang fini de $\oscr^{sm}\vert_{U}$-modules. 


\item Pour tout $\mathscr{F} \in \mathbf{Coh}_{\mathfrak{g}}(U)^{\mathfrak{n^0}}$, on possède un isomorphisme canonique  $$VB(\mathscr{F}) {\otimes}_{\oscr^{sm}\vert_U} \hat{\oscr}\vert_U \rightarrow \mathscr{F} {\otimes}_{\oscr_{U}} \hat{\oscr}\vert_U$$   et le foncteur $VB$ restreint à la catégorie $\mathbf{Coh}_{\mathfrak{g}}(U)^{\mathfrak{n^0}}$ est exact.

\item Le foncteur $VB$ est dérivable, et pour tout  $\mathscr{F} \in \mathbf{Coh}_{\mathfrak{g}}(U)$,  on a $\mathrm{R}^iVB(\mathscr{F}) = VB(\HH^i(\mathfrak{n}^0, \mathscr{F}))$.

\end{enumerate}
\end{thm}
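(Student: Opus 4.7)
\emph{Plan de preuve propos\'e.} Je traiterais d'abord la partie (2), qui \'etablit le comportement de $VB$ sur la sous-cat\'egorie $\mathbf{Coh}_\mathfrak{g}(U)^{\mathfrak{n}^0}$, puis la partie (3) par un argument de type Lazard reliant la cohomologie des $K_p$ \`a celle de $\mathfrak{n}^0$. La partie (1) suivra alors comme cons\'equence imm\'ediate de (2) appliqu\'ee \`a $\HH^0(\mathfrak{n}^0, \mathscr{F})$, via la formule $VB(\mathscr{F}) = \mathrm{R}^0 VB(\mathscr{F}) = VB(\HH^0(\mathfrak{n}^0, \mathscr{F}))$ issue de (3), puisque $\HH^0(\mathfrak{n}^0, \mathscr{F})$ appartient \`a $\mathbf{Coh}_\mathfrak{g}(U)^{\mathfrak{n}^0}$.

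\emph{Partie (2).} Pour $\mathscr{F} \in \mathbf{Coh}_\mathfrak{g}(U)^{\mathfrak{n}^0}$, j'utiliserais la description via les repr\'esentations alg\'ebriques de $B$ mentionn\'ee dans l'introduction (extension de l'action de $\mathfrak{g}$ \`a $\mathcal{D}$) : localement sur $U$, $\mathscr{F}$ est trivialis\'e par une base $B$-\'equivariante $(e_i)$, donc en particulier localement libre de rang fini sur $\oscr_U$. La construction de l'isomorphisme $VB(\mathscr{F}) \otimes_{\oscr^{sm}|_U} \hat{\oscr}|_U \to \mathscr{F} \otimes_{\oscr_U} \hat{\oscr}|_U$ consisterait \`a produire, apr\`es tensorisation avec $\hat{\oscr}$, une nouvelle base fix\'ee par un sous-groupe $K_p$ suffisamment petit. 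Le point cl\'e : comme $\mathfrak{n}^0$ agit trivialement sur $\mathscr{F}$, l'action infinit\'esimale diagonale de $\mathfrak{n}^0$ sur $\mathscr{F} \otimes \hat{\oscr}$ ne passe que par le facteur $\hat{\oscr}$, et celle de $\mathfrak{g}/\mathfrak{n}^0$ peut \^etre absorb\'ee par ajustement g\'eom\'etrique via $\pi_{HT}$. On int\`egre alors par exponentielle (Lazard) pour obtenir la base $K_p$-invariante voulue. L'exactitude du foncteur sur cette sous-cat\'egorie d\'ecoule ensuite de la platitude de $\hat{\oscr}|_U$ et de l'exactitude des $K_p$-invariants sur ces objets trivialis\'es.

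\emph{Partie (3) et obstacle principal.} Je noterais d'abord que les $\HH^i(\mathfrak{n}^0, \mathscr{F})$ appartiennent \`a $\mathbf{Coh}_\mathfrak{g}(U)^{\mathfrak{n}^0}$ : comme $\mathfrak{n}^0 \subseteq \oscr_{\mathcal{FL}} \otimes \mathfrak{g}$ est une sous-alg\`ebre $G$-stable, le complexe de Koszul $\mathscr{F} \to \mathscr{F} \otimes (\mathfrak{n}^0)^\vee$ est $\mathfrak{g}$-\'equivariant, et $\mathfrak{n}^0$ agit trivialement sur les invariants et les coinvariants. Pour \'etablir $\mathrm{R}^iVB(\mathscr{F}) = VB(\HH^i(\mathfrak{n}^0, \mathscr{F}))$, je calculerais $\mathrm{R}^iVB(\mathscr{F}) = \colim_{K_p} \HH^i(K_p, \mathscr{F} \otimes \hat{\oscr})$ via un complexe de Chevalley-Eilenberg infinit\'esimal : la partie horizontale $\mathfrak{g}/\mathfrak{n}^0$ \'etant acyclique par int\'egration g\'eom\'etrique le long de $\pi_{HT}$, seule la cohomologie de $\mathfrak{n}^0$ subsiste ; la partie (2) appliqu\'ee aux $\HH^i(\mathfrak{n}^0, \mathscr{F})$ permet alors d'identifier le r\'esultat \`a $VB(\HH^i(\mathfrak{n}^0, \mathscr{F}))$. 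L'obstacle principal sera pr\'ecis\'ement cette comparaison de type Lazard sur $\mathcal{FL}$, qui n\'ecessitera un contr\^ole fin de la structure locale de $\pi_{HT}$ (cartes analytiques, trivialisations) et de l'action infinit\'esimale sur $\hat{\oscr}$ ; l'essentiel de l'input technique devrait provenir de \cite{pan2021locally}.
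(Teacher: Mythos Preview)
Votre plan repose sur une comparaison de type Lazard entre $\colim_{K_p}\HH^i(K_p,\mathscr{F}\otimes\hat{\oscr})$ et la cohomologie de Chevalley--Eilenberg de $\mathfrak{g}$ (ou de $\mathfrak{n}^0$). C'est l\`a qu'il y a une vraie lacune : l'isomorphisme de Lazard requiert que le module de coefficients soit une repr\'esentation localement analytique de $K_p$, or $\hat{\oscr}$ ne l'est pas --- seuls les vecteurs $\oscr^{la}\subsetneq\hat{\oscr}$ le sont, et c'est justement tout l'objet de l'article de Pan que de les d\'ecrire. En particulier, l'action de $\mathfrak{g}$ sur $\hat{\oscr}$ n'existe pas au sens na\"if, et votre phrase ``la partie horizontale $\mathfrak{g}/\mathfrak{n}^0$ \'etant acyclique par int\'egration g\'eom\'etrique le long de $\pi_{HT}$'' n'a pas de contenu op\'eratoire : $\mathfrak{g}/\mathfrak{n}^0$ n'est m\^eme pas une alg\`ebre de Lie, et l'assertion $\HH^1(K_p,\hat{\oscr})\simeq VB((\mathfrak{n}^0)^\vee)$ qui sous-tend votre raisonnement est pr\'ecis\'ement ce qu'il faut d\'emontrer, pas une entr\'ee formelle.

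L'approche du papier est diff\'erente et contourne cet obstacle. On introduit une tour de Kummer auxiliaire $\{B_{K_p,n}\}$ avec groupe $\Gamma\simeq\mathbb{Z}_p$, et on applique le formalisme de Tate--Sen (Berger--Colmez, repris par Pan) : cela produit, pour $K_p$ assez petit, un module de Sen $D_{K_p,n}(F)\subset B_\infty\otimes_C F$ qui est $K_p$-invariant, libre de rang fini sur $B_{K_p,n}$, et tel que $\HH^i(K_p,B\otimes_C F)=\HH^0(\Gamma,\HH^i(\Theta_F,D_{K_p,n}(F)))$ pour un op\'erateur de Sen $\Theta_F$. Le c\oe ur de la preuve est alors d'identifier $\Theta_F$ : on montre, via l'extension de Faltings et un calcul explicite pour la repr\'esentation standard $St$, que $\Theta_F$ engendre l'image de $\tilde{VB}(\mathfrak{n}^0)\to\tilde{VB}(\mathscr{F}\otimes\mathscr{F}^\vee)$. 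C'est cette identification g\'eom\'etrique de l'op\'erateur de Sen avec un g\'en\'erateur de $\mathfrak{n}^0$ qui fait appara\^itre $\HH^i(\mathfrak{n}^0,\mathscr{F})$ --- et non un argument de Lazard. Votre proposition manque donc \`a la fois l'outil technique (th\'eorie de Sen relative) et l'input g\'eom\'etrique cl\'e (extension de Faltings).
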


\begin{rem}  Vu le point $(2)$ du théorème, on pense donc à la condition d'être annulé par $\mathfrak{n}^0$ comme à une condition d'admissibilité.  
\end{rem}

\begin{ex}[\ref{subsec-ex1}] Tout caractère algébrique  $\kappa$ du tore $T$ de $G$ fournit un faisceau $G$-équivariant sur $\mathcal{FL}$ que nous notons $\oscr^\kappa_{\mathcal{FL}}$. On peut le voir comme un objet  de la catégorie $\mathbf{Coh}_{\mathfrak{g}}(\mathcal{FL})^{\mathfrak{n}^0}$ et on note $\omega^{\kappa,sm}_{\mathcal{FL} }= VB(\oscr^\kappa_{\mathcal{FL}})$. C'est un faisceau $G(\qq_p)$-équivariant sur $\mathcal{FL}$. On vérifie qu'on possède un isomorphisme de $G(\qq_p)$-représentations:
$$\HH^i(\mathcal{FL}, \omega^{\kappa,sm}_{\mathcal{FL}}) = \colim_{K_p} \HH^i(X_{K_pK^p}, \omega^{\kappa}_{K_p})$$ où ce dernier groupe est la limite inductive des  cohomologies des faisceaux des formes modulaires de poids $\kappa$ sur les courbes modulaires $X_{K_pK^p}$. 
\end{ex}

\begin{ex}[\ref{subsec-ex3}] Considérons la décomposition de Bruhat $\mathcal{FL} = U_{w_0} \cup \{ \infty\}$, où $U_{w_0} = B\backslash Bw_0 B$ pour $w_0$ un générateur du groupe de Weyl,  et $\infty = B\backslash B$. A tout caractère $\kappa \in X^\star(T)_{\C_p}$, on peut associer un faisceau inversible  $\oscr^\kappa_{U_{w_0}}$ dans la catégorie $\mathbf{Coh}_{\mathfrak{g}}(U_{w_0})$, qui est $B(\qq_p)$-équivariant. On note  $\omega^{\kappa,sm}_{U_{w_0}}= VB(\oscr^\kappa_{U_{w_0}})$. Le groupe de cohomologie $$\HH^1_c(U_{w_0}, \omega^{\kappa,sm}_{U_{w_0}})$$ est une représentation  de $B(\qq_p)$. En prenant les invariants par $U(\ZZ_p)$ et la partie de pente finie, on retrouve l'espace de cohomologie locale qui est considéré dans la théorie de Coleman supérieure pour le $\HH^1$ (voir \cite{Boxer-Pilloni}).

De même,  on peut construire un faisceau $\oscr^\kappa_{\infty}$ inversible dans la catégorie $\mathbf{Coh}_{\mathfrak{g}}(\oscr_{\mathcal{FL},\infty}))$ (la limite des catégories $\mathbf{Coh}_{\mathfrak{g}}(U)$ pour $U$ parcourant les voisinages de $\infty$) qui est $B(\qq_p)$-équivariant.  On note  $\omega^{\kappa,sm}_{\infty}= VB(\oscr^\kappa_{\infty})$. Le groupe de cohomologie $$\HH^0(\{\infty\}, \omega^{\kappa,sm}_{\infty})$$ est une représentation de $B(\qq_p)$. Cet espace se relie à l'espace des formes surconvergentes de poids $\kappa$ (et donc à la théorie de Coleman pour le $\HH^0$). Si $\kappa \in X^\star(T)$, on possède une suite longue reliant cohomologie locale et cohomologie classique:
$$ 0 \rightarrow \HH^0(\mathcal{FL}, \omega^{\kappa,sm}_{\mathcal{FL}})  \rightarrow \HH^0(\{\infty\}, \omega^{\kappa,sm}_{\infty}) \rightarrow \HH^1_c(U_{w_0}, \omega^{\kappa,sm}_{U_{w_0}}) \rightarrow \HH^1(\mathcal{FL}, \omega^{\kappa,sm}_{\mathcal{FL}}) \rightarrow 0$$
\end{ex}

\begin{ex}[\ref{subsec-ex2}] Le point $3)$ du théorème couplé au théorème de comparaison primitif  de \cite{MR3090230} implique la décomposition de Hodge-Tate de la cohomologie \'etale des courbes modulaires avec coefficients. Soit $V$ une représentation algébrique du groupe $G$ et $\mathcal{V}_{K_p,et}$ le faisceau associé sur le site pro-Kummer-étale de $X_{K_pK^p}$. On a une suite exacte  de Hodge-Tate :
$$ 0 \rightarrow \HH^1(\mathcal{FL}, VB(\oscr_{\mathcal{FL}} \otimes V)) \rightarrow \colim_{K_p} (\HH^1_{proket}(X_{K_pK^p}, \mathcal{V}_{K_p,et})\otimes_{\qq_p} \C_p) $$ $$ \rightarrow  \HH^0(\mathcal{FL}, \mathrm{R}^1VB(\oscr_{\mathcal{FL}} \otimes V)) \rightarrow 0.$$
 \end{ex}

\bigskip

Passons à présent à la description des vecteurs localement analytiques $\oscr^{la} \subseteq \hat{\oscr}$. 
Pour tout $n \geq 0$, on note $G_n$ le sous-groupe affinoide des éléments  de $G$ qui se réduisent sur $1$ modulo $p^n$ et $\oscr_{G_n}$ son anneau de fonctions.  C'est un $G_n$-bi-module. Pour tout $f \in \oscr_{G_n}$ et $g \in G_n$, on pose 
\begin{eqnarray*}
g \star_1 f(-) & =& f(g^{-1}-),\\
g\star_2 f(-) &=& f(-g).
\end{eqnarray*}
On considère le faisceau $\oscr_{G_n} \hat{\otimes} \oscr_{\mathcal{FL}}$. On possède une action $\star_3$ de $G$ sur $\oscr_{\mathcal{FL}}$. Munis de l'action $\star_{1,3}$ (composée des $\star_{1}$ et $\star_{3}$), le faisceau $\oscr_{G_n} \hat{\otimes} \oscr_{\mathcal{FL}}$ est un faisceau $G_n$-équivariant.  Il possède une seconde action $\star_2$ de $G_n$ qui est $\oscr_{\mathcal{FL}}$-linéaire. 
Pour tout $f \in \oscr_{G_n} \hat{\otimes} \oscr_{\mathcal{FL}}$ (vu comme une fonction $G_n \rightarrow \oscr_{\mathcal{FL}}$), on a
 \begin{eqnarray*}
g \star_{1,3} f(-) & =&  g f(g^{-1}-), ~\forall g \in G_n,\\
g\star_2 f(-) &=& f(-g),~\forall g \in G_n.\\
\end{eqnarray*}
L'action $\star_{1,3}$ se dérive et induit une action de $\mathfrak{g}$ et donc une action ($\oscr_{\mathcal{FL}}$-linéaire)
$$ \mathfrak{n}^0 \rightarrow \underline{\mathrm{End}}_{\oscr_{\mathcal{FL}}}(\oscr_{G_n} \hat{\otimes} \oscr_{\mathcal{FL}})$$
On définit   $\mathcal{C}^{n-an}  = \HH^0(\mathfrak{n}^0, \oscr_{G_n} \hat{\otimes} \oscr_{\mathcal{FL}})$.  La fibre du faisceau $\mathcal{C}^{n-an}$ en un point $x \in \mathcal{FL}$ est une certaine complétion (dépendant de $n$) du dual du
module de Verma universel  $\C_p \otimes_{\mathcal{U}(\mathfrak{n}_x)} \mathcal{U}(\mathfrak{g})$. Ce faisceau est stable sous les actions $\star_{1,3}$ et  $\star_{2}$ considérées précédemment.  L'action $\star_{1,3}$ induit  aussi une action  $\star_{hor}$ du   Cartan horizontal  $\mathfrak{h} \hookrightarrow \mathfrak{b}^0/\mathfrak{n}^0$.
En passant à la limite, on définit l'espace des germes de fonctions analytiques au voisinage de $1$ : $ \oscr_{G,1} = \colim_n \oscr_{G_n}$, et on définit le  faisceau $$\mathcal{C}^{la} = \colim_n \mathcal{C}^{n-an} = \HH^0(\mathfrak{n}_0, \oscr_{G,1}  \hat{\otimes}_{\C_p} \oscr_{\mathcal{FL}}).$$  
Le formule $h \star_{1,2,3} f(-)  = hf(h^{-1}- h)$ définit une action de $G(\qq_p)$ sur les faisceaux  $\oscr_{G,1}  \hat{\otimes}_{\C_p} \oscr_{\mathcal{FL}}$ et $\mathcal{C}^{la}$.
La formule définissant le foncteur $VB$ s'étend naturellement à tous les faisceaux $G_n$-équivariants,  et par passage à la limite à $\oscr_{G,1}  \hat{\otimes}_{\C_p} \oscr_{\mathcal{FL}}$ et $\mathcal{C}^{la}$.
Par définition, $\oscr^{la} =  \colim_{K_p} (\oscr_{G,1} \hat{\otimes}_{\C_p} \hat{\oscr})^{K_p}= VB ( \oscr_{G,1}  \hat{\otimes}_{\C_p} \oscr_{\mathcal{FL}})$.  Le théorème qui suit est une formulation agréable de certains résultats de la section 4 de \cite{pan2021locally}. Cette formulation est inspirée par la section 6 de \cite{MR3552018}. On peut le voir comme une généralisation du théorème \ref{thm-intro} pour un faisceau non-cohérent.

\begin{thm}[\cite{pan2021locally}]
On a $\oscr^{la} = VB( \mathcal{C}^{la})$ et on  possède un isomorphisme canonique, $\hat{\oscr}$-linéaire, $\mathfrak{g}$ et $G(\qq_p)$-équivariant:
$$ \Psi :  \oscr^{la} \hat{\otimes}_{\oscr^{sm}} \hat{\oscr} \rightarrow \mathcal{C}^{la} \hat{\otimes}_{\oscr_{\mathcal{FL}}} \hat{\oscr}$$
\end{thm}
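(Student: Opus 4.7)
La preuve se d�compose en trois �tapes, les deux premi�res �tant essentiellement formelles et la troisi�me constituant le noyau technique. D'abord, pour l'identit� $\oscr^{la} = VB(\mathcal{C}^{la})$, je d�plierais les d�finitions: par la discussion pr�c�dant l'�nonc�, $\oscr^{la} = VB(\oscr_{G,1} \hat{\otimes}_{\C_p} \oscr_{\mathcal{FL}})$, de sorte que l'inclusion $\mathcal{C}^{la} \hookrightarrow \oscr_{G,1} \hat{\otimes}_{\C_p} \oscr_{\mathcal{FL}}$ fournit imm�diatement $VB(\mathcal{C}^{la}) \subseteq \oscr^{la}$. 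L'inclusion r�ciproque repose sur l'observation qu'un vecteur $K_p$-invariant pour $\star_{1,3}$ est annul� par l'action d�riv�e de $\mathfrak{g}$, donc en particulier par $\mathfrak{n}^0$; elle se r�duit alors � l'identit� cl�
$$ (\oscr_{G_n} \hat{\otimes}_{\C_p} \hat{\oscr})^{\mathfrak{n}^0} = \mathcal{C}^{n-an} \hat{\otimes}_{\oscr_{\mathcal{FL}}} \hat{\oscr} \quad (\star) $$
pour tout $n \geq 0$, dont je discute ci-dessous.

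Ensuite, pour l'isomorphisme $\Psi$, je le construirais via la \emph{fonction orbite}: pour $v \in \oscr^{la}$ qui est $n$-analytique pour $K_p = G_n(\ZZ_p)$, l'unique $\phi_v \in \oscr_{G_n} \hat{\otimes} \hat{\oscr}$ v�rifiant $\phi_v(g) = g \cdot v$ pour $g \in K_p$ est $K_p$-fixe pour $\star_{1,3}$, donc appartient � $\mathcal{C}^{n-an} \hat{\otimes}_{\oscr_{\mathcal{FL}}} \hat{\oscr}$ par $(\star)$. La formule $\Psi(v \otimes f) = \phi_v \cdot f$ d�finit alors un morphisme $\hat{\oscr}$-lin�aire, $\mathfrak{g}$- et $G(\qq_p)$-�quivariant, et on v�rifie l'�quivariance sous $\star_{1,2,3}$ en utilisant la formule d�finissant l'action de $G(\qq_p)$ sur $\mathcal{C}^{la}$. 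La bijectivit� de $\Psi$ s'obtient par la m�me strat�gie d'approximation que $(\star)$: pour chaque sous-quotient coh�rent $V_d \otimes \oscr_{\mathcal{FL}}$ (voir paragraphe suivant), le point $(2)$ du Th�or�me \ref{thm-intro} fournit l'isomorphisme analogue, et la compatibilit� avec la filtration et la topologie de Banach permet de passer � la limite. Notons que l'inverse de $\Psi$ est d�crit par l'�valuation en $1$, $\mathrm{ev}_1 : \phi \mapsto \phi(1)$, puisque l'�quivariance $\star_{1,3}$ d'une section $K_p$-invariante $\phi$ la d�termine enti�rement par sa valeur en l'identit�.

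Le point central de la preuve est donc l'identit� $(\star)$, analogue non-coh�rent du point $(2)$ du Th�or�me \ref{thm-intro}. Ma strat�gie consiste � approcher $\oscr_{G_n}$ par une filtration croissante de sous-$\C_p$-modules de dimension finie $V_d \subseteq \oscr_{G_n}$, stables sous l'action d�riv�e $\star_1$ de $\mathfrak{g}$ (construits par exemple � partir d'une d�composition d'Iwahori $G_n \simeq \bar{U}_n \cdot B_n$ et du degr� polynomial dans des coordonn�es adapt�es). Chaque $V_d \otimes_{\C_p} \oscr_{\mathcal{FL}}$ est alors un objet coh�rent de $\mathbf{Coh}_{\mathfrak{g}}(\mathcal{FL})$ (non n�cessairement annul� par $\mathfrak{n}^0$), auquel s'appliquent les points $(2)$ et $(3)$ du Th�or�me \ref{thm-intro}. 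Le passage � la limite $d \to \infty$, compatible avec la topologie de Banach de $\oscr_{G_n}$, fournit alors $(\star)$. La difficult� principale r�side dans le contr�le de la platitude topologique de $\hat{\oscr}$ le long de cette filtration, c'est-�-dire � v�rifier que les suites exactes courtes issues des quotients successifs restent suffisamment exactes apr�s compl�tion et $\hat{\otimes} \hat{\oscr}$; ce contr�le rel�ve in fine des techniques de d�compl�tion de Sen-Berger-Colmez (\cite{MR3552018}, section 6) et de leur d�ploiement g�om�trique sur la tour perfecto�de des courbes modulaires effectu� par Pan dans la section 4 de \cite{pan2021locally}.
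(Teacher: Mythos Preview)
Votre stratégie globale --- approcher $\oscr_{G_n}\hat\otimes\oscr_{\mathcal{FL}}$ par des sous-objets coh\'erents, appliquer le Th\'eor\`eme~\ref{thm-intro} \`a chacun, puis passer \`a la limite compl\'et\'ee --- est correcte et est exactement celle de l'article. Deux diff\'erences de mise en \oe uvre m\'eritent d'\^etre not\'ees. Premi\`erement, l'article utilise la d\'ecomposition de Peter--Weyl $\oscr_G^{alg}=\bigoplus_{V\in\mathbf{IrrRep}(G)} V\otimes V^\vee$ (une somme directe, pas une simple filtration) munie de la norme $\vert-\vert_n$, ce qui donne imm\'ediatement l'uniformit\'e en $K_p$ requise pour appliquer Sen simultan\'ement \`a tous les facteurs; votre filtration par le degr\'e dans des coordonn\'ees d'Iwahori fonctionne aussi mais demande un peu plus de soin pour contr\^oler les bornes. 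Deuxi\`emement, l'article comporte une \'etape suppl\'ementaire que vous n'avez pas: un argument d'approximation des variables montrant que $\tilde{VB}_{K_p(n)}(\oscr_{G_n}\otimes\oscr_{\mathcal{FL}})$ est localement un \emph{Tate algebra} $\oscr_{U_{K_p(n)}}\langle x'_1,x'_2,x'_3\rangle$. C'est ce qui donne l'isomorphisme de Banach $\Psi_n$ au niveau fini et permet de prendre la limite proprement; dans votre esquisse cela reste implicite dans la phrase ``platitude topologique''.

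Il y a en revanche une vraie lacune dans votre premi\`ere \'etape ``formelle''. L'affirmation qu'un vecteur $K_p$-invariant dans $\oscr_{G_n}\hat\otimes_{\C_p}\hat\oscr$ est annul\'e par $\mathfrak{n}^0$ parce que ``l'action d\'eriv\'ee de $\mathfrak{g}$ l'annule'' ne tient pas: l'action diagonale de $K_p$ fait intervenir l'action naturelle sur $\hat\oscr$, qui \emph{n'est pas} diff\'erentiable (les vecteurs de $\hat\oscr$ ne sont pas localement analytiques en g\'en\'eral), donc il n'y a pas d'action d\'eriv\'ee de $\mathfrak{g}$ \`a invoquer. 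La raison pour laquelle les $K_p$-invariants tombent dans le noyau de $\mathfrak{n}^0$ n'est pas formelle: c'est pr\'ecis\'ement le calcul de l'op\'erateur de Sen (via l'extension de Faltings) effectu\'e dans la preuve du Th\'eor\`eme~\ref{thm-intro}, qui identifie cet op\'erateur \`a un g\'en\'erateur de l'image de $\mathfrak{n}^0$. Autrement dit, votre identit\'e $(\star)$ et la r\'eduction qui y m\`ene ne sont pas des pr\'eliminaires \`a l'argument de limite; elles en sont des \emph{cons\'equences}, et ne peuvent \^etre \'etablies qu'\`a travers la m\'ethode de Sen appliqu\'ee aux approximants coh\'erents. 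Il serait donc plus propre de construire d'abord $\Psi_n$ au niveau fini (comme le fait l'article), puis d'en d\'eduire $\oscr^{la}=VB(\mathcal{C}^{la})$ en prenant les vecteurs lisses.
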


\begin{rem} En formulant le théorème ainsi, on réalise que  les calculs de la cohomologie des sous-algèbres de Lie de $\mathfrak{g}$ sur $\oscr^{la}$ (par exemple ceux de la section 5 de \cite{pan2021locally}) peuvent se transporter en des calculs sur le faisceau $\mathcal{C}^{la}$ et se réduisent donc à des questions de théorie des représentations.  Ce point de vue est développé dans la dernière section \ref{sectiononcohomology}   de cet article. 
\end{rem}

 Expliquons comment est définie l'application $\Psi$. On possède une application orbite:
$\oscr^{la} \rightarrow \oscr_{G,1} \hat{\otimes}_{\C_p} \hat{\oscr}$ qui à une section localement analytique $f \in \hat{\oscr}$, associe un germe de fonction analytique $g \mapsto g f$. Le théorème affirme  donc que cette application orbite se factorise à travers le sous-espace $\mathcal{C}^{la} \hat{\otimes}_{\oscr_{\mathcal{FL}}} \hat{\oscr} $ et que le linéarisé de l'application orbite, pris au dessus des vecteurs lisses, induit un isomorphisme sur $\mathcal{C}^{la} \hat{\otimes}_{\oscr_{\mathcal{FL}}} \hat{\oscr} $.

 Expliquons un peu mieux ce résultat  et décrivons aussi quelles sont toutes les actions. Pour tout $n$, on montre qu'il existe un sous-groupe ouvert compact $K_{p}(n) \subseteq G_n(\qq_p)$, un faisceau de $\oscr_{K_{p}(n)}$-modules ${\oscr}^{n-an}_{K_p(n)}$ (qui est sur chaque ouvert affinoide standard de $\mathcal{FL}$ de la forme $B \hat{\otimes}_{\C_p} \oscr_{K_{p}(n)}$ pour un espace de Banach $B$ sur $\C_p$), et un isomorphisme canonique, $\hat{\oscr}$-linéaire: 

$$\Psi_n : {\oscr}^{n-an}_{K_p(n)} \hat{\otimes}_{\oscr_{K_{p}(n)}} \hat{\oscr} = \mathcal{C}^{n-an} \hat{\otimes}_{\oscr_{\mathcal{FL}}} \hat{\oscr}$$

Faisons agir $K_p(n)$ via son action  sur $\hat{\oscr}$ sur le membre de gauche, et faisons le agir   diagonalement sur $\mathcal{C}^{n-an} \otimes_{\oscr_{\mathcal{FL}}} \hat{\oscr}$ (via $\star_{1,3}$ sur $\mathcal{C}^{n-an}$ et l'action naturelle sur le faisceau $\hat{\oscr}$).  Le morphisme $\Psi_n$ est alors $K_p(n)$-équivariant. 

On obtient donc que $VB(\mathcal{C}^{n-an}) = {\oscr}^{n-an}_{K_p(n)} \hat{\otimes}_{\oscr_{K_{p}(n)}} \hat{\oscr}^{sm}$ puis que $VB(\mathcal{C}^{la}) = \colim_n {\oscr}^{n-an}_{K_p(n)} = \oscr^{la}$ et $\Psi = \colim_n \Psi_n$.

On possède également une action diagonale de $G(\qq_p)$ sur $\oscr^{la} \hat{\otimes}_{\oscr^{sm}} \hat{\oscr}$ et une action diagonale de $G(\qq_p)$ sur $\mathcal{C}^{la} \hat{\otimes}_{\oscr_{\mathcal{FL}}} \hat{\oscr}$ (via   $\star_{1,2,3}$  sur $\mathcal{C}^{la}$ et l'action naturelle sur $\hat{\oscr}$). L'isomorphisme $\Psi$ est $G(\qq_p)$-équivariant pour ces actions.

On possède  une action de $\mathfrak{g}$ sur $\mathcal{C}^{la}$ par l'action $\star_2$, ainsi qu'une action  de $\mathfrak{g}$ sur $\oscr^{la}$ (obtenue en dérivant l'action de $G(\qq_p)$). Le morphisme $\Psi$ est $\mathfrak{g}$-équivariant pour ces actions.

Mentionnons enfin que le Cartan horizontal $\mathfrak{h} = \HH^0(\mathcal{FL}, \mathfrak{b}^0/\mathfrak{n}^0)$ agit sur $\mathcal{C}^{la}$ (resp. sur $\mathcal{C}^{n-an}$) et que l'action de $\mathfrak{h}$  commute avec  l'action $\star_{1,3}$ de $\mathfrak{g}$, (resp. $G_n$). Il en résulte qu'on possède une action $\star_{hor}$ de $\mathfrak{h}$ sur $\ocal^{la}$.
Voici une description plus directe de cette action. Comme on l'a déjà vu, l'action de $G(\qq_p)$ se dérive et  induit une action de $\mathfrak{g}$ sur ${\oscr}^{la}$. On l'étend linéairement en une action de  $\mathfrak{g}^0$.  On voit alors à l'aide du théorème que l'action de $\mathfrak{n}^0$ est triviale. Il en résulte alors une action de $\mathfrak{b}^0/\mathfrak{n}^0$. Sa restriction à $\mathfrak{h}$ est alors $-\star_{hor}$.

\subsection{Remerciements}  Je remercie Juan-Esteban Rodriguez pour de nombreux échanges sur le sujet.  Dans sa thèse, il étudie le cas des variétés de Shimura générales.  Ce texte est une version  détaillée d'un exposé donné lors d'un groupe de travail à l'ENS de Lyon sur le travail de Lue Pan. Je remercie  les participants au groupe de travail, et particulièrement  A. Bode, G. Boxer, G. Dospinescu, Joaquin Rodrigues,  Juan-Esteban Rodriguez et Olivier Taibi.  Cette recherche a bénéficié du soutient de l'ERC-2018-COG-818856-HiCoShiVa.

\section{Faisceaux équivariants sur $\mathcal{FL}$}
\subsection{Notations} Soit $k$ un corps complet  pour une valuation de rang $1$ étendant la valuation $p$-adique. 
Soit $S$ un espace adique localement de type fini sur $\Spa(k, \ocal_k)$.
 On utilise la théorie des espaces adiques de Huber (\cite{MR1306024}). 
Soit $G = \mathrm{GL}_2$ le groupe analytique sur $\Spa(k, \ocal_k)$ définit par $G(\Spa(R, R^+)) = \mathrm{GL}_2(R)$. Soit $B$ son Borel supérieur, $T$ le tore diagonal, et $U$ le radical unipotent de $B$. Soit $w_0$ le générateur du groupe de Weyl. On note $\mathfrak{g} = \mathrm{Lie}(G)$, $\mathfrak{b} = \mathrm{Lie}(B)$, $\mathfrak{n} = \mathrm{Lie}(U)$, $\mathfrak{h} = \mathrm{Lie}(T)$. On note $X^\star(T)$ le groupe des caractères de $T$ et $X^\star(T)^+$ le cone des caractères dominants pour $B$. 
On pose $\mathcal{FL} = B \backslash G$.  On note $\pi : G \rightarrow \mathcal{FL}$ la projection et on appelle $\infty$ le point $\pi(1)$ et $U_{w_0} = \pi ( w_0 U)$.
Pour tout point $x \in \mathcal{FL}$, on note $B_x = x^{-1}Bx = \mathrm{Stab}(x)$, $U_x$ le radical unipotent de $B_x$, $T_x = B_x/U_x$.  On pose $\mathfrak{b}_x = \mathrm{Lie}(B_x)$, $\mathfrak{n}_x = \mathrm{Lie}(U_x)$, $\mathfrak{h}_x = \mathrm{Lie}(T_x)$. 

\subsection{Faisceaux équivariants}\label{sect-f-eq}  
\subsubsection{Faisceaux cohérents}[\cite{stacks-project}, def. 01BV]
Soit $(X, \oscr_X)$ un espace annelé. Un faisceau $\mathscr{F}$ de $\oscr_X$-modules est cohérent si:
\begin{enumerate}
\item il est de type fini,
\item pour tout ouvert $U \subseteq X$,  et tout morphisme $\phi :  \oscr_U^i \rightarrow \mathscr{F}\vert_U$, le faisceau $\mathrm{Ker}(\phi)$ est de type fini.
\end{enumerate}

Si $\oscr_X$ lui-même est cohérent, alors un faisceau de $\oscr_X$-modules est cohérent si et seulement si il est de présentation finie (\cite{stacks-project}, lem. 01BZ et lem. 01BW). 

On note $\mathbf{Coh}(\oscr_X)$ la catégorie abélienne des $\oscr_X$-modules cohérents. Si $X$ est un espace adique et $\oscr_X$ est le faisceau structural on note aussi cette catégorie $\mathbf{Coh}(X)$. 
\subsubsection{Faisceaux équivariants}
Soit $S$ un espace adique localement de type fini sur $\Spa(k, \ocal_k)$.  
Soit $H$ un $S$-espace adique en groupes localement de type fini,  agissant à droite sur un $S$-espace adique $X$. On note $m : H \times_S X \rightarrow X$ l'action et $p : H \times_S X \rightarrow X$ la seconde projection. Un faisceau cohérent $H$-équivariant sur $X$ est un faisceau cohérent $\mathscr{F}$ munit d'un isomorphisme $act : m^\star \mathscr{F} \rightarrow p^\star \mathscr{F}$ vérifiant les propriétés suivantes. 
\begin{enumerate}
\item (L'identité agit trivialement)  Soit $e : S \rightarrow H$ la section neutre. On a $(e\times Id_X)^\star act = Id_{\mathscr{F}}$. 
\item(L'action est associative)  Pour $i=1,2,3$, on définit des applications $m_i : H \times_S H \times_S X \rightarrow H \times_S X$ par $m_1((g_1,g_2,x)) = (g_1 g_2,x)$, $m_2((g_1,g_2,x)) = (g_2, x g_1)$ and $m_3( (g_1,g_2,x)) = (g_1,x)$. 
Le diagramme suivant est commutatif :
\begin{eqnarray*}
\xymatrix{ m_1^\star m^\star \mathscr{F} \ar[rrr]^{m_1^\star act} \ar@{=}[d]  && &  m_1^\star p^\star \mathscr{F} \ar@{=}[d]  \\
m_2^\star m^\star \mathscr{F} \ar[r]^{m_2^\star act} & m_2^\star p^\star \mathscr{F} \ar@{=}[r]& m_3^\star m^\star \mathscr{F} \ar[r]^{m_3^\star act} & 
m_3^\star p^\star \mathscr{F}}
\end{eqnarray*}
\end{enumerate}

On note $\mathbf{Coh}_{H}(S)$ la catégorie des faisceaux cohérents $H$-équivariants sur $S$. 
\subsubsection{Faisceaux $G$-équivariants}
Le groupe $G$ agit sur lui-même à droite par translation à gauche via $(g, h) \mapsto g \star_1 h = g^{-1} h$ et translation à droite par $(g, h) \mapsto g \star_2 h = hg$. Ces actions induisent  deux structures de faisceaux $G$-équivariant sur $\oscr_G$.

Notons  $\mathbf{Rep}(B)$ la catégorie des représentations de $B$ sur les $k$-espaces vectoriels de dimension fini. 
On définit un foncteur $\mathbf{Rep}(B) \rightarrow \mathbf{Coh}_G(\mathscr{FL})$ qui envoie une représentation $(V,\rho)$ sur le faisceau $\mathcal{V}$ donné par $\mathcal{V} =  (\pi_\star \oscr_G {\otimes} V)^{B, \star_1 \otimes \rho}$. L'action $\star_2$ sur $\oscr_G$ induit la structure $G$-équivariante sur $\mathcal{V}$. Notons $i_{\infty} : \{ \infty \} \rightarrow \mathcal{FL}$ l'inclusion. Comme $B$ est le stabilisateur du point $\infty$,  on définit  un  foncteur $\mathbf{Coh}_G(\mathscr{FL}) \rightarrow \mathbf{Rep}(B)$ qui envoie $\mathscr{F}$ sur $i_{\infty}^\star \mathscr{F}$. 

\begin{prop}\label{prop-RepB} Le foncteur  
\begin{eqnarray*}
i_{\infty}^\star : \mathbf{Coh}_G(\mathscr{FL}) & \rightarrow & \mathbf{Rep}(B) \\
\mathscr{F} & \mapsto & i_{\infty}^\star \mathscr{F}
\end{eqnarray*}
est une équivalence de catégorie. Un quasi-inverse est donné par le foncteur $V \mapsto \mathcal{V}$. 
\end{prop}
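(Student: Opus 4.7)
The plan is to exploit the fact that $\pi : G \to \mathcal{FL}$ is a $B$-torsor for the $\star_1$-action, and is locally trivial on $\mathcal{FL}$. Concretely, the Bruhat decomposition $\mathcal{FL} = \{\infty\} \sqcup U_{w_0}$ provides an open cover by two copies of the affine line: on $U_{w_0}$ a section is given by $U \hookrightarrow G$, $u \mapsto w_0 u$; on a neighborhood of $\infty$ a section is given via the opposite unipotent $U^-$. Over any such trivializing open $\Omega \subseteq \mathcal{FL}$, we obtain an $\oscr_\Omega$-linear identification $\pi_\star \oscr_G|_\Omega \cong \oscr_\Omega \hat{\otimes}_k \oscr_B$, which is the technical input making descent along $\pi$ work in the adic setting.

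First I would check that the functor $V \mapsto \mathcal{V}$ lands in $\mathbf{Coh}_G(\mathcal{FL})$. Over a trivializing open $\Omega$, the space $(\oscr_\Omega \hat{\otimes} \oscr_B \otimes V)^{B,\star_1 \otimes \rho}$ is identified, by evaluation at $1 \in B$, with the free $\oscr_\Omega$-module $\oscr_\Omega \otimes V$. Hence $\mathcal{V}$ is locally free of rank $\dim V$ and in particular coherent. The $\star_2$-action of $G$ on $\pi_\star \oscr_G$ is $\oscr_{\mathcal{FL}}$-linear and commutes both with $\star_1$ and with $\rho$, so it descends to the invariants and equips $\mathcal{V}$ with a $G$-equivariant structure.

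Next I would check that the two compositions are naturally isomorphic to the identity. For the unit $V \to i_\infty^\star \mathcal{V}$, the base change $\pi^{-1}(\infty) = B$ gives $i_\infty^\star \mathcal{V} = (\oscr_B \otimes V)^{B,\star_1\otimes\rho}$, again isomorphic to $V$ via evaluation at $1$; one then verifies that the residual $B = \mathrm{Stab}(\infty)$-action on $i_\infty^\star \mathcal{V}$ coming from the $G$-equivariant structure coincides with $\rho$. For the counit, given $\mathscr{F} \in \mathbf{Coh}_G(\mathcal{FL})$ with fiber $V := i_\infty^\star \mathscr{F}$, I would pull back to $G$: the sheaf $\pi^\star \mathscr{F}$ carries two commuting structures, a $\star_1$-equivariance (from descent along $\pi$) and a $\star_2$-equivariance (from the $G$-equivariance of $\mathscr{F}$). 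Because $G$ acts simply transitively on itself via $\star_2$, the sheaf $\pi^\star \mathscr{F}$ is trivialized by its fiber at $1 \in G$, which equals $V$ by base change, giving $\pi^\star \mathscr{F} \cong \oscr_G \otimes V$. Under this trivialization the residual $\star_1$-action becomes the diagonal action $\star_1 \otimes \rho$, and descending back along $\pi$ recovers $\mathscr{F} \cong (\pi_\star \oscr_G \otimes V)^{B,\star_1\otimes\rho} = \mathcal{V}$.

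The step I expect to be the most delicate is the translation-equivariance lemma used in the construction of the counit: a $G$-equivariant coherent sheaf on $G$ for a simply transitive action is canonically trivialized by its fiber at the origin, compatibly with any other commuting equivariant structure. In the adic setting this requires unwinding the cocycle condition on $G \times G$ and checking that the isomorphism $m^\star \cong p^\star$ specialized along $(g,1)$ yields the desired trivialization. Once this lemma is established, the proposition follows formally by faithfully flat descent along the $B$-torsor $\pi$, both the unit and the counit being produced automatically.
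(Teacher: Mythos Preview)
Your approach is correct and is essentially the same as the paper's; both produce the counit by trivializing $\pi^\star\mathscr{F}$ via the $G$-equivariance and then descending along the $B$-torsor $\pi$. The one point worth noting is that the step you flagged as delicate is not: the paper obtains the trivialization $\pi^\star\mathscr{F}\cong \oscr_G\otimes_k i_\infty^\star\mathscr{F}$ in a single stroke by restricting the given action isomorphism $act:m^\star\mathscr{F}\to p^\star\mathscr{F}$ to $G\times\{\infty\}$ (where $m$ becomes $\pi$ and $p$ becomes the constant map to $\infty$), so there is nothing to unwind in the adic setting beyond specialization of a morphism already provided as data. The $B$-equivariance for $\star_1\otimes\rho$ and the $G$-equivariance for $\star_2$ then follow from the cocycle condition, exactly as you indicate.
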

\begin{proof} La restriction de l'isomorphisme d'action  $m^\star \mathscr{F} = p^\star \mathscr{F}$ à $G \times \{\infty\}$ fournit un isomorphisme $\pi^\star \mathscr{F} = \oscr_{G} \otimes_{k} i_\infty^\star \mathscr{F}$ dont on vérifie qu'il est équivariant pour l'action de $B$ via $\star_1 \otimes \rho$ et l'action de $G$ via $\star_2$.
\end{proof}

\begin{lem} Si $V \in \mathbf{Rep}(G)$, $\mathcal{V} = V {\otimes}_{k} \oscr_{\mathcal{FL}}$. 
\end{lem}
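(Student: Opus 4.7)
\medskip

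\noindent\textbf{Proof plan.} By Proposition~\ref{prop-RepB} it suffices to exhibit, for $V \in \mathbf{Rep}(G)$, a natural isomorphism $i_\infty^\star (V \otimes_k \oscr_{\mathcal{FL}}) \simeq V$ in $\mathbf{Rep}(B)$ (and to check that the $G$-equivariant structure of $V \otimes_k \oscr_{\mathcal{FL}}$ coming from its description via the functor $V \mapsto \mathcal{V}$ agrees with the obvious diagonal one). Concretely, I would prove the stronger statement that the isomorphism of $\oscr_G$-modules
\[
\phi : \pi_\star \oscr_G \otimes_k V \longrightarrow \pi_\star \oscr_G \otimes_k V,\qquad \phi(F)(g) = \rho(g)^{-1}\, F(g),
\]
intertwines the diagonal action $\star_1 \otimes \rho$ of $B$ on the source with the action $\star_1 \otimes \mathrm{id}$ on the target. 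Indeed, for $b \in B$, unwinding both definitions gives
\[
\phi\bigl((b \star_1 \otimes \rho)(F)\bigr)(g) \;=\; \rho(g)^{-1}\rho(b) F(b^{-1}g) \;=\; \rho(b^{-1}g)^{-1} F(b^{-1}g) \;=\; (b \star_1 \phi(F))(g).
\]

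Taking $B$-invariants, the right-hand side becomes $\bigl((\pi_\star \oscr_G)^{B,\star_1}\bigr) \otimes_k V = \oscr_{\mathcal{FL}} \otimes_k V$, because the $\star_1$-invariant sections of $\pi_\star \oscr_G$ are precisely the functions that descend along $\pi$. Hence $\phi$ induces an isomorphism of sheaves $\mathcal{V} \xrightarrow{\sim} \oscr_{\mathcal{FL}} \otimes_k V$.

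It then remains to identify the $G$-equivariant structure, that is, to check that the transported $\star_2$-action on $\oscr_{\mathcal{FL}} \otimes_k V$ is the diagonal one (natural translation on $\oscr_{\mathcal{FL}}$ tensored with $\rho$ on $V$). Writing $F'(g) = \rho(g)^{-1}F(g)$ and using $F(g) = \rho(g)F'(g)$, a direct substitution gives
\[
\phi(h \star_2 F)(g) \;=\; \rho(g)^{-1} F(gh) \;=\; \rho(g)^{-1}\rho(gh) F'(gh) \;=\; \rho(h)\cdot F'(gh),
\]
which is exactly the diagonal $\star_2$-action on $\oscr_{\mathcal{FL}} \otimes_k V$. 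Naturality in $V$ is clear from the formulas.

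The computation is purely formal, the only subtle point being to keep track of which of the two commuting actions $\star_1$ and $\star_2$ is being twisted by $\rho$. The twist $\phi$ works precisely because $\rho$ extends from $B$ to all of $G$, which is where the hypothesis $V \in \mathbf{Rep}(G)$ enters.
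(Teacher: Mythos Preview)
Your proof is correct and takes essentially the same approach as the paper: the key map $\phi(F)(g) = \rho(g)^{-1}F(g)$ intertwining the $\star_1 \otimes \rho$ and $\star_1 \otimes \mathrm{id}$ actions of $B$ is exactly what the paper uses. You have additionally written out the verification of the $\star_2$-equivariance, which the paper leaves implicit.
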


\begin{proof} Considérons l'application $\pi_\star \oscr_G {\otimes} V \rightarrow \pi_\star \oscr_G {\otimes} V$ donnée par  $f(g) \mapsto \rho(g)^{-1} f(g)$. Cette application échange l'action $\star_1 \otimes \rho$ et l'action $\star_1\otimes \mathrm{Id}$ de $B$. 
\end{proof}

\begin{ex}\label{exemple-basique} \begin{enumerate}
\item Pour tout caractère algébrique $\kappa = (k_1,k_2) \in X^\star(T)$, on note $\oscr_{\mathcal{FL}}^\kappa$ le faisceau associé à la représentation  $w_0\kappa$.  C'est un faisceau de degré $k_1-k_2$. Si $\kappa \in X^\star(T)^+$, $$\HH^0(\mathcal{FL}, \oscr_{\mathcal{FL}}^\kappa)  = \{ f : G \rightarrow \mathbb{A}_1,~f(bg) = w_0\kappa(b) f(g)\}$$ est la représentation de plus haut poids $\kappa$. 
\item On note $\mathfrak{g}^0 = \oscr_{\mathcal{FL}} \otimes \mathfrak{g}$ le faisceau associé à $\mathfrak{g}$ munit de la représentation adjointe.  
\item On note  $\mathfrak{n}^0 \subseteq \mathfrak{b}^0\subseteq \mathfrak{g}^0$ les sous-faisceaux $G$-équivariant, donnés par la condition $f \in \mathfrak{g}^0(U)$ est une section de $\mathfrak{b}^0(U)$ (resp. $\mathfrak{n}^0(U)$) si et seulement si   pour tout $x \in U$, $f(x) \in \mathfrak{b}_x$ (resp. $\mathfrak{n}_x$). Ainsi $\mathfrak{b}^0$ (resp. $\mathfrak{n}^0$) est  le faisceau associé à $\mathfrak{b}$ (resp. $\mathfrak{n}$) muni de la représentation adjointe par l'inverse du foncteur de la proposition \ref{prop-RepB}. 
\item On a un isomorphisme $\mathfrak{h} = \HH^0(\mathscr{FL}, \mathfrak{b}^0/\mathfrak{n}^0)$. L'image de $\mathfrak{h} \rightarrow  \mathfrak{b}^0/\mathfrak{n}^0$ est appelé Cartan horizontal. 
\item    Le faisceau localement libre $\mathfrak{g}^0/\mathfrak{b}^0$ s'identifie canoniquement au faisceau tangent $\mathcal{T}_{\mathcal{FL}}$. En effet, on possède un morphisme $\mathfrak{g} \rightarrow \mathcal{T}_{\mathcal{FL}}$ qui s'obtient en dérivant l'action de $G$ sur $\oscr_{\mathcal{FL}}$ (voir la section \ref{sect-dérivation}). On peut l'étendre linéairement en un morphisme  $\mathfrak{g}^0 \rightarrow \mathcal{T}_{\mathcal{FL}}$.  Ce morphisme passe au quotient en un isomorphisme de faisceau $G$-équivariants  $\mathfrak{g}^0/\mathfrak{b}^0 \rightarrow \mathcal{T}_{\mathcal{FL}}$ comme on le voit en comparant les fibres en le point $\infty$ des deux faisceaux. 
\end{enumerate}
\end{ex}

\subsubsection{Faisceaux $H$-équivariants}
On considère maintenant un sous-groupe ouvert  $H$  de $G$.  
Si $U$ est un ouvert de $\mathscr{FL}$ stable sous $H$, on possède une catégorie $\mathbf{Coh}_H(U)$ dont les objets sont les  faisceaux cohérents, $H$-équivariants  sur $U$.
Supposons qu'on possède un point $x \in U(k)$ tel que le morphisme $x H \rightarrow U$ induise un isomorphisme $x.\mathrm{Stab}_{H}(x) \backslash H \rightarrow U$.  Dans cette situation on a:

\begin{prop}\label{prop-rep-torseurs} Le foncteur $\mathbf{Coh}_{H}(U) \rightarrow \mathbf{Rep}(\mathrm{Stab}_H(x))$, donné par $\mathscr{F} \mapsto i_x^\star \mathscr{F}$ est une équivalence de catégorie.
\end{prop}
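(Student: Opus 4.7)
The plan is to mimic the argument of Proposition \ref{prop-RepB} almost verbatim. The hypothesis that $x\cdot\mathrm{Stab}_H(x)\backslash H \xrightarrow{\sim} U$ places us in the same geometric situation as $B\backslash G = \mathcal{FL}$ in the $G$-equivariant case, so the same functorial construction should yield the equivalence.

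First I would produce an explicit quasi-inverse. Let $\pi_H : H \to U$ denote the orbit map $h \mapsto x\cdot h$, which by hypothesis is an $\mathrm{Stab}_H(x)$-torseur (for the $\star_1$-action). Given $(V,\rho) \in \mathbf{Rep}(\mathrm{Stab}_H(x))$, I would set
$$\mathcal{V} \;:=\; \bigl((\pi_H)_\star \oscr_H \otimes_k V\bigr)^{\mathrm{Stab}_H(x),\,\star_1 \otimes \rho},$$
and observe that the right-translation action $\star_2$ of $H$ on $\oscr_H$, which commutes with $\star_1$, descends to an $H$-equivariant structure on $\mathcal{V}$. The equivariance axioms (identity acts trivially, associativity) are inherited formally from those of $\oscr_H$ viewed as a bi-equivariant sheaf on $H$, exactly as in Proposition \ref{prop-RepB}.

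Second, to check that $\mathscr{F} \mapsto i_x^\star\mathscr{F}$ and $V \mapsto \mathcal{V}$ are mutually inverse, I would restrict the equivariance isomorphism $\mathrm{act} : m^\star \mathscr{F} \xrightarrow{\sim} p^\star \mathscr{F}$ along the inclusion $\{x\} \times H \hookrightarrow U \times H$. Since $m$ restricts there to the orbit map $\pi_H$, one obtains a canonical isomorphism $\pi_H^\star \mathscr{F} \cong \oscr_H \otimes_k i_x^\star \mathscr{F}$, which a direct check shows to be $\mathrm{Stab}_H(x)$-equivariant for $\star_1 \otimes \rho$ and $H$-equivariant for $\star_2$. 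Taking invariants under $\mathrm{Stab}_H(x)$ recovers $\mathscr{F}$ from $i_x^\star \mathscr{F}$; conversely, pulling back $\mathcal{V}$ to $H$ and applying $i_x^\star$ (i.e.\ evaluating at the identity of $H$) recovers $V$ tautologically.

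The main point of care will be the descent step: one needs to know that taking $\mathrm{Stab}_H(x)$-invariants of $(\pi_H)_\star \pi_H^\star \mathscr{F}$ really recovers $\mathscr{F}$ for a coherent $\mathscr{F}$ on the adic space $U$. This is where the torseur hypothesis is used crucially: since $\pi_H$ is an $\mathrm{Stab}_H(x)$-torseur in the analytic topology, it is faithfully flat, so standard flat descent for coherent sheaves on adic spaces applies. Everything else reduces to the formal manipulations with equivariance data that already appeared in Proposition \ref{prop-RepB}; no new calculation is required.
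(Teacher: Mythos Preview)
Your proposal is correct and follows essentially the same approach as the paper. The paper's own proof simply reads ``C'est identique \`a la preuve de la proposition \ref{prop-RepB}'', and your write-up spells out exactly that argument: restrict the equivariance isomorphism $m^\star\mathscr{F}\simeq p^\star\mathscr{F}$ to the fibre over $x$ to obtain $\pi_H^\star\mathscr{F}\simeq \oscr_H\otimes_k i_x^\star\mathscr{F}$, check the two equivariances, and use the explicit quasi-inverse $V\mapsto ((\pi_H)_\star\oscr_H\otimes_k V)^{\mathrm{Stab}_H(x)}$ already introduced before Proposition~\ref{prop-RepB}. One cosmetic point: in the paper's conventions the action map is $m:H\times U\to U$, so the restriction is along $H\times\{x\}\hookrightarrow H\times U$ rather than $\{x\}\times H\hookrightarrow U\times H$; this does not affect the argument.
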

\begin{proof} C'est identique à la preuve de la proposition \ref{prop-RepB}.
\end{proof}





\subsubsection{Faisceaux $\mathfrak{g}$-équivariants}

On note $\hat{G}$ la complétion formelle de $G$ en la section neutre. Si $I$ désigne l'idéal d'augmentation de $\oscr_G$, alors $\oscr_{\hat{G}} = \lim_n \oscr_G/I^n$.  Notons $\mathfrak{g} = \mathrm{Hom}_k(I/I^2, k)$ l'algèbre le lie de $G$ et $\mathcal{U}(\mathfrak{g})$ l'algèbre enveloppante de $\mathfrak{g}$. Elle s'identifie à l'algèbre des operateurs differentiels invariants à gauche sur $G$ (voir \cite{MR2015057}, I, 7.10).   On possède un isomorphisme $\oscr_{\hat{G}} = \mathrm{Hom}_{k}( \mathcal{U}(\mathfrak{g}), k)$. 

On peut considérer la catégorie $\mathbf{Coh}_{\mathfrak{g}}(U)$ des faisceaux cohérents qui sont $\mathfrak{g}$-équivariant. 
Un objet de cette catégorie est un faisceau coherent $\mathscr{F}$, muni d'un morphisme d'action (vérifiant les conditions déduites de celles du numéro \ref{sect-f-eq}):
$act : \mathscr{F} \rightarrow \mathscr{F}\otimes_{k} \oscr_{\hat{G}}$. Par dualité, on possède un morphisme $\mathfrak{U}(\mathfrak{g}) \otimes \mathscr{F} \rightarrow \mathscr{F}$ qui est déterminé par sa restriction à $\mathfrak{g}$.

Un objet de $\mathbf{Coh}_{\mathfrak{g}}(U)$   est donc simplement la donnée d'un faisceau cohérent  $\mathscr{F}$ équipé d'un morphisme $ \mathfrak{g} \rightarrow \mathrm{End}_{k}( \mathscr{F})$, vérifiant :

\begin{enumerate}
\item  Pour tout ouvert $V$ de $U$,  pour tout $x, y \in \mathfrak{g}$,  et toute section $f \in \mathscr{F}(V)$, on a: $xyf - yxf = [x,y]f$,
\item  Pour tout ouvert $V$ de $U$, pour tout $x \in \mathfrak{g}$,  toute section $f \in \mathscr{F}(V)$, et toute section $h \in \oscr_{\mathcal{FL}}(V)$, on a $x(h f) = x(h)f + hx(f)$. 
\end{enumerate}

\begin{lem}\label{lem-extension} Tout objet de $\mathbf{Coh}_{\mathfrak{g}}(U)$ est un faisceau localement libre de rang fini sur $U$.
\end{lem}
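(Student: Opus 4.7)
The plan is to show that $\mathscr{F}$ is torsion-free; since $\mathcal{FL}$ is smooth of dimension $1$, a coherent torsion-free sheaf on an open $U \subseteq \mathcal{FL}$ is automatically locally free of finite rank.

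First I would verify that the torsion subsheaf $T \subseteq \mathscr{F}$ is preserved by the $\mathfrak{g}$-action. This follows from Leibniz: if $h \in \oscr_U$ is a non-zero-divisor with $h f = 0$, then for every $\xi \in \mathfrak{g}$ we get $h \xi(f) = -\xi(h) f$, and multiplying once more by $h$ yields $h^2 \xi(f) = -\xi(h) \cdot h f = 0$, so $\xi(f)$ is again torsion. Hence $T$ itself lies in $\mathbf{Coh}_{\mathfrak{g}}(U)$, and it suffices to show $T = 0$.

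Next, suppose for contradiction that $T \neq 0$ and pick a point $x$ in its support. Using Example~\ref{exemple-basique}(5), namely the $G$-equivariant isomorphism $\mathfrak{g}^0/\mathfrak{b}^0 \simeq \mathcal{T}_{\mathcal{FL}}$, the natural map $\mathfrak{g} \to \mathcal{T}_{\mathcal{FL}}$ is surjective on the fiber at $x$, so I can choose $\xi \in \mathfrak{g}$ whose associated derivation on $\oscr_U$ generates $\mathcal{T}_{\mathcal{FL}}$ in a neighborhood of $x$. Shrinking to a small affinoid around $x$, I may write $\xi = u\,\partial_t$ for a local uniformizer $t$ at $x$ and a unit $u \in \oscr_{U,x}^{\times}$. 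The key input is then the commutator identity
\[
[\xi, t] = \xi(t) = u, \qquad [\xi, t^n] = n\, u\, t^{n-1}
\]
(the second by induction) as $k$-linear operators on $\mathscr{F}_x$.

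Finally, let $n \geq 1$ be minimal with $t^n T_x = 0$. Since $\xi$ preserves $T_x$, both compositions $\xi \circ t^n$ and $t^n \circ \xi$ act trivially on $T_x$, hence $n u t^{n-1}$ vanishes on $T_x$. But $n$ is invertible (because $k$ has characteristic zero) and $u$ is a unit in $\oscr_{U,x}$, which forces $t^{n-1} T_x = 0$ and contradicts the minimality of $n$; therefore $T_x = 0$. I expect the main technical point to be the choice of a local uniformizer $t$ at $x$ in the adic setting: one should restrict to classical (rigid-analytic) points of $U$ and to a small affinoid where $\oscr_{U,x}$ behaves like a discrete valuation ring, so that $t$ and $u$ exist with the required properties. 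Once this set-up is in place, the commutator trick closes the argument at once.
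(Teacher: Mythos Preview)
Your proposal is correct and follows essentially the same route as the paper: show the torsion subsheaf is $\mathfrak{g}$-stable via Leibniz, then use characteristic $0$ and the surjection $\mathfrak{g}\to\mathcal{T}_{\mathcal{FL}}$ to derive a contradiction. The only cosmetic difference is that the paper phrases the second step globally --- it takes the annihilator ideal $I$ of $\mathscr{F}^{tors}(V)$ on an affinoid, shows $I$ is stable under all derivations coming from $\mathfrak{g}$, and concludes $I=A$ --- while you unfold that last implication pointwise with a uniformizer and the commutator $[\xi,t^n]=n\,u\,t^{n-1}$.
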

\begin{proof} Soit $\mathscr{F} \in \mathbf{Coh}_{\mathfrak{g}}(U)$. Soit $\mathscr{F}^{tors}$ le sous-faisceau de torsion. Il suffit de voir que $\mathscr{F}^{tors} =0$. Soit $V = \Spa(A,A^+)$ un ouvert affinoide de $U$. Le $A$-module $\mathscr{F}^{tors}(V)$  est de type fini et de torsion. Soit $I \subseteq A$ son idéal annulateur. Par hypothèse, $I \neq 0$. De plus pour tout $g \in \mathfrak{g}$, $f \in \mathscr{F}^{tors}(V)$ et $i \in I$ on a $g(if) = g(i) f + i g(f)$. Appliquant cette identité  à $i \in I^2$, on voit que $g(f)$ est de torsion. Appliquant ceci à $i \in I$, on déduit que $g(i).f=0$. On voit donc que l'idéal $I$ de $A$ est non nul, et stable sous les dérivations de $A$. C'est donc que $I=A$. 
\end{proof}

Soit $\mathscr{F} \in  \mathbf{Coh}_{\mathfrak{g}}(U)$. On peut étendre l'action de $\mathfrak{g}$ en une action de $\oscr_{U} \otimes_k \mathfrak{g}$ et le sous-faisceau $\mathfrak{b}^0$ agit linéairement, c'est à dire qu'on possède un morphisme $\mathfrak{g}$-équivariant  $\mathfrak{b}^0 \rightarrow \underline{\mathrm{End}}_{\oscr_{U}}(\mathscr{F})$.

\begin{defi} On dit qu'un objet $\mathscr{F}$ de $\mathbf{Coh}_{\mathfrak{g}}(U)$ est de poids $\chi \in X^\star(T)_{k}$ si le morphisme précédent se factorise en un morphisme $\mathfrak{b}^0/\mathfrak{n}^0 \rightarrow \underline{\mathrm{End}}_{\oscr_{U}}(\mathscr{F})$ et si $\mathfrak{h}$ agit par à travers  le caractère $\chi$.
\end{defi}

\begin{ex} Le faisceau $\oscr_{\mathcal{FL}}^\kappa$ est de poids $w_0\kappa$. 
\end{ex}

\begin{rem} Les objets de poids nul de $\mathbf{Coh}_{\mathfrak{g}}(U)$ sont les fibrés à connexion intégrable. En effet, dans ce cas le morphism $\oscr_{U} \otimes \mathfrak{g} \rightarrow \underline{\mathrm{End}}_{k}(\mathscr{F})$ se factorise en un morphisme :
$\mathcal{T}_{U} \rightarrow  \underline{\mathrm{End}}_{k}(\mathscr{F})$. Les conditions $(1)$ et $(2)$ assurent que ce morphisme est équivalent à la donnée d'une connexion intégrable $\nabla : \mathscr{F} \rightarrow \mathscr{F} \otimes_{\oscr_{U}} \Omega^1_{U/k}$. 
\end{rem}

\subsubsection{Cohomologie de  l'algèbre de Lie nilpotente}




On note $\mathbf{Coh}_{\mathfrak{g}}(U)^{\mathfrak{n}^0}$ la sous-catégorie  de $\mathbf{Coh}_{\mathfrak{g}}(U)$ des faisceaux tués par $\mathfrak{n}^0$. 
On dispose de foncteurs: 
\begin{eqnarray*}
\mathbf{Coh}_{\mathfrak{g}}(U) &\rightarrow& \mathbf{Coh}_{\mathfrak{g}}(U)^{\mathfrak{n}^0} \\
\mathscr{F} &\mapsto& \mathscr{F}^{\mathfrak{n}^0} = \HH^0(\mathfrak{n}^0, \mathscr{F})\\
\mathscr{F} &\mapsto& \mathscr{F}_{\mathfrak{n}^0} = \mathrm{coker}( \mathscr{F} \rightarrow \mathscr{F} \otimes (\mathfrak{n}^0)^\vee) = \HH^1(\mathfrak{n}^0, \mathscr{F})
\end{eqnarray*}

\subsubsection{Faisceaux $H$-équivariants et faisceaux $\mathfrak{g}$-équivariants}\label{sect-dérivation}
Pour tout sous-groupe ouvert $H$ de $G$ et tout ouvert $U$ qui est $H$-équivariant, on possède un foncteur :
$\mathbf{Coh}_{H}(U) \rightarrow \mathbf{Coh}_{\mathfrak{g}}(U)$ obtenu en "dérivant" l'action de $H$. A tout objet $\mathscr{F}$, munit de son action $H$-équivariante, on associe l'objet $\mathscr{F}$ muni du morphisme 
$\mathscr{F} \rightarrow \mathscr{F} \otimes \oscr_H \rightarrow \mathscr{F} \otimes \oscr_{\hat{G}}$ déduit de l'application $\oscr_H \rightarrow \oscr_{\hat{G}}$. On déduit que le foncteur $\mathbf{Coh}_{H}(U) \rightarrow \mathbf{Coh}_{\mathfrak{g}}(U)$ est pleinement fidèle dès que $H$ est connexe. 

\begin{rem} Voici une autre manière de décrire l'action de $\mathfrak{g}$ sur un faisceau $\mathscr{F}$ qui est $H$-équivariant.  
On note $H^{(1)}$ le fermé de $H$ défini par le carré de l'idéal d'augmentation $I_H^2$. On a $\oscr_{H^{(1)}} = k \oplus \epsilon \mathfrak{g}^\vee$.  
On restreint le morphisme $m_H$ en un morphisme $m_{H^{(1)}} : H^{(1)} \times U \rightarrow U$ et la projection $p_H$ en une projection $p_{H^{(1)}} : H^{(1)} \times  U \rightarrow U$
Considérons le morphisme  composé $ \mathscr{F} \rightarrow m_{H^{(1)}}^\star \mathscr{F} \rightarrow p_{H^{(1)}}^\star \mathscr{F} = \mathscr{F} + \epsilon \mathfrak{g}^\vee \otimes \mathscr{F}$.  La projection sur  $\mathfrak{g}^\vee \otimes \mathscr{F}$ fournit  une action de $\mathfrak{g}$ sur $\mathscr{F}$. 
\end{rem}


On va maintenant montrer qu'on peut toujours "intégrer" une action infinitesimale sur un ouvert quasi-compact. Commençons par quelques préliminaires. Soit $\mathfrak{g}^+ = \mathrm{M}_2(\ocal_k)$ l'algèbre de Lie du groupe $\mathrm{GL}_2/\ocal_k$. Pour tout $n \geq 0$, on  défini un sous-groupe $G_n \hookrightarrow G$ dont les points sont les matrices entières de reductions triviale modulo $p^n$. Le spectre formel de $\oscr_{G_n}^+$ est un schéma formel $p$-adique en groupes lisse. Soit $I_{G_n}^+$ l'idéal d'augmentation de $\oscr_{G_n}^+$. On note $\mathfrak{g}_n^+ = \mathrm{Hom} (I_{G_n}^+/(I_{G_n}^+)^2, \oscr_k)$. On possède un morphisme $\mathfrak{g}_n^+ \rightarrow \mathfrak{g}$ qui induit un isomorphisme $\mathfrak{g}_n^+ = p^n \mathfrak{g}^+$. Notons $\mathcal{U}(\mathfrak{g}_n^+) \hookrightarrow \mathcal{U}(\mathfrak{g})$ l'algèbre enveloppante sur $\ocal_k$ de $\mathfrak{g}_n^+$. C'est la sous $\ocal_k$-algèbre  engendrée par $\mathfrak{g}_n^+$. Notons  $Dist(\oscr_{G_n}^+) = \colim_m \mathrm{Hom}_{\ocal_k} ( \oscr_{G_n}^+/(I_{G_n}^+)^m, \ocal_k)$. Pour $n \geq 1$,  c'est la sous $\ocal_k$-algèbre de $\mathcal{U}(\mathfrak{g})$ engendrée par les éléments $$\big\{ \frac{x^m}{m !}, ~m \in \ZZ_{\geq 0}, x  \in \mathfrak{g}_n^+\}.$$
On possède alors la suite d'inclusions pour tout $n \geq 1$ :
$$ \mathcal{U}(\mathfrak{g}_n^+) \hookrightarrow Dist(\oscr_{G_n}^+)  \hookrightarrow  \mathcal{U}(\mathfrak{g}^+_{n-1}).$$

\begin{prop}\label{prop-integration-action} Soit $U$ un ouvert quasi-compact, et soit $\mathscr{F} \in  \mathbf{Coh}_{\mathfrak{g}}(U)$. Alors il existe un sous-groupe ouvert $H$ de $G$ tel que $\mathscr{F}$ soit dans l'image du foncteur $\mathbf{Coh}_{H}(U) \rightarrow \mathbf{Coh}_{\mathfrak{g}}(U)$. 
\end{prop}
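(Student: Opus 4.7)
The plan is to take $H = G_n$ for $n$ sufficiently large, and to integrate the given $\mathfrak{g}$-action using the chain $\mathcal{U}(\mathfrak{g}_n^+) \hookrightarrow Dist(\oscr_{G_n}^+) \hookrightarrow \mathcal{U}(\mathfrak{g}_{n-1}^+)$ recalled just before the statement. Two preliminaries are needed. First, any quasi-compact open $U \subseteq \mathcal{FL}$ is $G_n$-stable for all $n$ sufficiently large: by continuity of the action $G \times \mathcal{FL} \to \mathcal{FL}$ and quasi-compactness of $U$, a finite-cover argument produces an open neighborhood of $1 \in G$ stabilizing $U$, and $G_n$ lies in this neighborhood for large $n$. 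Second, by faithfulness of the functor $\mathbf{Coh}_{G_n}(U) \to \mathbf{Coh}_{\mathfrak{g}}(U)$ (valid since $G_n$ is connected, as observed in the previous subsection), any $G_n$-equivariant structure that derives to the given $\mathfrak{g}$-action is unique, so local integrations glue automatically.

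By Lemma \ref{lem-extension}, $\mathscr{F}$ is locally free of finite rank. Cover $U$ by finitely many affinoids $V = \Spa(A, A^+)$ on which $\mathscr{F}$ is free with basis $e_1, \ldots, e_r$, and let $M = \mathscr{F}(V)$. The $\mathfrak{g}$-action is encoded by $k$-linear operators
\[
T_x(a e_i) = \partial_x(a) e_i + a \sum_j \rho(x)_{ji} e_j, \quad x \in \mathfrak{g},\ a \in A,
\]
where $\partial : \mathfrak{g} \to \mathrm{Der}_k(A)$ is the derivation obtained from $\mathfrak{g}^0 \to \mathcal{T}_V$ (Example \ref{exemple-basique}(5)) and $\rho : \mathfrak{g} \to M_r(A)$ records the matrix coefficients. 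Set $M^+ = (A^+)^r$. Since the images $\partial(\mathfrak{g}^+)$ in $\mathrm{Der}_k(A)$ and $\rho(\mathfrak{g}^+)$ in $M_r(A)$ are bounded, there exists $n$ (uniform on the finite cover) with $\partial(\mathfrak{g}_n^+)(A^+) \subseteq p A^+$ and $\rho(\mathfrak{g}_n^+) \cdot M^+ \subseteq p M^+$, and hence $T_x(M^+) \subseteq p M^+$ for all $x \in \mathfrak{g}_n^+$.

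Iterating, $T_x^m(M^+) \subseteq p^m M^+$, and since $v_p(m!) \leq m/(p-1) \leq m$, the divided powers $T_x^m/m!$ preserve $M^+$ for every $x \in \mathfrak{g}_n^+$ and $m \geq 0$. Because $Dist(\oscr_{G_n}^+)$ is generated as an $\oscr_k$-algebra by such divided powers (as recalled in the excerpt), it acts $\oscr_k$-linearly on $M^+$. Via the topological duality $\oscr_{G_n}^+ = \mathrm{Hom}^{\mathrm{cts}}_{\oscr_k}(Dist(\oscr_{G_n}^+), \oscr_k)$, this action dualizes to a continuous coaction $M^+ \to M^+ \hat{\otimes}_{\oscr_k} \oscr_{G_n}^+$, and after inverting $p$, to a coaction $M \to M \hat{\otimes}_k \oscr_{G_n}$. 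Coassociativity and counitality translate via duality into compatibility of the $Dist$-action with the coproduct and counit of $\oscr_{G_n}^+$; these hold because the $\mathcal{U}(\mathfrak{g})$-action is a genuine module action over a Hopf algebra. The resulting $G_n$-equivariant structure on $\mathscr{F}|_V$ derives to the given $\mathfrak{g}$-action, and by the uniqueness noted above it glues to a global $G_n$-equivariant structure on $\mathscr{F}$.

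The main obstacle, I expect, is this integrality step: producing a single $n$ that bounds both the derivation and matrix parts of the action, uniformly on a finite affinoid cover. Once that is done, the promotion to divided powers and the passage to a genuine $\oscr_{G_n}$-coaction are formal consequences of the Hopf-algebraic structures already recalled in the excerpt.
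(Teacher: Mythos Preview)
Your strategy matches the paper's: reduce to a single affinoid via full faithfulness, find $n$ such that $\mathfrak{g}_n^+$ preserves a bounded lattice $M^+$, then exponentiate to a $G_N$-coaction for some $N\geq n$. The integrality step you isolate is exactly the one the paper singles out as well.

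The gap is in your passage from a $Dist(\oscr_{G_n}^+)$-action on $M^+$ to an $\oscr_{G_n}^+$-coaction. The duality you invoke is not correct: by definition $Dist(\oscr_{G_n}^+)=\colim_m \mathrm{Hom}_{\ocal_k}(\oscr_{G_n}^+/(I_{G_n}^+)^m,\ocal_k)$, so its $\ocal_k$-dual (continuous or not) is the $I$-adic completion $\varprojlim_m \oscr_{G_n}^+/(I_{G_n}^+)^m$, i.e.\ formal power series, \emph{not} $\oscr_{G_n}^+$. Knowing only that each $T_x^m/m!$ preserves $M^+$ says the Taylor coefficients of the would-be coaction are integral, but not that they tend to $0$; so the coaction you produce lands a priori only in $M^+\,\hat{\otimes}\,\varprojlim_m \oscr_{G_n}^+/(I_{G_n}^+)^m$, which is just the original $\hat{G}$-equivariant structure. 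Concretely, for $p=2$ your own estimate gives $T_x^m/m!(M^+)\subseteq p^{\,m-v_2(m!)}M^+=p^{\,s_2(m)}M^+$, and $s_2(m)$ does not tend to $\infty$, so convergence on $G_n$ genuinely fails.

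The paper closes this by trading two steps of $n$: from $\mathcal{U}(\mathfrak{g}_n^+)$ acting on $M^+$ one dualizes to a coaction into $\mathrm{Hom}_{\ocal_k}(\mathcal{U}(\mathfrak{g}_n^+),\ocal_k)$, then uses the chain $\mathcal{U}(\mathfrak{g}_n^+)\supset Dist(\oscr_{G_{n+1}}^+)$ and the inclusion $\varprojlim_m \oscr_{G_{n+1}}^+/(I_{G_{n+1}}^+)^m \hookrightarrow \oscr_{G_{n+2}}^+$ (a formal series $\sum a_I Y^I$ in the $G_{n+1}$-coordinates becomes $\sum a_I p^{|I|}(Y')^I$ in the $G_{n+2}$-coordinates, with coefficients tending to $0$). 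If you prefer to keep your divided-power route, you must likewise shrink the group: your estimate gives $T_x^m/m!(M^+)\subseteq p^{\,m-v_p(m!)}M^+$, and rewriting in $G_{n+1}$-coordinates multiplies coefficients by $p^{|I|}$, so the exponents become $\geq |I|$ and convergence in $\oscr_{G_{n+1}}^+$ follows (for all $p$). Either way the conclusion is a $G_{n+1}$- or $G_{n+2}$-structure, not a $G_n$-structure.
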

\begin{proof} On va trouver $H$ de la forme $G_n$ pour $n \in \ZZ_{\geq 1}$.   Vue la pleine fidélité de $ \mathbf{Coh}_{H}(U) \rightarrow  \mathbf{Coh}_{\mathfrak{g}}(U)$ pour $H$ connexe, on peut supposer que $U = \Spa(A,A^+)$ est affinoide. On pose $M = \mathscr{F}(U)$. On fixe un sous $A^+$-module ouvert et borné $M^+$ de $M$.  
On affirme qu'il existe $n \in \ZZ_{\geq 0}$ tel que le morphisme $\mathfrak{g} \rightarrow \mathrm{End}(M)$ induise un morphisme $p^n \mathfrak{g}^+ \rightarrow \mathrm{End}_{\oscr_k}(M^+)$. En effet, on commence par trouver $n_0$ tel que $p^{n_0} \mathfrak{g}^+(A^+) \subseteq A^+$ (ce qui est possible car $A^+$ est topologiquement de type fini sur $\ocal_k$). Puis, on peut trouver $n \geq n_0$ tel que pour une famille finie génératrice $\mathcal{M}$ d'éléments de $M^+$ comme $A^+$-module, on ait $p^n\mathfrak{g}^+ ( \mathcal{M}) \subseteq M^+$. 
On possède alors un morphisme $M^+ \rightarrow M^+ \otimes_{\ocal_k} \mathrm{Hom}_{\ocal_k}( \mathcal{U}(p^n\mathfrak{g}^+), \ocal_k)$. On possède des morphismes $\mathrm{Hom}_{\ocal_k}( \mathcal{U}(p^n\mathfrak{g}^+), \ocal_k) \rightarrow  \lim_k \oscr^+_{G_{n+1}}/I_{G_{n+1}}^k \rightarrow \oscr^+_{G_{n+2}}$. Ceci induit donc  un morphisme $M \rightarrow M \otimes  \oscr_{G_{n+2}}$. 
\end{proof}

\subsubsection{Faisceaux $(\mathfrak{g},M)$-équivariants}\label{faisceauxGM}
Soit $M$ un sous-groupe localement profini de $G(\qq_p)$. Soit $U$ un ouvert stable sous $M$.  Soit $m_M : M \times U \rightarrow U$ la projection, et $p_M : M \times U \rightarrow U$ l'action.  Soit un $\mathscr{F}$ un  objet de $ \mathbf{Coh}_{\mathfrak{g}}(U)$. On suppose que $\mathscr{F}$  est aussi $M$-équivariant : on possède un isomorphisme $act_M : m_M^\star \mathscr{F} = p^\star_M \mathscr{F}$ qui vérifie la condition de cocycle du numéro \ref{sect-f-eq}. Considérons maintenant les  trois conditions de compatibilité suivantes entre les actions de $\mathfrak{g}$ et de $M$. Pour tout ouvert $V$ de $U$, on a (fonctoriellement en $V$): 
   \begin{enumerate} 
\item Pour tout $(f,a,m) \in \mathscr{F}(V) \times \oscr_{\mathcal{FL}}(V) \times M$, avec $mV \subseteq V$,  on a  $m(af)= m(a) mf$,
\item Pour tout $(f,g,m) \in \mathscr{F}(V) \times \mathfrak{g} \times M$, avec $mV \subseteq V$ on a $\mathrm{Ad}_m(g) f = m( g(m^{-1} f))$. 
\item Si $V$ est quasi-compact, il existe un sous-groupe ouvert $H \subseteq G$ tel que l'action de $\mathfrak{g}$  sur $\mathscr{F}\vert_V$ s'intègre en une action $act_H$ de $H$ et tel que les actions  $act_H$  et $act_M$ coincident sur $M \cap H$.
\end{enumerate}
\begin{defi}
On dit que le faisceau $\mathscr{F}$ est $(\mathfrak{g},M)$-équivariant, si les actions de $\mathfrak{g}$ et $M$ vérifient les conditions $(1)$ et $(2)$. On note $\mathbf{Coh}_{(\mathfrak{g}, M)}(U)$ la catégorie des faisceaux $(\mathfrak{g},M)$-équivariants. 

On dit que le faisceau $\mathscr{F}$ est $(\mathfrak{g},M)$-fortement équivariant, si les actions de $\mathfrak{g}$ et $M$ vérifient les conditions $(1)$,$(2)$ et $(3)$. On note $\mathbf{Coh}_{(\mathfrak{g}, M)^f}(U)$ la catégorie des faisceaux $(\mathfrak{g},M)$-fortement équivariants.
\end{defi}

\begin{rem} La condition $(3)$  signifie que  l'action de $M$ est localement analytique, et  que l'action infinitesimale de $M$ est induite par celle de  $\mathfrak{g}$.  Même si cette condition est   naturelle, nous allons rencontrer des exemples où elle n'est pas satisfaite. 
\end{rem}

\begin{rem} Soit $H$ un sous-groupe ouvert de $G$ et $M$ un sous-groupe localement profini de $H \cap G(\qq_p)$. On possède alors un foncteur  naturel : $\mathbf{Coh}_{H}(U) \rightarrow \mathbf{Coh}_{(\mathfrak{g}, M)^f}(U)$.  
\end{rem}

\subsection{Faisceaux inversibles $\mathfrak{g}$-équivariants} On se propose de déterminer les faisceaux inversibles $\mathfrak{g}$-équivariants sur $\mathcal{FL}$, $\mathcal{FL} \setminus \{\infty\}$, et au voisinage du point $\{\infty\}$. 

\subsubsection{Faisceaux inversibles $\mathfrak{g}$-équivariants sur $\mathcal{FL}$}
Soit $Z$ le centre de $G$ et $T^{der}$ le tore maximal diagonal du groupe dérivé de $G$. On a un morphisme surjectif $T^{der} \times Z \rightarrow T$ de noyau $\mu_2$.  Ce morphisme induit un isomorphisme
$\mathfrak{h} = \mathfrak{h}^{der} \oplus \mathfrak{g}^{ab}$. 
On a $X^\star(T)  \hookrightarrow X^\star(T^{der}) \times X^\star(Z)$. Le conoyau de ce morphisme est un $2$-groupe.

Pour tout caractère $\kappa \in X^\star(T)$, on possède un faisceau inversible  $\oscr_{\mathcal{FL}}^\kappa$ (voir l'exemple \ref{exemple-basique}). Pour tout $\mu \in X^\star(Z)_{k}$ vu comme un caractère $\mu : \mathfrak{g} \rightarrow k$, on note $k(\mu)$ le $k$-espace vectoriel de dimension $1$, avec  action de $\mathfrak{g}$ par $\mu$. 

Soit $\kappa \in X^\star(T^{der}) \times X^\star(Z)_{k} \hookrightarrow X^\star(T)_k$. Ecrivons $\kappa = \kappa' + \mu$ avec $\kappa \in X^\star(T)$ et $\mu \in X^\star(Z)_{k}$. On définit $\oscr_{\mathcal{FL}}^{\kappa} = \oscr_{\mathcal{FL}}^{\kappa'} \otimes_{k} k(\mu) $. 

\begin{lem} Le faisceau $\mathfrak{g}$-équivariant $\oscr_{\mathcal{FL}}^{\kappa} $ ne dépend que de $\kappa$ (comme la notation l'indique), et pas   de la décomposition $\kappa = \kappa' + \mu$. 
\end{lem}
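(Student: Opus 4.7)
Le plan est de se ramener au cas o� la diff�rence entre les deux d�compositions est de forme particuli�rement simple. Soient $\kappa = \kappa'_1 + \mu_1 = \kappa'_2 + \mu_2$ deux d�compositions. Posons $\lambda := \kappa'_1 - \kappa'_2 \in X^\star(T)$ et $\nu := \mu_2 - \mu_1 \in X^\star(Z)_k$. L'�galit� $\lambda = \nu$, lue dans $X^\star(T^{der})_k \times X^\star(Z)_k$ via l'injection canonique $X^\star(T) \hookrightarrow X^\star(T^{der}) \times X^\star(Z)$, montre que $\lambda$ est trivial sur $T^{der}$ et que $\lambda|_Z = \nu$. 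Il suffit donc de construire, pour tout $\lambda \in X^\star(T)$ trivial sur $T^{der}$, un isomorphisme $\mathfrak{g}$-�quivariant canonique $\oscr_{\mathcal{FL}}^\lambda \cong \oscr_{\mathcal{FL}} \otimes_k k(\lambda|_Z)$.

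L'observation cl� est qu'un tel $\lambda$ se prolonge canoniquement en un caract�re alg�brique $\tilde\lambda : G \to \mathbb{G}_m$. En effet, $\lambda$ se factorise par $T/T^{der}$, et le d�terminant identifie $T/T^{der} \cong G/G^{der}$, d'o� $\tilde\lambda$. De plus, $w_0$ fixe $X^\star(T/T^{der})$, si bien que $w_0 \lambda = \lambda$. En appliquant � la repr�sentation $\tilde\lambda$ de $G$ le lemme qui suit la proposition \ref{prop-RepB}, on obtient alors un isomorphisme $G$-�quivariant $\oscr_{\mathcal{FL}}^\lambda \cong \oscr_{\mathcal{FL}} \otimes_k k(\tilde\lambda)$, qui induit apr�s d�rivation l'isomorphisme $\mathfrak{g}$-�quivariant recherch�.

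Il reste � v�rifier que l'action de $\mathfrak{g}$ d�duite par d�rivation de $\tilde\lambda$ co�ncide avec celle d�finie � partir de $\lambda|_Z \in X^\star(Z)_k$ : cela r�sulte de ce que $d\tilde\lambda$ se factorise par $\mathfrak{g}^{ab}$ et, par construction de $\tilde\lambda$, co�ncide avec $d(\lambda|_Z)$ sur $\mathrm{Lie}(Z) \subseteq \mathfrak{g}^{ab}$. Cette derni�re compatibilit� des conventions entre caract�re alg�brique d�riv� et caract�re infinit�simal sera l'obstacle principal ; je m'attends � ce qu'elle soit routini�re mais demande une attention soigneuse aux normalisations (en particulier au facteur qui appara�t dans l'isog�nie $Z \to T/T^{der}$).
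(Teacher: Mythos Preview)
Ta d\'emonstration est correcte. La r\'eduction au cas o\`u $\lambda = \kappa'_1 - \kappa'_2$ est trivial sur $T^{der}$, l'extension de $\lambda$ en un caract\`ere $\tilde\lambda$ de $G$ via $T/T^{der} \cong G/G^{der}$, et l'application du lemme qui suit la proposition~\ref{prop-RepB} donnent bien l'isomorphisme $G$-\'equivariant $\oscr_{\mathcal{FL}}^\lambda \cong k(\tilde\lambda) \otimes \oscr_{\mathcal{FL}}$, et la compatibilit\'e $d\tilde\lambda = \lambda|_Z$ (lue via l'isog\'enie $Z \to G^{ab}$) est effectivement routini\`ere.

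Ton approche diff\`ere sensiblement de celle de l'article. La preuve de l'article, telle qu'\'ecrite, \'etablit en r\'ealit\'e la direction inverse~: si les deux faisceaux sont isomorphes, alors $\kappa + \mu = \kappa' + \mu'$. L'argument proc\`ede par comparaison d'invariants (le degr\'e du fibr\'e sous-jacent, puis l'action de $\mathfrak{g}$ sur $\HH^0(\mathcal{FL}, \oscr_{\mathcal{FL}}^{\kappa'}) \simeq k$). Cet argument peut se retourner pour donner la bonne direction~: une fois r\'eduit \`a $\kappa = \mu = 0$ et $\kappa' \in X^\star(Z)$, le fibr\'e sous-jacent \`a $\oscr_{\mathcal{FL}}^{\kappa'}$ est trivial, et la structure $\mathfrak{g}$-\'equivariante est d\'etermin\'ee par l'action sur $\HH^0$, qui vaut $\kappa'$~; tensoriser par $k(\mu') = k(-\kappa')$ donne bien le faisceau trivial. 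Mais cela demande d'expliciter qu'un faisceau $\mathfrak{g}$-\'equivariant de fibr\'e sous-jacent trivial est d\'etermin\'e par l'action sur ses sections globales. Ta construction directe de l'isomorphisme, via l'extension en caract\`ere de $G$, \'evite ce d\'etour et colle mieux \`a l'\'enonc\'e tel qu'il est formul\'e.
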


\begin{proof}
Si $\oscr_{\mathcal{FL}}^{\kappa} \otimes_{k} k(\mu) \simeq  \oscr_{\mathcal{FL}}^{\kappa'} \otimes_{k} k(\mu')$,  alors les deux faisceaux ont même degré, ce qui signifie que $\kappa-\kappa' \in X^\star(Z)$. En tordant, on se ramène au cas où $\kappa =\mu =0$. L'action de  $\mathfrak{g}$ sur $\HH^0(\mathcal{FL}, \oscr^{\kappa'}_{\mathcal{FL}}) \simeq k$ se fait via $\kappa'$ et donc $\kappa'=-\mu'$. 
\end{proof}

  
\begin{lem} Tout faisceau $\mathfrak{g}$-équivariant inversible sur $\mathcal{FL}$ est isomorphe à un faisceau $\oscr_{\mathcal{FL}}^{\kappa}$ pour un unique $\kappa  \in X^\star(T_{der}) \times X^\star(Z)_{k}$.\end{lem}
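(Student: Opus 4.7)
The plan is to reduce to the case where the underlying line bundle is trivial, and then use that the only global functions on $\mathcal{FL} = \mathbb{P}^1$ are constants in order to classify the remaining $\mathfrak{g}$-equivariant structure by a character of the center.

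First I would establish existence. Let $\mathscr{L}$ be a $\mathfrak{g}$-equivariant invertible sheaf. As a coherent sheaf on $\mathcal{FL} \simeq \mathbb{P}^1$ the underlying bundle is $\oscr(d)$ for some $d \in \ZZ$, and by Example \ref{exemple-basique} one may pick $\kappa' \in X^\star(T)$ with $\oscr^{\kappa'}_{\mathcal{FL}}$ of degree $d$. The tensor product $\mathscr{M} := \mathscr{L} \otimes (\oscr^{\kappa'}_{\mathcal{FL}})^{-1}$ is then a $\mathfrak{g}$-equivariant invertible sheaf whose underlying line bundle is trivial.

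Next I would classify $\mathscr{M}$. Fix a global generator $e$ of $\mathscr{M}$, so that the trivialisation identifies $\mathscr{M}(V) = \oscr_{\mathcal{FL}}(V) \cdot e$ on every open $V$. For $x \in \mathfrak{g}$, the element $x \cdot e$ is a global section of $\mathscr{M}$, so $x \cdot e = \alpha(x) \cdot e$ for some $\alpha(x) \in \HH^0(\mathcal{FL}, \oscr_{\mathcal{FL}}) = k$; this uses crucially that $\mathbb{P}^1$ has no non-constant global functions. The Leibniz rule extends this action to every $V$ by $x(fe) = x(f)e + f\alpha(x)e$. The bracket relation $[x,y]\cdot e = xy\cdot e - yx\cdot e$, combined with the fact that $x$ acts on the constant $\alpha(y) \in k$ as a vector field and hence kills it, yields $\alpha([x,y]) = 0$. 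Thus $\alpha : \mathfrak{g} \to k$ factors through $\mathfrak{g}/[\mathfrak{g},\mathfrak{g}] = \mathfrak{g}^{ab} = \mathrm{Lie}(Z)$ and defines a character $\mu \in X^\star(Z)_k$, so $\mathscr{M} \simeq k(\mu)$ and $\mathscr{L} \simeq \oscr^{\kappa'}_{\mathcal{FL}} \otimes_k k(\mu) = \oscr^\kappa_{\mathcal{FL}}$ for the corresponding $\kappa \in X^\star(T_{der}) \times X^\star(Z)_k$.

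For uniqueness, suppose $\oscr^\kappa_{\mathcal{FL}} \simeq \oscr^{\tilde\kappa}_{\mathcal{FL}}$. Comparing degrees of the underlying bundles forces the components in $X^\star(T_{der})$ to coincide, and after twisting one reduces to the case where both sides have trivial underlying bundle; then the two $\mathfrak{g}$-actions on $k = \HH^0(\mathcal{FL}, \oscr_{\mathcal{FL}})$ coincide, which pins down the character of $Z$. The main point of the argument is really the vanishing step $\alpha(x) \in k$: once one knows the cocycle describing the action on a trivial bundle is necessarily constant, the rest is bookkeeping in $X^\star(T)$ and $X^\star(Z)_k$.
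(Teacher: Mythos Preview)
Your argument is correct and follows essentially the same line as the paper's proof: twist by some $\oscr_{\mathcal{FL}}^{\kappa'}$ with $\kappa' \in X^\star(T)$ to reduce to the trivial underlying bundle, then read off the residual $\mathfrak{g}$-action on $\HH^0(\mathcal{FL},\oscr_{\mathcal{FL}}) = k$ as a character $\mu$. The paper states this in two sentences; you have simply spelled out why the resulting $\alpha : \mathfrak{g} \to k$ vanishes on commutators and handled uniqueness explicitly, which the paper leaves implicit (uniqueness is covered by the preceding lemma).
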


\begin{proof} En tordant par un faisceau convenable $\oscr_{\mathcal{FL}}^{\kappa}$, $\kappa \in X^\star(T)$, on  se ramène au cas où le  faisceau sous-jacent est le faisceau structural $\oscr_{\mathcal{FL}}$.  L'action de $\mathfrak{g}$ sur $\HH^0(\mathcal{FL}, \oscr_{\mathcal{FL}})$ se fait via un caractère $\mu : \mathfrak{g} \rightarrow k$. 
\end{proof}

On détermine à présent les objets  inversibles de $\mathbf{Coh}_{(\mathfrak{g}, G(\qq_p))}(\mathcal{FL})$. Pour tout $\kappa \in X^\star(T)$, le faisceau $\oscr_{\mathcal{FL}}^\kappa$ est $G(\qq_p)$-equivariant et défini donc un objet de $\mathbf{Coh}_{(\mathfrak{g}, G(\qq_p))}(\mathcal{FL})$. 
Il en résulte que pour tout $\kappa  \in X^\star(T_{der}) \times X^\star(Z)_{k}$, $\oscr_{\mathcal{FL}}^\kappa$ est également  un objet de $\mathbf{Coh}_{(\mathfrak{g}, G(\qq_p))}(\mathcal{FL})$.


Pour tout caractère continu  $\chi : G(\qq_p) \rightarrow k^\times $, on note $k(\chi)$ le $k$-espace vectoriel de dimension $1$, avec  action de $G(\qq_p)$ par $\chi$. 


\begin{lem}  Tout faisceau inversible  $(\mathfrak{g},G(\qq_p))$-équivariant est isomorphe à un faisceau $\oscr_{\mathcal{FL}}^{\kappa} \otimes_k k(\chi) $ pour un unique $\kappa \in  X^\star(T_{der}) \times X^\star(Z)_{k}$ et un unique $\chi :  G(\qq_p) \rightarrow k^\times$.
\end{lem}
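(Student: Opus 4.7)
Le plan est de combiner le lemme précédent (classification des faisceaux inversibles $\mathfrak{g}$-équivariants sur $\mathcal{FL}$) avec une analyse directe de la structure $G(\qq_p)$-équivariante supplémentaire, après réduction au cas où le faisceau $\mathfrak{g}$-équivariant sous-jacent est trivial.

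Pour l'existence, je pars d'un faisceau inversible $\mathscr{L} \in \mathbf{Coh}_{(\mathfrak{g}, G(\qq_p))}(\mathcal{FL})$. Le lemme précédent fournit un unique $\kappa \in X^\star(T_{der}) \times X^\star(Z)_{k}$ et un isomorphisme de faisceaux $\mathfrak{g}$-équivariants $\mathscr{L} \simeq \oscr_{\mathcal{FL}}^\kappa$. Comme $\oscr_{\mathcal{FL}}^\kappa$ est lui-même naturellement un objet de $\mathbf{Coh}_{(\mathfrak{g}, G(\qq_p))}(\mathcal{FL})$, je tensorise $\mathscr{L}$ par son inverse afin de me ramener au cas où le faisceau $\mathfrak{g}$-équivariant sous-jacent est isomorphe à $\oscr_{\mathcal{FL}}$ muni de son action naturelle de $\mathfrak{g}$ via la dérivation $\mathfrak{g} \to \mathcal{T}_{\mathcal{FL}}$ (c'est-à-dire $\kappa = 0$).

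Dans ce cas trivialisé, pour chaque $g \in G(\qq_p)$ la donnée d'équivariance fournit un isomorphisme $\oscr_{\mathcal{FL}}$-linéaire $\psi_g : g^\star \oscr_{\mathcal{FL}} \to \oscr_{\mathcal{FL}}$. En le composant avec l'isomorphisme canonique $g^\star \oscr_{\mathcal{FL}} \simeq \oscr_{\mathcal{FL}}$, on obtient un automorphisme $\oscr_{\mathcal{FL}}$-linéaire global de $\oscr_{\mathcal{FL}}$, soit la multiplication par un scalaire $\chi(g) \in \HH^0(\mathcal{FL}, \oscr_{\mathcal{FL}}^\times) = k^\times$ (on utilise ici que $\mathcal{FL} = \mathbb{P}^1$ est propre et géométriquement connexe). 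La condition de cocycle produit alors un morphisme de groupes $\chi : G(\qq_p) \to k^\times$, et la compatibilité avec l'action de $\mathfrak{g}$ (condition (2) de la définition des faisceaux $(\mathfrak{g},M)$-équivariants) est automatique puisque les $\chi(g)$ sont des scalaires constants. On obtient ainsi $\mathscr{L} \simeq \oscr_{\mathcal{FL}} \otimes_k k(\chi)$, puis en remontant la torsion on récupère la décomposition annoncée.

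L'unicité de $\kappa$ résulte du lemme précédent appliqué à la structure $\mathfrak{g}$-équivariante sous-jacente ; celle de $\chi$ se lit sur l'action induite de $G(\qq_p)$ sur les sections globales après trivialisation. Le point le plus délicat sera d'établir la continuité du caractère $\chi$ : elle n'est pas transparente dans les axiomes de $(\mathfrak{g}, G(\qq_p))$-équivariance tels qu'énoncés et devra provenir soit d'une continuité implicite dans la donnée d'équivariance (compatibilité entre la structure $\mathfrak{g}$-dérivée et l'action de $G(\qq_p)$), soit d'une hypothèse supplémentaire analogue à la notion de faisceau fortement équivariant.
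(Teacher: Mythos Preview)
Your approach is essentially identical to the paper's: twist by $\oscr_{\mathcal{FL}}^{-\kappa}$ to reduce to the trivial $\mathfrak{g}$-equivariant sheaf, then read off the $G(\qq_p)$-action as a scalar character via $\HH^0(\mathcal{FL},\oscr_{\mathcal{FL}}^\times)=k^\times$. The paper's proof is two sentences and does exactly this.

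Your final paragraph on continuity is a fair observation, but note that the paper's proof does not address it either, and in fact the definition of $(\mathfrak{g},M)$-\'equivariant (conditions (1) and (2) only, not (3)) imposes no continuity on the $M$-action. So the honest reading is that the lemma classifies such objects by arbitrary group homomorphisms $\chi:G(\qq_p)\to k^\times$; the word ``continu'' in the definition of $k(\chi)$ just before the lemma appears to be a slight inconsistency in the paper rather than an extra claim you are failing to prove. If one wanted continuity, the strongly equivariant condition (3) would give it immediately on an open subgroup, exactly as you suspect.
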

\begin{proof} En tordant par un faisceau $\oscr_{\mathcal{FL}}^{\kappa}$, on se ramène au cas où le faisceau $\mathfrak{g}$-equivariant sous-jacent est $\oscr_{\mathcal{FL}}$. L'action de $G(\qq_p)$ est donc donnée par un caractère $\chi$. 
\end{proof}



\subsubsection{Faisceaux inversibles $\mathfrak{g}$-équivariants sur $\mathcal{FL}\setminus \{\infty\}$}\label{sec-fsurcon}
On veut maintenant détérminer les faisceaux inversibles $\mathfrak{g}$-équivariants  sur l'ouvert $B\backslash Bw_0B = U_{w_0} = \hat{B} \backslash \hat{B} w_0 U $ 

Commençons par décrire deux constructions. 

\begin{enumerate}
\item Considérons le morphisme: 
$$ \pi :  \hat{U} \backslash \hat{B} w_0 U \rightarrow \hat{B} \backslash \hat{B} w_0 U.$$
Pour tout $\kappa \in X^\star(T)_k$, on pose :
$$ \oscr_{U_{w_0}}^\kappa(V) = \{ f : \pi^{-1}(V) \rightarrow \mathbb{A}^1, f(bg) = w_0\kappa(b) f(g)~\forall b \in \hat{B}\}.$$
L'action par translation à droite de $\hat{G}$ fournit une structure $\mathfrak{g}$-équivariante. 



\item Soit   $\xi \in \Omega^1_{U_{w_0}/\qq_p}$. On note $\oscr_{U_{w_0}}[\xi]$ le faisceau trivial muni de la connection $\nabla$ donnée par $\nabla(1) = \xi$. La connexion définit, via le morphisme $g \in \mathfrak{g} \mapsto \nabla_g$,  une structure $\mathfrak{g}$-équivariante.  
\end{enumerate}

On note  $\oscr_{U_{w_0}}^\kappa[\xi] = \oscr_{U_{w_0}}^\kappa \otimes_{\oscr_{U_{w_0}}} \oscr_{U_{w_0}}[\xi]$. 

\begin{lem} 
\begin{enumerate}
\item Le faisceau $\oscr_{U_{w_0}}^\kappa[\xi]$ est de poids $w_0\kappa$,
\item  Tout objet de rang $1$ de  $\mathbf{Coh}_{\mathfrak{g}}(U_{w_0})$  est isomorphe à un  faisceau $ \oscr_{U_{w_0}}^\kappa[\xi]$ pour un unique $\kappa$ et une forme différentielle $\xi$ unique à un scalaire près. 
\end{enumerate}
\end{lem}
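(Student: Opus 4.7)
The plan is to prove (1) by combining the weight computation for $\oscr^\kappa_{U_{w_0}}$ with the observation that a connection twist does not affect the action of $\mathfrak{b}^0$, and to prove (2) by a twist-and-trivialize strategy: first extract the weight $\kappa$ of $\mathscr{L}$, twist by $\oscr^{-\kappa}_{U_{w_0}}$ to reduce to a weight-zero object, and then trivialize the resulting line bundle with integrable connection on the affine cell $U_{w_0} \simeq \mathbb{A}^1_k$.

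For (1), the isomorphism $\mathfrak{g}^0/\mathfrak{b}^0 \simeq \mathcal{T}_{\mathcal{FL}}$ of Example \ref{exemple-basique}(5) shows that $\mathfrak{b}^0$ has zero image in $\mathcal{T}_{\mathcal{FL}}$. The connection term $\xi(g)$ appearing in $\nabla_g$ on $\oscr^\kappa_{U_{w_0}}[\xi]$ therefore vanishes for $g \in \mathfrak{b}^0$, so the $\mathfrak{b}^0$-action on $\oscr^\kappa_{U_{w_0}}[\xi]$ agrees with that on $\oscr^\kappa_{U_{w_0}}$. The latter has weight $w_0\kappa$ by the same computation as for $\oscr^\kappa_{\mathcal{FL}}$, restricted to the open cell.

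For (2), let $\mathscr{L}$ be a rank-one object. Since $\underline{\mathrm{End}}_{\oscr_{U_{w_0}}}(\mathscr{L}) = \oscr_{U_{w_0}}$ is commutative, the canonical $\mathfrak{g}$-equivariant $\oscr$-linear map $\mathfrak{b}^0 \to \oscr_{U_{w_0}}$ is a Lie-algebra morphism to an abelian target, hence vanishes on $[\mathfrak{b}^0, \mathfrak{b}^0] = \mathfrak{n}^0$ and descends to a $\mathfrak{g}$-equivariant map $\chi : \mathfrak{b}^0/\mathfrak{n}^0 \to \oscr_{U_{w_0}}$. Applying Proposition \ref{prop-RepB} to the trivial $B$-module $\mathfrak{h} = \mathfrak{b}/\mathfrak{n}$, one obtains a canonical identification $\mathfrak{b}^0/\mathfrak{n}^0 \simeq \oscr_{\mathcal{FL}} \otimes_k \mathfrak{h}$ of $G$-equivariant sheaves, with $\mathfrak{g}$ acting only through derivations on $\oscr$. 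Equivariance then forces $\chi(1 \otimes h) \in \oscr(U_{w_0})$ to be annihilated by every vector field coming from $\mathfrak{g}^0 \to \mathcal{T}_{U_{w_0}}$; as these vector fields span the tangent sheaf pointwise, $\chi(1\otimes h)$ is locally constant, hence constant, yielding $\tilde\chi \in \mathfrak{h}^\vee_k = X^\star(T)_k$. Writing $\tilde\chi = w_0\kappa$, the twist $\mathscr{L}' := \mathscr{L} \otimes_{\oscr_{U_{w_0}}} (\oscr^\kappa_{U_{w_0}})^\vee$ has weight zero, so by the remark characterizing weight-zero objects it is a line bundle with integrable connection. Since $\mathrm{Pic}(U_{w_0}) = 0$, a global trivialization turns the connection into a $1$-form $\xi \in \Omega^1_{U_{w_0}/k}$, giving $\mathscr{L} \simeq \oscr^\kappa_{U_{w_0}}[\xi]$. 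For uniqueness, $\kappa$ is pinned down by the weight; two trivializations differ by $u \in \oscr^\times(U_{w_0}) = k^\times$, which modifies $\xi$ by $d\log u = 0$, so $\xi$ is rigid, the residual $k^\times$-ambiguity of the isomorphism being the scalar mentioned in the statement.

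The main obstacle is the passage from the a priori $\oscr$-linear character $\chi$ to an honest scalar weight $\tilde\chi \in X^\star(T)_k$, i.e., showing that $\mathfrak{g}$-invariants in $\oscr_{U_{w_0}}$ reduce to constants. This rests on two inputs: the representation-theoretic triviality of the $B$-action on $\mathfrak{h}$, which promotes the horizontal Cartan embedding to a full $G$-equivariant identification $\mathfrak{b}^0/\mathfrak{n}^0 \cong \oscr \otimes \mathfrak{h}$; and the geometric transitivity expressed by the surjection $\mathfrak{g}^0 \twoheadrightarrow \mathcal{T}_{\mathcal{FL}}$. Once these are in place, the remainder is a routine reduction to the classification of line bundles with connection on the affine line.
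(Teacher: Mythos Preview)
Your argument is correct and follows the same overall strategy as the paper: show that $\mathfrak{n}^0$ acts trivially on a rank-one object, extract a scalar weight $\kappa$ from the resulting map $\mathfrak{b}^0/\mathfrak{n}^0 \to \oscr_{U_{w_0}}$, twist to weight zero, and then classify line bundles with integrable connection on the affine cell. The only tactical difference is in the first step: you kill $\mathfrak{n}^0$ via $[\mathfrak{b}^0,\mathfrak{b}^0]=\mathfrak{n}^0$ (which is legitimate once one checks, as you implicitly use, that the $\oscr$-linearised action restricted to $\mathfrak{b}^0$ respects the bundle Lie bracket because the derivation part vanishes there), whereas the paper invokes the weight of $\mathfrak{n}^0$ to exclude any nonzero $\mathfrak{g}$-equivariant map $\mathfrak{n}^0 \to \oscr_{U_{w_0}}$. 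Your treatment of uniqueness and your explicit verification that the character is constant (via surjectivity of $\mathfrak{g}^0 \to \mathcal{T}_{U_{w_0}}$) are in fact more detailed than what the paper spells out.
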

\begin{proof} Le premier point est clair. Passons au second point. Rappelons pour commencer que tout faisceau inversible sur $U_{w_0}$ est trivial (\cite{MR4166998}, thm 4). 
On possède un morphisme $\mathfrak{g}$-équivariant $\mathfrak{b}^0  \rightarrow \underline{\mathrm{End}}_{\oscr_{U_{w_0}}}(\mathscr{F}) = \oscr_{U_{w_0}}$. Le morphisme $\mathfrak{n}^0 \rightarrow  \oscr_{U_{w_0}}$ est nul car le faisceau $\mathfrak{n}^0$ est de poids $-2 \rho$. Le morphisme $\mathfrak{b}^0 /\mathfrak{n}^0 \rightarrow  \oscr_{U_{w_0}}$ est $\mathfrak{g}$-équivariant et donc donné par un caractère $\kappa \in X^\star(T)_{k}$. En tordant par $\oscr_{U_{w_0}}^{-w_0\kappa}$ on peut supposer que $\kappa=0$. L'action de $\mathfrak{g}$ sur $\mathscr{F}$ induit donc un morphism :
$ \mathcal{T}_{U_{w_0}} \otimes \mathscr{F} \rightarrow  \mathscr{F}$  ou de façon équivalente, une connexion 
$\nabla :  \mathscr{F} \rightarrow \mathscr{F} \otimes_{\oscr_{U}} \Omega^1_{U_{w_0}/\qq_p}$.  Il en résulte que  le faisceau $\mathscr{F}$ est isomorphe à $\oscr_{U_{w_0}}^\kappa[\xi]$.  

\end{proof}

On s'intéresse à présent aux objets de $\mathbf{Coh}_{(\mathfrak{g}, B(\qq_p))}(U_{w_0})$.   Pour tout $\kappa \in  X^\star(T)_k$,    on peut définir une structure $(\mathfrak{g}, B(\qq_p))$-équivariante sur  le faisceau $\oscr^\kappa_{U_{w_0}}$ de la façon suivante. Pour toute section $f$ de $\oscr^\kappa_{U_{w_0}}$ (vue comme une fonction $f(bw_0 u)$ pour $b \in \hat{B}$ et $u \in U$), on pose :

\begin{enumerate}
\item     $u'f( b w_0 u) =  \kappa(b) f(bw_0  u u')$ pour tout $u' \in U(\qq_p)$,
\item $t f(bw_0 u) = f(bw_0 t^{-1} u t)$ pour tout $t \in T(\qq_p)$. 
\end{enumerate}


Pour tout caractère continu $\lambda : T(\qq_p) \rightarrow k^\times$, on note $k(\lambda)$ le $k$-espace vectoriel munit de l'action de $B(\qq_p)$ par $\lambda$. %


\begin{prop} Tout objet inversible de $\mathbf{Coh}_{(\mathfrak{g}, B(\qq_p))}(U_{w_0})$ est isomorphe à un faisceau $\oscr^\kappa_{U_{w_0}} \otimes_k k(\lambda)$ pour un unique $\kappa \in  X_\star(T)_k$ et un unique  $\lambda : T(\qq_p) \rightarrow  k^\times$.
\end{prop}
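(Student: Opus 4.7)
Ma strat\'egie proc\`ede en trois \'etapes successives pour d\'emanteler les couches de structure de $\mathscr{F}$.

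Premi\`erement, j'applique le lemme pr\'ec\'edent \`a l'objet $\mathfrak{g}$-\'equivariant sous-jacent pour obtenir un isomorphisme $\mathscr{F} \simeq \oscr^\kappa_{U_{w_0}}[\xi]$, pour un unique $\kappa \in X^\star(T)_k$ et une forme diff\'erentielle $\xi$.

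Deuxi\`emement, j'utilise l'\'equivariance sous $B(\qq_p)$ pour annuler $\xi$. Le faisceau $\oscr^\kappa_{U_{w_0}}$ poss\'edant sa structure $(\mathfrak{g}, B(\qq_p))$-\'equivariante naturelle, l'isomorphisme $b^\star \mathscr{F} \simeq \mathscr{F}$ de faisceaux $\mathfrak{g}$-\'equivariants fourni par l'\'equivariance se traduit, pour chaque $b \in B(\qq_p)$, en un isomorphisme $\oscr^\kappa_{U_{w_0}}[b^\star \xi] \simeq \oscr^\kappa_{U_{w_0}}[\xi]$. L'unicit\'e dans le lemme pr\'ec\'edent, combin\'ee au fait que les fonctions inversibles analytiques sur $U_{w_0} \simeq \A^1$ sont constantes, donne $b^\star \xi = \xi$: la forme $\xi$ est donc $B(\qq_p)$-invariante. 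Les translations de $U(\qq_p)$ r\'eduisent alors $\xi$ \`a un multiple constant de $du$, puis les dilatations de $T(\qq_p)$ (agissant sur $du$ par une racine non triviale) forcent ce multiple \`a s'annuler, d'o\`u $\xi = 0$.

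Troisi\`emement, $\mathscr{F}$ s'identifie alors \`a $\oscr^\kappa_{U_{w_0}}$ en tant que faisceau $\mathfrak{g}$-\'equivariant, muni d'une structure $B(\qq_p)$-\'equivariante qu'on compare \`a la structure naturelle. Leur diff\'erence est un $1$-cocycle \`a valeurs dans $\mathrm{Aut}_{\mathbf{Coh}_{\mathfrak{g}}}(\oscr^\kappa_{U_{w_0}})$. Un tel automorphisme \'etant donn\'e par multiplication par une section inversible $\mathfrak{g}$-invariante de $\oscr_{U_{w_0}}$, la surjectivit\'e de $\mathfrak{g}^0 \rightarrow \mathcal{T}_{U_{w_0}}$ (exemple \ref{exemple-basique}(5)) force une telle section \`a \^etre constante, donc dans $k^\times$. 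Le cocycle s'interpr\`ete ainsi comme un caract\`ere continu $\alpha : B(\qq_p) \rightarrow k^\times$.

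L'obstacle principal reste alors la v\'erification que $\alpha$ se factorise \`a travers $T(\qq_p)$, c'est-\`a-dire que $\alpha\vert_{U(\qq_p)} = 1$. Je m'attends \`a ce que ceci d\'ecoule d'une analyse \`a la fibre en $w_0$: cette fibre porte une repr\'esentation de dimension $1$ du stabilisateur $\mathrm{Stab}_{B(\qq_p)}(w_0) = T(\qq_p)$ (qui fournit $\lambda$) ainsi qu'une repr\'esentation compatible de $\mathfrak{b}_{w_0}$ (dont le caract\`ere via le quotient $\mathfrak{h}$ redonne $\kappa$); un analogue $(\mathfrak{g}, M)$-\'equivariant de la proposition \ref{prop-rep-torseurs}, tirant parti de l'int\'egration de l'action infinit\'esimale fournie par la proposition \ref{prop-integration-action} pour combiner l'action de $B(\qq_p)$ avec celle d'un voisinage formel de l'identit\'e dans $G$, doit alors reconstruire le faisceau entier \`a partir de cette donn\'ee \`a la fibre.
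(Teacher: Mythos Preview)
Votre approche est correcte et co\"incide pour l'essentiel avec celle de l'article: tordre pour se ramener au poids nul, exploiter l'invariance par $B(\qq_p)$ pour annuler $\xi$ (translations puis dilatations), et identifier le r\'esidu comme un caract\`ere.

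En revanche, l'\og obstacle principal\fg{} que vous soulevez n'en est pas un. Tout homomorphisme de groupes $\alpha: B(\qq_p) \to k^\times$ se factorise automatiquement par $T(\qq_p)$, simplement parce que $k^\times$ est ab\'elien et que $U(\qq_p)$ est le sous-groupe d\'eriv\'e de $B(\qq_p)$: pour $t = \mathrm{diag}(a,1)$ et $u \in U(\qq_p)$ de coefficient $x$, le commutateur $[t,u]$ est l'\'el\'ement de $U(\qq_p)$ de coefficient $(a-1)x$, et ces commutateurs engendrent $U(\qq_p)$. Votre d\'etour par la fibre en $w_0$ et une version $(\mathfrak{g},M)$-\'equivariante de la proposition~\ref{prop-rep-torseurs} est donc superflu; la preuve est compl\`ete d\`es la fin de votre \'etape~3.
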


\begin{proof} Soit  $\mathscr{F}$ un objet de $\mathbf{Coh}_{(\mathfrak{g}, B(\qq_p))}(U_{w_0})$ de rang $1$.  Quitte à tordre, on peut le supposer de poids $0$. On peut aussi fixer un isomorphisme $\mathscr{F} \simeq \oscr_{U_{w_0}}$. L'action de $\hat{G}$ est donnée par une connection $\nabla$ et   l'action de $B(\qq_p)$  sur $\mathscr{F}$ commute avec la connection.  Pour tout $m \in B(\qq_p)$ on a $m. 1 = f_m . 1$ où $f_m$ est une fonction inversible sur $U_{w_0}$, donc une constante  (considérer le polygone de Newton de $f_m$). Il en résulte que l'action de $B(\qq_p)$ sur $1$ est donnée par un caractère $\lambda : B(\qq_p) \rightarrow k^\times$.  
Soit $z$ la coordonnée naturelle sur $U_{w_0}$.  Posons $\nabla(1) =  \xi(z) = f(z)dz $.  L'invariance par l'action de $B(\qq_p)$ signifie que $ \xi(z) = bf(z) d bz $ pour tout $b \in B(\qq_p)$. Si on l'applique à la matrice unipotente standard, on trouve que $f(z+1) = f(z)$, donc  $\xi =  rdz$ avec $r \in k$. Si on l'applique à  $t = \mathrm{diag}(t,1) \in B(\qq_p)$, 
on trouve que $rdz = trdz$. Il en résulte que   $r=0$.   Ainsi, les objets inversibles de poids zero de $\mathbf{Coh}_{(\mathfrak{g}, B(\qq_p))}(U_{w_0})$ sont bien de la forme $\oscr_{U_{w_0}} \otimes_k k(\lambda)$.
\end{proof}

\begin{rem}\label{rem-fortement} Soit $\kappa \in X^\star(T^{der}) \times X^\star(Z)_k \hookrightarrow X^\star(T)_k$. On possède un faisceau $(\mathfrak{g}, G(\qq_p))$-équivariant $\oscr^{\kappa}_{\mathcal{FL}}$ et on vérifie qu'on possède un isomorphisme canonique de faisceaux $(\mathfrak{g}, B(\qq_p))$-équivariants :
$$\oscr^{\kappa}_{\mathcal{FL}}\vert_{U_{w_0}} \simeq \oscr_{U_{w_0}}^{\kappa}\otimes_k k( \kappa).$$
\end{rem}
\subsubsection{Faisceaux inversibles $\mathfrak{g}$-équivariants au voisinage de l'infini}\label{sec-fsurcon2}
Notons $\mathbf{Coh}_{\mathfrak{g}} (\oscr_{\mathcal{FL}, \infty})$ la limite inductive des catégories  $\mathbf{Coh}_{\mathfrak{g}}(U)$ où $U$ parcourt les voisinages du point  $\infty$.    

Commençons par  une construction sur l'ouvert $B\backslash Bw_0Bw_0 = U_{1}$. 
Considérons le morphisme : 
$$ \pi :  \hat{U} \backslash \hat{B} w_0 Uw_0 \rightarrow \hat{B} \backslash \hat{B} w_0 Uw_0.$$
Pour tout $\kappa \in X^\star(T)_k$, on pose :
$$ \oscr_{U_{1}}^\kappa(V) = \{ f : \pi^{-1}(V) \rightarrow \mathbb{A}^1, f(bg) = w_0\kappa(b) f(g)~\forall b \in \hat{B}\}.$$

L'action par translation à droite de $\hat{G}$ fournit une structure $\mathfrak{g}$-équivariante  sur  $\oscr_{U_{1}}^\kappa$.

\begin{rem}   On a donc construit des faisceaux $\mathfrak{g}$-équivariants,  $\oscr_{U_1}^\kappa$ et $\oscr_{U_{w_0}}^\kappa$ pour tout $\kappa \in X^\star(T)_k$. Ces deux faisceaux se recollent en le faisceau $\oscr_{\mathcal{FL}}^\kappa$ si $\kappa \in X^\star(T^{der})\times X^\star(Z)_{k}$. 
\end{rem}

On pose $\oscr_{\infty}^{\kappa} = i_{\infty}^{-1}(\oscr_{U_1}^{\kappa}) = \oscr_{U_1,\infty}^{\kappa} $. C'est un objet de $\mathbf{Coh}_{\mathfrak{g}} (\oscr_{\mathcal{FL}, \infty})$.


\begin{lem} \begin{enumerate}
\item Le faisceau $\oscr_{\infty}^{\kappa} $ est de poids $\omega_0 \kappa$.
\item Tout objet de rang $1$ de $\mathbf{Coh}_{\mathfrak{g}} (\oscr_{\mathcal{FL}, \infty})$ est isomorphe à un unique faisceau $\oscr_{\infty}^\kappa$. 
\end{enumerate}
\end{lem}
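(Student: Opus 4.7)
Pour le point $(1)$, j'observe que la fibre de $\oscr_\infty^\kappa$ en $\infty$, obtenue par \'evaluation au point $1 \in \hat{B} w_0 U w_0$, est un espace de dimension $1$ sur lequel le stabilisateur $B_\infty = B$ agit par $w_0\kappa$ d'apr\`es la r\`egle de transformation $f(bg) = w_0\kappa(b) f(g)$ d\'efinissant $\oscr_{U_1}^\kappa$. En d\'erivant, l'action de $\mathfrak{b}$ sur la fibre est donn\'ee par $w_0\kappa$, et $\mathfrak{n}$ y agit trivialement. Comme le morphisme $\mathfrak{b}^0 \to \underline{\mathrm{End}}_{\oscr_\infty}(\oscr_\infty^\kappa) = \oscr_\infty$ est $\mathfrak{g}$-\'equivariant et $\oscr$-lin\'eaire, il est d\'etermin\'e par sa restriction \`a la fibre en $\infty$, ce qui \'etablit qu'il se factorise par $\mathfrak{b}^0/\mathfrak{n}^0$ et que le poids est bien $w_0\kappa$.

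Pour le point $(2)$, je suis la strat\'egie de la preuve du lemme analogue sur $U_{w_0}$. La diff\'erence cruciale est qu'au voisinage du point $\infty$, toute connexion int\'egrable sur un fibr\'e trivial sera trivialisable, alors que sur $U_{w_0}$ subsistait le param\`etre $\xi$. Soit $\mathscr{F}$ un objet inversible. Sur un voisinage affinoide $V$ de $\infty$ suffisamment petit (isomorphe \`a un disque), le faisceau sous-jacent est trivial et l'on fixe un isomorphisme $\mathscr{F}|_V \simeq \oscr_V$. Le m\^eme argument de poids que dans le cas $U_{w_0}$, exploitant que $\mathfrak{n}^0$ est un faisceau $\mathfrak{g}$-\'equivariant inversible de poids non trivial ($-2\rho$), montre que le morphisme $\mathfrak{n}^0 \to \oscr_V$ induit par l'action est nul. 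L'action de $\mathfrak{b}^0/\mathfrak{n}^0$ sur $\oscr_V$ fournit alors un unique caract\`ere $\kappa' \in X^\star(T)_k$. Posant $\kappa = w_0\kappa'$ et utilisant le point $(1)$, le faisceau $\mathscr{F} \otimes \oscr_\infty^{-\kappa}$ est de poids nul, ce qui \'equivaut \`a la donn\'ee d'un fibr\'e en droites muni d'une connexion int\'egrable.

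Le point clef, que je consid\`ere comme l'obstacle principal, consiste \`a trivialiser cette connexion sur un germe de $\infty$. Dans une coordonn\'ee locale $z$ centr\'ee en $\infty$, la connexion s'\'ecrit $\nabla(1) = f(z)\,dz$ avec $f \in \oscr(V)$. Chercher une nouvelle trivialisation $g \in \oscr(V')^\times$ annulant la connexion revient \`a r\'esoudre l'\'equation diff\'erentielle $g'(z) = -f(z)g(z)$ avec condition initiale $g(0) = 1$. L'existence d'une unique solution formelle en s\'erie enti\`ere est imm\'ediate par r\'ecurrence sur les coefficients, et par un argument classique d'analyse $p$-adique (le rayon de convergence de la solution \'etant contr\^ol\'e par celui de $f$ et par celui de l'exponentielle $p$-adique) cette s\'erie converge en une fonction inversible sur un disque $V' \subseteq V$ suffisamment petit. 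Sur ce disque, la nouvelle trivialisation $g \cdot 1$ annule la connexion, d'o\`u $\mathscr{F} \otimes \oscr_\infty^{-\kappa} \simeq \oscr_\infty$, et finalement $\mathscr{F} \simeq \oscr_\infty^\kappa$. L'unicit\'e de $\kappa$ r\'esulte du point $(1)$ : le poids de $\mathscr{F}$ d\'etermine $\kappa$.
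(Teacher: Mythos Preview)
Your proof is correct and follows essentially the same approach as the paper: for $(1)$ the paper simply says ``\'evident'' while you spell out the fiber computation, and for $(2)$ both arguments reduce to trivializing a rank-one connection on a germ of disk, the paper saying ``on v\'erifie sans peine que $\mathscr{F}$ poss\`ede un g\'en\'erateur horizontal'' where you explicitly solve the ODE $g' = -fg$ via the $p$-adic exponential on a shrunken disk. Your added detail is sound and the strategies coincide.
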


\begin{proof} Le premier point est évident. Pour le second point, soit $\mathscr{F}$ un objet de rang $1$ de $\mathbf{Coh}_{\mathfrak{g}} (\oscr_{\mathcal{FL}, \infty})$. Le morphisme $\mathfrak{n}^0_{\infty} \rightarrow \mathrm{End}_{\oscr_{\mathcal{FL},\infty}}( \mathscr{F})$ est $\mathfrak{g}$-équivariant, donc nul.  On possède donc un morphisme 
$(\mathfrak{b}^0/\mathfrak{n}^0)_{\infty} \rightarrow \mathrm{End}_{\oscr_{\mathcal{FL},\infty}}( \mathscr{F})$ qui est $\hat{G}$-equivariant, et donc $\mathfrak{h}$ agit à travers un caractère $\chi \in X^\star(T)_k$. Quitte à tordre, on peut supposer que $\chi=0$. Ainsi, $\mathscr{F}$ est un fibré à connection de rang $1$ sur $\oscr_{\mathcal{FL},\infty}$. On vérifie sans peine que $\mathscr{F}$ possède un générateur horizontal et donc $\mathscr{F} \simeq \oscr_{\mathcal{FL}, \infty}$. 
\end{proof}

Le groupe $B(\qq_p)$ stabilise le point $\infty$ et  agit  sur l'ensemble des  voisinages $U$ de $\infty$.  Notons  $\mathbf{Coh}_{(\mathfrak{g}, B(\qq_p))}(\oscr_{\mathcal{FL}, \infty})$ la catégorie dont les objets sont des modules de types finis sur $\oscr_{\mathcal{FL}, \infty}$, munis d'une action de $\mathfrak{g}$ et d'une action de $B(\qq_p)$ compatibles au sens des points $(1)$ et $(2)$ de la section \ref{faisceauxGM}.
On définit une structure $B(\qq_p)$-équivariante sur $\oscr^\kappa_{\infty}$. Pour toute section $f$ de $\oscr^\kappa_{\infty}$ (vue comme une fonction $f(bw_0 u w_0)$ pour $b \in \hat{B}$ et $u \in U$ arbitrairement proche de $1$), on pose :
\begin{enumerate}
\item     $u'f( b w_0 u w_0) =   f(bw_0  u w_0 u')$ pour tout $u' \in U(\qq_p)$,
\item $t f(bw_0 u w_0) = f(b t^{-1} w_0  u w_0 t)$ pour tout $t \in T(\qq_p)$. 
\end{enumerate}
On vérifie immédiatement que $\oscr^\kappa_{\infty}$ est bien un objet de $\mathbf{Coh}_{(\mathfrak{g}, B(\qq_p))}(\oscr_{\mathcal{FL}, \infty})$. 

Rappelons que pour tout caractère continu $\lambda : T(\qq_p) \rightarrow k^\times$, on note $k(\lambda)$ le $k$-espace vectoriel munit de l'action de $B(\qq_p)$ par $\lambda$. %

\begin{lem}
Tout objet de rang $1$ de  $\mathbf{Coh}_{(\mathfrak{g}, B(\qq_p))} (\oscr_{\mathcal{FL}, \infty})$ est isomorphe à un faisceau 
$\oscr^\kappa_{\infty} \otimes_k k(\lambda)$ pour un unique $\kappa \in X_\star(T)_k$ et $\lambda : T(\qq_p) \rightarrow k^\times$.
\end{lem}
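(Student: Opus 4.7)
The plan is to adapt the proof of the analogous classification over $U_{w_0}$ to the local situation at $\infty$, the new obstacle being that invertible germs at $\infty$ are no longer automatically constant.

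Let $\mathscr{F}$ be a rank $1$ object of $\mathbf{Coh}_{(\mathfrak{g}, B(\qq_p))}(\oscr_{\mathcal{FL}, \infty})$. By the preceding lemma, the underlying $\mathfrak{g}$-equivariant sheaf is isomorphic to a unique $\oscr_\infty^\kappa$ with $\kappa \in X^\star(T)_k$. Since each $\oscr_\infty^\kappa$ carries a canonical $(\mathfrak{g}, B(\qq_p))$-equivariant structure, I would tensor $\mathscr{F}$ by $\oscr_\infty^{-\kappa}$ to reduce to the case $\kappa = 0$, so that $\mathscr{F}$, as a $\mathfrak{g}$-equivariant sheaf, becomes $\oscr_{\mathcal{FL}, \infty}$ equipped with the canonical action by vector fields; in particular, $x \cdot 1 = 0$ for every $x \in \mathfrak{g}$.

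The residual $B(\qq_p)$-action is then determined by $b \cdot 1 = f_b$ for some invertible germ $f_b$ at $\infty$. The key step is to apply the compatibility axiom $(2)$ from the definition of $(\mathfrak{g}, B(\qq_p))$-equivariance to the section $1$. The identity $\mathrm{Ad}_b(x) \cdot 1 = b \cdot (x \cdot (b^{-1} \cdot 1))$ has vanishing left-hand side, while the right-hand side unfolds, via the Leibniz rule for the $\mathfrak{g}$-action on $\oscr_{\mathcal{FL}, \infty}$, to $\tau_b(\vec{x}(f_{b^{-1}})) \cdot f_b$, where $\tau_b$ denotes pullback by $b$ and $\vec{x}$ is the vector field associated with $x$. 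Since $f_b$ is a unit, this forces $\vec{x}(f_{b^{-1}}) = 0$ for every $x \in \mathfrak{g}$. Because $\mathcal{FL}$ is $G$-homogeneous, the vector fields $\vec{x}$ span the tangent sheaf at every point, so $f_{b^{-1}}$ is annihilated by all derivations and is thus a constant in $k^\times$; symmetrically, $f_b \in k^\times$.

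The assignment $\lambda \colon b \mapsto f_b$ is then a character $B(\qq_p) \to k^\times$ (by associativity of the $B(\qq_p)$-action, using that $\tau_b$ fixes constants), which yields the desired isomorphism $\mathscr{F} \simeq \oscr_\infty^\kappa \otimes_k k(\lambda)$. Uniqueness of $\kappa$ comes from the preceding lemma, and uniqueness of $\lambda$ is immediate from its construction. The main obstacle is precisely the horizontality step: it replaces the Newton-polygon argument of the $U_{w_0}$ case (inapplicable here since invertible germs at $\infty$ need not be constant), and it rests essentially on the transitivity of the infinitesimal $\mathfrak{g}$-action on $\mathcal{FL}$.
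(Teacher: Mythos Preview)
Your argument is correct and matches the paper's proof: both reduce to the weight-zero case $\mathscr{F}\simeq\oscr_{\mathcal{FL},\infty}$ with trivial connection, then use the compatibility axiom (2) between the $\mathfrak{g}$- and $B(\qq_p)$-actions to force $d(f_b)=0$, so that $f_b\in k^\times$ and the residual action is a character. The paper phrases the key step tersely as ``$d(f_b)=b\,d(1)=0$'', which is exactly your horizontality computation; one small addendum worth making explicit is that the resulting character of $B(\qq_p)$ factors through $T(\qq_p)$ because $U(\qq_p)$ lies in the derived subgroup.
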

\begin{proof} Soit $\mathscr{F}$ un objet de rang $1$ de $\mathbf{Coh}_{(\mathfrak{g}, B(\qq_p))} (\oscr_{\mathcal{FL}, \infty})$.   Quitte à tordre, on peut supposer  que $\mathscr{F}$ est de poids nul donc isomorphe à $\oscr_{\mathcal{FL,\infty}}$ (et l'action de $\hat{G}$ correspond à la connection triviale). L'action de $b \in B(\qq_p)$ est donnée par la mutliplication par une fonction $f_b \in \oscr_{\mathcal{FL}, \infty}$ et la compatibilité entre l'action de $\hat{G}$ et de $B(\qq_p)$ entraine que $d( f_b) = b d(1) = 0$. Ainsi, l'action de $B(\qq_p)$ est bien donnée par un caractère $\lambda$ du tore $T(\qq_p)$. 
\end{proof}

\begin{rem} Soit $\kappa \in X^\star(T^{der}) \times X^\star(Z)_k \hookrightarrow X^\star(T)_k$. On possède un faisceau $(\mathfrak{g}, G(\qq_p))$-équivariant $\oscr^{\kappa}_{\mathcal{FL}}$ et on vérifie qu'on possède un isomorphisme canonique de faisceaux $(\mathfrak{g}, B(\qq_p))$-équivariants :
$$\oscr^{\kappa}_{\mathcal{FL},\infty} \simeq \oscr_{\infty}^{\kappa}\otimes_k k( w_0 \kappa).$$
\end{rem}

\section{Faisceaux automorphes}
\subsection{Catégories de faisceaux cohérents sur les courbes modulaires}
\subsubsection{Courbes modulaires perfectoides} Soit $K^p$ un sous-groupe ouvert compact de $G(\mathbb{A}_f^p)$. Soit $K_p$ un sous-groupe ouvert compact de $G(\qq_p)$. 
Soit $X_{K_pK^p}$ la courbe modulaire adique de niveau $K_pK^p$ sur $\Spa (\C_p, \ocal_{\C_p})$. Soit $X_{K^p} = \lim_{K_p} X_{K_p K^p}$ la courbe modulaire perfectoide (\cite{scholze-torsion}). Soit $\pi_{K_p} : X_{K^p} \rightarrow X_{K_pK^p}$ la projection vers la courbe modulaire de  niveau $K_pK^p$. Si $U$ est un ouvert quasi-compact de $X_{K^p}$ alors $U$ est de la forme $\pi^{-1}_{K_p} (U_{K_p})$ où $U_{K_p}$ est un ouvert quasi-compact de $X_{K_pK^p}$ pour tout  $K_p$ assez petit. 
On note $\oscr_{X_{K^p}}^{sm} = \colim_{K_p} \pi_{K_p}^{-1} \oscr_{X_{K_pK^p}}$. La complétion du faisceau $\oscr_{X_{K^p}}^{sm}$ est le  faisceau  structural $\oscr_{X_{K^p}}$ de $X_{K^p}$. 
On possède une morphisme d'espaces annelés $\pi^{sm}_{K_p} : (X_{K^p}, \oscr_{X_{K^p}}^{sm}) \rightarrow (X_{K_pK^p}, \oscr_{X_{K_pK^p}})$ (ne pas confondre avec $\pi_{K_p}$ !). On a donc un foncteur 
\begin{eqnarray*} (\pi_{K_p}^{sm})^\star :  \mathbf{Coh}(X_{K_pK^p}) &\rightarrow &\mathbf{Coh}(\oscr_{X_{K^p}}^{sm}) \\\mathscr{F} &\mapsto& \pi_{K_p}^{-1} \mathscr{F}\otimes_{\pi_{K_p}^{-1} \oscr_{X_{K_pK^p}}}\oscr_{X_{K^p}}^{sm}
\end{eqnarray*}

L'énoncé qui suit exprime entre autre que la catégorie $\mathbf{Coh}(\oscr_{X_{K^p}}^{sm})$ est la limite inductive des catégories 
$\mathbf{Coh}(X_{K_pK^p})$. 

\begin{prop}\label{prop-coherence} \begin{enumerate} 
\item Le faisceau $\oscr_{X_{K^p}}^{sm}$ est cohérent dans $\mathbf{Mod}(\oscr_{X_{K^p}}^{sm})$.
\item Soit $U = \lim_{K_p} U_{K_p}$ un ouvert  quasi-compact de $X_{K_p}$ et $\mathscr{F} \in \mathbf{Coh}( \oscr_{X_{K^p}}^{sm}\vert_U)$. Alors il existe un sous-groupe ouvert compact $K_p$ est  $\mathscr{F}_{K_p} \in \mathbf{Coh}( U_{K_p})$ tel que   $\mathscr{F} = (\pi_{K_p}^{sm})^\star \mathscr{F}_{K_p}$.

\item Soit $\phi : \mathscr{F} \rightarrow \mathscr{G}$ un morphisme dans  $\mathbf{Coh}( \oscr_{X_{K^p}}^{sm}\vert_U)$. Alors il existe un sous-groupe ouvert compact $K_p$ et un morphisme  $\phi_{K_p} : \mathscr{F}_{K_p} \rightarrow \mathscr{G}_{K_p}$ dans  $\mathbf{Coh}( U_{K_p})$ tel que   $\phi = (\pi_{K_p}^{sm})^{\star} \phi_{K_p}$.
\item Soit $\phi : \mathscr{F} \rightarrow \mathscr{G}$ un morphisme dans  $\mathbf{Coh}( \oscr_{X_{K^p}}^{sm}\vert_U)$. Alors $\phi$ est un isomorphisme si et seulement si, pour tout $\C_p$-point $x$ de $U$, $\phi_x : \mathscr{F}_x \rightarrow \mathscr{G}_x$ est un isomorphisme.   
\end{enumerate}
\end{prop}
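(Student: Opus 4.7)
The plan is to reduce every assertion to standard facts on the locally noetherian adic spaces $X_{K_pK^p}$ at finite level. The key technical input is that for $K_p' \subseteq K_p$ small enough the transition morphism $X_{K_p'K^p} \to X_{K_pK^p}$ is finite (by construction) and flat (a finite morphism between smooth adic curves over $\C_p$); consequently, over any quasi-compact open $U = \pi_{K_p}^{-1}(U_{K_p})$, the ring map $\pi_{K_p}^{-1} \oscr_{X_{K_pK^p}} \to \oscr^{sm}_{X_{K^p}}|_U$ is faithfully flat and its target is the filtered colimit $\colim_{K_p' \subseteq K_p} \pi^{-1}_{K_p'} \oscr_{X_{K_p'K^p}}$ along flat ring maps. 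All four assertions follow from this colimit presentation combined with the coherence of each $\oscr_{X_{K_pK^p}}$ (standard for a locally noetherian adic space).

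For (1), finite type is obvious. For the kernel condition, a morphism $\phi : (\oscr^{sm}|_V)^n \to \oscr^{sm}|_V$ is determined by $n$ sections of $\oscr^{sm}$, which, after shrinking $V$, all come from some finite level $K_p$; thus $\phi = (\pi^{sm}_{K_p})^\star \phi_{K_p}$ for some $\phi_{K_p}$ whose kernel is coherent on $X_{K_pK^p}$, and flatness gives $\mathrm{Ker}(\phi) = (\pi^{sm}_{K_p})^\star \mathrm{Ker}(\phi_{K_p})$, which is of finite type.

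For (2), quasi-compactness of $U$ lets me cover it by finitely many affinoid opens $V^{(j)}$ on each of which coherence of $\mathscr{F}$ yields a finite presentation $(\oscr^{sm}|_{V^{(j)}})^{m_j} \to (\oscr^{sm}|_{V^{(j)}})^{n_j} \to \mathscr{F}|_{V^{(j)}} \to 0$. Both the presentation matrices and the (matrix) gluing isomorphisms on the pairwise overlaps $V^{(j)}\cap V^{(j')}$ involve only finitely many sections of $\oscr^{sm}$, each of which comes from some finite level; choosing a common sufficiently small $K_p$ descends both the local pieces and the gluing data, yielding $\mathscr{F}_{K_p}$ on $U_{K_p}$ with $(\pi^{sm}_{K_p})^\star \mathscr{F}_{K_p} = \mathscr{F}$. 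Assertion (3) is handled the same way: once $\mathscr{F}$ and $\mathscr{G}$ both descend to level $K_p$, the morphism $\phi$ is, locally, a matrix of sections of $\mathscr{G}$, and applying the colimit presentation to these finitely many entries (using the coherent presentation of $\mathscr{G}_{K_p}$ to encode the relations) shows they descend to some common finite level.

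For (4), $\mathrm{Ker}(\phi)$ and $\mathrm{coker}(\phi)$ are coherent $\oscr^{sm}$-modules by (1), so $\phi$ is an isomorphism iff both vanish; by the standard stalk criterion for coherent modules over a coherent ring, this reduces to vanishing of stalks, and a support argument using the density of classical $\C_p$-points on the perfectoid space $X_{K^p}$ over $\Spa(\C_p, \ocal_{\C_p})$ lets one check only at $\C_p$-points (the support of a coherent module is a closed analytic subset, and over the algebraically closed field $\C_p$ any nonempty such subset contains a $\C_p$-point). The main technical obstacle I anticipate is making the flatness claim for the transitions precise, especially at the cusps of the compactified modular curve — this is essentially automatic because the compactified modular curves are smooth adic curves, but it is the one place where the specific geometry is used and must be carefully articulated; the rest reduces to formal manipulations of filtered colimits of coherent sheaves together with the quasi-compact descent argument.
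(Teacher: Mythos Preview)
Your proposal is correct and follows essentially the same approach as the paper: for (1) you descend the finitely many sections to a finite level and use flatness of the transition maps to pull back the kernel, which is exactly what the paper does (though the paper leaves the flatness implicit), and for (2)--(3) you spell out by hand the descent-of-presentations argument that the paper instead packages by citing the Stacks Project lemma~01ZR on coherent sheaves over limits with affine transitions. For (4) the paper simply says ``it follows from the analogous result at finite level'' (via the descent of (2)--(3)), whereas you argue directly via coherence of kernel/cokernel and density of $\C_p$-points; both routes are valid and close in spirit.
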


\begin{proof} Montrons que le faisceau  $\oscr_{X_{K^p}}^{sm}$  est cohérent (voir le numéro \ref{sect-f-eq}). Le premier point de la définition est évident. Pour le second point, on peut supposer que l'ouvert $U \subseteq X_{K_p}$ est quasi-compact.   Soit $s_1, \cdots, s_i \in \oscr_{X_{K^p}}^{sm}(U)$.  Alors il existe un sous-groupe ouvert compact $K_p$ et un ouvert $U_{K_p}$ de $X_{K_pK^p}$ tel que $U = \pi_{K_p}^{-1}(U_{K_p})$ et  tel que $s_1, \cdots, s_i \in \oscr_{X_{K_pK^p}}(U_{K_p})$.  Le noyau du morphisme $\oscr_{U_{K_p}}^i \rightarrow \oscr_{U_{K_p}}$ est de type fini comme $\oscr_{U_{K_p}}$-module. Le noyau du morphisme $(\oscr^{sm}_{U})^i \rightarrow \oscr^{sm}_{U}$ est donc de type fini. Le reste  resulte  du lemme 01ZR de \cite{stacks-project} (énoncé pour des schémas). Le dernier point, se déduit du résultat analogue dans  $\mathbf{Coh}(\oscr_{X_{K_pK^p}})$.
\end{proof} 

\subsubsection{Application des périodes de Hodge-Tate}
Soit $\pi_{HT} : X_{K^p} \rightarrow \mathcal{FL}$ l'application des périodes de Hodge-Tate  (\cite{scholze-torsion}).  On note $\oscr_{K_p} =  (\pi_{HT})_\star \pi_{K_p}^{-1}( \oscr_{X_{K_p}})$, 
$\oscr^{sm} = (\pi_{HT})_\star \oscr_{X_{K^p}}^{sm} = \colim_{K_p} \oscr_{K_p}$ et $\hat{\oscr} = (\pi_{HT})_\star \oscr_{X_{K^p}}$. 

On considère le morphisme d'espaces annelés  $\pi^{sm}_{HT} : (X_{K^p}, \oscr_{X_{K^p}}^{sm}) \rightarrow (\mathcal{FL}, \oscr^{sm})$ (ne pas confondre avec $\pi_{HT}$ !). 
On possède donc un foncteur $(\pi^{sm}_{HT})_\star = (\pi_{HT})_\star : \mathbf{Coh}(\oscr_{X_{K^p}}^{sm}) \rightarrow \mathbf{Coh}(\oscr^{sm})$ et un foncteur 
\begin{eqnarray*}
  (\pi_{HT}^{sm})^\star : \mathbf{Coh}(\oscr^{sm}) &\rightarrow &\mathbf{Coh}(\oscr_{X_{K^p}}^{sm}) \\
\mathscr{F} &\mapsto& \pi_{HT}^{-1} \mathscr{F}\otimes_{\pi_{HT}^{-1} \oscr^{sm}}\oscr_{X_{K^p}}^{sm}
\end{eqnarray*}

\begin{prop}
\begin{enumerate} 
\item Le faisceau $\oscr^{sm}$ est cohérent dans $\mathbf{Mod}(\oscr^{sm})$,
\item Les foncteurs $(\pi_{HT})_\star $ et $(\pi_{HT}^{sm})^\star$ induisent des équivalences entre les catégories  $\mathbf{Coh}(\oscr_{X_{K^p}}^{sm})$ et  $\mathbf{Coh}(\oscr^{sm})$. 

\item Pour tout $\mathscr{F} \in \mathbf{Coh}(\oscr_{X_{K^p}}^{sm})$, on a $\mathrm{R}^i (\pi_{HT})_\star \mathscr{F} =0$ pour tout $i >0$.
\end{enumerate}
\end{prop}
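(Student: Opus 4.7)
The three statements share a common geometric input: since $\pi_{HT}$ is quasi-compact, for any quasi-compact open $V \subseteq \mathcal{FL}$ the preimage $\pi_{HT}^{-1}(V) \subseteq X_{K^p}$ is quasi-compact, and the remark opening the subsection then produces a compact open $K_p \subseteq G(\qq_p)$ and a quasi-compact open $V_{K_p} \subseteq X_{K_pK^p}$ with $\pi_{HT}^{-1}(V) = \pi_{K_p}^{-1}(V_{K_p})$; after shrinking we may take $V_{K_p}$ affinoid. This reduces every local question on $\mathcal{FL}$ to a question on a rigid analytic affinoid at finite level.

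For (1), the plan is to mimic the proof of coherence of $\oscr^{sm}_{X_{K^p}}$ given in Proposition \ref{prop-coherence}. Given a morphism $(\oscr^{sm}\vert_V)^i \to \oscr^{sm}\vert_V$ over a quasi-compact $V$, the defining sections descend to some finite level $K_p$, yielding a morphism $\oscr_{V_{K_p}}^i \to \oscr_{V_{K_p}}$ whose kernel is coherent on the rigid analytic curve $X_{K_pK^p}$, hence finitely generated after shrinking $V_{K_p}$. Since the transition maps $X_{K_p'K^p} \to X_{K_pK^p}$ are finite flat (indeed finite \'etale away from the cusps), the kernel at any smaller level is the pullback of the kernel at level $K_p$, so the finitely many generators found at level $K_p$ remain generators in the filtered colimit defining $\oscr^{sm}$.

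For (2), I would pass through finite level using Proposition \ref{prop-coherence}. Any $\mathscr{F} \in \mathbf{Coh}(\oscr^{sm}_{X_{K^p}})$ restricted to $\pi_{HT}^{-1}(V) = \pi_{K_p}^{-1}(V_{K_p})$ is of the form $(\pi_{K_p}^{sm})^\star \mathscr{F}_{K_p}$ for a coherent sheaf $\mathscr{F}_{K_p}$ on $V_{K_p}$. Pushing forward along $\pi_{HT}$ gives the explicit description
$$(\pi_{HT})_\star \mathscr{F}(V) = \colim_{K_p' \subseteq K_p} \mathscr{F}_{K_p}(V_{K_p'}),$$
which shows $(\pi_{HT})_\star \mathscr{F}$ is finitely presented, hence coherent by (1), and which makes the adjunction maps transparent: the counit $(\pi_{HT}^{sm})^\star (\pi_{HT})_\star \mathscr{F} \to \mathscr{F}$ can be checked on $\C_p$-points via Proposition \ref{prop-coherence} (4), while the unit $\mathscr{G} \to (\pi_{HT})_\star (\pi_{HT}^{sm})^\star \mathscr{G}$ follows from the explicit description applied to $(\pi_{HT}^{sm})^\star \mathscr{G}$.

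For (3), I would use the Leray spectral sequence for the map $\pi_{HT}^{-1}(V) \xrightarrow{\pi_{K_p}} V_{K_p}$ (viewed with the appropriate ringed structures). With $V_{K_p}$ affinoid and $\mathscr{F} = (\pi_{K_p}^{sm})^\star \mathscr{F}_{K_p}$, Proposition \ref{prop-coherence} (3) gives $R^q(\pi_{K_p})_\star \mathscr{F} = 0$ for $q>0$, so Leray collapses to
$$H^i(\pi_{HT}^{-1}(V), \mathscr{F}) = H^i(V_{K_p}, (\pi_{K_p})_\star \mathscr{F}) = \colim_{K_p' \subseteq K_p} H^i(V_{K_p'}, \pi_{K_p'K_p}^\star \mathscr{F}_{K_p}),$$
which vanishes for $i>0$ by Kiehl's Theorem B on the affinoids $V_{K_p'}$. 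The main obstacle is (2): one must carefully reconcile the description of $(\pi_{HT})_\star \mathscr{F}$ as a filtered colimit of pullbacks of coherent sheaves at intermediate levels with its structure as a coherent $\oscr^{sm}$-module, which amounts to identifying $\oscr^{sm}(V)$ with $\colim_{K_p'} \oscr_{X_{K_p'K^p}}(V_{K_p'})$ and tracking the adjunction carefully through the change of ringed-space structures.
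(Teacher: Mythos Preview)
Your approach is essentially the same as the paper's: both rely on Scholze's result that $\mathcal{FL}$ admits a basis of quasi-compact opens $U$ with $\pi_{HT}^{-1}(U)=\lim_{K_p} U_{K_p}$ for \emph{affinoid} $U_{K_p}$, and both reduce everything to Kiehl's Theorem B at finite level via Proposition~\ref{prop-coherence}.

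There are a few executional differences worth noting. The paper treats the three points in a different logical order: it first observes that $\pi_{HT}^{-1}\oscr^{sm}\to\oscr^{sm}_{X_{K^p}}$ is flat (so $(\pi_{HT}^{sm})^\star$ is exact and $(\pi_{HT})_\star$ is a left inverse), then proves (2) and (3) simultaneously by showing that on such a basic open $U$ the sheaf $\mathscr{F}\vert_{\pi_{HT}^{-1}(U)}$ is the sheaf associated to a finitely presented $\oscr^{sm}(U)$-module with vanishing higher cohomology, and finally deduces (1) by pulling the kernel of $\phi:(\oscr^{sm})^i\to\oscr^{sm}$ back to $X_{K^p}$ where coherence is already known. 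Your route for (1) --- descending the sections directly to $X_{K_pK^p}$ rather than detouring through $X_{K^p}$ --- is a legitimate shortcut.

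One caveat: your citation of ``Proposition~\ref{prop-coherence}~(3)'' for the vanishing $R^q(\pi_{K_p})_\star\mathscr{F}=0$ is a misreference --- that item is about descent of morphisms. The vanishing you need is true (it is exactly the computation $\HH^q(\pi_{K_p}^{-1}(V_{K_p}),\mathscr{F})=\colim_{K_p'}\HH^q(V_{K_p'},\mathscr{F}_{K_p'})=0$ on affinoids), but you should state it directly rather than invoke Leray, which is the paper's tactic. This also dissolves the ``main obstacle'' you flag at the end: once you know $\mathscr{F}\vert_{\pi_{HT}^{-1}(U)}$ is the sheaf associated to its global sections over $\oscr^{sm}(U)$, the counit $(\pi_{HT}^{sm})^\star(\pi_{HT})_\star\mathscr{F}\to\mathscr{F}$ is manifestly an isomorphism without appealing to $\C_p$-points.
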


\begin{proof} Le morphisme  $\pi_{HT}^{-1} \oscr^{sm} \rightarrow \oscr^{sm}_{X_{K^p}}$ est plat. Ainsi, le foncteur $(\pi^{sm}_{HT})^\star : \mathscr{F} \mapsto \pi_{HT}^{-1} \mathscr{F}\otimes_{\pi_{HT}^{-1} \oscr^{sm}}\oscr_{X_{K^p}}^{sm}$ est exact et le foncteur $(\pi_{HT})_\star$ est un inverse à gauche. 
Soit $\mathscr{F} \in \mathbf{Coh}(\oscr_{X_{K^p}}^{sm})$.  D'après \cite{scholze-torsion}, on possède une base d'ouverts quasi-compacts  $U$ de $\mathcal{FL}$ tel que $\pi_{HT}^{-1}(U) = \lim_{K_p} U_{K_p}$ où $U_{K_p}$ est un ouvert affinoide de $X_{K_pK^p}$ pour tout compact ouvert $K_p$ assez petit. De plus $\mathscr{F}\vert_U = \colim_{K_p} \mathscr{F}_{K_p}$ pour des  faisceaux cohérents $\mathscr{F}_{K_p}$ sur $U_{K_p}$ pour tout $K_p$ assez petit. Ainsi $\mathscr{F}_{K_p}$ est le faisceau associé à son module de sections globales  qui est de présentation  finie sur $\oscr_{X_{K_pK^p}}(U_{K_p})$ et  $\HH^i(U_{K_p}, \mathscr{F}_{K_p}) = 0$ pour tout $i >0$.
Il en résulte que $\mathscr{F}\vert_{{\pi_{HT}^{-1}}(U)}$ est le faisceau associé à son module de sections globales qui est de présentation finie sur $\oscr^{sm}(U)$ et donc $(\pi^{sm}_{HT})^\star (\pi_{HT})_{\star} \mathscr{F} = \mathscr{F}$. De plus,  $\HH^i(U, (\pi_{HT})_\star \mathscr{F}) = \colim_{K_p} \HH^i(U_{K_p}, \mathscr{F}_{K_p}) = 0$ pour tout $i >0$. 
Il en résulte que $\mathrm{R}^i(\pi_{HT})_\star\mathscr{F} =0$ pour tout $i >0$. 
Soit $s_1, \cdots, s_i \in \oscr^{sm}(U)$. Considérons le  morphisme $\phi : \oscr^{sm}\vert_U^i \rightarrow \oscr^{sm}\vert_U$.  On déduit que $(\pi^{sm}_{HT})^\star \mathrm{Ker}(\phi)$ est de type fini, et donc $\mathrm{Ker}(\phi)$ est de type fini. 
\end{proof}

\subsection{Le foncteur $VB$ sur la catégorie $\mathbf{Coh}_{\mathfrak{g}}(U)$}
Soit $U$ un ouvert de $\mathcal{FL}$. On définit un foncteur : 
\begin{eqnarray*} 
VB:   \mathbf{Coh}_{\mathfrak{g}}(U) &\rightarrow& \mathbf{Mod}(\oscr^{sm}\vert_U) \\
\mathscr{F} & \mapsto & \colim_{K_p}(\mathscr{F} {\otimes}_{\oscr_{U}} \hat{\oscr}\vert_U)^{K_p}
\end{eqnarray*}
Explicitons la définition. Pour tout ouvert  affinoide $V$  de $U$, on a $VB(\mathscr{F})(V) =  \colim_{K_p} \HH^0(K_p, \mathscr{F}(V) {\otimes}_{\oscr_{\mathcal{FL}(V)}} \hat{\oscr}(V))$. On observe que l'action de $K_p$ diagonale est bien défini pour tout $K_p$ suffisamment petit. En effet, d'après le lemme \ref{lem-extension}, l'action de $\hat{G}$ sur $\mathscr{F}(V)$ est induite par l'action d'un sous-groupe ouvert $H$ de $G$, qui  contient un sous-groupe ouvert de $G(\qq_p)$. 
\begin{thm}\label{thm}
\begin{enumerate} 
\item Pour tout $\mathscr{F} \in \mathbf{Coh}_{\mathfrak{g}}(U)$, $VB(\mathscr{F})$ est un faisceau localement libre de rang fini. 

\item  Si $U$ est quasi-compact,  et $\mathscr{F} \in \mathbf{Coh}_{\mathfrak{g}}(U)$, il existe un sous-groupe compact ouvert $K_p$ et un faisceau localement libre canonique de $(\oscr_{K_p})\vert_U$-modules, noté  $VB_{K_p}(\mathscr{F})$, et un isomorphisme  $VB(\mathscr{F}) = VB_{K_p}(\mathscr{F}) \otimes_{\oscr_{K_p}\vert_U} \oscr^{sm}\vert_U$. 

\item Pour tout $\mathscr{F} \in \mathbf{Coh}_{\mathfrak{g}}(U)^{\mathfrak{n^0}}$, on possède un isomorphisme canonique  $$VB(\mathscr{F}) {\otimes}_{\oscr^{sm}\vert_U} \hat{\oscr}\vert_U \rightarrow \mathscr{F} {\otimes}_{\oscr_{U}} \hat{\oscr}\vert_U$$   et le foncteur $VB$ restreint à la catégorie $\mathbf{Coh}_{\mathfrak{g}}(U)^{\mathfrak{n^0}}$ est exact.

\item Le foncteur $VB$ est dérivable, et pour tout  $\mathscr{F} \in \mathbf{Coh}_{\mathfrak{g}}(U)$,  on a $\mathrm{R}^iVB(\mathscr{F}) = VB(\HH^i(\mathfrak{n}^0, \mathscr{F}))$.

\end{enumerate}
\end{thm}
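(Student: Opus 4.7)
The plan is to organise the proof in three steps, corresponding to parts (1)--(2), part (3), and part (4).

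\emph{Steps for parts (1)--(2).} Working locally, I would assume $U$ is affinoid. By Proposition~\ref{prop-integration-action} the infinitesimal action on $\mathscr{F}$ integrates to an action of some analytic open subgroup $H \subseteq G$ (one can take $H = G_n$ for $n \gg 0$), so that $K_p := H \cap G(\qq_p)$ acts on $\mathscr{F}|_U$ and the diagonal $K_p$-action on $\mathscr{F}\otimes_{\oscr_U}\hat{\oscr}|_U$ is well defined. The essential geometric input, which I would draw from Scholze's construction of $\pi_{HT}$ and its refinement in \cite{pan2021locally}, is a local description of $\pi_{HT}^{-1}(U) \to X_{K_p K^p}|_{U_{K_p}}$ as a (pro-)$K_p$-torsor after pro-\'etale localisation, for $K_p$ sufficiently small. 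Combined with Proposition~\ref{prop-rep-torseurs} applied to the integrated $H$-equivariant structure, this identifies $(\mathscr{F}\otimes_{\oscr_U}\hat{\oscr}|_U)^{K_p}$ with a locally free $\oscr_{K_p}|_U$-module of rank equal to the rank of $\mathscr{F}$; this defines $VB_{K_p}(\mathscr{F})$ and yields (2). Passing to the colimit over $K_p$ gives (1).

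\emph{Step for part (3).} If $\mathscr{F}$ is killed by $\mathfrak{n}^0$, then after integration the unipotent part of the stabiliser of a reference point in $H$ acts trivially on $\mathscr{F}$, so in the torsor picture above the twist of $\mathscr{F}$ over the trivialisation only involves the horizontal Cartan. A direct stalk computation using the local trivialisation then produces the canonical map
\[ VB(\mathscr{F})\otimes_{\oscr^{sm}|_U}\hat{\oscr}|_U \longrightarrow \mathscr{F}\otimes_{\oscr_U}\hat{\oscr}|_U \]
and shows it is an isomorphism. Exactness of $VB$ restricted to $\mathbf{Coh}_{\mathfrak{g}}(U)^{\mathfrak{n}^0}$ then follows formally, since under this isomorphism $VB$ is realised as ``base change along $\oscr^{sm}|_U \to \hat{\oscr}|_U$, followed by descent to $\oscr^{sm}|_U$-modules via $K_p$-invariants'', both of which are exact operations in this torsor-like situation.

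\emph{Step for part (4) and main obstacle.} The crux of (4) is the identity $VB(\mathscr{F}) = VB(\mathscr{F}^{\mathfrak{n}^0})$ for arbitrary $\mathscr{F} \in \mathbf{Coh}_{\mathfrak{g}}(U)$: any section of $\mathscr{F}\otimes\hat{\oscr}|_U$ invariant under a sufficiently small $K_p$ is forced to be $\mathfrak{n}^0$-invariant, because the infinitesimal $K_p$-action on $\hat{\oscr}|_U$, pushed in the direction of the nilpotent radical, is identified by Sen-type arguments with the $\mathfrak{n}^0$-action coming from $\mathfrak{g}^0\twoheadrightarrow \mathcal{T}_{\mathcal{FL}}$. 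Granting this, $VB$ on $\mathbf{Coh}_{\mathfrak{g}}(U)$ factors as $\mathscr{F}\mapsto \mathscr{F}^{\mathfrak{n}^0}$ followed by the exact functor $VB|_{\mathbf{Coh}_{\mathfrak{g}}(U)^{\mathfrak{n}^0}}$ of (3); since $\mathfrak{n}^0$ is a line bundle, the derived functors of $\mathscr{F}\mapsto \mathscr{F}^{\mathfrak{n}^0}$ are precisely $\HH^i(\mathfrak{n}^0,-)$ (concentrated in degrees $0$ and $1$), and the Grothendieck spectral sequence degenerates to give $\mathrm{R}^i VB(\mathscr{F}) = VB(\HH^i(\mathfrak{n}^0,\mathscr{F}))$. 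The main obstacle is this Sen-theoretic identification: it requires controlling how the group action on $\hat{\oscr}$ differentiates along the fibres of $\pi_{HT}$, and is the technical hinge that connects the coherent $\mathfrak{g}$-equivariant side to the automorphic side.
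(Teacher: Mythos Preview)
Your instinct that the Sen-theoretic identification is the crux is exactly right, but there is a concrete error in your treatment of (1)--(2), and the separation you propose between (1)--(3) and (4) is not how the argument runs.

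The error: you assert that the torsor description identifies $(\mathscr{F}\otimes_{\oscr_U}\hat{\oscr}|_U)^{K_p}$ with a locally free $\oscr_{K_p}|_U$-module \emph{of rank equal to the rank of $\mathscr{F}$}. This is false in general. For $\mathscr{F} = \oscr_{\mathcal{FL}}\otimes_{\qq_p} St$ (rank $2$) one has $VB(\mathscr{F}) = \omega^{(0,1),sm}$ of rank $1$ (Corollary~\ref{coro-calculdeFalt}). The identity $VB(\mathscr{F}) = VB(\mathscr{F}^{\mathfrak{n}^0})$ that you yourself invoke in step (4) already forces the rank to drop whenever $\mathfrak{n}^0$ acts nontrivially, so your direct argument for (1)--(2) cannot be correct as stated. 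The pro-$K_p$-torsor $X_{K^p}\to X_{K_pK^p}$ does not let you descend $\pi_{HT}^{-1}\mathscr{F}\otimes\oscr_{X_{K^p}}$ naively, precisely because the diagonal $K_p$-action carries a nontrivial $\mathfrak{n}^0$-twist. A torsor-style argument does work for rank-$1$ objects (Corollary~\ref{coro-Sen-rang1}), and these serve as one input, but they do not suffice on their own.

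In the paper all four points are obtained together from a single Sen-theoretic computation, rather than (4) being a final hinge. One first passes to the variant $\tilde{VB}$ on $X_{K^p}$ (Theorem~\ref{thm2}) and then pushes forward. After integrating the action (Lemma~\ref{lem-local-triviality}), one applies the Berger--Colmez form of Sen's method to the double tower $\{B_{K'_p,n}\}$ to produce Sen modules $D_{K_p,n}(F)$ and a Sen operator $\Theta_F$, with $\HH^i(K_p, B\otimes_C F)$ computed as $\HH^0(\Gamma, \HH^i(\Theta_F, D_{K_p,n}(F)))$ (Lemma~\ref{lem-calculcoho}). The decisive step is then to identify $\Theta_F$: via the Faltings extension one shows that the Sen operator for $\oscr_{\mathcal{FL}}\otimes St$ generates $\tilde{VB}(\mathfrak{n}^0)$ (Proposition~\ref{prop-calcul-operaSenST}), and by tensor functoriality together with reduction to irreducible $B_{x,n}$-representations one deduces that for arbitrary $\mathscr{F}$ the Sen operator is exactly the image of $\tilde{VB}(\mathfrak{n}^0)\to\tilde{VB}(\mathscr{F}\otimes\mathscr{F}^\vee)$ (Proposition~\ref{prop-calcul-Senopfinal}). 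Once $\Theta_F$ is identified with the $\mathfrak{n}^0$-action, local freeness in (1)--(2), the isomorphism in (3), and the derived formula in (4) all follow simultaneously. So the ``main obstacle'' you correctly flag is not a postscript to the proof but its engine throughout.
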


\begin{rem} Si $U$ est quasi-compact, et $K_p$ est suffisamment petit comme au $(2)$, l'isomorphisme du point $(3)$ précédent s'écrit  $$VB_{K_p}(\mathscr{F}) {\otimes}_{(\oscr_{K_p})\vert_U} \hat{\oscr}\vert_U \rightarrow \mathscr{F} {\otimes}_{\oscr_{U}} \hat{\oscr}\vert_U.$$
On peut de plus supposer que l'action de $\hat{G}$ sur $\mathscr{F}$ induit une action de $K_p$ (quitte à rapetisser $K_p$). On peut alors supposer que  l'isomorphisme est  $K_p$-équivariant pour l'action de $K_p$ sur  $\hat{\oscr}\vert_U$ sur le membre de gauche, et l'action de $K_p$ diagonale   sur le membre de droite.
\end{rem}

\begin{rem}\label{rem-lisse} Soit $M$ un sous-groupe localement profini qui stabilise $U$ et   $\mathscr{F} \in \mathbf{Coh}_{(\mathfrak{g}, M)}(U)$, alors $VB(\mathscr{F})$ est un faisceau $M$-équivariant. Il en résulte que les groupes de  cohomologie $\HH^i(U, \mathrm{R}^jVB(\mathscr{F}))$ sont des représentations de $M$. 
Si $\mathscr{F} \in \mathbf{Coh}_{(\mathfrak{g}, M)^f}(U)$ alors ces représentations sont lisses (cela résulte de la remarque précédente). 
\end{rem}

\subsection{Le foncteur $\tilde{VB}$}

Il est parfois plus simple de travailler sur $X_{K^p}$. On définit donc une variante du  foncteur précédent :
\begin{eqnarray*} 
\tilde{VB}:   \mathbf{Coh}_{\mathfrak{g}}(U) &\rightarrow& \mathbf{Mod}(\oscr_{X_{K^p}}^{sm}\vert_U) \\
\mathscr{F} & \mapsto & \colim_{K_p}\big( \pi_{HT}^{-1} \mathscr{F} {\otimes}_{\pi_{HT}^{-1}\oscr_{U}} \oscr_{X_{K^p}}\vert_{\pi_{HT}^{-1}(U)}\big)^{K_p}
\end{eqnarray*}

On va démontrer 

\begin{thm}\label{thm2}
\begin{enumerate} 
\item Pour tout $\mathscr{F} \in \mathbf{Coh}_{\mathfrak{g}}(U)$, $\tilde{VB}(\mathscr{F})$ est un faisceau localement libre de rang fini de $\oscr_{X_{K^p}}^{sm}\vert_U$-modules.

 \item Supposons que  $U$ est quasi-compact. Soit  $\mathscr{F} \in \mathbf{Coh}_{\mathfrak{g}}(U)$, il existe un sous-groupe compact ouvert $K_p$ et un ouvert $U_{K_p} \subseteq X_{K_pK^p}$ tel que $\pi_{K_p}^{-1}(U_{K_p}) = \pi_{HT}^{-1}(U)$, et un faisceau localement libre canonique de $\oscr_{U_{K_p}}$-modules, noté  $\tilde{VB}_{K_p}(\mathscr{F})$ muni d'un isomorphisme canonique  $$\tilde{VB}(\mathscr{F}) = \tilde{VB}_{K_p}(\mathscr{F}) \otimes_{\oscr_{U_{K_p}}} \oscr^{sm}_{X_{K^p}}\vert_{\pi_{HT}^{-1}(U)}.$$ 
\item Pour tout $\mathscr{F} \in \mathbf{Coh}_{\mathfrak{g}}(U)^{\mathfrak{n^0}}$, on a $$\tilde{VB}(\mathscr{F}) {\otimes}_{\oscr_{X_{K_p}}^{sm}\vert_U} \oscr_{X_{K^p}}\vert_U = \pi_{HT}^{-1}\mathscr{F} {\otimes}_{\pi_{HT}^{-1}\oscr_{U}} {\oscr_{X_{K^p}}}\vert_U$$ et le foncteur $\tilde{VB}$ restreint à la catégorie $\mathbf{Coh}_{\mathfrak{g}}(U)^{\mathfrak{n^0}}$ est exact.
\item Le foncteur $\tilde{VB}$ est dérivable, et pour tout  $\mathscr{F} \in \mathbf{Coh}_{\mathfrak{g}}(U)$,  on a $\mathrm{R}^i\tilde{VB}(\mathscr{F}) = \tilde{VB}(\HH^i(\mathfrak{n}^0, \mathscr{F}))$.
\end{enumerate}
\end{thm}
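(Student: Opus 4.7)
Le plan est de d\'eduire le th\'eor\`eme \ref{thm2} du th\'eor\`eme \ref{thm} d\'ej\`a d\'emontr\'e, en exploitant le fait que les foncteurs $\tilde{VB}$ et $VB$ se correspondent canoniquement via les foncteurs $(\pi_{HT})_\star$ et $(\pi_{HT}^{sm})^\star$ de la proposition pr\'ec\'edente, lesquels induisent une \'equivalence entre $\mathbf{Coh}(\oscr^{sm})$ et $\mathbf{Coh}(\oscr_{X_{K^p}}^{sm})$.

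La premi\`ere \'etape consistera \`a \'etablir un isomorphisme naturel
$$\tilde{VB}(\mathscr{F}) \simeq (\pi_{HT}^{sm})^\star VB(\mathscr{F}).$$
L'observation cl\'e est que, par d\'efinition m\^eme de $\hat{\oscr} = (\pi_{HT})_\star \oscr_{X_{K^p}}$, pour tout ouvert affino\"ide $V$ de $U$ tel que $\pi_{HT}^{-1}(V)$ soit un affino\"ide perfecto\"ide (une telle base d'ouverts existe d'apr\`es \cite{scholze-torsion}), les sections des deux faisceaux co\"incident tautologiquement :
$$\tilde{VB}(\mathscr{F})(\pi_{HT}^{-1}(V)) = \colim_{K_p} \HH^0\bigl(K_p, \mathscr{F}(V) \otimes_{\oscr_{U}(V)} \hat{\oscr}(V)\bigr) = VB(\mathscr{F})(V).$$
En utilisant la coh\'erence de $VB(\mathscr{F})$ sur $\oscr^{sm}$ (point $(1)$ du th\'eor\`eme \ref{thm}) et l'\'equivalence de cat\'egories rappel\'ee ci-dessus, on en d\'eduira l'isomorphisme annonc\'e au niveau des faisceaux.

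Les quatre points s'en d\'eduiront alors presque formellement. Le foncteur $(\pi_{HT}^{sm})^\star$ est exact (par la platitude de $\pi_{HT}^{-1}\oscr^{sm} \rightarrow \oscr^{sm}_{X_{K^p}}$ d\'ej\`a utilis\'ee dans la preuve de la proposition pr\'ec\'edente) et pr\'eserve les faisceaux localement libres de rang fini, d'o\`u le point $(1)$. Pour le point $(2)$, je partirai du faisceau $VB_{K_p}(\mathscr{F})$ fourni par le th\'eor\`eme \ref{thm}, coh\'erent sur $\oscr_{K_p}\vert_U$ ; la proposition \ref{prop-coherence} combin\'ee \`a l'\'equivalence sur $X_{K^p}$ permet de le descendre en un faisceau coh\'erent $\tilde{VB}_{K_p}(\mathscr{F})$ sur un ouvert $U_{K_p} \subseteq X_{K_pK^p}$ tel que $\pi_{K_p}^{-1}(U_{K_p}) = \pi_{HT}^{-1}(U)$. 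Le point $(3)$ s'obtient en appliquant $(\pi_{HT}^{sm})^\star$ \`a l'isomorphisme analogue du th\'eor\`eme \ref{thm} et en utilisant la compatibilit\'e au produit tensoriel, ce qui pour $\mathscr{F} \in \mathbf{Coh}_{\mathfrak{g}}(U)^{\mathfrak{n}^0}$ donne
$$\tilde{VB}(\mathscr{F}) \otimes_{\oscr^{sm}_{X_{K^p}}\vert_U} \oscr_{X_{K^p}}\vert_U = \pi_{HT}^{-1}\mathscr{F} \otimes_{\pi_{HT}^{-1}\oscr_U} \oscr_{X_{K^p}}\vert_U.$$
Enfin, l'exactitude de $(\pi_{HT}^{sm})^\star$ entra\^ine imm\'ediatement le point $(4)$, puisque $\mathrm{R}^i\tilde{VB}(\mathscr{F}) = (\pi_{HT}^{sm})^\star \mathrm{R}^iVB(\mathscr{F})$.

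Le principal obstacle sera la v\'erification rigoureuse de la premi\`ere \'etape : il s'agit de contr\^oler que l'\'egalit\'e des sections sur une base adapt\'ee d'ouverts se recolle bien en un isomorphisme global de faisceaux $\oscr^{sm}_{X_{K^p}}$-lin\'eaires, et que la colimite sur les $K_p$ ainsi que la prise des $K_p$-invariants sont compatibles au morphisme d'espaces annel\'es $\pi_{HT}^{sm}$. Une fois cette comparaison acquise, tous les autres \'enonc\'es r\'esultent de mani\`ere formelle du th\'eor\`eme \ref{thm} et des propri\'et\'es (exactitude, pr\'eservation de la coh\'erence et des faisceaux localement libres) du foncteur $(\pi_{HT}^{sm})^\star$.
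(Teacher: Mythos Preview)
Your proposal has a fundamental circularity problem. You write that you will deduce Theorem~\ref{thm2} from ``le th\'eor\`eme~\ref{thm} d\'ej\`a d\'emontr\'e'', but in the paper the logical dependence goes the other way: the Lemma immediately following the statement of Theorem~\ref{thm2} shows that Theorem~\ref{thm2} implies Theorem~\ref{thm}, and the whole of Section~3.5 (``Preuve du th\'eor\`eme'') is devoted to proving Theorem~\ref{thm2} directly, via Sen's method and the Faltings extension. Theorem~\ref{thm} is never proved independently. So invoking it as an input to prove Theorem~\ref{thm2} is circular.

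Even setting the circularity aside, the key step of your argument is not quite sound. You claim to identify $\tilde{VB}(\mathscr{F})$ with $(\pi_{HT}^{sm})^\star VB(\mathscr{F})$ by checking equality of sections on opens of the form $\pi_{HT}^{-1}(V)$. But these opens do \emph{not} form a base of the topology of $X_{K^p}$, so matching sections there does not determine a sheaf on $X_{K^p}$. What one actually gets from your computation is $(\pi_{HT})_\star \tilde{VB}(\mathscr{F}) = VB(\mathscr{F})$; to invert this via the equivalence of categories one would need to know a priori that $\tilde{VB}(\mathscr{F})$ is coherent, which is precisely part~(1) of the theorem you are trying to prove. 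The paper avoids this by working directly on $X_{K^p}$: the Sen machinery produces finite free modules $D_{K_p,n}(F)$ over $B_{K_p,n}$, identifies the Sen operator with a generator of $\tilde{VB}(\mathfrak{n}^0)$ via the Faltings extension (Proposition~\ref{prop-calcul-operaSenST} and Proposition~\ref{prop-calcul-Senopfinal}), and reads off coherence and the formula $\mathrm{R}^i\tilde{VB}(\mathscr{F}) = \tilde{VB}(\HH^i(\mathfrak{n}^0,\mathscr{F}))$ from the resulting description of the $K_p$-cohomology (Lemma~\ref{lem-calculcoho}). None of this input can be bypassed.
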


\begin{lem} Le théorème \ref{thm2} implique le théorème \ref{thm}.
\end{lem}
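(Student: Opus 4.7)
The plan is to factor $VB$ as pushforward by $\pi_{HT}$ applied to $\tilde{VB}$, and then transport each assertion of Theorem~\ref{thm} from the corresponding assertion of Theorem~\ref{thm2} using the properties of $(\pi_{HT})_\star$ established above. The key identification, which I would verify first, is $VB(\mathscr{F}) = (\pi_{HT})_\star \tilde{VB}(\mathscr{F})$ as sheaves of $\oscr^{sm}\vert_U$-modules. This is essentially tautological on affinoids $V \subseteq U$ for which $\pi_{HT}^{-1}(V)$ is of the form $\pi_{K_p}^{-1}(V_{K_p})$ at some finite level, since sections of both sides over such $V$ identify with $\colim_{K_p} \HH^0(K_p,\, \mathscr{F}(V) \otimes_{\oscr_{\mathcal{FL}}(V)} \hat{\oscr}(V))$ via $\oscr_{X_{K^p}}(\pi_{HT}^{-1}(V)) = \hat{\oscr}(V)$, and such affinoids form a basis of $U$.

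Granting this identification, part (1) of Theorem~\ref{thm} would follow from Theorem~\ref{thm2}(1) through the equivalence of categories $(\pi_{HT})_\star\colon \mathbf{Coh}(\oscr^{sm}_{X_{K^p}}) \to \mathbf{Coh}(\oscr^{sm})$ proved in the preceding proposition, which preserves the property of being locally free of finite rank. Part (2) would follow by setting $VB_{K_p}(\mathscr{F}) := (\pi_{K_p}^{sm})_\star \tilde{VB}_{K_p}(\mathscr{F})$ and invoking the finite-level descent of Theorem~\ref{thm2}(2). For part (3), I would push forward the canonical isomorphism of Theorem~\ref{thm2}(3) along $\pi_{HT}$: the projection formula is available because $\tilde{VB}(\mathscr{F})$ is locally free of finite rank over $\oscr^{sm}_{X_{K^p}}\vert_U$ by Theorem~\ref{thm2}(1) and $\mathscr{F}$ is locally free over $\oscr_U$ by Lemma~\ref{lem-extension}, so the tensor products interact correctly with $(\pi_{HT})_\star$; the vanishing $\mathrm{R}^i(\pi_{HT})_\star = 0$ on coherent $\oscr^{sm}_{X_{K^p}}$-modules then ensures that exactness also transports.

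Point (4) uses the same vanishing: since $(\pi_{HT})_\star$ is exact on coherent $\oscr^{sm}_{X_{K^p}}$-modules, deriving the composition $VB = (\pi_{HT})_\star \circ \tilde{VB}$ amounts to deriving $\tilde{VB}$ only, so Theorem~\ref{thm2}(4) combined with the identification of the first paragraph yields $\mathrm{R}^i VB(\mathscr{F}) = (\pi_{HT})_\star \tilde{VB}(\HH^i(\mathfrak{n}^0, \mathscr{F})) = VB(\HH^i(\mathfrak{n}^0, \mathscr{F}))$. The main subtlety I anticipate is the rigorous form of the projection formula, which in the analytic setting requires some care around the completed structure sheaf $\hat{\oscr}$; but this is controlled because Theorem~\ref{thm2}(2) allows one to reduce all tensor product computations to a finite level, where the formula is classical for coherent modules over a Noetherian adic space.
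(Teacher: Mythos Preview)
Your proposal is correct and follows essentially the same route as the paper: establish $VB = (\pi_{HT})_\star \circ \tilde{VB}$, then use the projection formula and the exactness of $(\pi_{HT})_\star$ on coherent $\oscr^{sm}_{X_{K^p}}$-modules to transport each clause of Theorem~\ref{thm2} to Theorem~\ref{thm}. One small slip: in part~(2) you write $VB_{K_p}(\mathscr{F}) := (\pi_{K_p}^{sm})_\star \tilde{VB}_{K_p}(\mathscr{F})$, but $\tilde{VB}_{K_p}(\mathscr{F})$ already lives on $U_{K_p}\subseteq X_{K_pK^p}$ and $\pi_{K_p}^{sm}$ goes the wrong way; the correct definition is $VB_{K_p}(\mathscr{F}) = (\pi_{HT})_\star\,\pi_{K_p}^{-1}\tilde{VB}_{K_p}(\mathscr{F})$, which is a sheaf of $\oscr_{K_p}\vert_U$-modules on $U$, as in the paper.
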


\begin{proof} 
On démontre que $VB(\mathscr{F}) = (\pi_{HT})_\star \tilde{VB}(\mathscr{F})$. En effet, on trouve que 
$$ (\pi_{HT})_\star  \colim_{K_p}\big( \pi_{HT}^{-1} \mathscr{F} {\otimes}_{\pi_{HT}^{-1}\oscr_{U}} \oscr_{X_{K^p}}\vert_{\pi_{HT}^{-1}(U)}\big)^{K_p} = $$$$  \colim_{K_p}\big((\pi_{HT})_\star \big( \pi_{HT}^{-1} \mathscr{F} {\otimes}_{\pi_{HT}^{-1}\oscr_{U}} \oscr_{X_{K^p}}\vert_{\pi_{HT}^{-1}(U)})\big)^{K_p}$$
et la formule de projection donne  un isomorphisme $$(\pi_{HT}) _\star (\pi_{HT}^{-1} \mathscr{F} {\otimes}_{\pi_{HT}^{-1}\oscr_{U}} \oscr_{X_{K^p}}\vert_{\pi_{HT}^{-1}(U)}) \rightarrow  \mathscr{F} {\otimes}_{\oscr_{U}} \hat{\oscr}\vert_U$$ qui induit un isomorphisme $(\pi_{HT})_\star \tilde{VB}(\mathscr{F}) = VB(\mathscr{F})$.
Soit $\mathscr{F} \in \mathbf{Coh}_{\mathfrak{g}}(U)^{\mathfrak{n^0}}$. Appliquons le foncteur $(\pi_{HT})_\star$ a l'identité $$\tilde{VB}(\mathscr{F}) {\otimes}_{\oscr_{X_{K_p}}^{sm}\vert_U} \oscr_{X_{K^p}}\vert_U = \pi_{HT}^{-1}\mathscr{F} {\otimes}_{\pi_{HT}^{-1}\oscr_{U}} {\oscr_{X_{K^p}}}\vert_U.$$
Comme $\tilde{VB}(\mathscr{F}) {\otimes}_{\oscr_{X_{K_p}}^{sm}\vert_U} \oscr_{X_{K^p}}\vert_U = \pi_{HT}^{-1} VB(\mathscr{F}) \otimes_{\pi_{HT}^{-1} \oscr^{sm}\vert_U} \oscr_{X_{K^p}}\vert_U$ la formule de projection implique que :
$$(\pi_{HT})_\star \tilde{VB}(\mathscr{F}) {\otimes}_{\oscr_{X_{K_p}}^{sm}\vert_U} \oscr_{X_{K^p}}\vert_U = VB(\mathscr{F}) {\otimes}_{\oscr^{sm}\vert_U} \hat{\oscr}\vert_U.$$
Il en résulte que $$VB(\mathscr{F}) {\otimes}_{\oscr^{sm}\vert_U} \hat{\oscr}\vert_U \rightarrow \mathscr{F} {\otimes}_{\oscr_{U}} \hat{\oscr}\vert_U$$ est un isomorphisme.

On voit similairement que $(\pi_{HT})_\star \tilde{VB}_{K_p}(\mathscr{F})= VB_{K_p}(\mathscr{F})$. 
Le dernier point suit de l'exactitude de $(\pi_{HT})_\star$. 
\end{proof}

\subsection{Exemples}

On va illustrer ce théorème dans quelques exemples simples. 

\subsubsection{Fibrés vectoriels automorphes}\label{subsec-ex1}

On note $\mathcal{T}_{dR,K_p} \rightarrow X_{K_pK^p}$ le $T$-torseur des trivialisations de $\omega_{E^t} \oplus Lie(E)$. 
On note $\mathcal{T}_{HT} = U\backslash G \rightarrow \mathcal{FL}$ le $T$-torseur canonique sur $\mathcal{FL}$. On possède par construction un isomorphisme $G(\qq_p)$-équivariant de $T$-torseurs sur $X_{K^p}$ :
$$ \mathcal{T}_{dR,K_p} \times_{X_{K_p K^p}} X_{K^p} = \mathcal{T}_{HT} \times_{\mathcal{FL}} X_{K^p}$$
Le $T$-torseur $\mathcal{T}_{dR,K_p}$ induit une application 
\begin{eqnarray*}
X^\star(T) & \rightarrow & \mathbf{Coh}(X_{K_pK^p})\\
\kappa & \mapsto & \omega^{\kappa}_{K_p}
\end{eqnarray*}
où $\omega^\kappa_{K_p}$ est "le faisceau des formes modulaires de poids $\kappa$" sur $X_{K_pK^p}$. 
On a donc un isomorphisme :
$$ \omega^\kappa_{K_p} \otimes_{\oscr_{X_{K_pK^p}}} \oscr_{X_{K^p}} = \oscr_{\mathcal{FL}}^\kappa \otimes_{\oscr_{\mathcal{FL}} }\oscr_{X_{K^p}}.$$
Notons $\omega^{\kappa,sm} =  \colim_{K_p} \omega_{K_p}^{\kappa}$.  On déduit:

\begin{coro} On a $\tilde{VB}(\oscr_{\mathcal{FL}}^\kappa) =  \omega^{\kappa,sm}$. 
\end{coro}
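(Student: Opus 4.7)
Ma d�monstration va partir de l'identification des $T$-torseurs $\mathcal{T}_{dR,K_p} \times_{X_{K_pK^p}} X_{K^p} = \mathcal{T}_{HT} \times_{\mathcal{FL}} X_{K^p}$ donn�e juste avant l'�nonc�, laquelle fournit pour tout $\kappa$ l'isomorphisme canonique de fibr�s en droites sur $X_{K^p}$ :
$$ \pi_{HT}^{-1}\oscr_{\mathcal{FL}}^\kappa \otimes_{\pi_{HT}^{-1}\oscr_{\mathcal{FL}}} \oscr_{X_{K^p}} = \pi_{K_p}^{-1}\omega^\kappa_{K_p} \otimes_{\pi_{K_p}^{-1}\oscr_{X_{K_pK^p}}} \oscr_{X_{K^p}}. $$
L'id�e est de substituer cette identit� dans la d�finition de $\tilde{VB}(\oscr_{\mathcal{FL}}^\kappa)$, puis de sortir le facteur $\pi_{K_p}^{-1}\omega^\kappa_{K_p}$ hors des $K_p$-invariants.

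Apr�s substitution, on obtient
$$\tilde{VB}(\oscr_{\mathcal{FL}}^\kappa) = \colim_{K_p}\bigl(\pi_{K_p}^{-1}\omega^\kappa_{K_p} \otimes_{\pi_{K_p}^{-1}\oscr_{X_{K_pK^p}}} \oscr_{X_{K^p}}\bigr)^{K_p}.$$
Le faisceau $\pi_{K_p}^{-1}\omega^\kappa_{K_p}$ provenant par image r�ciproque d'un faisceau sur $X_{K_pK^p}$, l'action de $K_p$ y est triviale. Comme $\omega^\kappa_{K_p}$ est inversible, donc localement trivial, on peut sur chaque ouvert trivialisant d�placer ce facteur en dehors des $K_p$-invariants :
$$\bigl(\pi_{K_p}^{-1}\omega^\kappa_{K_p} \otimes_{\pi_{K_p}^{-1}\oscr_{X_{K_pK^p}}} \oscr_{X_{K^p}}\bigr)^{K_p} = \pi_{K_p}^{-1}\omega^\kappa_{K_p} \otimes_{\pi_{K_p}^{-1}\oscr_{X_{K_pK^p}}} \bigl(\oscr_{X_{K^p}}\bigr)^{K_p}.$$

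En passant � la colimite et en utilisant que $\colim_{K_p} \bigl(\oscr_{X_{K^p}}\bigr)^{K_p} = \oscr^{sm}_{X_{K^p}} = \colim_{K_p}\pi_{K_p}^{-1}\oscr_{X_{K_pK^p}}$ (les vecteurs lisses du faisceau structural perfectoide sont pr�cis�ment les sections provenant d'un niveau fini), l'expression se simplifie en
$$\tilde{VB}(\oscr_{\mathcal{FL}}^\kappa) = \colim_{K_p}\pi_{K_p}^{-1}\omega^\kappa_{K_p} = \omega^{\kappa,sm}.$$

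Le point potentiellement d�licat est l'�tape interm�diaire o� l'on sort $\pi_{K_p}^{-1}\omega^\kappa_{K_p}$ hors des $K_p$-invariants : elle se ram�ne � une v�rification purement locale, sur un recouvrement trivialisant $\omega^\kappa_{K_p}$. Un argument alternatif plus formel consiste � invoquer le th�or�me \ref{thm2}(2) : le faisceau $\tilde{VB}_{K_p}(\oscr_{\mathcal{FL}}^\kappa)$ et $\omega^\kappa_{K_p}$ deviennent isomorphes apr�s tensorisation � $\oscr_{X_{K^p}}$ via l'isomorphisme ci-dessus, et le r�sultat suit par descente.
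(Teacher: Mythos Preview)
Your proof is correct and follows exactly the approach the paper intends: the paper simply writes ``On d\'eduit'' after displaying the isomorphism $\omega^\kappa_{K_p} \otimes_{\oscr_{X_{K_pK^p}}} \oscr_{X_{K^p}} = \oscr_{\mathcal{FL}}^\kappa \otimes_{\oscr_{\mathcal{FL}}} \oscr_{X_{K^p}}$, and your argument (substitute, use that $K_p$ acts trivially on $\pi_{K_p}^{-1}\omega^\kappa_{K_p}$, factor out, pass to the colimit) is precisely the intended unpacking of that one word.
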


On note  aussi $VB(\oscr_{\mathcal{FL}}^\kappa) = \omega_{\mathcal{FL}}^{\kappa,sm} = (\pi_{HT})_\star \omega^{\kappa,sm}$.  On possède alors un isomorphisme $G(\qq_p)$-équivariant :  $$\HH^i(\mathcal{FL}, \omega_{\mathcal{FL}}^{\kappa,sm}) = \colim_{K_p} \HH^i(X_{K_pK^p}, \omega_{K_p}^\kappa) = \HH^i(X_{K^p}, \omega^{\kappa,sm}).$$ 
\subsubsection{Systèmes locaux et décomposition de Hodge-Tate}\label{subsec-ex2}

Soit $\kappa \in X^\star(T)^+$ un poids $B$-dominant. Soit $V_\kappa$ la représentation  de $G$ de plus haut poids $\kappa$. Notons $\mathcal{V}_{\kappa, K_p}$ le système local dans la topologie pro-Kummer étale induit sur $X_{K_pK^p}$. 
Notons $\mathcal{V}_\kappa$ le système local sur $X_{K^p}$. 
On a $$\colim_{K_p} \mathrm{R}\Gamma(K_p, \mathrm{R}\Gamma_{proket}( X_{K^p}, \mathcal{V}_{\kappa})) = \colim_{K_p} \mathrm{R}\Gamma_{proket}(X_{K_pK^p}, \mathcal{V}_{K_p,\kappa}).$$
On a
\begin{eqnarray*}
 (V_\kappa \otimes_{\qq_p} \oscr_{\mathcal{FL}})^{\mathfrak{n}^0}& =& \oscr^{\omega_0 \kappa} \\
 (V_\kappa \otimes_{\qq_p} \oscr_{\mathcal{FL}})_{\mathfrak{n}^0}& = &\oscr^\kappa \otimes (\mathfrak{n}^0)^\vee
 \end{eqnarray*}
 On rappelle l'isomorphisme de Kodaira-Spencer : 
 $$KS : \tilde{VB}((\mathfrak{n}^0)^\vee) = \colim_{K_p} \Omega^1_{X_{K_pK^p}}(\mathrm{log}(cusp)).$$ 
Donc $\tilde{VB}(\oscr^{\omega_0 \kappa}) = \omega^{\omega_0\kappa,sm}$, $\tilde{VB}(\oscr^{ \kappa}\otimes (\mathfrak{n}^0)^\vee) = \omega^{\kappa,sm} \otimes\tilde{VB}((\mathfrak{n}^0)^\vee)$.

 On observe que 
\begin{eqnarray*}
\colim_{K_p} (\mathrm{R}\Gamma_{proket}(X_{K_pK^p}, V_{\kappa,et})\otimes_{\qq_p} \C_p) &=& \colim_{K_p} \mathrm{R}\Gamma\big(K_p, \mathrm{R}\Gamma_{proket}( X_{K^p}, \mathcal{V}_{\kappa})\otimes_{\qq_p} \C_p\big) \\
&=& \colim_{K_p} \mathrm{R}\Gamma(K_p, \mathrm{R}\Gamma_{proket}( X_{K^p}, {V}_{\kappa} \otimes_{\qq_p} \oscr_{X_{K^p}})) \\
&=& \colim_{K_p} \mathrm{R}\Gamma(K_p, \mathrm{R}\Gamma_{an}( X_{K^p}, {V}_{\kappa} \otimes_{\qq_p} \oscr_{X_{K^p}})) 
\end{eqnarray*}
d'après le théorème de comparaison primitif de \cite{MR3090230}.  On a  donc:
$$\colim_{K_p} \mathrm{R}\Gamma(K_p, \mathrm{R}\Gamma_{an}( X_{K^p}, {V}_{\kappa} \otimes_{\qq_p} \oscr_{X_{K^p}})) = \mathrm{R}\Gamma(\mathcal{FL}, \mathrm{R}VB(V_\kappa \otimes_{\qq_p} \oscr_{\mathcal{FL}}))$$
et il résulte du théorème 4.4.1 de \cite{MR102537} qu'on possède une suite exacte (de Hodge-Tate):

$$0 \rightarrow  \HH^1(X_{K^p}, \omega^{w_0\kappa,sm})  \rightarrow \colim_{K_p} (\HH^1_{proket}(X_{K_pK^p}, \mathcal{V}_{\kappa,K_p})\otimes_{\qq_p} \C_p) \rightarrow \HH^0(X_{K^p},  \omega^{\kappa,sm}\otimes \tilde{VB}((\mathfrak{n}^0)^\vee))\rightarrow 0$$

\subsubsection{Formes modulaires surconvergentes}\label{subsec-ex3}  D'après la section \ref{sec-fsurcon}, pour tout caractère  $\kappa \in X^\star(T)_{\C_p}$, on possède un faisceau $\oscr^{\kappa}_{U_{w_0}} \in \mathbf{Coh}_{(\mathfrak{g}, B(\qq_p))}( U_{w_0})$ et un faisceau $\oscr^{\kappa}_{\infty} \in \mathbf{Coh}_{(\mathfrak{g}, B(\qq_p))}( \oscr_{\mathcal{FL},\infty})$. On note: 
\begin{eqnarray*}
\omega^{\kappa,sm}_{U_{w_0}} &=& VB(\oscr^{\kappa}_{U_{w_0}})\\
\omega^{\kappa,sm}_{\infty} &=& VB(\oscr^{\kappa}_{\infty})
\end{eqnarray*}

\begin{lem} Les groupes de cohomologie $\HH^1_{c}(U_{w_0}, \omega_{U_{w_0}}^{\kappa,sm})$ et $\HH^0( \{\infty\}, \omega^{\kappa,sm}_{\infty})$  sont des  représentations localement analytiques  de $B(\qq_p)$ de poids respectifs $-\kappa$ et $-w_0 \kappa$ (c'est à dire que l'action de $\mathfrak{b}$ se factorise à travers les caractères $-\kappa$ et $-w_0 \kappa$) . 
\end{lem}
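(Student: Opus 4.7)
Le plan consiste à établir l'analyticité locale, puis à calculer le poids, en deux étapes.

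\textbf{Étape 1 : Analyticité locale.} On applique le Théorème \ref{thm}(2) à l'ouvert quasi-compact $U_{w_0}$ et à une base décroissante de voisinages affinoïdes $V$ de $\infty$. On obtient des sous-groupes ouverts compacts $K_p \subseteq G(\qq_p)$ et des faisceaux localement libres de rang fini $VB_{K_p}(\oscr^\kappa_{U_{w_0}})$, resp. $VB_{K_p}(\oscr^\kappa_V)$, sur le niveau fini $X_{K_pK^p}$. On exprime alors la cohomologie comme colimite filtrée d'espaces de Banach
\[
\HH^1_c(U_{w_0}, \omega^{\kappa,sm}_{U_{w_0}}) = \colim_{K_p'} \HH^1_c(U_{w_0, K_p'}, VB_{K_p'}(\oscr^\kappa_{U_{w_0}})),
\]
et de manière analogue pour $\HH^0(\{\infty\}, \omega^{\kappa,sm}_\infty)$ (avec une colimite additionnelle sur les voisinages $V$ de $\infty$). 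Les formules explicites de l'action de $B(\qq_p)$ sur $\oscr^\kappa_{U_{w_0}}$ et $\oscr^\kappa_\infty$ étant analytiques en les variables du groupe, et $\hat{\oscr}$ étant muni d'une action localement analytique de $G(\qq_p)$, l'action induite de chaque $K_p' \cap B(\qq_p)$ sur le terme de Banach correspondant est analytique. La colimite est donc une représentation localement analytique de $B(\qq_p)$.

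\textbf{Étape 2 : Calcul du poids.} D'après la Remarque \ref{rem-fortement}, on possède des isomorphismes canoniques de faisceaux $(\mathfrak{g}, B(\qq_p))$-équivariants
\[
\oscr^\kappa_{\mathcal{FL}}\vert_{U_{w_0}} \simeq \oscr^\kappa_{U_{w_0}} \otimes_{\C_p} \C_p(\kappa),\quad \oscr^\kappa_{\mathcal{FL},\infty} \simeq \oscr^\kappa_\infty \otimes_{\C_p} \C_p(w_0\kappa).
\]
En appliquant le foncteur $VB$ et en passant à la cohomologie, les cohomologies visées s'écrivent comme la cohomologie des faisceaux $VB(\oscr^\kappa_{\mathcal{FL}})\vert_{U_{w_0}}$ et $VB(\oscr^\kappa_{\mathcal{FL},\infty})$ tensorisée respectivement par $\C_p(-\kappa)$ et $\C_p(-w_0\kappa)$. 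Or $\oscr^\kappa_{\mathcal{FL}}$ est $(\mathfrak{g}, G(\qq_p))^f$-équivariant (il s'intègre en un faisceau $G$-équivariant algébrique), donc sa cohomologie est une représentation lisse de $G(\qq_p)$ d'après la Remarque \ref{rem-lisse}, et l'action dérivée de $\mathfrak{b}$ y est triviale. Par la torsion, l'action de $\mathfrak{b}$ sur $\HH^1_c(U_{w_0}, \omega^{\kappa,sm}_{U_{w_0}})$ se factorise à travers le caractère $-\kappa$, et celle sur $\HH^0(\{\infty\}, \omega^{\kappa,sm}_\infty)$ à travers $-w_0\kappa$.

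\textbf{Difficulté principale.} Le point délicat est de traiter rigoureusement le cas $\kappa \in X^\star(T)_{\C_p}$ non algébrique : la Remarque \ref{rem-fortement} introduit alors un caractère du centre non nécessairement algébrique, et il faut justifier la décomposition \emph{"cohomologie lisse tensorisée par un caractère"} à ce niveau de généralité, en analysant finement la variation analytique en $\kappa$ du faisceau $VB_{K_p}(\oscr^\kappa_{\mathcal{FL}})$ et de l'action induite de $B(\qq_p)$. Il faut aussi vérifier soigneusement que l'action au niveau fini de l'étape 1 est effectivement analytique, ce qui repose sur la nature analytique de l'action de $K_p$ sur $\hat{\oscr}$ restreinte à $\pi_{HT}^{-1}$ d'un ouvert affinoïde de $\mathcal{FL}$.
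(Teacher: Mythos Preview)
Your Step 2 has the right idea and matches the paper's approach, but the paper executes it more cleanly in a way that also absorbs your Step 1 and dissolves your ``Difficulté principale''. Instead of invoking the global sheaf $\oscr^\kappa_{\mathcal{FL}}$ (which, as you note, is only defined for $\kappa \in X^\star(T^{der}) \times X^\star(Z)_{\C_p}$), the paper simply chooses a continuous character $\chi : T(\qq_p) \to \C_p^\times$ with $\mathrm{d}\chi = \kappa$; such a $\chi$ exists for every $\kappa \in X^\star(T)_{\C_p}$. The twisted sheaf $\oscr^\kappa_{U_{w_0}} \otimes_{\C_p} \C_p(\chi)$ is then \emph{strongly} $(\mathfrak{g}, B(\qq_p))$-equivariant (condition (3) of \S\ref{faisceauxGM} holds because the derivative of the $B(\qq_p)$-action now matches the $\mathfrak{g}$-action), so by Remark~\ref{rem-lisse} its cohomology is a smooth $B(\qq_p)$-representation. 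Untwisting, the original cohomology is (smooth) $\otimes\, \chi^{-1}$, hence locally analytic with $\mathfrak{b}$-action through $-\kappa$. The same trick with $w_0\chi$ handles $\oscr^\kappa_\infty$. Thus analyticity and the weight come in one stroke, for arbitrary $\kappa$, with no separate Step 1 and no special treatment of non-algebraic weights.

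A word of caution on your Step 1: the assertion that ``$\hat{\oscr}$ [est] muni d'une action localement analytique de $G(\qq_p)$'' is not correct as stated; the locally analytic vectors form the proper subsheaf $\oscr^{la} \subsetneq \hat{\oscr}$. Your proposed direct argument would therefore need more care (working with $VB_{K_p}$ at finite level and tracking how $B(\qq_p)$ acts there), and the paper's twist avoids all of this.
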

\begin{proof}  Fixons un caratère $\chi : T(\qq_p) \rightarrow \C_p^\times$ de differentielle $\mathrm{d}\chi = \kappa \in X^\star(T)$. On voit d'après la remarque \ref{rem-fortement}  que $\oscr^{\kappa}_{U_{w_0}} \otimes_{\C_p} \C_p(\chi)$ est un objet fortement $(\mathfrak{g}, B(\qq_p))$-équivariant et donc sa cohomologie est une représentation lisse d'après la remarque \ref{rem-lisse}. Il en va de même pour $\oscr^{\kappa}_{\infty} \otimes_{\C_p} \C_p(w_0\chi)$
\end{proof}

Prenons $K_p$ le sous-groupe d'Iwahori. Soit $X_{K_pK^p}^{ord,m}$ le lieu ordinaire-multiplicatif de $X_{K_pK^p}$. Soit $\chi$ un caractère $T(\Z_p) \rightarrow \C_p^\times$. 
Sur le dagger space $X_{K_pK^p}^{ord,m, \dag}$, on possède un faisceau des formes surconvergentes de poids $\chi$ (\cite{MR3097946}, \cite{MR3265287}) : $\omega^{\chi, \dag}_{K_p}$.  On note $M^\dag_{\chi} = \ \HH^0( X_{K_pK^p}^{ord,m, \dag}, \omega^{\chi, \dag}_{K_p})$, l'espace des formes surconvergentes de poids $\kappa$. Cet espace est muni d'un Frobenius $\phi$ et d'un opérateur $U_p$. 

On peut étendre $\chi$ en un caractère de $T(\qq_p)$ en posant $\chi( \mathrm{diag}(p,1)) = \chi(\mathrm{diag}(1,p)) = 1$. Posons $\kappa = \mathrm{d}\chi \in X^\star(T)_{\C_p}$.

\begin{prop}   
On a 
$$\big(\HH^0( \{\infty\}, \omega^{\kappa,sm}_{\infty}) \otimes_{\C_p} \C_p(\omega_0 \chi)\big)^{B(\ZZ_p)} = M^\dag_{\kappa}$$ et $U_p = B(\ZZ_p) \mathrm{diag}(1,p^{-1}) B(\ZZ_p)$, $\phi = B(\ZZ_p) \mathrm{diag}(p^{-1},1) B(\ZZ_p)$.
\end{prop}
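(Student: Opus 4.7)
La strat\'egie consiste \`a identifier les deux membres via une description commune en termes de sections du faisceau $\oscr^\kappa_\infty$ au-dessus de voisinages stricts du lieu ordinaire-multiplicatif. La cl\'e est que, sous l'application des p\'eriodes de Hodge-Tate $\pi_{HT}$, les voisinages quasi-compacts du point $\infty \in \mathcal{FL}$ correspondent aux voisinages stricts du lieu ordinaire-multiplicatif $X_{K_pK^p}^{ord,m}$ lorsque $K_p$ est l'Iwahori (voir \cite{scholze-torsion}). Premi\`erement, j'appliquerais le th\'eor\`eme \ref{thm2} au faisceau $\oscr^\kappa_\infty$ : il existe un sous-groupe compact ouvert $K_p'$ (que l'on peut choisir contenu dans $K_p$), un voisinage quasi-compact $U$ de $\infty$, un ouvert $U_{K_p'} \subseteq X_{K_p'K^p}$ avec $\pi_{K_p'}^{-1}(U_{K_p'}) = \pi_{HT}^{-1}(U)$, et un faisceau localement libre $\tilde{VB}_{K_p'}(\oscr^\kappa_\infty)$ sur $U_{K_p'}$ qui calcule $\tilde{VB}(\oscr^\kappa_\infty)$. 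Ceci ram\`ene le calcul \`a un niveau fini.

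Deuxi\`emement, je montrerais que l'image de $U_{K_p'}$ dans $X_{K_pK^p}$ contient un voisinage strict $\mathcal{V}$ du lieu ordinaire-multiplicatif, et que la descente de $\tilde{VB}_{K_p'}(\oscr^\kappa_\infty)$ \`a $\mathcal{V}$ (apr\`es prise d'invariants par $K_p'$ dans $K_p$ et torsion par $\C_p(w_0\chi)$) s'identifie au faisceau des formes surconvergentes $\omega^{\chi,\dag}_{K_p}$. Pour cela, on trivialise explicitement $\oscr^\kappa_\infty$ via la coordonn\'ee standard de $\mathbb{P}^1$ au voisinage de $\infty$ et on reconnait que la trivialisation canonique du $T$-torseur de Hodge-Tate au-dessus de $\infty$ provient, par le sous-groupe canonique, de la trivialisation du $T$-torseur de de Rham $\mathcal{T}_{dR,K_p}$ sur le lieu ordinaire-multiplicatif. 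La torsion par $\C_p(w_0\chi)$ corrige la diff\'erence entre le poids $w_0\kappa$ port\'e par le faisceau \'equivariant $\oscr^\kappa_\infty$ et la convention classique de poids $\kappa$ pour les formes surconvergentes.

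Troisi\`emement, on passe \`a la cohomologie et \`a la colimite sur les voisinages stricts $\mathcal{V}$, ce qui correspond pr\'ecis\'ement \`a la limite inductive d\'efinissant les formes surconvergentes $M^\dag_\kappa = \colim_{\mathcal{V}} \HH^0(\mathcal{V}, \omega^{\chi,\dag}_{K_p})$. Le passage aux invariants par $B(\ZZ_p)$ extrait les sections invariantes par $U(\ZZ_p)$ (correspondant \`a la descente au niveau Iwahori) et isotypiques de poids $\chi$ sous $T(\ZZ_p)$. Pour les op\'erateurs de Hecke, un calcul explicite dans la d\'ecomposition de Bruhat au voisinage de $\infty$ montre que la double classe $B(\ZZ_p) \mathrm{diag}(1, p^{-1}) B(\ZZ_p)$ r\'ealise la correspondance classique $U_p$ (somme sur les sous-groupes d'ordre $p$ non canoniques), tandis que $B(\ZZ_p) \mathrm{diag}(p^{-1}, 1) B(\ZZ_p)$ r\'ealise le Frobenius $\phi$.

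L'obstacle technique principal r\'eside dans la deuxi\`eme \'etape : il faut identifier pr\'ecis\'ement la g\'eom\'etrie de $\pi_{HT}$ au voisinage de $\infty$ avec celle du lieu ordinaire-multiplicatif en niveau Iwahori, puis propager cette identification aux faisceaux \'equivariants en mettant en correspondance les trivialisations des $T$-torseurs de Hodge-Tate et de de Rham via le sous-groupe canonique.
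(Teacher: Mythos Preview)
Your proposal is correct and follows essentially the same approach as the paper: both rely on the geometric input $\pi_{K_p}(\pi_{HT}^{-1}(\{\infty\})) = \overline{X_{K_pK^p}^{ord,m}}$ together with a $B(\qq_p)$-equivariant isomorphism between $\omega^{\kappa,sm}_{\infty} \otimes \C_p(w_0\chi)$ and the pull-up of the overconvergent sheaf $\omega^{\chi}_{K_p}$ to $\oscr^{sm}_{X_{K^p}}$, after which the identification with $M^\dag_\kappa$ follows by taking $B(\ZZ_p)$-invariants. The paper's proof is more compressed --- it states the key sheaf isomorphism directly rather than deriving it through the $\tilde{VB}_{K'_p}$ machinery of Theorem~\ref{thm2} --- but the content is the same.
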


\begin{proof} Le faisceau $\omega^{\chi, \dag}_{K_p}$ provient d'un faisceau inversible $\omega^\chi_{K_p}$ sur un voisinage strict $U_{K_p}$ de  $X_{K_pK^p}^{ord,m}$. On rappelle  que $\pi_{K_p}(\pi_{HT}^{-1}(\{\infty\})) = \overline{X_{K_pK^p}^{ord,m}}  \subseteq U_{K_p}$ et que $M^\dag_{\chi} = \HH^0(\overline{X_{K_pK^p}^{ord,m}}, \omega^\chi_{K_p})$. On vérifie alors qu'on possède un isomorphisme $B(\qq_p)$-équivariant:
$$ \omega^{\chi}_{K_p} \otimes_{\oscr_{U_{K_p}}} \oscr^{sm}_{X_{K^p}} \vert_{\pi_{HT}^{-1}({\infty})} = \omega^{\kappa,sm}_{\infty} \otimes_{\C_p} \C_p(w_0\chi) \otimes_{\oscr^{sm}} \oscr^{sm}_{X_{K^p}}$$

\end{proof}

Sur le dagger space $X_{K_pK^p}^{ord,et, \dag}$, on possède un faisceau des formes surconvergentes de poids $\chi$ : $\omega^{\chi, \dag}_{K_p}$ (voir \cite{Boxer-Pilloni}).  On note $N^\dag_{\chi} =  \HH^1_c( X_{K_pK^p}^{ord,et, \dag}, \omega^{\chi,\dag}_{{K_p}})$. Cet espace est muni d'un Frobenius $\phi$ et d'un opérateur $U_p$ qui est compact. Utilisons l'exposant $fs$ pour désigner la partie de pente finie pour l'opérateur $U_p$.

\begin{prop}    
On possède un isomorphisme  : 
$$(\HH^1_c( U_{w_0}, \omega^{\kappa,sm}_{U_{w_0}}) \otimes_{\C_p} \C_p(\chi))^{B(\ZZ_p),fs}  \rightarrow  N^{\dag,fs}_{\chi}$$ 
compatible pour les opérateurs  $U_p = B(\ZZ_p) \mathrm{diag}(1,p^{-1}) B(\ZZ_p)$, $\phi = B(\ZZ_p) \mathrm{diag}(p^{-1},1) B(\ZZ_p)$.

\end{prop}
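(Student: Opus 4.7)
Le sch\'ema de la d\'emonstration est parall\`ele \`a celui de la proposition pr\'ec\'edente pour $M^\dag_\chi$, avec une \'etape suppl\'ementaire de localisation par l'op\'erateur $U_p$. Premi\`erement, j'\'etablirais une comparaison de faisceaux au niveau fini: l'objet $\oscr^{\kappa}_{U_{w_0}} \otimes_{\C_p} \C_p(\chi)$ de $\mathbf{Coh}_{(\mathfrak{g}, B(\qq_p))}(U_{w_0})$ est fortement \'equivariant (comme dans la preuve pr\'ec\'edente, cf. la remarque \ref{rem-fortement}), et le th\'eor\`eme \ref{thm2} fournit donc un faisceau localement libre canonique $\tilde{VB}_{K_p}(\oscr^{\kappa}_{U_{w_0}} \otimes \C_p(\chi))$ sur un ouvert $U_{K_p} \subseteq X_{K_pK^p}$ tel que $\pi_{K_p}^{-1}(U_{K_p}) = \pi_{HT}^{-1}(U_{w_0})$. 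Cet ouvert $U_{K_p}$ est le compl\'ementaire de l'adh\'erence du lieu ordinaire-multiplicatif; par la construction standard des formes surconvergentes au niveau Iwahori, je v\'erifierais que la restriction du fibr\'e ainsi obtenu \`a un voisinage strict du lieu ordinaire-\'etale co\"incide, de mani\`ere $B(\qq_p)$-\'equivariante, avec $\omega^\chi_{K_p}$.

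Deuxi\`emement, je transf\'ererais le calcul de cohomologie \`a support compact en utilisant la proposition \ref{prop-coherence} et la formule de projection pour $\pi_{HT}$ comme dans le cas de $\HH^0$. Ceci donne un isomorphisme $B(\qq_p)$-\'equivariant
$$ \HH^1_c(U_{w_0}, \omega^{\kappa,sm}_{U_{w_0}} \otimes_{\C_p} \C_p(\chi)) \simeq \colim_{K_p} \HH^1_c(U_{K_p}, \tilde{VB}_{K_p}(\oscr^{\kappa}_{U_{w_0}} \otimes \C_p(\chi))), $$
la colimite portant sur les sous-groupes Iwahori $K_p$. Prendre les invariants par $B(\ZZ_p)$ fixe un niveau fini et remplace la colimite par un terme unique.

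L'\'etape cruciale est le passage \`a la partie de pente finie pour $U_p$. L'ouvert $U_{K_p}$ est beaucoup plus grand qu'un voisinage strict du lieu ordinaire-\'etale, puisqu'il contient notamment tout le lieu supersingulier. Cependant, l'op\'erateur $U_p = [B(\ZZ_p) \mathrm{diag}(1,p^{-1}) B(\ZZ_p)]$ contracte les voisinages du lieu ordinaire-\'etale sur eux-m\^emes. Par un th\'eor\`eme de contr\^ole \`a la Coleman dans la version de \cite{Boxer-Pilloni}, la partie de pente finie de $\HH^1_c(U_{K_p}, \tilde{VB}_{K_p})$ est ind\'ependante du voisinage choisi contenant le lieu ordinaire-\'etale et co\"incide avec $N^{\dag,fs}_\chi$, la partie de pente finie de la cohomologie \`a support compact du dagger space. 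La compatibilit\'e aux op\'erateurs $U_p$ et $\phi$ r\'esulte alors imm\'ediatement de l'\'equivariance par $B(\qq_p)$ fix\'ee \`a l'\'etape initiale, puisque l'action de ces doubles classes sur les deux c\^ot\'es se lit directement sur la structure $B(\qq_p)$-\'equivariante des faisceaux.

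L'obstacle principal est donc l'\'etablissement soigneux de ce th\'eor\`eme de contr\^ole pour la cohomologie \`a support compact, qui permet de r\'eduire la partie de pente finie sur l'ouvert large $U_{K_p}$ \`a celle sur un voisinage strict du lieu ordinaire-\'etale. C'est un analogue pour le $\HH^1_c$ du principe classique selon lequel une forme surconvergente de pente finie s'\'etend \`a un voisinage arbitrairement grand du lieu ordinaire; ici on renverse le r\^ole et on contracte l'ouvert de d\'efinition jusqu'au dagger space, ce qui demande de contr\^oler les contributions supersinguli\`eres.
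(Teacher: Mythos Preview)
Votre strat\'egie g\'en\'erale co\"incide avec celle de l'article: identifier le faisceau $\tilde{VB}_{K_p}$ sur un ouvert $U_{K_p}$ de niveau fini, puis utiliser un argument de contr\^ole en pente finie pour relier la cohomologie \`a support compact sur cet ouvert \`a $N^\dag_\chi$. La difficult\'e est bien localis\'ee \`a l'\'etape de contr\^ole.

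Il y a cependant une impr\'ecision importante dans votre deuxi\`eme \'etape. Vous affirmez que ``prendre les invariants par $B(\ZZ_p)$ fixe un niveau fini et remplace la colimite par un terme unique''. Ceci est faux tel quel: $B(\ZZ_p)$ n'est pas un sous-groupe compact ouvert de $G(\qq_p)$, et un vecteur $B(\ZZ_p)$-invariant dans la repr\'esentation lisse $\colim_{K_p} \HH^1_c(U_{K_p},\tilde{VB}_{K_p})$ n'a aucune raison a priori de provenir d'un seul terme au niveau Iwahori. C'est pr\'ecis\'ement ici que le passage \`a la pente finie est n\'ecessaire, et non seulement pour le r\'etr\'ecissement g\'eom\'etrique de l'ouvert.

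L'article traite ce point en introduisant la tour interm\'ediaire $K_{p,n} \subseteq K_p$ des \'el\'ements de l'Iwahori qui sont dans le Borel modulo $p^n$, et d\'ecompose l'argument en trois morphismes:
\[
N^\dag_\chi \longrightarrow \HH^1_c(V_{K_p},\omega^\chi_{K_p}) \longrightarrow \colim_n \HH^1_c(V_{K_{p,n}},\omega^\chi_{K_{p,n}}) \longrightarrow \big(\HH^1_c(U_{w_0},\omega^{\kappa,sm}_{U_{w_0}})\otimes\C_p(\chi)\big)^{B(\ZZ_p)}.
\]
Le premier et le dernier sont des isomorphismes en pente finie par prolongement analytique; le deuxi\`eme l'est parce que $U_p$ \emph{d\'ecro\^it le niveau en $p$}. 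Autrement dit, le ``th\'eor\`eme de contr\^ole'' que vous invoquez doit faire deux choses distinctes: r\'eduire le niveau dans la tour $K_{p,n}$ (via $U_p$) et r\'etr\'ecir l'ouvert (via prolongement analytique). Votre formulation sugg\`ere que seul le second point est en jeu, alors que le premier est tout aussi essentiel et constitue la partie non triviale de votre affirmation sur les $B(\ZZ_p)$-invariants.
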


\begin{proof} On a $\pi_{K_p}(U_{w_0}) = X_{K_pK^p} \setminus \overline{X_{K_pK^p}^{ord,m}}$. On possède un voisinage strict $V_{K_p}$de $X^{ord,et}_{K_pK^p}$ et un faisceau $\omega^{\chi}_{K_p}$ sur $V_{K_p}$ tel que $$ \omega^{\chi}_{K_p} \otimes_{\oscr_{V_{K_p}}} \oscr^{sm}_{X_{K^p}} \vert_{\pi_{K_p}^{-1}(V_{K_p})} = \omega^{\kappa,sm}_{U_{w_0}} \otimes_{\C_p} \C_p(\chi) \otimes_{\oscr^{sm}} \oscr^{sm}_{X_{K^p}}\vert_{\pi_{K_p}^{-1}(V_{K_p})}.$$
On possède donc un morphisme $N^\dag_{\chi} \rightarrow \HH^1_c(V_{K_p}, \omega^{\chi}_{K_p}) \rightarrow \HH^1_c( U_{w_0}, \omega^{\kappa,sm}_{U_{w_0}})$ et il est clair que ce morphisme se factorise à travers $(\HH^1_c( U_{w_0}, \omega^{\kappa,sm}_{U_{w_0}}) \otimes_{\C_p} \C_p(\chi))^{B(\ZZ_p)}$.   Notons $K_{p,n}$ le sous-groupe de $K_p$ des éléments qui sont dans le Borel modulo $p^n$. Notons $V_{K_{p,n}}$ l'image inverse de $V_{K_p}$ dans $X_{K_{p,n}K^p}$.  On voit que le morphisme :  $\colim_n \HH^1_c( V_{K_p,n}, \omega^{\chi}_{K_{p,n}}) \rightarrow  (\HH^1_c( U_{w_0}, \omega^{\kappa,sm}_{U_{w_0}}) \otimes_{\C_p} \C_p(\chi))^{B(\ZZ_p)}$ est un isomorphisme sur la partie de pente finie en utilisant un argument de prolongement analytique. 
On voit que le morphisme $\HH^1_c(V_{K_p}, \omega^{\chi}_{K_p}) \rightarrow \colim_n \HH^1_c( V_{K_p,n}, \omega^{\chi}_{K_{p,n}})$ est un isomorphisme sur la partie de pente finie (en utilisant que $U_p$ décroit le niveau en $p$). On voit finalement que $N^\dag_{\chi} \rightarrow \HH^1_c(V_{K_p}, \omega^{\chi}_{K_p})$ induit un isomorphisme sur la partie de pente finie en utilisant un argument de prolongement analytique. 
\end{proof}

\subsection{Preuve du théorème}  Cette section est dédiée à la preuve du théorème \ref{thm2}. On fixe donc $\mathscr{F}$, un objet de $ \mathbf{Coh}_{\mathfrak{g}}(U)$. 

\subsubsection{Un lemme péliminaire} 

\begin{lem}\label{lem-local-triviality} Pour tout ouvert affinoide $V= \Spa(C,C^+)$ de $U$ tel que $\mathscr{F}\vert_V$ soit libre, il existe un sous $C^+$-module libre et ouvert  $\mathscr{F}(V)^+\subseteq \mathscr{F}(V)$, et un sous-groupe ouvert 
$H_V$ de $G$ tel que l'action de $\hat{G}$ s'étende en une action  $ \rho : \mathscr{F}(V) \rightarrow \mathscr{F}(V) {\otimes}_{\C_p} \oscr_{H_V}$, et  telle que $ \rho\vert_{\mathscr{F}(V)^+} : \mathscr{F}(V)^+ \rightarrow \mathscr{F}(V)^+ {\otimes}_{\C_p} \oscr^+_{H_V}$. De plus, il existe un sous-groupe compact ouvert $K_p \subseteq H_V(\qq_p)$ dépendant uniquement de $H_V$ tel que l'action de $K_p$ sur $\mathscr{F}(V)^+/p\mathscr{F}(V)^+$ soit triviale.

 \end{lem}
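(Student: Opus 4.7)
The plan is to refine the argument used in Proposition \ref{prop-integration-action} so as to track integrality and to produce a subgroup of $H_V(\qq_p)$ acting trivially modulo $p$.

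First, since $\mathscr{F}\vert_V$ is free by hypothesis, I would fix a basis $e_1,\ldots,e_r$ and set $\mathscr{F}(V)^+ = \bigoplus_{i=1}^r C^+ e_i$. This is a free open bounded $C^+$-submodule of $\mathscr{F}(V)$ by construction. Next, I would reproduce the argument of Proposition \ref{prop-integration-action} but demanding integral bounds. Concretely, since $C^+$ is topologically of finite type over $\ocal_k$, there exists $n_0$ such that $p^{n_0}\mathfrak{g}^+(C^+) \subseteq C^+$; and since the $e_i$'s form a finite generating family, there exists $n_1\geq n_0$ such that $p^{n_1}\mathfrak{g}^+(e_i) \subseteq \mathscr{F}(V)^+$ for every $i$. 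Setting $n = n_1$, the $\mathfrak{g}$-action extends to a map of $\ocal_k$-modules
$$\mathscr{F}(V)^+ \rightarrow \mathscr{F}(V)^+ \otimes_{\ocal_k} \mathrm{Hom}_{\ocal_k}\bigl(\mathcal{U}(p^n\mathfrak{g}^+),\ocal_k\bigr),$$
which factors through $\mathscr{F}(V)^+ \otimes_{\ocal_k} \oscr^+_{G_{n+2}}$, using the inclusion of distributions noted in the paper. Taking $H_V = G_{n+2}$ then produces the required integral action $\rho\vert_{\mathscr{F}(V)^+}$.

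For the last statement, I would choose $K_p$ to be a sufficiently small compact open subgroup of $H_V(\qq_p)$, for instance $K_p = G_{m}(\ZZ_p)$ for $m$ large enough. The point is that the composite $\rho_g : \mathscr{F}(V)^+ \to \mathscr{F}(V)^+$ for $g\in K_p$ is the evaluation of $\rho$ at $g$. Writing $\oscr^+_{H_V}$ with its augmentation ideal $I_{H_V}^+$, the image of $\rho(f) - f\otimes 1$ lies in $\mathscr{F}(V)^+ \otimes_{\ocal_k} I_{H_V}^+$, so everything reduces to estimating $\mathrm{ev}_g(I_{H_V}^+)$ modulo $p$. For $g\in H_V(\qq_p)$ close enough to $1$ in the $p$-adic topology (i.e.\ inside $G_m(\ZZ_p)$ for $m$ large compared to $n+2$), evaluation of the Taylor coordinates yields elements of $p\ocal_{\C_p}$, and the higher-order Taylor terms lie in even higher powers of $p$. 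Hence $\rho_g(f) \equiv f \pmod{p\mathscr{F}(V)^+}$ for every $f\in \mathscr{F}(V)^+$.

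The main obstacle is the final integrality bookkeeping: one must check that the Taylor expansion of $\rho$ converges in the right integral sense, and that both the linear and higher-order Taylor coefficients, when paired with elements of a sufficiently deep subgroup $K_p$, land in $p\mathscr{F}(V)^+$ uniformly. Once the divided-power style inclusions $\mathcal{U}(p^n\mathfrak{g}^+) \hookrightarrow \mathrm{Dist}(\oscr^+_{G_n}) \hookrightarrow \mathcal{U}(\mathfrak{g}^+_{n-1})$ are used to translate "small enough in the Lie algebra" into "trivial modulo $p$ on the group side", the conclusion drops out; the dependence of $K_p$ on $H_V$ alone, and not on $\mathscr{F}$, is visible because the required depth only depends on the integer $n+2$ controlling $H_V$.
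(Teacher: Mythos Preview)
Your argument is correct and follows essentially the same route as the paper. The only organizational difference is that the paper first invokes Proposition~\ref{prop-integration-action} to obtain a $G_m$-coaction on $\mathscr{F}(V)$ and then bounds the denominators of the Taylor coefficients of the basis vectors to find $r\geq m$ with $\rho_{F,r}(\mathscr{F}(V)^+)\subseteq \mathscr{F}(V)^+\otimes\oscr^+_{G_r}$, whereas you rerun the integration argument integrally from the start to land directly in $\oscr^+_{G_{n+2}}$; in both cases one then takes $K_p=G_{r+1}(\qq_p)$ (respectively $G_{n+3}(\ZZ_p)$) so that evaluation sends the integral augmentation ideal into $p\ocal_{\C_p}$, giving triviality modulo $p$.
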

\begin{proof}  Soit $V= \Spa(C,C^+)$ un ouvert affinoide de $U$. 
Posons  $F = \mathscr{F}(V)$, c'est un $C$-module libre.   D'après la proposition \ref{prop-integration-action},  il existe un sous-groupe ouvert  affinoide $H$ de $G$ tel que $F$ est un $H$-comodule.  

Pour fixer les idées, on va supposer que $H = G_m$ pour $m \in \ZZ_{\geq 1}$ et donc $\oscr_{G_m} = \C_p \langle p^{-m}X_1,p^{-m}X_2,p^{-m}X_3, p^{-m}X_4 \rangle$ (où l'élément universel de $G_m$ est $1 +    \begin{pmatrix} 
      X_1 & X_2 \\
      X_3 & X_4 \\
   \end{pmatrix}$). 
   On possède donc pour tout $r \geq m$ des morphisms  $\rho_{F,r} : F \rightarrow F {\otimes}_{\C_p} \oscr_{G_r}$ qui sont compatibles avec le morphisme $\rho_{C,r}  : C \rightarrow C {\otimes}_{\C_p} \oscr_{G_r}$ (et tous induits par le morphisme $\rho_{F,m}$).   On observe que $\rho_{C,m}(C^+) \subseteq C^+ {\otimes} \oscr_{G_m}^+$
 Considérons  une base $v_1, \cdots, v_t$ du $C$-module $F$ et notons $F^+ = \oplus C^+ v_i$.   Pour tout $v \in F$, on a $\rho_{F,m}(v) =  \sum_{(i,j,k,l) \in \Z^4_{\geq 0}} v_{i,j,k,l} p^{-m} X_1^i X_2^j X_3^kX_4^l$ où $v_{0,0,0,0} = v$ et il existe $s_v \geq 0$ tel que $v_{i,,j,k,l} \in p^{-s_v} F^+$. 
On déduit donc  qu'il existe $r \geq m$ tel que $\rho_{F,r}(F^+) \subseteq  F^+ {\otimes}_{\ocal_{\C_p}} \oscr^+_{G_r}$ (par exemple $r= m+ \max\{s_{v_i}, i=1, \cdots, t\}$).
De plus, on voit que si on prend $K_p = G_{r+1} (\qq_p) $ alors l'action de $K_p$ sur $\mathscr{F}(V)^+/p\mathscr{F}(V)^+$ est triviale. 
\end{proof}

\subsubsection{Le méthode de Sen}\label{sect-TSEN}

Soit $K_p$ un sous-groupe ouvert compact. Soit $W_{K_p} = \Spa (B_{K_p}, B_{K_p}^+) \hookrightarrow X_{K_pK^p}$ un ouvert. Pour tout $K'_p \subseteq K_p$ on pose $W_{K'_p} = \Spa(B_{K'_p}, B_{K'_p}^+) = W_{K_p} \times_{X_{K_pK^p}} X_{K'_pK^p}$. 

On suppose  que  $B_{K_p}$ est \emph{petit}. Cela signifie:

\begin{enumerate}

\item Dans le cas où 
$W_{K_p}$ ne rencontre pas de pointes,  on possède un morphisme $ \C_p \langle T^{±1} \rangle \rightarrow B_{K_p}$ qui est un composé de morphismes finis \'etales et d'inclusion d'ouverts rationnels.
\item Dans le cas où $W_{K_p}$ rencontre les pointes, on possède   un morphisme  $\C_p \langle T \rangle \rightarrow B_{K_p}$ qui est un composé de morphismes finis \'etales et d'inclusion d'ouverts rationnels  tel que $T=0$ soit le diviseur des pointes dans $W_{K_p}$.
\end{enumerate}

On vérifie sans peine  que $X_{K_pK^p}$ possède une base d'ouverts affinoides  petits.  On rappelle maintenant la construction de la double tour $\{B_{K'_p,n}\}$ de \cite{pan2021locally}, sect. 3.2.

On note $R_n = \C_p \langle T^{±p^{-n}} \rangle$ dans le cas $1)$ et $R_n =  \C_p \langle T^{p^{-n}} \rangle$ dans le cas $2)$  et $R_\infty = \C_p \langle T^{±p^{-\infty}} \rangle$ dans le cas $1)$ et  $R_\infty = \C_p \langle T^{p^{-\infty}} \rangle$  dans le cas $2)$. On note $\Gamma \simeq \ZZ_p$. Le groupe $\Gamma$ agit naturellement sur $R_{\infty}$.

Pour tout $K'_p \subseteq K_p$, on construit   $B_{K'_p, n}$, le normalisé de  $B_{K'_p} \hat{\otimes}_R R_n$ (dans le cas $1)$ le produit tensoriel  est déjà normal) et $B_{K'_p, \infty}$ est la complétion de $\colim_n B_{K'_p,n}$ pour la norme spectrale. Cela signifie que $B_{K'_p, \infty} = \widehat{ \colim_n B_{K'_p,n}^+}^p[1/p]$. On pose $B_n$ la completion de $\colim_{K'_p} B_{K'_p, n}$ et $B_\infty$ la completion de $\colim_{n, K'_p} B_{K'_p,n}$ (pour la norme spectrale). D'après le lemme d'Abhyankar, $B_{K_p, \infty} \rightarrow B_{K'_p, \infty}$ est finie étale, et $B_n \rightarrow B_m$ (pour $m \geq n$) est aussi finie étale. 

On note  $B_\infty^+$  le sous-anneau de $B_\infty$ des éléments à puissance bornée. Les anneaux $B_\infty^+$ et $B_\infty$ sont munis d'une action de $K_p\times \Gamma$. 
  


\begin{prop}\label{prop-sen-meth} Soit $0 < c<1$.   Soit $M$ un $B^+_\infty$ module libre de rang fini, muni d'une action semi-linéaire de $K_p\times \Gamma$. On suppose qu'il existe un base $(f_1, \cdots, f_n)$ de $M$ telle que $\sigma f_i - f_i \in  p M$ pour tout $\sigma \in K_p \times \Gamma$.  Alors il existe une constante $n(K_p,c)$ (qui ne dépend pas de $M$ mais qui dépend de la double tour $\{ B_{K'_p,n}\}_{K'_p,n}$) tels que  pour tout $n \geq n(K_p,c)$, on possède un unique $B^+_{K_p,n}$-module libre $D^+_{K_p,n}(M)$, qui est un sous-module de $M$ et tel que 

\begin{enumerate}
\item $D^+_{K_p,n}(M)$ est invariant sous $K_p$, et stable sous l'action de $\Gamma$,
\item Le morphisme $B^+_\infty \otimes_{B^+_{K_p,n}} D^+_{K_p,n}(M) \rightarrow M$ est un isomorphisme $K_p \times \Gamma$-équivariant,
\item il existe une base $\mathcal{B}$ de $D^+_{K_p,n}(M)$ tel que $(\gamma-1)(\mathcal{B}) \subseteq p^c D^+_{K_p,n}(M)$ pour tout $\gamma \in \Gamma$.
\end{enumerate}
\end{prop}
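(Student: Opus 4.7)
Ma strat\'egie est d'appliquer successivement une d\'ecompl\'etion totale pour l'action compacte de $K_p$, puis la m\'ethode de Sen classique pour l'action de $\Gamma \simeq \ZZ_p$. L'\'enonc\'e entre dans le cadre axiomatique de Tate-Sen-Berger-Colmez (\cite{MR2493221}, \cite{MR3552018}), et la preuve consiste essentiellement \`a v\'erifier que la double tour $(B_\infty^+, B_{K_p,\infty}^+, B_{K_p,n}^+)$ munie des actions de $K_p$ et $\Gamma$ satisfait les hypoth\`eses standards de Tate-Sen, ce qui a d\'ej\`a \'et\'e largement fait dans \cite{pan2021locally}, section 3.2.

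La premi\`ere \'etape est la descente totale sous $K_p$. Comme l'extension $B_{K_p,\infty} \rightarrow B_\infty$ est limite d'extensions finies \'etales (lemme d'Abhyankar d\'ej\`a invoqu\'e), la th\'eorie presque \'etale de Faltings fournit un op\'erateur de trace normalis\'e $\mathrm{tr}_{K_p} : B_\infty^+ \rightarrow B_{K_p,\infty}^+$, $K_p$-\'equivariant et presque une projection; son noyau $X_{K_p}^+$ supporte une action presque inversible de $\sigma - 1$ pour $\sigma \in K_p$. La condition $\sigma f_i - f_i \in pM$ permet alors une it\'eration de type Newton-Hensel qui corrige chaque $f_i$ par un \'el\'ement de $X_{K_p}^+ \otimes M$, convergeant vers une nouvelle base $(f'_i)$ de $M$ form\'ee de vecteurs $K_p$-invariants. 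Le $B_{K_p,\infty}^+$-module libre $M^{K_p}$ qu'ils engendrent v\'erifie $B_\infty^+ \otimes_{B_{K_p,\infty}^+} M^{K_p} \simeq M$ par construction.

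La seconde \'etape est la descente partielle sous $\Gamma$, appliqu\'ee \`a $M^{K_p}$. On utilise la d\'ecomposition de Tate $B_{K_p,\infty}^+ = B_{K_p,n}^+ \oplus X_{K_p,n}^+$ (o\`u $X_{K_p,n}^+$ est le noyau de la trace normalis\'ee de la sous-tour $\Gamma$), avec la propri\'et\'e cruciale que $\gamma - 1$ est inversible sur $X_{K_p,n}^+$ avec un inverse de norme major\'ee par une constante $c(K_p, n)$ d\'ecroissante en $n$. En choisissant $n(K_p, c)$ assez grand pour que $c(K_p, n) \leq p^{-c}$, une nouvelle it\'eration de Newton-Hensel projette la base $(f'_i)$ sur sa composante dans $B_{K_p,n}^+$ et la corrige it\'erativement en une base $\mathcal{B}$ du $B_{K_p,n}^+$-module libre $D^+_{K_p,n}(M) \subseteq M^{K_p}$. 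L'estim\'ee $(\gamma - 1)\mathcal{B} \subseteq p^c D^+_{K_p,n}(M)$ en r\'esulte automatiquement du choix de $n(K_p,c)$, et l'unicit\'e vient de ce que tout tel module co\"incide avec l'image d'une projection canonique (la compos\'ee des traces normalis\'ees), caract\'eris\'ee sans ambigu\"it\'e.

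L'obstacle principal est la construction et le contr\^ole quantitatif des op\'erateurs de trace normalis\'es, en particulier l'estim\'ee de l'inverse de $\gamma - 1$ sur $X_{K_p,n}^+$; c'est un travail d\'elicat mais d\'ej\`a effectu\'e dans \cite{pan2021locally} pour cette situation g\'eom\'etrique pr\'ecise, ce qui permet d'invoquer directement le cadre axiomatique de \cite{MR3552018}. La d\'ependance de $n(K_p, c)$ en $K_p$ (et non seulement en $c$) provient de ce que les constantes des op\'erateurs de trace d\'ependent de l'ouvert $W_{K_p}$ et du nombre de morphismes finis \'etales n\'ecessaires pour le repr\'esenter au-dessus de $\C_p \langle T^{\pm 1} \rangle$ ou $\C_p \langle T \rangle$.
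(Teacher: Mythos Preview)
Your proposal is correct and follows essentially the same approach as the paper: both reduce the statement to the axiomatic Tate--Sen framework of Berger--Colmez, invoking \cite{pan2021locally}, section~3.2 for the verification that the double tower $\{B_{K'_p,n}\}$ satisfies the required conditions. The paper's own proof is more terse, simply citing \cite{MR2493221}, d\'ef.~3.1.3 and prop.~3.2.6 directly, whereas you unpack the two-step descent (first under $K_p$ via almost \'etale theory, then under $\Gamma$ via Tate's normalized traces); this elaboration is accurate but not strictly needed once the Berger--Colmez machinery is invoked.
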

\begin{proof} La tour $\{ B_{K'_p,n}\}_{K'_p,n}$ vérifie les conditions de Tate-Sen (\cite{MR2493221}, def. 3.1.3) d'après \cite{pan2021locally}, sect. 3.2. On peut donc appliquer \cite{MR2493221}, prop. 3.2.6.
\end{proof} 

Pour $K'_p \subseteq K_p$ et $m \geq n$, on a $D^+_{K'_p,m}(M) = D^+_{K_p,n}(M) \otimes_{B_{K_p,n}^+} B_{K'_p,m}^+$. On pose $D_{K'_p,m}(M) = D^+_{K'_p,m}(M)[1/p]$. 

L'action de $\Gamma$ est localement analytique sur $D_{K_p,n}(M)$. On fixe un isomorphisme $\Gamma \simeq \ZZ_p$ et on note $\Theta_M \in \mathrm{End}_{B_{K_p,n}}(D_{K_p,n}(M))$ l'opérateur de Sen donné par la formule $\Theta_M (f) = \lim_{\gamma \rightarrow 0} \frac{\gamma f- f}{\gamma}$. C'est un endomorphisme qui commute à l'action de $\Gamma$.

\subsubsection{Application de la théorie de Sen à $\mathscr{F}$}\label{section-construct-Sen}


Soit $V = \Spa(C, C^+) \subseteq U \subseteq \mathcal{FL}$ un ouvert quasi-compact qui est stable sous l'action d'un sous-groupe ouvert   $H$ de $G$. 

Posons  $F = \mathscr{F}(V)$, c'est un $C$-modules. Quitte à rapetisser $V$, on peut supposer $F$ libre. 

Il résulte du lemme \ref{lem-local-triviality} qu'on possède un  sous $C^+$-module libre $F^+$ de $F$ qui engendre $F$ et un  sous-groupe compact ouvert $K_p$ assez petit  qui preserve $F^+$ et qui agit trivialement sur $F^+/pF^+$. 

Considérons un  ouvert  quasi-compact $W = \Spa(B,B^+) \subseteq  \pi_{HT}^{-1}(V)$ qui est affinoide perfectoide. 

Quitte à rapetisser $K_p$, on peut supposer que $W = \pi_{K_p}^{-1}(W_{K_p})$ avec  $W_{K_p} = \Spa(B_{K_p}, B_{K_p}^+)$.  Pour tout $K'_p \subseteq K_p$ on pose $W_{K'_p} = \Spa(B_{K'_p}, B_{K'_p}^+) = \pi_{K'_p}(W)$. 
On peut supposer (en remplaçant $W_{K_p}$ par un ouvert rationnel) que  $B_{K_p}$ est \emph{petit}.  \bigskip
   
On peut appliquer la théorie de Sen  rappelée au numéro \ref{sect-TSEN} à la représentation semi-linéaire de $K_p \times \Gamma$:  $B^+_\infty {\otimes}_{C^+} F^+$. 

\begin{lem} Fixons une constante $0< c<1$. Pour tout $n \geq n(K_p,c)$, on possède un unique $B^+_{K_p,n}$-module libre $D^+_{K_p,n}(F)$, qui est un sous-module de $B_\infty \otimes_C F$ et tel que 

\begin{enumerate}
\item $D^+_{K_p,n}(F)$ est fixé par $K_p$ et stable sous $\Gamma$,
\item Le morphisme $B^+_\infty \otimes_{B^+_{K_p,n}} D^+_{K_p,n}(F) \rightarrow B^+_\infty \otimes_{C^+} F^+$ est un isomorphisme $K_p\times \Gamma$-équivariant,
\item il existe une base $\mathcal{B}$ de $D^+_{K_p,n}(F)$ tel que $(\gamma-1)(\mathcal{B}) \subseteq p^c D^+_{K_p,n}(F)$ pour tout $\gamma \in \Gamma$,
\end{enumerate}
\end{lem}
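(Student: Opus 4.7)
The plan is to deduce this lemma directly from Proposition \ref{prop-sen-meth} applied to the semi-linear $K_p\times\Gamma$-representation $M := B^+_\infty \otimes_{C^+} F^+$. Since $F^+$ is a free $C^+$-module of finite rank (by the choice made via Lemma \ref{lem-local-triviality}) and $B^+_\infty$ is flat over $C^+$ through the chain $C^+ \to B_{K_p}^+ \to B^+_\infty$, the module $M$ is free of finite rank over $B^+_\infty$. The group $K_p$ acts diagonally: on $B^+_\infty$ via its Galois action on the perfectoid tower of the modular curve (so that $B_{K_p}^+ = (B^+_\infty)^{K_p,\Gamma=1,\text{fin}}$), and on $F^+$ via the integrated infinitesimal action provided by Lemma \ref{lem-local-triviality}, which preserves $F^+$ by construction. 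The group $\Gamma$ acts only on the $B^+_\infty$ factor, through its action on the double tower $\{B^+_{K'_p,n}\}$. These two actions commute, yielding a well-defined semi-linear $K_p\times\Gamma$-action.

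The key point is to verify the hypothesis of Proposition \ref{prop-sen-meth}: the existence of a basis of $M$ fixed modulo $p$ by the whole group $K_p\times\Gamma$. For this, pick a $C^+$-basis $f_1,\dots,f_t$ of $F^+$ and set $e_i := 1 \otimes f_i \in M$. Since $\Gamma$ acts only on the first factor, $\gamma(e_i) = \gamma(1)\otimes f_i = e_i$ for every $\gamma\in\Gamma$. For $k\in K_p$ one has $k(1) = 1$ in $B^+_\infty$, so $k(e_i) = 1\otimes k(f_i)$; but the last assertion of Lemma \ref{lem-local-triviality} tells us that $K_p$ acts trivially on $F^+/pF^+$, so $k(f_i) - f_i \in pF^+$. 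Combining, $(k,\gamma)(e_i) - e_i \in pM$ for all $(k,\gamma)\in K_p\times\Gamma$, exactly as required.

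Given a fixed constant $0<c<1$, one invokes Proposition \ref{prop-sen-meth} to produce, for every $n\geq n(K_p,c)$, a unique free $B^+_{K_p,n}$-submodule $D^+_{K_p,n}(F)\subseteq M$ satisfying properties (1)--(3) of that proposition; these translate verbatim into the three properties stated in the lemma. Inclusion of $D^+_{K_p,n}(F)$ into $B_\infty \otimes_C F$ follows from the inclusion $M \subseteq B_\infty\otimes_C F$. There is no real obstacle here: the entire content of the argument is packaged in Proposition \ref{prop-sen-meth}, and the only nontrivial verification is the mod-$p$ triviality of a canonical basis of $M$, which was engineered into the hypotheses of the setup (small enough $K_p$ acting trivially on $F^+/pF^+$ from the integration step in Lemma \ref{lem-local-triviality}).
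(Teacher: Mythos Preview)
Your proposal is correct and follows exactly the paper's approach: apply Proposition~\ref{prop-sen-meth} to $M = B^+_\infty \otimes_{C^+} F^+$ and set $D^+_{K_p,n}(F) := D^+_{K_p,n}(M)$. The paper's proof is a one-liner, whereas you have helpfully spelled out the verification that the basis $1\otimes f_i$ of $M$ is fixed modulo~$p$ by $K_p\times\Gamma$ (using that $\Gamma$ acts only on the first factor and that $K_p$ was chosen via Lemma~\ref{lem-local-triviality} to act trivially on $F^+/pF^+$); note incidentally that freeness of $M$ follows directly from freeness of $F^+$ over $C^+$, so the flatness remark is unnecessary.
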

\begin{proof} On applique la proposition \ref{prop-sen-meth} à $M = B^+_\infty {\otimes}_{C^+} F^+$ et on pose $ D^+_{K_p,n}(F) :=  D^+_{K_p,n}(M)$ dans les notations de la proposition.
\end{proof}




\begin{lem}\label{lem-calculcoho} On a $\HH^i(K_p, B \otimes_C F) = \HH^i(\Gamma \times K_p, B_\infty \otimes_C F) = \HH^i(\Gamma, (B_\infty \otimes_C F)^{K_p}) = \HH^i(\Gamma, D_{K_p,n'}(F))$ pour tout $n'$ assez grand. 
\end{lem}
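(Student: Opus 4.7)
The plan is to derive the three equalities successively, combining Hochschild-Serre spectral sequences for the product group $K_p \times \Gamma$ with the Tate-Sen axioms verified in \cite{pan2021locally}, Section 3.2 for the double tower $\{B_{K'_p,n}\}_{K'_p,n}$, together with the Sen descent of Proposition \ref{prop-sen-meth}.

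First I would treat the equality $\HH^i(K_p, B \otimes_C F) = \HH^i(\Gamma \times K_p, B_\infty \otimes_C F)$. Writing $K_p \times \Gamma$ as an extension of $K_p$ by $\Gamma$, the Hochschild-Serre spectral sequence reads $E_2^{p,q} = \HH^p(K_p, \HH^q(\Gamma, B_\infty \otimes_C F)) \Rightarrow \HH^{p+q}(K_p \times \Gamma, B_\infty \otimes_C F)$. It would degenerate once one checks $(B_\infty)^\Gamma = B$ and $\HH^q(\Gamma, B_\infty) = 0$ for $q > 0$. The first is a direct computation: $\Gamma$-invariants commute with the filtered colimit $\colim_{n, K'_p} B_{K'_p,n}$, and $(B_{K'_p,n})^\Gamma = B_{K'_p}$ by construction of the Kummer-Sen tower, so after completion one recovers $B = \widehat{\colim_{K'_p} B_{K'_p}}$. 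The second is Tate's theorem on the vanishing of higher Galois cohomology of the cyclotomic/Kummer tower, applied after inverting $p$. Finite-freeness of $F$ as a $C$-module with trivial $\Gamma$-action transfers both statements from $B_\infty$ to $B_\infty \otimes_C F$.

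Next I would handle $\HH^i(\Gamma \times K_p, B_\infty \otimes_C F) = \HH^i(\Gamma, (B_\infty \otimes_C F)^{K_p})$ by the Hochschild-Serre spectral sequence for the complementary projection $K_p \times \Gamma \twoheadrightarrow \Gamma$. The required vanishing is $\HH^i(K_p, B_\infty \otimes_C F) = 0$ for $i > 0$, which again reduces to $\HH^i(K_p, B_\infty) = 0$ by finite-freeness of $F$. This holds because $\Spa(B_\infty, B^+_\infty)$ is, in the $K'_p$-direction, the pro-finite-\'etale $K_p$-cover of $\Spa(B_{K_p,\infty}, B^+_{K_p,\infty})$, so almost purity for perfectoid affinoids (\cite{scholze-torsion}) supplies the vanishing after inverting $p$.

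For the third equality, Proposition \ref{prop-sen-meth} provides the descent isomorphism $B^+_\infty \otimes_{B^+_{K_p,n'}} D^+_{K_p,n'}(F) \simeq B^+_\infty \otimes_{C^+} F^+$ of $K_p \times \Gamma$-modules. Taking $K_p$-invariants and inverting $p$, and using $(B_\infty)^{K_p} = B_{K_p,\infty}$ (obtained by the same colimit-plus-completion argument as above applied in the $K'_p$-direction), identifies $(B_\infty \otimes_C F)^{K_p}$ with $B_{K_p,\infty} \otimes_{B_{K_p,n'}} D_{K_p,n'}(F)$. It then remains to show that the base-change map
\[
\HH^i(\Gamma, D_{K_p,n'}(F)) \longrightarrow \HH^i(\Gamma, D_{K_p,n'}(F) \otimes_{B_{K_p,n'}} B_{K_p,\infty})
\]
is an isomorphism for $n'$ large. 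This is the standard decompletion step of Tate-Sen theory (\cite{MR2493221}, proof of Prop. 3.2.6): the smallness condition $(\gamma - 1)(\mathcal{B}) \subseteq p^c D_{K_p,n'}(F)$ makes the operator $\gamma - 1$ act invertibly on the complement of $D_{K_p,n'}(F)$ in $D_{K_p,n'}(F) \otimes_{B_{K_p,n'}} B_{K_p,\infty}$ (the kernel of Tate's normalized trace), which kills the cohomology of the complement.

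The step I expect to be the main obstacle is the third: turning the various almost isomorphisms produced by Tate-Sen theory and almost purity into genuine isomorphisms after inverting $p$, and choosing a single $n'$ compatible with the decompletion estimates in the $K'_p$-direction (used for the second equality) and in the $n$-direction (used for the third). The first two equalities are, by comparison, essentially formal consequences of Hochschild-Serre once the relevant cohomological vanishings are in place.
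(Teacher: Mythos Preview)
Your proposal is correct and follows the same three-step structure as the paper's proof: two Hochschild--Serre degenerations (using almost purity for the extensions $B \to B_\infty$ and $B_{K_p,\infty} \to B_\infty$, respectively) followed by the Tate--Sen decompletion argument via normalized traces on $\mathrm{Ker}(\mathrm{Tr}_{n'})$. One minor point: for $(B_\infty)^\Gamma = B$ the paper invokes almost purity directly rather than your colimit-plus-completion argument, which is cleaner since taking invariants does not a priori commute with completion.
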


\begin{proof} L'isomorphisme $\HH^i(K_p, B \otimes_C F) = \HH^i(\Gamma \times K_p, B_\infty \otimes_C F)$ se déduite de $\HH^i(\Gamma, B_\infty) = 0$ si $i>0$ et $\HH^0(\Gamma, B_\infty) = B$ qui résulte de la presque pureté (\cite{MR3090258}) appliquée à $B \rightarrow B_\infty$. 
L'isomorphisme  $\HH^i(\Gamma \times K_p, B_\infty \otimes_C F) = \HH^i(\Gamma, (B_\infty \otimes_C F)^{K_p})$ résulte de $\HH^i(K_p, B_\infty \otimes_C F) = 0$ si $i>0$  qui résulte de la presque pureté pour l'extension $B_{K_p, \infty} \rightarrow B_\infty$. On montre l'isomorphisme $\HH^i(\Gamma, (B_\infty \otimes_C F)^{K_p}) = \HH^i(\Gamma, D_{K_p,n}(F))$ en suivant la méthode  du  lemme 3.6.6 de \cite{pan2021locally}. On a $(B_\infty \otimes_C F)^{K_p} = B_{K_p, \infty} \otimes_{B_{K_p,n}} D_{K_p,n}(F)$. Il existe $m \geq 1$ tel que $(\gamma-1)^m D^+_{K_p,n}(F) \subseteq p D^+_{K_p,n}(F)$. 
On possède des traces de Tate : $\mathrm{Tr}_{n'} : B_{K_p, \infty} \rightarrow B_{K_p,n'}$ pour tout $n'$ assez grand qui ont la propriété que $\gamma-1$ est inversible sur $\mathrm{Ker}(\mathrm{Tr}_{n'})$ et $\vert (\gamma-1)^{-1}\vert \leq p^{\frac{1}{2m}}$.
Il suffit de démontrer que $\HH^1(\Gamma, \mathrm{Ker}(\mathrm{Tr}_{n'}) \otimes_{B_{K_p,n}} D_{K_p,n}(F)) = 0$. Soit $a \in  \mathrm{Ker}(\mathrm{Tr}_{n'}) \cap B_{K_p, \infty}^+$. Soit $b \in D^+_{K_p,n}(F)$. Soit $c = (\gamma-1)^{-1} pa$. 
On vérifie que la série $\sum_{\ell \geq 0} (\gamma^{-1}-1)^{-\ell}(c) \otimes (\gamma-1)^\ell(b)$ converge vers un élément $x$ dans $\mathrm{Ker}(\mathrm{Tr}_{n'}) \otimes_{B_{K_p,n}} D_{K_p,n}(F)$ et que $(\gamma-1)x = pa \otimes b$. 
 
\end{proof}

Il reste à calculer $\HH^i(\Gamma, D_{K_p,n'}(F))$. L'action de $\Gamma$ est  localement analytique, et donc on possède un opérateur de Sen $\Theta_F \in \mathrm{End}_{B_{K_p,n}}(D_{K_p,n}(F))$ (et par extension dans $\mathrm{End}_{B_{K_p,n'}}(D_{K_p,n'}(F))$ pour tout $n' \geq n$) qui commute avec l'action de $ \Gamma$. De plus on a  $\HH^i(\Gamma, D_{K_p,n'}(F)) = \HH^0(\Gamma, \HH^i(\Theta_F, D_{K_p,n'}(F))$.

Etudions d'avantage l'opérateur $\Theta_F$. On a  $\Theta_F \in \mathrm{End}_{B_{K_p,n}}(D_{K_p,n}(M)) \rightarrow \mathrm{End}_{B_\infty}(B_{\infty} \otimes_C F)$. On peut ainsi écrire également :
$$\HH^i(K_p,  B \otimes_C F) = \HH^0(K_p \times \Gamma, \HH^i(\Theta_F, B_\infty \otimes_C F))$$

Comme $\Theta_F$ commute avec l'action de $\Gamma \times K_p$, on peut aussi voir $$\Theta_F \in  (B \otimes_C F \otimes_C F^\vee)^{K_p}.$$ 

\begin{coro}\label{coro-op-Sen} La méthode de Sen nous permet de construire un recouvrement ouvert $\pi_{HT}^{-1}(U) = \cup W_i$ et pour chaque $W_i$, un opérateur de Sen $\Theta_i \in \tilde{VB}( \mathscr{F} \otimes \mathscr{F}^\vee)(W_i)$.
\end{coro}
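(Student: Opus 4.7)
The plan is to directly package the construction of Section \ref{section-construct-Sen} into the covering statement, with no additional work beyond verifying that the Sen operator produced there lives in the correct sections of $\tilde{VB}(\mathscr{F}\otimes \mathscr{F}^\vee)$.

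First I would exhibit a covering $\pi_{HT}^{-1}(U) = \bigcup_i W_i$ by opens of the type required by the Sen construction. Namely, by Scholze's work on the perfectoid modular curve, $\pi_{HT}^{-1}(U)$ admits a base of affinoid perfectoid opens $W = \pi_{K_p}^{-1}(W_{K_p})$ (for some small compact open $K_p$) with $W_{K_p}$ small in the sense of Section \ref{section-construct-Sen}; shrinking if necessary I can arrange $W \subseteq \pi_{HT}^{-1}(V)$ for an affinoid $V = \Spa(C,C^+) \subseteq U$ on which $\mathscr{F}\vert_V$ is free, and I can also arrange that $V$ is stable under a sufficiently small open subgroup $H$ of $G$, which is possible because $H$ is open in $G$ and the $H$-translates of the $V$'s cover $U$. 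For any such $W_i$ all the hypotheses needed in Section \ref{section-construct-Sen} are met.

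Next, for each $W_i$, I would invoke the machinery already assembled: applying Lemma \ref{lem-local-triviality} yields a lattice $F_i^+ \subseteq F_i = \mathscr{F}(V_i)$ and a compact open $K_{p,i}$ acting trivially on $F_i^+/pF_i^+$, so Proposition \ref{prop-sen-meth} (applied to $M = B_\infty^+\otimes_{C^+} F_i^+$) produces the descent $D_{K_{p,i},n}^+(F_i)$ with locally analytic $\Gamma$-action. Differentiating this action gives the Sen operator $\Theta_i = \Theta_{F_i}\in \End_{B_{K_{p,i},n}}(D_{K_{p,i},n}(F_i))$, which after base change to $B_\infty$ and passage to $K_{p,i}\times \Gamma$-invariants already satisfies the crucial relation
\[
\Theta_i \in (B_i \otimes_C F_i \otimes_C F_i^\vee)^{K_{p,i}}
\]
noted at the end of Section \ref{section-construct-Sen}.

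Finally I need to match this with a section of $\tilde{VB}(\mathscr{F} \otimes \mathscr{F}^\vee)$ over $W_i$. Unwinding the definition of $\tilde{VB}$ on the affinoid perfectoid $W_i$, and using that $\mathscr{F}\vert_{V_i}$ is free (so $\mathscr{F}\otimes\mathscr{F}^\vee$ has sections $F_i \otimes_C F_i^\vee$ over $V_i$), one has
\[
\tilde{VB}(\mathscr{F}\otimes \mathscr{F}^\vee)(W_i) = \colim_{K_p' \subseteq K_{p,i}} (F_i\otimes_C F_i^\vee \otimes_C B_i)^{K_p'},
\]
and $\Theta_i$ precisely gives an element of the $K_{p,i}$-term of this colimit. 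This finishes the construction. There is no real obstacle here: the only thing to check beyond Section \ref{section-construct-Sen} is that sufficiently small affinoid perfectoid opens $W$ with the required structure cover $\pi_{HT}^{-1}(U)$, which follows from \cite{scholze-torsion} together with local freeness of $\mathscr{F}$ (Lemma \ref{lem-extension}).
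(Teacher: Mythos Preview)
Your proposal is correct and follows the same approach as the paper: the corollary is just a summary of the construction carried out in Section \ref{section-construct-Sen}, and the paper does not supply a separate proof beyond the observation that $\Theta_F \in (B \otimes_C F \otimes_C F^\vee)^{K_p}$. Your write-up makes explicit the existence of the required covering (via Scholze's base of affinoid perfectoids, local freeness of $\mathscr{F}$ from Lemma \ref{lem-extension}, and the fact that small $G_n$-stable affinoids cover $\mathcal{FL}$), which the paper leaves implicit, but the argument is the same.
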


La fin de la preuve est  dédiée à la  détermination des opérateurs de Sen. Voir la proposition \ref{prop-calcul-Senopfinal}


\subsubsection{Fonctorialité du module de Sen}

Etudions la fonctorialité en l'ouvert $W$. Soit $W' = \Spa (B',(B')^+) \subseteq W$ un ouvert rationnel. 

 On a $W' = (\pi_{K_p'})^{-1}(W'_{K'_p})$ pour tout compact ouvert $K'_p \subseteq K_p$ assez petit.  On a  $W'_{K'_p} = \Spa (B'_{K'_p}, (B'_{K'_p})^+)$. On définit $B'_{K'_p,n}$, $B'_{K'_p, \infty}$, $B'_\infty$ comme précédemment. 
 On possède une  constante $n'(K'_p,c)$ et pour tout $n \geq n'(K'_p,c)$ un module $D'_{K'_p,n}(F) \subseteq B_{\infty}' \otimes_C F$.

\begin{lem} Pour  $n \geq \max\{n(K_p,c), n'(K'_p,c)\}$,  le morphisme naturel 
$$ B'_{K'_p,n} \otimes_{B_{K_p,n}}  D_{K_p,n}(F)\rightarrow D'_{K'_p,n}(F)$$ est un isomorphisme.
\end{lem}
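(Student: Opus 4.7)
La strat�gie est de v�rifier que le membre de gauche, not� $N := B'_{K'_p,n} \otimes_{B_{K_p,n}} D_{K_p,n}(F)$, satisfait les trois propri�t�s caract�risant $D'_{K'_p,n}(F)$ dans la proposition \ref{prop-sen-meth}, puis d'invoquer l'unicit�.

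Tout d'abord, on se place dans $B'_\infty \otimes_C F$ au moyen du morphisme naturel $B_\infty \hookrightarrow B'_\infty$ (obtenu par la presque puret�, comme dans la preuve du lemme \ref{lem-calculcoho}), qui est $K'_p\times \Gamma$-�quivariant. Comme $D_{K_p,n}(F)$ est libre sur $B_{K_p,n}$, le module $N$ est libre sur $B'_{K'_p,n}$ et le morphisme canonique $N \to B'_\infty \otimes_C F$ est injectif puisque $B'_{K'_p,n} \hookrightarrow B'_\infty$ (par presque puret� appliqu�e � la tour de Sen associ�e � $W'$). On regarde donc $N$ comme un sous-$B'_{K'_p,n}$-module libre de $B'_\infty \otimes_C F$.

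Je v�rifie ensuite les trois conditions de la proposition \ref{prop-sen-meth} relative � $W'$. Pour la condition (1) : comme $D_{K_p,n}(F)$ est fix� par $K_p$, il est a fortiori fix� par $K'_p \subseteq K_p$ ; et $B'_{K'_p,n}$ est par construction $K'_p$-invariant et stable sous $\Gamma$. Donc $N$ est $K'_p$-invariant et $\Gamma$-stable. Pour la condition (2) : par transitivit� du produit tensoriel et par la propri�t� (2) de $D_{K_p,n}(F)$, on obtient
$$B'_\infty \otimes_{B'_{K'_p,n}} N = B'_\infty \otimes_{B_{K_p,n}} D_{K_p,n}(F) = B'_\infty \otimes_{B_\infty} (B_\infty \otimes_{B_{K_p,n}} D_{K_p,n}(F)) = B'_\infty \otimes_C F,$$
l'isomorphisme �tant manifestement $K'_p \times \Gamma$-�quivariant. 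Pour la condition (3) : si $\mathcal{B}$ est une base de $D_{K_p,n}(F)$ avec $(\gamma-1)(\mathcal{B}) \subseteq p^c D_{K_p,n}(F)$ pour tout $\gamma \in \Gamma$, alors l'image $1\otimes \mathcal{B}$ dans $N$ est une base avec la m�me propri�t�, puisque l'inclusion $D_{K_p,n}(F) \hookrightarrow N$ est $\Gamma$-�quivariante.

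Par l'unicit� affirm�e dans la proposition \ref{prop-sen-meth} (appliqu�e � $M = B'^+_\infty \otimes_{C^+} F^+$ et � la tour $\{B'_{K''_p,m}\}$), on conclut que $N = D'_{K'_p,n}(F)$, ce qui donne le r�sultat. Le point potentiellement d�licat est la v�rification de l'injectivit� de $N \hookrightarrow B'_\infty \otimes_C F$, qui repose sur la fid�le platitude de $B'_{K'_p,n} \to B'_\infty$ fournie par la th�orie de Tate-Sen.
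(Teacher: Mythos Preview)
Your proposal is correct and is precisely the natural elaboration of the paper's own proof, which consists of the single sentence ``C'est clair.'' Your strategy---verifying that the base-changed module satisfies the three characterizing properties of Proposition~\ref{prop-sen-meth} and then invoking uniqueness---is exactly what that remark is meant to convey; the only minor point to keep in mind is that the uniqueness in Proposition~\ref{prop-sen-meth} is stated for the integral lattices $D^+$, so your verification of condition~(3) and the appeal to uniqueness should, strictly speaking, be carried out at the integral level (with $D^+_{K_p,n}(F)$ and $B'^+_{K'_p,n}$) before inverting~$p$, but this changes nothing in the argument.
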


\begin{proof} C'est clair.  

\end{proof}

\begin{coro} Les faisceaux $\tilde{VB}(\mathscr{F})$ et $\mathrm{R}^1\tilde{VB}(\mathscr{F})$ sont cohérents. 
\end{coro}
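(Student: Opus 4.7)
The plan is to reduce coherence to a check at finite level via Proposition~\ref{prop-coherence}(2), and then use the Sen-theoretic description of the stalks developed in Section~\ref{section-construct-Sen} to exhibit the required coherent models. First, I fix a base of opens of the form $W = \pi_{K_p}^{-1}(W_{K_p}) \subseteq \pi_{HT}^{-1}(V)$ with $V = \Spa(C,C^+) \subseteq U$ affinoid trivialising $\mathscr{F}$ and $W_{K_p}$ small in the sense of Section~\ref{sect-TSEN}. It suffices to produce, on each such $W_{K_p}$ (at some finite level $K_p$ depending on $W$), coherent $\oscr_{W_{K_p}}$-modules $\tilde{VB}_{K_p}(\mathscr{F})$ and $\mathrm{R}^1\tilde{VB}_{K_p}(\mathscr{F})$ whose pullback via $(\pi_{K_p}^{sm})^\star$ recovers $\tilde{VB}(\mathscr{F})$ and $\mathrm{R}^1\tilde{VB}(\mathscr{F})$.

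On such a $W$, Lemma~\ref{lem-calculcoho}, applied at each $K'_p \subseteq K_p$, together with the fact that the locally analytic $\Gamma$-action on $D_{K'_p,n}(F)$ has infinitesimal generator $\Theta_F$, gives
$$\HH^i(K'_p, B_{K'_p} \otimes_C F) \;=\; \HH^0\!\bigl(\Gamma,\, \HH^i(\Theta_F,\, D_{K'_p,n}(F))\bigr) \quad (i=0,1).$$
Passing to the filtered colimit over $K'_p$, the right-hand side is built from the kernel and cokernel of the Sen operator $\Theta_F$ acting $B_{K_p,n}$-linearly on the finite free $B_{K_p,n}$-module $D_{K_p,n}(F)$. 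Both $\ker \Theta_F$ and $\coker \Theta_F$ are finitely presented $B_{K_p,n}$-modules by noetherianity of $B_{K_p,n}$, and since $B_{K_p,n}/B_{K_p}$ is finite (étale away from cusps), their $\Gamma$-invariants are finitely presented $B_{K_p}$-modules. This defines the candidate coherent sheaves $\tilde{VB}_{K_p}(\mathscr{F})$ and $\mathrm{R}^1\tilde{VB}_{K_p}(\mathscr{F})$ on $W_{K_p}$.

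It remains to (i) glue these local constructions and (ii) verify the base change compatibility. The functoriality statement preceding this corollary ensures that for a rational open $W' \subseteq W$ and $K''_p \subseteq K'_p$ sufficiently small, one has $D_{K''_p,m}(F) = B'_{K''_p,m} \otimes_{B_{K_p,n}} D_{K_p,n}(F)$. Applying $\ker$ and $\coker$ of $\Theta_F$ (which commutes with this flat base change) and then $\Gamma$-invariants, one recovers the same module up to base change along $B_{K_p} \to B'_{K''_p}$, so the local finite-level sheaves glue. Passing now to the colimit over $K'_p$ and using that $(B_{K'_p,n} \otimes_{B_{K_p,n}} N)^\Gamma = B_{K'_p} \otimes_{B_{K_p}} N^\Gamma$ for $N$ a finitely presented $\Gamma$-equivariant $B_{K_p,n}$-module on which $\Gamma$ acts through its finite cyclotomic quotient, one recovers $\tilde{VB}(\mathscr{F})(W)$ and $\mathrm{R}^1\tilde{VB}(\mathscr{F})(W)$ as the sections of $(\pi_{K_p}^{sm})^\star$ of the finite-level sheaves, as required by Proposition~\ref{prop-coherence}(2).

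The main obstacle I expect is the comparison $\HH^i(\Gamma, D_{K_p,n}(F)) = \HH^0(\Gamma, \HH^i(\Theta_F, D_{K_p,n}(F)))$ in degree one: one must argue that the continuous $\Gamma$-action is close enough to being linearised by its derivative $\Theta_F$ to collapse the cohomology of the topologically cyclic group $\Gamma$ onto kernel/cokernel of the infinitesimal generator. This is a standard Tate--Sen decompletion argument (analogous to the trace-of-Tate computation inside the proof of Lemma~\ref{lem-calculcoho}), but needs to be made precise because the conclusion about coherence, and ultimately Theorem~\ref{thm2}(4), depends on having a clean equality and not merely an almost-isomorphism.
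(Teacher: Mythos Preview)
Your approach is essentially the paper's: both identify the sections over a small $W$ as $\colim_{K_p}\HH^0(\Gamma,\HH^i(\Theta_F,D_{K_p,n}(F)))$ and use flatness of $B_{K_p,n}\to B'_{K'_p,n}$ (the functoriality lemma just before the corollary) to obtain the base-change compatibility needed for coherence. Two small remarks: in your displayed formula the module on which $K'_p$ acts should be $B\otimes_C F$ (with $B$ the perfectoid ring), not $B_{K'_p}\otimes_C F$; and the identity you flag as the ``main obstacle'' is already recorded in the paper immediately before the corollary, so you may simply cite it rather than reprove it.
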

\begin{proof} Avec les notations précédentes, on a $\tilde{VB}(\mathscr{F})(V) = \colim_{K_p} \HH^0(\Gamma, D_{K_p,n}(F)^{\Theta_F})$ et $\mathrm{R}^1\tilde{VB}(\mathscr{F})(V) = \colim_{K_p} \HH^0(\Gamma, D_{K_p,n}(F)_{\Theta_F})$.
Le morphisme $B_{K_p,n} \rightarrow B'_{K'_p,n}$ est plat, donc l'isomorphisme $$ B'_{K'_p,n} \otimes_{B_{K_p,n}}  D_{K_p,n}(F)\rightarrow D_{K'_p,n}(F)$$ induit des isomorphismes
$$ B'_{K'_p,n} \otimes_{B_{K_p,n}}  \HH^i(\Theta_F, D_{K_p,n}(F)) \rightarrow  \HH^i(\Theta_{F}, D'_{K'_p,n}(F)).$$
Le corollaire en résulte facilement. 
\end{proof}

Etudions la fonctorialité en le faisceau $\mathscr{F}$.  On travaille toujours sur un ouvert $W$ de $X_{K_pK^p}$ comme dans la section \ref{section-construct-Sen}. On se donne deux faisceaux $\mathscr{F}$ et $\mathscr{G}$ dans $\mathbf{Coh}_{\mathfrak{g}}(U)$ et pour $K_p$ assez petit, et $n \geq n(K_p,c)$ on possède leurs modules de Sen respectifs (sur l'ouvert $W$), $D_{K_p,n}(F)$ et   $D_{K_p,n}(G)$.

\begin{lem}\label{lem-functor} \begin{enumerate} 
\item Supposons qu'on a un morphisme  $\mathscr{F} \rightarrow \mathscr{G}$ dans $\mathbf{Coh}_{\mathfrak{g}}(U)$. Alors, pour tout $K_p$ assez petit et $n \geq n(K_p,c)$, on possède un morphisme naturel
$D_{K_p,n}(F) \rightarrow D_{K_p,n}(G)$ qui est $K_p \times \Gamma$-équivariant.
\item  Supposons qu'on a deux objets   $\mathscr{F}$ et  $\mathscr{G}$ dans $\mathbf{Coh}_{\mathfrak{g}}(U)$. Alors, pour tout $K_p$ assez petit et $n \geq n(K_p,c)$, on possède un isomorphisme  naturel
$D_{K_p,n}(F) \otimes_{B_{K_p,n}} D_{K_p,n}(G) \rightarrow D_{K_p,n}(F \otimes G)$ qui est $K_p \times \Gamma$-équivariant. Si $\Theta_F$ et $\Theta_G$ sont les opérateurs de Sen sur $D_{K_p,n}(F) $ et $ D_{K_p,n}(G)$, alors $\Theta_F \otimes 1 + 1 \otimes \Theta_G$ est l'opérateur de Sen sur $D_{K_p,n}(F \otimes F')$.
\end{enumerate}
\end{lem}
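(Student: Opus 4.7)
The plan is to deduce both parts from the uniqueness clause of Proposition~\ref{prop-sen-meth}, by exhibiting the expected objects as candidate Sen modules and checking they satisfy the defining properties.

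For part (1), given a morphism $\varphi : \mathscr{F} \to \mathscr{G}$ in $\mathbf{Coh}_{\mathfrak{g}}(U)$, I would first extract on sections over the chosen affinoid $V$ a $C$-linear, $\mathfrak{g}$-equivariant map $F \to G$. After rescaling the integral lattices $F^+, G^+$ (which is harmless), one may arrange $\varphi(F^+) \subseteq G^+$. Tensoring with $B_\infty^+$ yields a $K_p \times \Gamma$-equivariant map $\varphi_\infty$ between the two semi-linear representations. The image $\varphi_\infty(D_{K_p,n}^+(F))$ is a $B_{K_p,n}^+$-submodule of $B_\infty^+ \otimes_{C^+} G^+$ that is fixed by $K_p$ and stable under $\Gamma$, hence lies in $B_{K_p,\infty}^+ \otimes_{B_{K_p,n}^+} D_{K_p,n}^+(G)$ by Sen descent. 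The key step is to invoke the uniqueness clause of Proposition~\ref{prop-sen-meth}: after possibly enlarging $n$ to absorb the loss of smallness produced by $\varphi_\infty$, this image must be contained in $D_{K_p,n}^+(G)$, yielding the desired natural map. The $K_p \times \Gamma$-equivariance is built into the construction.

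For part (2), I would exhibit $D_{K_p,n}(F) \otimes_{B_{K_p,n}} D_{K_p,n}(G)$ as a candidate Sen module for $\mathscr{F} \otimes \mathscr{G}$. It is free of finite rank over $B_{K_p,n}$, fixed by $K_p$, stable under the diagonal $\Gamma$-action, and its base change to $B_\infty^+$ coincides with $B_\infty^+ \otimes_{C^+} (F^+ \otimes_{C^+} G^+)$ since tensor products commute with base change. The smallness condition on the tensor basis $\mathcal{B}_F \otimes \mathcal{B}_G$ follows from the Leibniz-type identity
$$(\gamma - 1)(x \otimes y) \;=\; (\gamma - 1)x \otimes y \;+\; \gamma x \otimes (\gamma - 1)y,$$
using $\gamma x = x + (\gamma - 1)x$ to see that both terms lie in $p^c \cdot \bigl(D_{K_p,n}(F) \otimes D_{K_p,n}(G)\bigr)$ when the bases of $D_{K_p,n}(F)$ and $D_{K_p,n}(G)$ satisfy the smallness condition at level $c$. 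Uniqueness in Proposition~\ref{prop-sen-meth} then forces the canonical inclusion to be an isomorphism. The formula $\Theta_{F \otimes G} = \Theta_F \otimes 1 + 1 \otimes \Theta_G$ is then obtained by differentiating the diagonal action $\gamma \cdot (x \otimes y) = \gamma x \otimes \gamma y$ at the identity.

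The main obstacle I anticipate is in part (1): one must ensure that $\varphi_\infty$ preserves the structural conditions of Sen descent (in particular the smallness of a basis) up to passing to a slightly larger level $n$, uniformly in $\varphi$. This is the point where the flexibility of the Tate--Sen axioms of \cite{MR2493221} is essential, and the argument should closely parallel the functoriality arguments in \emph{loc. cit.}
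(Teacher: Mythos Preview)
Your approach is correct and is precisely what underlies the paper's proof, which reads in its entirety ``C'est clair.'' You have correctly identified that both parts follow from the uniqueness clause of Proposition~\ref{prop-sen-meth}, and your detailed unpacking (the Leibniz identity for $(\gamma-1)$ on tensors, differentiating the diagonal action to obtain $\Theta_{F\otimes G}=\Theta_F\otimes 1+1\otimes\Theta_G$) is exactly the standard verification that the paper leaves implicit.
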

\begin{proof} C'est clair.
\end{proof}
\subsubsection{Objets de rang $1$}  On étudie maintenant le foncteur $\tilde{VB}$ pour des objets de rang $1$. 
On note $\mathcal{T}_{dR,K_p} \rightarrow X_{K_pK^p}$ le $T$-torseur des trivialisations de $\omega_{E^t} \oplus Lie(E)$. 
On note $\mathcal{T}_{HT} = U\backslash G \rightarrow \mathcal{FL}$ le $T$-torseur tautologique. On possède un isomorphisme $G(\qq_p)$-équivariant:
$$ \mathcal{T}_{dR,K_p} \times_{X_{K_p K^p}} X_{K^p} = \mathcal{T}_{HT} \times_{\mathcal{FL}} X_{K^p}$$

Soit $x$ un point classique de $\mathcal{FL}$ (c'est à dire un point de corps résiduel $\C_p$).  Soit $n \geq 0$. On rappelle qu'on possède un sous-groupe $G_n$ de $G$ des élements qui se réduisent sur $1$ modulo $p^n$. On note $B_{x,n} = B_x \cap G_n$, $U_{x,n} = U_x \cap G_n$, $T_{x,n} = T_x \cap G_n$.  Soit $x B_{x,n} \backslash G_n = V_x$ un voisinage du point $x$ (c'est la $G_n$-orbite de $x$). On possède un $T_{x,n}$-torseur qui est $G_n$-équivariant : $x U_{x,n} \backslash G_n \rightarrow V_x$. Notons le $\mathcal{T}_{HT, x,n}$. C'est une réduction de structure du torseur $\mathcal{T}_{HT}\vert_{V_x}$. 

Il existe un ouvert $U_{K_p} \subseteq X_{K_pK^p}$ tel qu'on ait $\pi_{HT}^{-1}(V_x) = \pi_{K_p}^{-1}(U_{K_p})$ pour $K_p$ assez petit (il suffit $K_p \subseteq G_n(\qq_p)$).  

\begin{lem} Le torseur $\mathcal{T}_{dR,K_p}\vert_{U_{K_p}}$ possède une réduction de structure à un unique $T_{x,n}$-torseur $\mathcal{T}_{dR,K_p,n} \rightarrow U_{K_p}$ tel que l'isomorphisme $$ \mathcal{T}_{dR,K_p} \times_{X_{K_p K^p}} X_{K^p} = \mathcal{T}_{HT} \times_{\mathcal{FL}} X_{K^p}$$
induise un isomorphisme : $$ \mathcal{T}_{dR,K_p,n} \times_{U_{K_p}} \pi_{HT}^{-1}(V_x) = \mathcal{T}_{HT,x,n} \times_{V_x} \pi_{HT}^{-1}(V_x)$$
\end{lem}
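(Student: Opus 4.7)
The plan is to obtain $\mathcal{T}_{dR,K_p,n}$ by faithfully flat (pro-\'etale) descent of the $T_{x,n}$-reduction coming from the Hodge-Tate side. Set $W = \pi_{HT}^{-1}(V_x) = \pi_{K_p}^{-1}(U_{K_p})$ (valid once $K_p \subseteq G_n(\qq_p)$). Pulling back the $T_{x,n}$-torseur $\mathcal{T}_{HT,x,n}\to V_x$ to $W$ and transporting along the canonical isomorphism
$$\mathcal{T}_{dR,K_p}\times_{X_{K_pK^p}} X_{K^p} \simeq \mathcal{T}_{HT}\times_{\mathcal{FL}} X_{K^p}$$
yields a $T_{x,n}$-structure reduction $\mathcal{S}\subseteq \mathcal{T}_{dR,K_p}\vert_{U_{K_p}}\times_{U_{K_p}} W$ of the pullback of the de Rham torseur. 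So the problem is reduced to checking that $\mathcal{S}$ descends along the quasi-pro-\'etale map $W = \pi_{K_p}^{-1}(U_{K_p})\to U_{K_p}$.

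For descent, since $X_{K^p}\to X_{K_pK^p}$ is a cofiltered inverse limit of finite \'etale $K_p/K'_p$-torseurs (for varying $K'_p\subseteq K_p$), it suffices to show that $\mathcal{S}$ is $K_p$-equivariant, and in fact already defined at some finite level $K'_p$. The former is immediate: the Hodge-Tate comparison is $G(\qq_p)$-equivariant, so in particular $K_p$-equivariant, and $\mathcal{T}_{HT,x,n} = xU_{x,n}\backslash G_n\to V_x = xB_{x,n}\backslash G_n$ is manifestly $G_n$-equivariant, hence $K_p$-invariant because $K_p \subseteq G_n(\qq_p)$. To reach a finite level, one uses that the $T$-torseur $\mathcal{T}_{dR,K_p}\vert_{U_{K_p}}$ already exists at level $K_p$, so any reduction to the closed subgroup $T_{x,n}\subseteq T$ corresponds to a section of the affine quotient $\mathcal{T}_{dR,K_p}/T_{x,n}\to U_{K_p}$; the invariant section supplied by $\mathcal{S}$ over $W$ is $K_p$-invariant, hence comes (uniquely) from a section over $U_{K_p}$ by the limit formula $\oscr_{X_{K_pK^p}} = (\pi_{K_p,\star}\oscr_{X_{K^p}}^{sm})^{K_p}$.

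This section cuts out the desired $T_{x,n}$-torseur $\mathcal{T}_{dR,K_p,n}\subseteq \mathcal{T}_{dR,K_p}\vert_{U_{K_p}}$. Uniqueness is built into the construction: any two reductions satisfying the asserted identification over $\pi_{HT}^{-1}(V_x)$ pull back to the same subscheme of $\mathcal{T}_{dR,K_p}\vert_{U_{K_p}}\times_{U_{K_p}}W$, and descend data is unique by the fully faithfulness of pullback along $U_{K_p}\to W$ on quasi-coherent sheaves. The compatibility isomorphism
$$\mathcal{T}_{dR,K_p,n}\times_{U_{K_p}}W \simeq \mathcal{T}_{HT,x,n}\times_{V_x}W$$
then holds tautologically.

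The main obstacle is purely formal: verifying the descent of a closed subtorseur along the pro-\'etale cover $X_{K^p}\to X_{K_pK^p}$, which one can phrase either in terms of Scholze's pro-\'etale topology on the perfectoid tower or by noting that, since the structure groups involved are all finite type affine, the reduction is detected by a $K_p$-invariant element in a coherent $\oscr_{U_{K_p}}$-module and hence descends by the coherence statements of Proposition~\ref{prop-coherence}. Everything else is unfurling definitions.
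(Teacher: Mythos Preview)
Your overall plan --- pull back the reduction from the Hodge--Tate side, check $K_p$-equivariance, then descend --- matches the paper's. The gap is in the descent step: you treat $T_{x,n}$ as a \emph{closed} subgroup of $T$ and speak of ``a section of the affine quotient $\mathcal{T}_{dR,K_p}/T_{x,n}$'', but $T_{x,n} = T_x \cap G_n$ is an \emph{open} affinoid subgroup of $T_x \simeq T$, not a closed one. The quotient $T/T_{x,n}$ is therefore not an adic space (let alone affine), so there is no coherent-sheaf or section-valued datum to which Proposition~\ref{prop-coherence} applies. Your concluding paragraph also calls $\mathcal{S}$ a ``closed subtorseur'', which is the same slip.

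The paper exploits exactly this openness. Since $T_{x,n}\subseteq T$ is open, the reduction $\mathcal{T}_{HT,x,n}\times_{V_x} W$ sits as an \emph{open} subset of $|\mathcal{T}_{dR,K_p}\times_{U_{K_p}} W|$. Because $|X_{K^p}|\to |X_{K_pK^p}|$ is the topological quotient by $K_p$, a $K_p$-invariant open descends to an open $|\mathcal{T}_{dR,K_p,n}|\subseteq |\mathcal{T}_{dR,K_p}|_{U_{K_p}}|$; one then endows it with the induced adic structure and checks the torsor condition after pulling back to $X_{K^p}$. So the descent here is purely topological, not sheaf-theoretic. Once you replace ``closed subgroup / section of an affine quotient'' by ``open subspace / descent of a $K_p$-invariant open'', your argument becomes the paper's.
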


\begin{proof} On possède un ouvert (topologique) $\vert \mathcal{T}_{HT,x,n} \times_{V_x} \pi_{HT}^{-1}(V_x)\vert \hookrightarrow  \vert \mathcal{T}_{dR,K_p} \times_{U_{K_p}} \pi_{HT}^{-1}(V_x)\vert$. Comme cet ouvert est $K_p$-invariant, il provient d'un ouvert $\vert \mathcal{T}_{dR,K_p,n} \vert \hookrightarrow \vert \mathcal{T}_{dR,K_p}\vert_{U_{K_p}}\vert$. On note $\mathcal{T}_{dR,K_p,n} $ l'espace adique associé à cette ouvert. On vérifie alors que cet espace est stable sous l'action de $T_{x,n}$ et que le morphisme $T_{x,n} \times \mathcal{T}_{dR,K_p,n}  \rightarrow \mathcal{T}_{dR,K_p,n} \times T_{dR,K_p,n}$ est un isomorphisme. Pour le vérifier on peut changer de base à $X_{K^p}$ et c'est évident.
\end{proof}

\begin{coro}\label{coro-Sen-rang1} Soit $\mathscr{F} \in \mathbf{Coh}_{G_n}(V_x)$ un objet de rang $1$. Alors $\tilde{VB}(\mathscr{F})$ est un faisceau inversible de $\oscr^{sm}_{X_{K^p}}\vert_{\pi_{HT}^{-1}(V_x)}$-modules. De plus, les opérateurs de Sen associés à $\mathscr{F}$ sont triviaux.  
\end{coro}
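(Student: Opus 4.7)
Le plan est de ramener les calculs à une construction classique de fibrés en droites associés par un caractère à un torseur, en exploitant la réduction de structure qui vient d'être établie.

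Premièrement, j'appliquerais la proposition \ref{prop-rep-torseurs} à $V_x = x.B_{x,n}\backslash G_n$ et au groupe $G_n$ agissant sur lui : la donnée d'un objet de rang $1$ dans $\mathbf{Coh}_{G_n}(V_x)$ est équivalente à celle d'un caractère algébrique $\chi : B_{x,n} \to \mathbb{G}_m$. Comme $U_{x,n}$ est pro-unipotent, un tel caractère se factorise à travers $T_{x,n}$. Le faisceau $\mathscr{F}$ s'obtient alors comme le faisceau associé au $T_{x,n}$-torseur $\mathcal{T}_{HT,x,n}$ via le caractère $\chi$.

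Ensuite, grâce à la réduction de structure $\mathcal{T}_{dR,K_p,n}$ fournie par le lemme précédent, on peut former, pour $K_p$ suffisamment petit, un faisceau inversible $\mathscr{F}_{K_p}$ sur $U_{K_p}$, défini comme le faisceau associé à $\mathcal{T}_{dR,K_p,n}$ via le même caractère $\chi$. L'isomorphisme de torseurs
$$\mathcal{T}_{dR,K_p,n} \times_{U_{K_p}} \pi_{HT}^{-1}(V_x) = \mathcal{T}_{HT,x,n} \times_{V_x} \pi_{HT}^{-1}(V_x)$$
induit par fonctorialité un isomorphisme canonique, $K_p$-équivariant pour l'action triviale de $K_p$ sur $\mathscr{F}_{K_p}$ et l'action naturelle sur $\mathscr{F}$ à droite :
$$\pi_{K_p}^{-1}\mathscr{F}_{K_p} \otimes_{\pi_{K_p}^{-1}\oscr_{U_{K_p}}} \oscr_{X_{K^p}} \;\simeq\; \pi_{HT}^{-1}\mathscr{F} \otimes_{\pi_{HT}^{-1}\oscr_{V_x}} \oscr_{X_{K^p}}.$$

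En prenant les $K_p$-invariants puis la colimite sur $K_p$, et en tensorisant au-dessus de $\oscr^{sm}_{X_{K^p}}$, on obtient $\tilde{VB}(\mathscr{F}) = \mathscr{F}_{K_p} \otimes_{\oscr_{U_{K_p}}} \oscr^{sm}_{X_{K^p}}\vert_{\pi_{HT}^{-1}(V_x)}$, qui est clairement un faisceau inversible de $\oscr^{sm}_{X_{K^p}}$-modules. Pour la trivialité des opérateurs de Sen, il suffit d'observer que l'on peut, dans la construction du module de Sen $D_{K_p,n}(F)$ pour un ouvert affinoïde $W \subseteq \pi_{HT}^{-1}(V_x)$ intersecté avec un ouvert affinoïde de $V_x$, prendre pour base une trivialisation locale provenant de $\mathscr{F}_{K_p}$ : cette base est fixée par $K_p$ et par $\Gamma$ (car elle provient d'une trivialisation au niveau fini $U_{K_p}$, qui ne rencontre pas la tour cyclotomique). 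Ainsi $\Theta_F$ agit par $0$ sur cette base, donc est nul. Le principal point technique est de bien vérifier que la réduction de structure $\mathcal{T}_{dR,K_p,n}$ donne effectivement une trivialisation locale provenant du niveau fini $U_{K_p}$ qui soit compatible avec celle fournie sur $\pi_{HT}^{-1}(V_x)$ par $\mathcal{T}_{HT,x,n}$ ; mais c'est précisément le contenu du lemme précédant l'énoncé.
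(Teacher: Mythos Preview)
Ta démonstration est correcte et suit exactement la même approche que celle du papier : utiliser la proposition~\ref{prop-rep-torseurs} pour réduire à un caractère $\chi$ de $T_{x,n}$, puis exploiter la réduction de structure $\mathcal{T}_{dR,K_p,n}$ du lemme précédent pour construire $\mathscr{F}_{K_p}$ au niveau fini et en déduire l'isomorphisme $K_p$-équivariant voulu. Ton argument pour la trivialité de l'opérateur de Sen est même un peu plus explicite que celui du papier, qui se contente de dire que c'est évident.
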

\begin{proof} Le faisceau  inversible $\mathscr{F}$ est associé, par la proposition \ref{prop-rep-torseurs}, à une représentation de $B_{x,n}$ de rang $1$ qui se factorise en un caractère  $\chi : T_{x,n} \rightarrow \mathbb{G}_m^{an}$. En utilisant le torseur $\mathcal{T}_{dR,K_p,n}$, on peut associer à tout character $\chi$ de $T_{x,n}$ un faisceau inversible de $\oscr_{U_{K_p}}$-modules $\mathscr{F}_{\chi, K_p}$ et le lemme précédent implique qu'on possède un isomorphisme  $$\mathscr{F}_{\chi,K_p} \otimes_{\oscr_{U_{K_p}}} \oscr_{X_{K^p}}\vert_{\pi_{HT}^{-1}(V_x)} = \pi_{HT}^{-1} \mathscr{F} \otimes_{\pi_{HT}^{-1}\oscr_{V_x}} \oscr_{X_{K_p}}\vert_{\pi_{HT}^{-1}(V_x)}$$ qui est $K_p$-équivariant. Il en résulte que $\mathscr{F}_{\chi,K_p}= \tilde{VB}_{K_p}(\mathscr{F})$. Il est aussi évident que l'action de $\Gamma$ sur les modules de Sen associé à $\mathscr{F}$ est triviale.
\end{proof}


\subsubsection{L'extension de Faltings}

L'extension de Faltings (\cite{MR909221},  thm 2, (ii))  est une suite exacte  $G(\qq_p)$-équivariante:
$$ 0 \rightarrow \oscr_{X_{K_p}} \rightarrow FE \rightarrow \oscr_{X_{K^p}} \otimes \Omega^1_{X_{K_pK^p}}(\mathrm{log}(cusp)) \rightarrow 0 $$
En prenant la cohomologie de $K_p$, on récupère un isomorphisme:
$$ \omega^1_{X_{K_pK^p}} (\mathrm{log}(cusp)) \rightarrow \HH^1(K_p, (\pi_{K_p})_\star \oscr_{X_{K^p}}). $$
Notons $St$ la représentation standard de dimension $2$ de ${G}$. On possède également la suite exacte:
$$ 0 \rightarrow \oscr^{(0,1)}_{\mathcal{FL}} \rightarrow \oscr_{\mathcal{FL}} \otimes_{\qq_p} St \rightarrow  \oscr^{(1,0)}_{\mathcal{FL}} \rightarrow 0$$
On remarque que  \begin{eqnarray*}
\oscr^{(0,1)}_{\mathcal{FL}}&=& (\oscr_{\mathcal{FL}} \otimes_{\qq_p} St)^{\mathfrak{n}^0}, \\
\oscr^{(1,0)} \otimes \mathfrak{n_0}^\vee &=&(\oscr_{\mathcal{FL}} \otimes_{\qq_p} St)_{\mathfrak{n}^0}.
\end{eqnarray*}
En tirant en arrière via $\pi_{HT}$, on obtient une suite exacte :
$$  0 \rightarrow \omega_{K_p}^{(0,1)} \otimes \oscr_{X_{K^p}} \rightarrow \oscr_{X_{K^p}} \otimes_{\qq_p} St \rightarrow 
 \omega_{K_p}^{(1,0)} \rightarrow 0$$
 et en tordant par $\omega_{K_p}^{(0,-1)}$, on trouve une extension : 
$$  0 \rightarrow  \oscr_{X_{K^p}} \rightarrow \oscr_{X_{K^p}} \otimes_{\qq_p} St \otimes \omega_{K_p}^{(0,-1)}  \rightarrow 
 \omega_{K_p}^{(1,0)} \otimes \omega_{K_p}^{(0,-1)} \rightarrow 0.$$

 \begin{thm}[\cite{MR909221}, thm. 5; \cite{pan2021locally}, thm. 4.4.2] L''inverse de l'isomorphisme de Kodaira-Spencer $-KS : \omega_{K_p}^{(1,0)} \otimes \omega_{K_p}^{(0,-1)} \rightarrow  \Omega^1_{X_{K_pK^p}}(\mathrm{log}(cusp)) $ induit un isomorphisme de suite exactes :
 $$ \oscr_{X_{K^p}} \otimes_{\qq_p} St \otimes \omega_{K_p}^{(0,-1)} \rightarrow FE$$
 \end{thm}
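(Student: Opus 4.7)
The plan is to identify both extensions as pullbacks along $\pi_{HT}$ of a single tautological exact sequence on $\mathcal{FL}$, after appropriate twisting. By the very definition of the Hodge-Tate period map, the universal trivialization of $T_p E$ on $X_{K^p}$ combined with the Hodge-Tate filtration gives a canonical $G(\qq_p)$-equivariant isomorphism $\pi_{HT}^\star \oscr_{\mathcal{FL}}^{(0,1)} \otimes \oscr_{X_{K^p}} = \omega_{K_p}^{(0,1)} \otimes \oscr_{X_{K^p}}$ and similarly for the quotient line. Pulling back the tautological sequence $0 \to \oscr^{(0,1)}_{\mathcal{FL}} \to \oscr_{\mathcal{FL}} \otimes_{\qq_p} St \to \oscr^{(1,0)}_{\mathcal{FL}} \to 0$ along $\pi_{HT}$, then tensoring with $\omega_{K_p}^{(0,-1)}$, yields exactly the first of the two exact sequences appearing in the statement.

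The second step is to realize $FE$ as coming from the same geometric input. Following Faltings, the extension $FE$ arises from the Hodge-Tate sequence applied to the $p$-adic Tate module of the universal elliptic curve, combined with the Kodaira-Spencer isomorphism $\omega_{K_p}^{(1,0)} \otimes \omega_{K_p}^{(0,-1)} \simeq \Omega^1_{X_{K_pK^p}}(\mathrm{log}(\mathrm{cusp}))$. Concretely, the trivialization $T_p E \otimes \oscr_{X_{K^p}} \simeq St \otimes \oscr_{X_{K^p}}$ identifies, up to a Tate twist and after tensoring with $\omega_{K_p}^{(0,-1)}$, the twisted Hodge-Tate extension of $H^1_{\mathrm{et}}$ with the pullback of the tautological flag sequence on $\mathcal{FL}$.

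It then remains to compare the two extension classes in $\mathrm{Ext}^1(\omega_{K_p}^{(1,0)} \otimes \omega_{K_p}^{(0,-1)}, \oscr_{X_{K^p}})$. Both classes are $G(\qq_p)$-equivariant, nonzero, and naturally governed by the differential $d\pi_{HT}$; the key geometric input is the $p$-adic analogue of Griffiths transversality, which asserts that $d\pi_{HT}$ coincides up to the sign $-1$ with the Kodaira-Spencer map. Once this is settled, the five-lemma, together with the observation that any endomorphism of such an extension fixing the outer terms is the identity (since $\mathrm{Hom}(\omega_{K_p}^{(1,0)} \otimes \omega_{K_p}^{(0,-1)}, \oscr_{X_{K^p}}) = 0$), produces the desired isomorphism of exact sequences.

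I expect the principal obstacle to be the careful tracking of Tate twists and normalizations throughout: one must juggle the identifications between $\omega_E$, the line bundle $\oscr^{(0,1)}_{\mathcal{FL}}$, and its pullback along $\pi_{HT}$, while simultaneously accounting for the cyclotomic twist implicit in the Hodge-Tate comparison for $H^1_{\mathrm{et}}(E)$. These normalizations determine the precise sign appearing in the theorem, which encodes the compatibility between the analytic structure coming from Faltings' construction and the purely group-theoretic structure coming from $\mathcal{FL}$.
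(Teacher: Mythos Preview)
The paper does not actually prove this theorem: it is stated with attribution to Faltings (\cite{MR909221}, thm.~5) and Pan (\cite{pan2021locally}, thm.~4.4.2) and then used as a black box. There is therefore no ``paper's own proof'' to compare against; the result is imported from the literature and immediately applied in the corollaries that follow.

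As for your sketch on its own merits: the overall architecture (both extensions arise from the tautological flag sequence on $\mathcal{FL}$ pulled back along $\pi_{HT}$, and the heart of the matter is that $d\pi_{HT}$ agrees with $-KS$) is indeed the standard one, and is essentially how Pan proceeds in \cite{pan2021locally}, \S4.4. Two cautions, however. First, the sentence invoking the five-lemma together with $\mathrm{Hom}(\omega_{K_p}^{(1,0)} \otimes \omega_{K_p}^{(0,-1)}, \oscr_{X_{K^p}}) = 0$ is not quite right: that Hom is a line bundle's worth of sections on a perfectoid space and has no reason to vanish. What you actually need is that once the extension \emph{classes} are shown to agree, any $\oscr_{X_{K^p}}$-linear map between the middle terms inducing the identity on the outer terms is automatically an isomorphism; the five-lemma gives that directly without the vanishing claim. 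Second, your key step (``$d\pi_{HT}$ coincides up to sign $-1$ with the Kodaira--Spencer map'') is precisely the nontrivial content of the theorem, so as written the argument is close to circular; the actual work in the cited references is to unwind the construction of $FE$ from the relative Hodge--Tate filtration of the universal elliptic curve and match it against the flag variety picture, which is where the Tate-twist bookkeeping you flag becomes essential.
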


 \begin{coro}\label{coro-calculdeFalt} On a $\tilde{VB}(\oscr_{\mathcal{FL}} \otimes_{\qq_p} St) = \omega^{(0,1),sm}$ et $\mathrm{R}^1\tilde{VB}((\oscr_{\mathcal{FL}} \otimes_{\qq_p} St) =  \omega^{(1,0),sm} \otimes \tilde{VB}((\mathfrak{n}^0)^\vee)$.
\end{coro}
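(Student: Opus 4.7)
La strat�gie est d'appliquer directement le point $(4)$ du th�or�me \ref{thm2} au faisceau $\mathscr{F} = \oscr_{\mathcal{FL}} \otimes_{\qq_p} St$. La premi�re �tape consiste � identifier les faisceaux de cohomologie $\HH^i(\mathfrak{n}^0, \mathscr{F})$ : le complexe de Koszul, calcul� � partir de la suite exacte courte rappel�e juste au-dessus du th�or�me de Faltings, donne aussit�t $\HH^0(\mathfrak{n}^0, \mathscr{F}) = \oscr^{(0,1)}_{\mathcal{FL}}$ et $\HH^1(\mathfrak{n}^0, \mathscr{F}) = \oscr^{(1,0)}_{\mathcal{FL}} \otimes_{\oscr_{\mathcal{FL}}} (\mathfrak{n}^0)^\vee$. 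La premi�re �galit� du corollaire r�sultera alors du th�or�me \ref{thm2} $(4)$ en degr� $0$, combin� � l'identit� $\tilde{VB}(\oscr^\kappa_{\mathcal{FL}}) = \omega^{\kappa,sm}$ �tablie dans la sous-section \ref{subsec-ex1}.

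Pour la seconde �galit�, il me restera � �tablir une formule de ``projection''
$$\tilde{VB}(\oscr^{(1,0)}_{\mathcal{FL}} \otimes_{\oscr_{\mathcal{FL}}} (\mathfrak{n}^0)^\vee) \simeq \omega^{(1,0),sm} \otimes_{\oscr^{sm}_{X_{K^p}}} \tilde{VB}((\mathfrak{n}^0)^\vee).$$
Je proc�derais en invoquant la multiplicativit� des modules de Sen donn�e par le lemme \ref{lem-functor} $(2)$ : on dispose d'un isomorphisme canonique entre $D_{K_p,n}(\oscr^{(1,0)}_{\mathcal{FL}} \otimes (\mathfrak{n}^0)^\vee)$ et $D_{K_p,n}(\oscr^{(1,0)}_{\mathcal{FL}}) \otimes_{B_{K_p,n}} D_{K_p,n}((\mathfrak{n}^0)^\vee)$, l'op�rateur de Sen du produit s'�crivant $\Theta_{\oscr^{(1,0)}_{\mathcal{FL}}} \otimes 1 + 1 \otimes \Theta_{(\mathfrak{n}^0)^\vee}$.

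Le fait crucial est que $\oscr^{(1,0)}_{\mathcal{FL}}$ �tant un fibr� inversible $G$-�quivariant sur $\mathcal{FL}$, le corollaire \ref{coro-Sen-rang1} assure que son op�rateur de Sen est trivial et que son module de Sen provient du faisceau $\omega^{(1,0)}_{K_p}$ sur $X_{K_pK^p}$. L'obstacle principal sera alors de v�rifier que la prise du noyau de $\Theta$, puis des $\Gamma$-invariants, et enfin la colimite sur $K_p$, commutent avec le produit tensoriel par ce fibr� inversible de niveau fini. Cette commutation devrait d�couler de la locale libert� de rang fini (th�or�me \ref{thm2} $(1)$) et de la possibilit� de v�rifier l'isomorphisme sur les fibres gr�ce � la proposition \ref{prop-coherence} $(4)$.
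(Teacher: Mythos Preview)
Your argument is circular. The corollary sits \emph{inside} the proof of Theorem~\ref{thm2}: it is invoked in Proposition~\ref{prop-calcul-operaSenST} to show that the Sen operator for $St$ generates $\tilde{VB}(\mathfrak{n}^0)$, which feeds into Proposition~\ref{prop-calcul-Senopfinal}, which is what finishes the proof of Theorem~\ref{thm2} (including its point~(4)). So you cannot appeal to Theorem~\ref{thm2}(4) here; at this point in the paper that statement is not yet established, and in fact depends on the very corollary you are trying to prove.

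The paper's route avoids this by computing $\tilde{VB}$ and $\mathrm{R}^1\tilde{VB}$ on $\oscr_{\mathcal{FL}}\otimes_{\qq_p} St$ \emph{directly} as continuous $K_p$-cohomology, without any reference to the general formula $\mathrm{R}^i\tilde{VB}(\mathscr{F})=\tilde{VB}(\HH^i(\mathfrak{n}^0,\mathscr{F}))$. Concretely, the theorem of Faltings recalled just above furnishes a $K_p$-equivariant isomorphism $\oscr_{X_{K^p}}\otimes_{\qq_p} St \otimes \omega_{K_p}^{(0,-1)} \simeq FE$, and the $K_p$-cohomology of the Faltings extension $FE$ is already known: $\HH^0(K_p,FE)=\oscr_{X_{K_pK^p}}$, while the connecting map gives $\HH^1(K_p,FE)\simeq \Omega^1_{X_{K_pK^p}}(\mathrm{log}(\mathrm{cusp}))$. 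Twisting back by $\omega^{(0,1)}$, passing to the colimit over $K_p$, and invoking Kodaira--Spencer for $\tilde{VB}((\mathfrak{n}^0)^\vee)$ gives both identities. This bare-hands computation for $St$ is precisely the seed from which the general statement (4) of Theorem~\ref{thm2} is later deduced.
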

\begin{proof} Cela résulte du calcul de la cohomologie de l'extension de Faltings rappelé plus haut.
\end{proof}
 
On rappelle qu'on a $\mathfrak{g} = \mathfrak{g}^{der} \oplus \mathfrak{g}^{ab}$.  On possède un morphisme $\mathfrak{g}$-équivariant  $\mathfrak{n}^0 \rightarrow \oscr_{\mathcal{FL}} \otimes_{\qq_p} \mathfrak{g}^{der}$. Il induit un morphisme $\tilde{VB}(\mathfrak{n}^0)   \rightarrow \tilde{VB}( \oscr_{\mathcal{FL}} \otimes_{\qq_p} \mathfrak{g}^{der})$. 
 
 \begin{coro} Le morphisme $\tilde{VB}(\mathfrak{n}^0)   \rightarrow \tilde{VB}( \oscr_{\mathcal{FL}} \otimes_{\qq_p} \mathfrak{g}^{der})$ est un isomorphisme.
 \end{coro}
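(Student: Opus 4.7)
The plan is to identify the natural morphism $\mathfrak{n}^0 \hookrightarrow \oscr_{\mathcal{FL}} \otimes_{\qq_p} \mathfrak{g}^{der}$ as the inclusion of $\mathfrak{n}^0$-invariants in $\oscr_{\mathcal{FL}} \otimes_{\qq_p} \mathfrak{g}^{der}$, and then apply the formula $\mathrm{R}^0\tilde{VB}(\mathscr{F}) = \tilde{VB}(\HH^0(\mathfrak{n}^0, \mathscr{F}))$ from Theorem \ref{thm2}~(4).

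The first step is to compute $\HH^0(\mathfrak{n}^0, \oscr_{\mathcal{FL}} \otimes_{\qq_p} \mathfrak{g}^{der})$ pointwise. At any point $x \in \mathcal{FL}$, the fibre is the centralizer of $\mathfrak{n}_x$ acting by the adjoint representation on $\mathfrak{g}^{der}$, which as a Lie algebra is isomorphic to $\mathfrak{sl}_2$. Since $\mathfrak{n}_x$ is the one-dimensional span of a regular nilpotent element, a direct computation in the standard $(e,f,h)$-basis of $\mathfrak{sl}_2$ shows that this centralizer equals $\mathfrak{n}_x$ itself. On the other hand, $\mathfrak{n}^0$ is abelian, so its inclusion into $\oscr_{\mathcal{FL}} \otimes \mathfrak{g}^{der}$ automatically factors through $\HH^0(\mathfrak{n}^0, \oscr_{\mathcal{FL}} \otimes \mathfrak{g}^{der})$. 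Combining the two observations gives the equality of subsheaves $\mathfrak{n}^0 = \HH^0(\mathfrak{n}^0, \oscr_{\mathcal{FL}} \otimes_{\qq_p} \mathfrak{g}^{der})$.

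The corollary then follows at once: applying Theorem \ref{thm2}~(4) with $i = 0$ to $\mathscr{F} = \oscr_{\mathcal{FL}} \otimes_{\qq_p} \mathfrak{g}^{der}$ gives an isomorphism $\tilde{VB}(\oscr_{\mathcal{FL}} \otimes_{\qq_p} \mathfrak{g}^{der}) = \tilde{VB}(\HH^0(\mathfrak{n}^0, \oscr_{\mathcal{FL}} \otimes_{\qq_p} \mathfrak{g}^{der})) = \tilde{VB}(\mathfrak{n}^0)$, functorial with respect to the inclusion. Tracing through the identifications, one checks that this is precisely the map in the statement.

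There is no substantial technical obstacle: the content of the argument is the pointwise centralizer computation, and everything else is a direct appeal to the main theorem about $\tilde{VB}$. The only minor point to verify is that the identification respects the $\mathfrak{g}$-equivariant structure, which follows from the fact that the decomposition $\mathfrak{g} = \mathfrak{g}^{der} \oplus \mathfrak{g}^{ab}$ is $G$-stable and that $\mathfrak{n}^0 \subset \mathfrak{g}^0$ has trivial projection on the abelian summand (since all its fibres consist of traceless matrices).
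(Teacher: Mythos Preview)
Your argument is mathematically sound as a standalone proof, but it is circular in the context of the paper. This corollary is placed inside the section devoted to the \emph{proof} of Theorem~\ref{thm2}: it is used immediately afterwards (in Proposition~\ref{prop-calcul-operaSenST}) to identify the Sen operator of $\oscr_{\mathcal{FL}}\otimes St$ as a generator of $\tilde{VB}(\mathfrak{n}^0)$, which in turn feeds into Proposition~\ref{prop-calcul-Senopfinal}, and the paper concludes by saying that the theorem follows from these propositions. So you cannot invoke Theorem~\ref{thm2}(4) here; that part of the theorem has not yet been established, and indeed its proof relies on the very corollary you are trying to show.

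The paper's argument avoids this by working directly with the Faltings extension. It uses the filtration $\mathfrak{n}^0 \subset (\mathfrak{b}^{der})^0 \subset \oscr_{\mathcal{FL}}\otimes_{\qq_p}\mathfrak{g}^{der}$ and observes that each successive quotient, after a twist by a line bundle, is the Faltings extension (the sequence $0\to\oscr^{(0,1)}\to\oscr_{\mathcal{FL}}\otimes St\to\oscr^{(1,0)}\to 0$ tensored appropriately). Corollary~\ref{coro-calculdeFalt}, proved from the classical Faltings result independently of Theorem~\ref{thm2}, computes $\tilde{VB}$ and $\mathrm{R}^1\tilde{VB}$ of that extension, showing in particular that $\tilde{VB}$ kills the successive quotients of the filtration. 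This is what forces $\tilde{VB}(\mathfrak{n}^0)\to\tilde{VB}(\oscr_{\mathcal{FL}}\otimes\mathfrak{g}^{der})$ to be an isomorphism. Your centralizer computation $\HH^0(\mathfrak{n}^0,\oscr_{\mathcal{FL}}\otimes\mathfrak{g}^{der})=\mathfrak{n}^0$ is correct and morally the same phenomenon, but at this stage of the paper one has to extract it by hand from the Faltings input rather than from the general formula.
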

 \begin{proof}    On possède une filtration $ \mathfrak{n}^0 \subseteq (\mathfrak{b}^{der})^0 \subseteq \oscr_{\mathcal{FL}} \otimes_{\qq_p} \mathfrak{g}^{der}$ et les extensions entre les gradués successifs de cette filtration s'identifient à l'extension de Faltings. 
On déduit donc que $\tilde{VB}(\mathfrak{n}^0) \rightarrow \tilde{VB}( \oscr_{\mathcal{FL}} \otimes_{\qq_p} \mathfrak{g}^{der})$ est un isomorphisme.
\end{proof}

 Pour tout ouvert $W$ assez petit de $X_{K_p}$,  on a construit  un opérateur de Sen $\Theta \in \tilde{VB}( \oscr_{\mathcal{FL}} \otimes_{\qq_p} \mathfrak{g})(W)$ associé au faisceau $\oscr_{\mathcal{FL}}\otimes {St}$ (voir le corollaire \ref{coro-op-Sen}).
 
 \begin{lem} L'opérateur de Sen $\Theta$ appartient à $\tilde{VB}( \oscr_{\mathcal{FL}} \otimes_{\qq_p} \mathfrak{g}^{der})(W)$.
 \end{lem}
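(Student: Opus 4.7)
The plan is to show that the trace of $\Theta$ vanishes, which, in view of the decomposition $\mathfrak{g} = \mathfrak{g}^{der} \oplus \mathfrak{g}^{ab}$ (with $\mathfrak{g}^{ab}$ identified via the trace character with a one-dimensional $G$-representation sitting in $\oscr_{\mathcal{FL}} \otimes \mathfrak{g}$ as a $G$-equivariant direct summand), is equivalent to the statement. More precisely, the $G$-equivariant trace map $\oscr_{\mathcal{FL}} \otimes \mathfrak{g} \to \oscr_{\mathcal{FL}} \otimes \mathfrak{g}^{ab}$ has kernel $\oscr_{\mathcal{FL}} \otimes \mathfrak{g}^{der}$, so it suffices to show that the induced morphism
$$\tilde{VB}(\oscr_{\mathcal{FL}} \otimes \mathfrak{g})(W) \to \tilde{VB}(\oscr_{\mathcal{FL}} \otimes \mathfrak{g}^{ab})(W)$$
sends $\Theta$ to zero.

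The key step is to identify this image. The operator $\Theta$ is the Sen operator attached to $\mathscr{F} = \oscr_{\mathcal{FL}} \otimes St$ and lives naturally in $\tilde{VB}(\mathscr{F} \otimes \mathscr{F}^\vee)(W) = \tilde{VB}(\oscr_{\mathcal{FL}} \otimes \mathfrak{g})(W)$. By Lemma \ref{lem-functor}, the Sen operator is additive for tensor products and respects duals, so by a standard linear-algebra computation (which amounts to differentiating $\det : \mathrm{GL}(D_{K_p,n}(\mathscr{F})) \to \mathbb{G}_m$), the image of $\Theta$ under the trace map coincides with the Sen operator attached to the determinant $\det(\mathscr{F}) = \oscr_{\mathcal{FL}} \otimes \det(St)$. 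As a $G$-equivariant sheaf on $\mathcal{FL}$, $\oscr_{\mathcal{FL}} \otimes \det(St)$ is canonically isomorphic to $\oscr^{(1,1)}_{\mathcal{FL}}$, in particular it is an invertible object of $\mathbf{Coh}_G(\mathcal{FL})$.

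Finally, picking $n$ large enough so that $\mathscr{F}$ is in the image of $\mathbf{Coh}_{G_n}(V_x) \to \mathbf{Coh}_{\mathfrak{g}}(V_x)$ and restricting to a small enough open, Corollary \ref{coro-Sen-rang1} applies to the rank one object $\oscr^{(1,1)}_{\mathcal{FL}}$ and yields the vanishing of its Sen operator. Consequently $\mathrm{tr}(\Theta) = 0$, and $\Theta$ lies in the summand $\tilde{VB}(\oscr_{\mathcal{FL}} \otimes \mathfrak{g}^{der})(W)$. The only delicate point is the compatibility of trace with the Sen operator on the endomorphism bundle; this is bookkeeping but requires spelling out the identification $\tilde{VB}(\mathscr{F}) \otimes \tilde{VB}(\mathscr{F})^\vee \hookrightarrow \tilde{VB}(\mathscr{F} \otimes \mathscr{F}^\vee)$ used when interpreting $\Theta$ as an element of the left-hand side.
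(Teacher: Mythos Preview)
Your proof is correct and follows essentially the same approach as the paper: both reduce to showing $\mathrm{Tr}(\Theta)=0$ by identifying this trace with the Sen operator of the rank-one sheaf $\oscr_{\mathcal{FL}}\otimes\det(St)$ via the tensor compatibility of Lemma~\ref{lem-functor}, and then invoking Corollary~\ref{coro-Sen-rang1}. The paper makes the passage from $\Theta$ to $\mathrm{Tr}(\Theta)$ slightly more explicit by writing down the map $\det: St\otimes St\to\det$ and noting that $\Theta\otimes 1+1\otimes\Theta$ is the Sen operator on the tensor square, but this is exactly the content of your parenthetical remark about differentiating the determinant.
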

 
 \begin{proof} On considère le morphisme $\det : {St} \otimes {St} \rightarrow \det$ de $G$-représentations. Soit $\Theta$ un opérateur de Sen pour $\oscr_{\mathcal{FL}} \otimes_{\qq_p} {St} $.  Alors $\Theta \otimes 1 + 1 \otimes \Theta$ est un opérateur de Sen pour $\oscr_{\mathcal{FL}} \otimes_{\qq_p} {St} \otimes \mathrm{St}$  et $\mathrm{Tr}(\Theta)$ est   un opérateur de Sen pour $\oscr_{\mathcal{FL}} \otimes \det$,  donc $\mathrm{Tr}(\Theta)=0$ (par exemple d'après le corollaire \ref{coro-Sen-rang1}). Il en résulte que  $\Theta$ appartient à $\tilde{VB}( \oscr_{\mathcal{FL}} \otimes_{\qq_p} \mathfrak{g}^{der})(W)$.
 \end{proof}

\begin{prop}\label{prop-calcul-operaSenST} L'opérateur de Sen $\Theta \in \tilde{VB}( \oscr_{\mathcal{FL}} \otimes_{\qq_p} \mathfrak{g}^{der})(W) = \tilde{VB}(\mathfrak{n}^0)(W)$ engendre  $\tilde{VB}(\mathfrak{n}^0)(W)$.
\end{prop}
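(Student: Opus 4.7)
Le plan est d'identifier l'op\'erateur de Sen $\Theta$ \`a l'isomorphisme de Kodaira--Spencer via le th\'eor\`eme de Faltings. Comme $\mathfrak{n}^0 \simeq \oscr^{(-1,1)}_{\mathcal{FL}}$ est un faisceau inversible sur $\mathcal{FL}$, le Th\'eor\`eme \ref{thm2}(1) garantit que $\tilde{VB}(\mathfrak{n}^0)\vert_W$ est un faisceau inversible de $\oscr^{sm}_{X_{K^p}}\vert_W$-modules. Il suffira donc d'\'etablir que la section $\Theta$ est partout non nulle sur $W$, ou de mani\`ere \'equivalente, que la multiplication par $\Theta$ d\'efinit un isomorphisme $\oscr^{sm}_{X_{K^p}}\vert_W \to \tilde{VB}(\mathfrak{n}^0)\vert_W$.

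D'abord, je d\'ecomposerai $\Theta$ \`a l'aide de la suite exacte courte
$$0 \to \oscr^{(0,1)}_{\mathcal{FL}} \to \oscr_{\mathcal{FL}} \otimes_{\qq_p} \mathrm{St} \to \oscr^{(1,0)}_{\mathcal{FL}} \to 0.$$
Les extr\'emit\'es sont des faisceaux $G$-\'equivariants de rang $1$, donc (d'apr\`es le Corollaire \ref{coro-Sen-rang1}, appliqu\'e sur les $G_n$-orbites qui couvrent localement $W$) leurs op\'erateurs de Sen sont triviaux. Par fonctorialit\'e des modules de Sen (Lemme \ref{lem-functor}(1)), l'op\'erateur $\Theta$ sur $D_{K_p,n}(\oscr_{\mathcal{FL}} \otimes \mathrm{St})$ s'annule sur le sous-module issu de $\oscr^{(0,1)}_{\mathcal{FL}}$ et induit $0$ sur le quotient issu de $\oscr^{(1,0)}_{\mathcal{FL}}$. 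Il se factorise donc en un morphisme $K_p \times \Gamma$-\'equivariant $\bar\Theta: D_{K_p,n}(\oscr^{(1,0)}_{\mathcal{FL}}) \to D_{K_p,n}(\oscr^{(0,1)}_{\mathcal{FL}})$, ce qui correspond exactement \`a la section $\Theta$ via l'identification $\oscr^{(0,1)}_{\mathcal{FL}} \otimes (\oscr^{(1,0)}_{\mathcal{FL}})^\vee \simeq \oscr^{(-1,1)}_{\mathcal{FL}} = \mathfrak{n}^0$.

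Ensuite, j'identifierai $\bar\Theta$ \`a l'isomorphisme de Kodaira--Spencer. D'apr\`es le Lemme \ref{lem-calculcoho}, le morphisme de bord $\delta$ de la suite exacte longue de cohomologie continue de $K_p$ associ\'ee \`a la suite exacte ci-dessus (qui, modulo twists, est l'extension de Faltings) s'exprime au niveau des modules de Sen comme la multiplication par $\bar\Theta$. Or, le Th\'eor\`eme 4.4.2 de \cite{pan2021locally} affirme pr\'ecis\'ement que $\delta$ co\"incide avec l'isomorphisme de Kodaira--Spencer, et est donc inversible. Il s'ensuivra que $\Theta$ engendre $\tilde{VB}(\mathfrak{n}^0)(W)$.

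Le point le plus d\'elicat sera d'\'etablir rigoureusement l'identification entre $\bar\Theta$ et le morphisme de bord de Faltings, qui n\'ecessite de d\'eplier la construction des modules de Sen pour l'extension de Faltings (dans la lign\'ee de la Section 3.3 de \cite{pan2021locally}) et de v\'erifier que l'op\'erateur de Sen calcule bien l'homomorphisme de connexion de la suite exacte longue de cohomologie. Une fois cette identification pr\'ecis\'ee, la conclusion sera imm\'ediate par inversibilit\'e de Kodaira--Spencer.
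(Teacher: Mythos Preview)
Ton approche est correcte dans l'esprit et repose sur le m\^eme ingr\'edient cl\'e que l'article (l'extension de Faltings et Kodaira--Spencer), mais elle diff\`ere dans la mani\`ere d'en tirer la conclusion. L'article ne cherche pas \`a identifier $\bar\Theta$ au morphisme de bord : il utilise le Corollaire~\ref{coro-calculdeFalt} pour savoir que $\mathrm{R}^1\tilde{VB}(\oscr_{\mathcal{FL}}\otimes St)$ est localement libre de rang~$1$, exprime ce $\mathrm{R}^1$ via la th\'eorie de Sen comme $\HH^0(\Gamma,\HH^1(\Theta,D_{K_p,n}(St)))$, puis conclut par descente \'etale que $\HH^1(\Theta,D_{K_p,n}(St))$ est de rang~$1$. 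Comme $D_{K_p,n}(St)$ est de rang~$2$, $\Theta$ est non nul en tout point, donc engendre le module inversible $\tilde{VB}(\mathfrak{n}^0)(W)$. Cet argument par le rang du conoyau contourne compl\`etement le ``point d\'elicat'' que tu signales.

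Deux remarques sur ta version. D'abord, une circularit\'e mineure : tu invoques le Th\'eor\`eme~\ref{thm2}(1) pour l'inversibilit\'e de $\tilde{VB}(\mathfrak{n}^0)$, mais la proposition fait partie de la d\'emonstration de ce th\'eor\`eme ; il faut citer le Corollaire~\ref{coro-Sen-rang1} \`a la place (qui traite les objets de rang~$1$ ind\'ependamment). Ensuite, ton point d\'elicat est effectivement r\'esoluble : puisque $\Gamma$ agit trivialement sur les gradu\'es (preuve du Corollaire~\ref{coro-Sen-rang1}) et pr\'eserve la filtration, l'application $\gamma\mapsto(\gamma-1)\tilde{x}\in D_{K_p,n}(\oscr^{(0,1)})$ est additive et localement analytique en $\gamma\in\Gamma\simeq\ZZ_p$, donc lin\'eaire ; ainsi $(\gamma_0-1)\tilde{x}$ et $\Theta(\tilde{x})=\bar\Theta(x)$ diff\`erent par une unit\'e de $\ZZ_p$. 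Ton identification de $\bar\Theta$ avec le morphisme de bord tient donc \`a un scalaire inversible pr\`es, ce qui suffit. L'avantage de ta route est qu'elle rend explicite que $\Theta$ \emph{est} (essentiellement) Kodaira--Spencer ; l'avantage de celle de l'article est d'\'eviter toute identification fine et de ne manipuler que des rangs.
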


\begin{proof} On a  $\mathrm{R}^1\tilde{VB}((\oscr_{\mathcal{FL}} \otimes_{\qq_p} St) = \colim_{K_p} \omega^{(1,0)}_{K_p} \otimes \Omega^1_{X_{K_pK^p}}(\mathrm{log})$ par le corollaire \ref{coro-calculdeFalt}. 
D'après la théorie de Sen, on a  $$\mathrm{R}^1\tilde{VB}(\oscr_{\mathcal{FL}} \otimes_{\qq_p} St)(W) = \HH^0( \Gamma,  \colim_{K_p,n}\HH^1(\Theta,   D_{K_p,n}(St))),$$ et par descente étale, pour $K_p$ suffisament petit,

$$ B_{K_p,n} \otimes_{B_{K_p}} (\HH^1(\Theta,  D_{K_p,n}(St)))^{\Gamma} = \HH^1(\Theta,  D_{K_p,n}(St))$$

Ceci montre que $\HH^1(\Theta,  D_{K_p,n}(St))$ est un faisceau localement libre de rang $1$, et donc que l'opérateur $\Theta$ est non nul en tout point de $W$ (puisque $D_{K_p,n}(St)$ est de rang $2$). 


Il en résulte  que $\Theta$ est un générateur du module projectif de rang $1$, $\tilde{VB}(\mathfrak{n}^0)(W)$.
\end{proof}


\subsubsection{Fin de la démonstration}

On possède un morphisme $\mathfrak{n}^0 \rightarrow \mathscr{F} \otimes_{\oscr_{\mathcal{FL}}} \mathscr{F}^\vee$ qui induit un morphisme $\tilde{VB}(\mathfrak{n}^0) \rightarrow \tilde{VB}(\mathscr{F} \otimes_{\oscr_{\mathcal{FL}}} \mathscr{F}^\vee)$. 

\begin{prop}\label{prop-calcul-Senopfinal} Le conoyau du morphisme $\tilde{VB}(\mathfrak{n}^0) \rightarrow \tilde{VB}(\mathscr{F} \otimes_{\oscr_{\mathcal{FL}}} \mathscr{F}^\vee)$ est un faisceau localement libre et donc l'image du faisceau $\tilde{VB}(\mathfrak{n}^0) \rightarrow \tilde{VB}(\mathscr{F} \otimes_{\oscr_{\mathcal{FL}}} \mathscr{F}^\vee)$ est un faisceau localement libre de rang $0$ ou $1$ qui est localement engendré par un opérateur de Sen de $\mathscr{F}$. 
\end{prop}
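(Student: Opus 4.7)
Je propose de raisonner localement sur un ouvert quasi-compact $W \subseteq \pi_{HT}^{-1}(U)$ auquel s'applique la m�thode de Sen (section \ref{sect-TSEN}), puis de descendre au niveau fini $K_p$ via le th�or�me \ref{thm2}(2). Notons $\phi$ le morphisme $\tilde{VB}(\mathfrak{n}^0) \to \tilde{VB}(\mathscr{F} \otimes \mathscr{F}^\vee)$.

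L'�tape principale sera l'identification de l'image de $\phi$ avec un op�rateur de Sen. D'apr�s la proposition \ref{prop-calcul-operaSenST}, le faisceau en droites $\tilde{VB}(\mathfrak{n}^0)\vert_W$ est engendr� par l'op�rateur de Sen $\Theta$ associ� � $\oscr_{\mathcal{FL}} \otimes St$. J'appliquerai la fonctorialit� des modules de Sen (lemme \ref{lem-functor}) au morphisme $\mathfrak{g}$-�quivariant d'action $\mathfrak{n}^0 \hookrightarrow \oscr_{\mathcal{FL}} \otimes \mathfrak{g} \to \underline{\End}_{\oscr_U}(\mathscr{F})$ pour en d�duire que $\phi(\Theta) = \Theta_F$, l'op�rateur de Sen de $\mathscr{F}$ du corollaire \ref{coro-op-Sen}. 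L'image de $\phi$ sera donc le sous-faisceau localement engendr� par un op�rateur de Sen de $\mathscr{F}$, �tablissant la derni�re assertion.

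Je passerai ensuite � la libert� locale par une dichotomie $\mathfrak{g}$-�quivariante. Le noyau de $\mathfrak{n}^0 \to \mathscr{F} \otimes \mathscr{F}^\vee$ est un sous-faisceau $\mathfrak{g}$-�quivariant du faisceau en droites $\mathfrak{n}^0$ sur $U \subseteq \mathcal{FL} = \mathbb{P}^1$ ; comme l'action transitive de $G$ ne laisse aucun id�al $\mathfrak{g}$-invariant non trivial sur $U$ connexe, ce noyau vaut $0$ ou $\mathfrak{n}^0$. Dans le cas $\mathfrak{n}^0$, $\phi = 0$ et le conoyau co�ncide avec $\tilde{VB}(\mathscr{F} \otimes \mathscr{F}^\vee)$, localement libre par le th�or�me \ref{thm2}(1). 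Dans le cas $0$, on a la suite exacte courte $0 \to \mathfrak{n}^0 \to \mathscr{F} \otimes \mathscr{F}^\vee \to \mathcal{Q} \to 0$ avec $\mathcal{Q}$ localement libre (lemme \ref{lem-extension}). Combin�e au calcul $\mathrm{R}^1\tilde{VB}(\mathfrak{n}^0) = \tilde{VB}(\oscr_{\mathcal{FL}}) = \oscr^{sm}$ (qui r�sulte de l'abelianit� de $\mathfrak{n}$) et � la suite exacte longue du th�or�me \ref{thm2}(4), elle identifie le conoyau de $\tilde{VB}(\phi)$ au noyau d'un morphisme bord $\partial : \tilde{VB}(\mathcal{Q}) \to \oscr^{sm}$.

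Le principal obstacle sera alors de d�montrer que $\partial$ est de rang ponctuel localement constant, auquel cas son noyau est localement libre (trivialement dans le cas nul, et comme noyau d'une surjection entre faisceaux localement libres dans le cas surjectif). Je compte l'�tablir en descendant au niveau fini $K_p$, o� l'on travaille sur la courbe analytique $U_{K_p}$ : sur une courbe lisse, tout sous-faisceau coh�rent sans torsion de rang $\le 1$ d'un fibr� est automatiquement localement libre. Combin� � une description directe de $\partial$ en termes de l'op�rateur de Sen $\Theta_F$ qui force $\partial$ � �tre ou bien nul, ou bien surjectif, ceci assurera la libert� locale du conoyau et conclura la preuve.
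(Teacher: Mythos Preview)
Il y a un probl\`eme de circularit\'e s\'erieux dans ta d\'emarche. La proposition \ref{prop-calcul-Senopfinal} est l'ingr\'edient final de la preuve du th\'eor\`eme \ref{thm2} (voir la phrase qui suit imm\'ediatement la proposition dans l'article : \og On d\'eduit maintenant le th\'eor\`eme en combinant la proposition \ref{prop-calcul-Senopfinal} et le lemme \ref{lem-calculcoho}\fg). Tu ne peux donc invoquer ni le th\'eor\`eme \ref{thm2}(1) pour la libert\'e locale de $\tilde{VB}(\mathscr{F}\otimes\mathscr{F}^\vee)$, ni le th\'eor\`eme \ref{thm2}(2) pour descendre au niveau fini, ni le th\'eor\`eme \ref{thm2}(4) pour la suite exacte longue des $\mathrm{R}^i\tilde{VB}$. \`A ce stade, seules la coh\'erence de $\tilde{VB}$ et $\mathrm{R}^1\tilde{VB}$ (corollaire de la section 3.5.4), le cas de rang~$1$ (corollaire \ref{coro-Sen-rang1}), et le calcul pour $St$ (proposition \ref{prop-calcul-operaSenST}) sont acquis.

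Il y a aussi une lacune dans l'identification $\phi(\Theta)=\Theta_F$. Ta factorisation $\mathfrak{n}^0 \hookrightarrow \oscr_{\mathcal{FL}}\otimes\mathfrak{g} \to \underline{\End}_{\oscr_U}(\mathscr{F})$ n'a pas de sens : l'action de $\mathfrak{g}$ sur $\mathscr{F}$ n'est pas $\oscr_U$-lin\'eaire (c'est une d\'erivation), donc la seconde fl\`eche n'atterrit pas dans $\underline{\End}_{\oscr_U}(\mathscr{F})=\mathscr{F}\otimes\mathscr{F}^\vee$. Seul $\mathfrak{b}^0$ agit $\oscr_U$-lin\'eairement, et l'identification de $\Theta$ comme g\'en\'erateur de $\tilde{VB}(\mathfrak{n}^0)$ passe par l'isomorphisme $\tilde{VB}(\mathfrak{n}^0)\simeq\tilde{VB}(\oscr_{\mathcal{FL}}\otimes\mathfrak{g}^{der})$, qui ne se compose pas avec un morphisme vers $\underline{\End}_{\oscr_U}(\mathscr{F})$. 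Le lemme \ref{lem-functor}(1) appliqu\'e na\"ivement ne donne que la compatibilit\'e des op\'erateurs de Sen sous le morphisme, pas que l'image de $\Theta$ est $\Theta_F$.

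La preuve de l'article contourne ces deux \'ecueils d'un seul coup : on travaille au voisinage d'un point classique $x$ (ce qui suffit par la proposition \ref{prop-coherence}(4)), o\`u la proposition \ref{prop-rep-torseurs} identifie $\mathscr{F}|_{V_x}$ \`a une repr\'esentation de $B_{x,n}$ ; on peut la supposer irr\'eductible, donc de la forme $\chi\otimes\mathrm{Sym}^k St$. L'op\'erateur de Sen de $\mathscr{F}$ est alors $1\otimes\Theta_{\mathrm{Sym}^k}$ par le corollaire \ref{coro-Sen-rang1} et le lemme \ref{lem-functor}(2), et $\Theta_{\mathrm{Sym}^k}$ est l'image de $\Theta$ par la repr\'esentation sym\'etrique, calcul\'ee via la proposition \ref{prop-calcul-operaSenST}. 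On lit directement que l'image de $\phi$ est localement libre de rang $0$ si $k=0$ et $1$ sinon, engendr\'ee par l'op\'erateur de Sen de $\mathscr{F}$. Ta dichotomie globale sur le noyau et l'\'etude du bord $\partial$ deviennent inutiles.
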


\begin{proof}  Il suffit de démontrer tous les énoncés au voisinage des points classiques d'après la proposition \ref{prop-coherence}.  On considère un point classique $x \in \mathcal{FL}$ et un ouvert  de la forme  $V_x = x B_{n,x} \backslash G_{n,x}$. Il en résulte que $\mathscr{F}\vert_{V_x}$ est décrit par une représentation de dimension finie $\rho$ de $B_{x,n}$ (voir la proposition \ref{prop-rep-torseurs}). On peut la supposer irréductible. On a donc $\rho = \chi \otimes \mathrm{Sym}^k {St}$ pour un caractère $\chi$ du tore $T_{x,n}$ de $B_{x,n}$. 
On a donc $\mathscr{F} = \mathscr{F}_{\chi} \otimes \mathscr{F}_{\mathrm{Sym}^k}$ 
L' opérateur de Sen pour $\mathscr{F}$ est de la forme $1 \otimes \Theta$ où $\Theta$ est un opérateur de Sen pour $\mathrm{Sym}^k {St}$ lequel est déterminé par une combinaison du lemme \ref{lem-functor} et du calcul pour ${St}$ (donné dans la proposition \ref{prop-calcul-operaSenST}). 
Il en résulte bien que le morphisme 
$\tilde{VB}(\mathfrak{n}^0)(\pi_{HT}^{-1}(V_x)) \rightarrow \tilde{VB}(\mathscr{F} \otimes_{\oscr_{\mathcal{FL}}} \mathscr{F}^\vee)(\pi_{HT}^{-1}(V_x))$ a une image  localement libre de rang $0$ (si $k=0$) ou $1$ (si $k\neq 0$) et qu'il est engendré par un opérateur de Sen. 
\end{proof}

On déduit maintenant le théorème en combinant la proposition \ref{prop-calcul-Senopfinal} et le lemme \ref{lem-calculcoho}.  

\section{Vecteurs localement analytiques}

Dans cette section on explique  certains résultats de la section 4 de  \cite{pan2021locally} d'après le point de vue de cet article. 

Notons $\oscr_G^{alg}$ l'algèbre des fonctions sur le schéma $GL_2^{alg}$. Elle admet la description (comme $G$-bimodule) $\oscr_G^{alg}= \oplus_{V \in \mathbf{IrrRep}(G)} V \otimes V^\vee$ où $\mathbf{IrrRep}(G)$ désigne les classes d'isomorphisme de représentations irréductibles de dimension finie de $G$.

Pour tout $n \geq 0$, on note $\oscr_{G_n}$ l'espace des fonctions analytiques sur $G_n$, le sous-groupe des éléments $G$ qui se réduisent sur $1$ modulo $p^n$.  On possède un morphisme $\oscr_G^{alg} \rightarrow \oscr_{G_n}$ et $\oscr_{G_n}$ est la complétion de $\oscr_{G}^{alg}$ pour une norme $\vert - \vert_n$ (la norme spectrale sur $\oscr_{G_n}$). On remarque que $\oscr_{G_n}$ est  un $G_n$-bi-module. Pour tout $f \in \oscr_{G_n}$ et $g \in G_n$, on pose 
\begin{eqnarray*}
g \star_1 f(-) & =& f(g^{-1}-)\\
g\star_2 f(-) &=& f(-g)
\end{eqnarray*}


On considère le faisceau $\oscr_{G_n} \hat{\otimes} \oscr_{\mathcal{FL}}$. On possède une action $\star_3$ de $G$ sur $\oscr_{\mathcal{FL}}$. Munis de l'action $\star_{1,3}$ (composée des $\star_{1}$ et $\star_{3}$), le faisceau $\oscr_{G_n} \hat{\otimes} \oscr_{\mathcal{FL}}$ est un faisceau $G_n$-équivariant.  Il possède une seconde action $\star_2$ de $G_n$ qui est $\oscr_{\mathcal{FL}}$-linéaire.  
Pour tout $f \in \oscr_{G_n} \hat{\otimes} \oscr_{\mathcal{FL}}$ (vu comme un fonction $G_n \rightarrow \oscr_{\mathcal{FL}}$), on a
 \begin{eqnarray*}
g \star_{1,3} f(-) & =&  g f(g^{-1}-), ~\forall g \in G_n\\
g\star_2 f(-) &=& f(-g),~\forall g \in G_n\\
\end{eqnarray*}

L'action $\star_{1,3}$ se dérive et induit une action de $\mathfrak{g}$ et donc une action ($\oscr_{\mathcal{FL}}$-linéaire)
$$ \mathfrak{n}^0 \rightarrow \mathrm{End}_{\oscr_{\mathcal{FL}}}(\oscr_{G_n} \hat{\otimes} \oscr_{\mathcal{FL}})$$
On définit   $\mathcal{C}^{n-an}  = \HH^0(\mathfrak{n}^0, \oscr_{G_n} \hat{\otimes} \oscr_{\mathcal{FL}})$.  Ce faisceau est stable sous les actions $\star_{1,3}$, $\star_{2}$. 
L'action $\star_{1,3}$ induit  aussi une action  $\star_{hor}$ du   Cartan horizontal  $\mathfrak{h} \hookrightarrow \mathfrak{b}^0/\mathfrak{n}^0$.

En passant à la limite, on définit l'espace des germes de fonctions analytiques $ \oscr_{G,1} = \colim_n \oscr_{G_n}$ et on définit le  faisceau $$\mathcal{C}^{la} = \colim_n \mathcal{C}^{n-an} = \HH^0(\mathfrak{n}_0, \oscr_{G,1}  \hat{\otimes}_{\C_p} \oscr_{\mathcal{FL}}).$$ La formule $h \star_{1,2,3} f(-)  = hf(h^{-1}- h)$ définit une action de $G(\qq_p)$.

 La formule définissant le foncteur $VB$ s'étend naturellement à tous les faisceaux $G_n$-équivariants  et par passage à la limite à $\mathcal{C}^{la}$.

Par définition, $\oscr^{la} =  \colim_{K_p} (\oscr_{G,1} \hat{\otimes}_{\C_p} \hat{\oscr})^{K_p}= VB ( \oscr_{G,1}  \hat{\otimes}_{\C_p} \oscr_{\mathcal{FL}})$.  Le théorème qui suit est une formulation agréable de certains résultats de la section 4 de \cite{pan2021locally}.

\begin{thm}\label{thm3}
On a $\oscr^{la} = VB( \mathcal{C}^{la})$ et on  possède un isomorphisme canonique, $\hat{\oscr}$-linéaire, $\mathfrak{g}$ et $G(\qq_p)$-équivariant:
$$ \Psi :  \oscr^{la} \hat{\otimes}_{\oscr^{sm}} \hat{\oscr} = \mathcal{C}^{la} \hat{\otimes}_{\oscr_{\mathcal{FL}}} \hat{\oscr}$$
\end{thm}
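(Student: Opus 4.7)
\emph{Stratégie.} Je reproduirais la structure de la preuve du théorème \ref{thm2} en l'étendant au cadre non-cohérent, en travaillant niveau par niveau sur les approximations $\mathcal{C}^{n-an}$. Je commencerais par construire l'application orbite $\oscr^{la} \rightarrow \oscr_{G,1} \hat{\otimes}_{\C_p} \hat{\oscr}$ qui associe à une section localement analytique $f$ le germe de fonction analytique $g \mapsto gf$. Je montrerais ensuite que l'image de cette application tombe dans le sous-espace $\mathcal{C}^{la} \hat{\otimes}_{\oscr_{\mathcal{FL}}} \hat{\oscr}$, ce qui revient à dire que l'action infinitésimale de $\mathfrak{n}^0$ (via $\star_{1,3}$) est triviale sur l'image. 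Cette trivialité reflète le fait fondamental que l'application de Hodge-Tate envoie tout point $y \in X_{K^p}$ sur un point $x \in \mathcal{FL}$ dont le stabilisateur a pour radical unipotent $U_x$, et que l'action infinitésimale de $\mathfrak{n}_x$ sur la fibre de $\hat{\oscr}$ en $y$ est triviale (car $\mathfrak{n}_x$ est le noyau de la dérivée de $\pi_{HT}$ en $y$). En linéarisant au-dessus de $\oscr^{sm}$, on obtient le morphisme $\Psi$ souhaité.

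L'étape suivante consiste à montrer que $\Psi$ est un isomorphisme. Pour cela, je travaillerais à chaque niveau fini en construisant, pour tout $n$, un sous-groupe compact ouvert $K_p(n) \subseteq G_n(\qq_p)$ suffisamment petit et un faisceau localement libre $\oscr^{n-an}_{K_p(n)}$ de $\oscr_{K_p(n)}$-modules, accompagné d'un isomorphisme canonique $\hat{\oscr}$-linéaire et $K_p(n)$-équivariant
$$\Psi_n : \oscr^{n-an}_{K_p(n)} \hat{\otimes}_{\oscr_{K_p(n)}} \hat{\oscr} \rightarrow \mathcal{C}^{n-an} \hat{\otimes}_{\oscr_{\mathcal{FL}}} \hat{\oscr}.$$
La construction de $\oscr^{n-an}_{K_p(n)}$ reposerait sur une extension de la méthode de Sen (proposition \ref{prop-sen-meth}) à la famille de modules de Banach $\{\oscr_{G_n} \hat{\otimes} \oscr_{\mathcal{FL}}\}_n$ (de rang infini mais topologiquement contrôlé). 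Le point clé est que $\mathcal{C}^{n-an}$ est par construction tué par $\mathfrak{n}^0$ : on se trouve donc dans un contexte analogue à celui de $\mathbf{Coh}_{\mathfrak{g}}(U)^{\mathfrak{n}^0}$, où l'analogue de l'énoncé $(3)$ du théorème \ref{thm} s'applique. L'opérateur de Sen associé prend alors ses valeurs dans l'image de $\tilde{VB}(\mathfrak{n}^0)$ agissant trivialement (par la proposition \ref{prop-calcul-Senopfinal}), et $\Psi_n$ est automatiquement un isomorphisme.

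En passant à la colimite sur $n$, on obtient $VB(\mathcal{C}^{la}) = \colim_n \oscr^{n-an}_{K_p(n)} \hat{\otimes}_{\oscr_{K_p(n)}} \oscr^{sm}$, et ce dernier s'identifie à $\oscr^{la}$ par définition des vecteurs localement analytiques. L'isomorphisme $\Psi = \colim_n \Psi_n$ est alors compatible aux actions de $G(\qq_p)$ via $\star_{1,2,3}$ et aux actions infinitésimales de $\mathfrak{g}$ via $\star_2$, comme on le vérifie sur les générateurs.

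Le principal obstacle se situe dans l'étape de construction de $\oscr^{n-an}_{K_p(n)}$ et de $\Psi_n$ à niveau fini. Il faut adapter soigneusement la théorie de Tate-Sen à des modules de Banach (plutôt qu'à des modules libres de rang fini), ce qui requiert un contrôle fin des normes $\vert - \vert_n$ sur $\oscr_{G_n}$ et de la condition d'analyticité de niveau $n$. Le choix de $K_p(n)$, qui dépend à la fois de $n$ et de la constante de Tate-Sen, doit garantir simultanément que l'action diagonale soit bien définie, que la condition d'analyticité soit préservée, et que la méthode de Tate-Sen s'applique uniformément en la base. Une fois $\Psi_n$ établi comme isomorphisme au niveau fini, le passage à la colimite et les propriétés d'équivariance sous $G(\qq_p)$ et $\mathfrak{g}$ sont formels.
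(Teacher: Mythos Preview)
Your overall architecture is the same as the paper's: work at each analytic level $n$, produce an isomorphism $\Psi_n$ via a Sen-theoretic descent, and pass to the colimit. However, the step you flag as ``le principal obstacle'' --- extending the Tate--Sen formalism from finite free modules to the infinite-rank Banach module $\oscr_{G_n}\hat\otimes\oscr_{\mathcal{FL}}$ --- is precisely where the paper supplies a concrete idea that your proposal lacks. Rather than developing a Banach-module version of Sen theory in the abstract, the paper exploits the Peter--Weyl-type decomposition $\oscr_G^{alg}=\bigoplus_{V\in\mathbf{IrrRep}(G)} V\otimes V^\vee$ and applies the finite-rank theorem \ref{thm2} \emph{simultaneously} to all the coherent summands $V\otimes V^\vee\otimes\oscr_{\mathcal{FL}}$. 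The crucial observation is that, for the norm $|-|_n$, the unit ball of each $V\otimes V^\vee$ carries a trivial action of $G_{n+1}(\qq_p)$ modulo $p$, so a \emph{single} choice $K_p(n)=G_{n+1}(\qq_p)$ works uniformly across all $V$; one then completes the direct sum to recover $\tilde{VB}_{K_p(n)}(\oscr_{G_n}\otimes\oscr_{\mathcal{FL}})$.

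There is a second step you do not mention and which is not automatic: once $\tilde{VB}_{K_p(n)}(\oscr_{G_n}\otimes\oscr_{\mathcal{FL}})$ is constructed, one must still identify it locally on standard affinoids as a free power-series ring $\oscr_{U_{K_p(n)'}}\langle x_1',x_2',x_3'\rangle$ over some finite level $K_p(n)'$, so that its pushforward really defines a sheaf $\oscr^{n-an}_{K_p(n)}$ of the required shape. The paper does this via an approximation argument: the explicit coordinates $x_1,x_2,x_3$ on $\mathcal{C}^{n-an}|_U\simeq\oscr_U\langle x_1,x_2,x_3\rangle$ are approximated by smooth elements $x_{i,n}\in\oscr^{sm}_{X_{K^p}}\otimes_{\oscr_{X_{K_p(n)K^p}}}\oscr^{sm}_{X_{K^p}}$ using density of smooth vectors in $\hat\oscr$, and for $n$ large the perturbed map is still an isomorphism. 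Without this, you cannot conclude that $VB(\mathcal{C}^{n-an})$ descends cleanly. Your equivariance discussion at the end is in line with the paper's.
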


\subsubsection{Démonstration du théorème, d'après \cite{pan2021locally}}
\begin{prop} Il existe $K_p(n)$ assez petit et un faisceau $\tilde{VB}_{K_p(n)}(\oscr_{G_n} \otimes \oscr_{\mathcal{FL}})$ sur $X_{K_p(n)K^p}$  tel que :
$$\tilde{VB}_{K_p(n)}(\oscr_{G_n} \otimes \oscr_{\mathcal{FL}}) \hat{\otimes}_{\oscr_{X_{K_p(n)K^p}}} \oscr_{X_{K^p}} = \mathcal{C}^{n-an} \hat{\otimes}_{\oscr_{\mathcal{FL}}} \oscr_{X_{K^p}}$$
\end{prop}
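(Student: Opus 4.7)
The plan is to adapt the Sen-theoretic strategy used in the proof of Theorem \ref{thm2} from the coherent setting to the orthonormalizable Banach setting. The sheaf $\oscr_{G_n} \hat{\otimes} \oscr_{\mathcal{FL}}$ is not coherent, but locally on $\mathcal{FL}$ it is a free ON-able Banach module carrying a $G_n$-equivariant structure through $\star_{1,3}$, and this is the only feature of coherence that the Sen-Tate machinery genuinely uses.

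First, I would cover $\mathcal{FL}$ by the $G_n(\qq_p)$-stable affinoid orbits $V_x = x B_{x,n} \backslash G_n$ attached to classical points $x$, as in the proof of Corollary \ref{coro-Sen-rang1}. On such a $V_x = \Spa(C, C^+)$ the restriction of $\oscr_{G_n} \hat{\otimes} \oscr_{\mathcal{FL}}$ is the ON-able Banach $C$-module $\oscr_{G_n} \hat{\otimes}_{\C_p} C$, equipped with the semilinear $\star_{1,3}$-action of any compact open $K_p \subseteq G_n(\qq_p)$. I would then pull back along $\pi_{HT}$ and cover $\pi_{HT}^{-1}(V_x)$ by affinoid perfectoids $W = \Spa(B,B^+) = \pi_{K_p}^{-1}(W_{K_p})$ with $B_{K_p}$ small in the sense of Section \ref{sect-TSEN}, obtaining the ON-able Banach $B$-module $M := \oscr_{G_n} \hat{\otimes}_{\C_p} B$ with its combined semilinear $K_p$-action (via $\star_1$ on the first factor and the natural action on $B$; note that $\star_3$ on $\oscr_{\mathcal{FL}}$ lifts to the natural action on $\oscr_{X_{K^p}}$ under $\pi_{HT}$).

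Next I would produce the integral lattice $M^+ := \oscr_{G_n}^+ \hat{\otimes}_{\ocal_{\C_p}} B^+$ inside $M$ and verify that, for $K_p(n) \subseteq K_p \cap G_{n+1}(\qq_p)$ small enough, the action of $K_p(n)$ on $M^+/pM^+$ is trivial. This combines two ingredients already used in the paper: the $\star_1$-action of $G_{n+1}(\qq_p)$ on $\oscr_{G_n}^+$ is trivial modulo $p$ by a direct Taylor expansion in the coordinates of $G_n$, and the natural action of a sufficiently small $K_p(n)$ on $B^+$ is trivial modulo $p$ by the standard Tate-Sen input on the tower $\{B_{K_p',m}\}$ (cf.\ Lemma \ref{lem-local-triviality}). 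With this hypothesis in hand, the ON-able Banach version of the Berger-Colmez Tate-Sen theorem (Proposition \ref{prop-sen-meth}; the statement extends verbatim to ON-able $B^+_\infty$-modules since the proof in \cite{MR2493221} only uses successive approximation and the existence of Tate normalized traces) produces, for $m \geq m(K_p(n),c)$, a canonical $K_p(n)$-invariant, $\Gamma$-stable ON-able $B^+_{K_p(n),m}$-module $D^+_{K_p(n),m}(M) \subseteq B^+_\infty \hat{\otimes}_{B^+} M^+$ whose scalar extension back up to $B^+_\infty$ recovers $M^+$.

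I would then use the functoriality of the Sen construction (analog of Lemma \ref{lem-functor}) to glue these local Sen modules into an ON-able Banach sheaf over $X_{K_p(n) K^p}$, declare this sheaf to be $\tilde{VB}_{K_p(n)}(\oscr_{G_n} \hat{\otimes} \oscr_{\mathcal{FL}})$, and observe that by construction its completed pullback to $X_{K^p}$ is $\oscr_{G_n} \hat{\otimes}_{\C_p} \oscr_{X_{K^p}}$. Finally, since the $\oscr_{\mathcal{FL}}$-linear $\mathfrak{n}^0$-action on $\oscr_{G_n} \hat{\otimes} \oscr_{\mathcal{FL}}$ commutes with the $K_p \times \Gamma$-action (as it is induced by deriving $\star_{1,3}$ and the relevant compatibility is built into the $\star_{1,3}$-equivariance), the formation of $\HH^0(\mathfrak{n}^0,-)$ commutes with Sen descent, and applying it yields the desired descent of $\mathcal{C}^{n-an} \hat{\otimes}_{\oscr_{\mathcal{FL}}} \oscr_{X_{K^p}}$ to a Banach sheaf on $X_{K_p(n) K^p}$.

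The main obstacle is strictly technical rather than conceptual: one must check that the Tate-Sen formalism of Proposition \ref{prop-sen-meth}, originally stated for finite free modules, really does go through for ON-able Banach modules such as $M^+$, and that the resulting descent is compatible with restriction to smaller affinoid perfectoids so that the local pieces glue. Both points reduce to exhibiting a careful integral lattice on which $K_p(n)$ acts trivially modulo $p$ and to the standard fact that normalized Tate traces are continuous for the spectral norm, so the Banach coefficient $\oscr_{G_n}$ is carried along formally. Once this Banach Tate-Sen input is in place the rest of the argument is parallel to Section \ref{sect-TSEN}.
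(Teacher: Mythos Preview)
Your overall strategy---running the Tate--Sen machinery directly on the orthonormalizable Banach module $\oscr_{G_n}\hat\otimes\oscr_{\mathcal{FL}}$---is viable, but it is not the route the paper takes, and your write-up leaves a genuine gap at the end.

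The paper does \emph{not} extend Proposition~\ref{prop-sen-meth} to Banach modules. Instead it uses the Peter--Weyl decomposition $\oscr_G^{alg}=\bigoplus_{V\in\mathbf{IrrRep}(G)} V\otimes V^\vee$, observes that $\oscr_{G_n}$ is the completion of this for the norm $\vert-\vert_n$, and applies the already-proven Theorem~\ref{thm2} to each coherent summand $V\otimes V^\vee\otimes\oscr_{\mathcal{FL}}$ individually. The point is that for the norm $\vert-\vert_n$ the group $G_{n+1}(\qq_p)$ acts trivially modulo $p$ on every unit ball $V^+\otimes V^{\vee,+}$, so one may take $K_p(n)=G_{n+1}(\qq_p)$ uniformly in $V$, and then simply set $\tilde{VB}_{K_p(n)}(\oscr_{G_n}\otimes\oscr_{\mathcal{FL}})$ to be the completed direct sum $\widehat{\bigoplus_V \tilde{VB}_{K_p(n)}(V\otimes V^\vee\otimes\oscr_{\mathcal{FL}})}$. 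The identification of the pullback with $\mathcal{C}^{n-an}\hat\otimes\oscr_{X_{K^p}}$ then follows by taking the completed sum of the identities supplied by Theorem~\ref{thm2}(3),(4) for each coherent summand. This avoids entirely the Banach extension you flag as the main obstacle.

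The gap in your version is in the last step. The glued Sen module you construct lives over the tower $B_{K_p(n),m}$, still carries a residual $\Gamma$-action, and its base change to $\oscr_{X_{K^p}}$ recovers the \emph{full} $\oscr_{G_n}\hat\otimes\oscr_{X_{K^p}}$, not $\mathcal{C}^{n-an}\hat\otimes\oscr_{X_{K^p}}$; so it is not the sheaf the proposition is asking for. Your proposed fix---apply $\HH^0(\mathfrak{n}^0,-)$---is heading in the right direction, but the justification you give (that $\mathfrak{n}^0$ commutes with the $K_p\times\Gamma$-action) only tells you that $\HH^0(\mathfrak{n}^0,-)$ of the Sen module is again a Sen-descended object. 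To conclude that this object actually lives on $X_{K_p(n)K^p}$ and base-changes back to $\mathcal{C}^{n-an}\hat\otimes\oscr_{X_{K^p}}$, you must know that the Sen operator $\Theta$ vanishes on it, i.e.\ that $\Theta$ itself lies in the image of $\tilde{VB}(\mathfrak{n}^0)$. That is precisely the content of Proposition~\ref{prop-calcul-Senopfinal}, and it is the one non-formal input you have not invoked. In the paper's Peter--Weyl argument this input is already absorbed into Theorem~\ref{thm2}, which is why no extra work is needed there.
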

\begin{proof}
On applique   la théorie de Sen simultanément aux faisceaux  $G$-équivariants $V \otimes V^\vee \otimes_{\C_p} \oscr_{\mathcal{FL}}$ pour $V \in \mathbf{IrrRep}(G)$. Plus précisément, on équipe chaque représentation $V\otimes V^\vee$ de la norme $\vert-\vert_n$. La boule unité est une représentation $V^+ \otimes V^{\vee,+}$ et on voit que $G_{n+1}(\qq_p)$, agit trivialement sur $V^+ \otimes V^{\vee,+}/p$ (l'action est celle sur le second facteur $V^{\vee,+}$). On a alors pour $K_p(n) = G_{n+1}(\qq_p)$ que:

$$\tilde{VB}_{K_p(n)}(\oscr_{G_n} \otimes \oscr_{\mathcal{FL}}) = \widehat{ \bigoplus_{V \in \mathbf{IrrRep}(G)} \tilde{VB}_{K_p(n)}( V \otimes V^{\vee} \otimes \oscr_{\mathcal{FL}})}$$
où chaque $V\otimes V^\vee$ a été muni de la norme $\vert - \vert_n$ ce qui fournit une norme sur $\tilde{VB}_{K_p(n)}( V \otimes V^{\vee} \otimes \oscr_{\mathcal{FL}})$ par rapport à laquelle on complète. 
On déduit par passage à la limite dans le théorème \ref{thm2},   $$\widehat{\big( \bigoplus_{V \in \mathbf{IrrRep}(G)} \tilde{VB}_{K_p(n)}( V \otimes V^{\vee} \otimes \oscr_{\mathcal{FL}})\big)} \hat{\otimes}_{\oscr_{X_{K_p(n)K^p}}} \oscr_{X_{K^p}} = \mathcal{C}^{n-an} \hat{\otimes}_{\oscr_{\mathcal{FL}}} \oscr_{X_{K^p}}.$$
\end{proof}

On décrit  à présent sur chaque ouvert standard affinoide $U  \hookrightarrow \mathcal{FL}$, le faisceau $\tilde{VB}_{K_p(n)}(\oscr_{G_n} \otimes \oscr_{\mathcal{FL}}) \hat{\otimes}_{\oscr_{X_{K_p(n)K^p}}} \oscr_{X_{K^p}}$. 

Fixons une section de la projection $G \rightarrow \mathcal{FL}$ au dessus de  $U$. On note $x$ l'élément universel de $G$ au-dessus de $U$, déterminé par la section. 

\begin{lem} On possède un isomorphisme  $\mathcal{C}^{n-an}\vert_{U} = \oscr_U \langle x_1,x_2,x_3\rangle$. 
\end{lem}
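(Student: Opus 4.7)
Le plan est d'utiliser la section $x : U \to G$ pour ramener le calcul de cohomologie $\HH^0(\mathfrak{n}^0, -)$ \`a un probl\`eme avec alg\`ebre de Lie nilpotente \emph{constante}, puis d'invoquer la d\'ecomposition multiplicative $N \times B \to G$ au voisinage de $1 \in G$.

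La premi\`ere observation cruciale est que $\mathfrak{n}^0$ agit trivialement sur $\oscr_{\mathcal{FL}}$ par $\star_3$: d'apr\`es l'exemple \ref{exemple-basique}(5), le morphisme d'action $\mathfrak{g}^0 \to \mathcal{T}_{\mathcal{FL}}$ a pour noyau $\mathfrak{b}^0 \supseteq \mathfrak{n}^0$. Par cons\'equent, l'action $\star_{1,3}$ de $\mathfrak{n}^0$ sur $\oscr_{G_n} \hat{\otimes} \oscr_{\mathcal{FL}}$ se r\'eduit \`a l'action $\star_1$ sur le premier facteur, et elle est $\oscr_{\mathcal{FL}}$-lin\'eaire. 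Sur $U$, le sous-fibr\'e en droites $\mathfrak{n}^0\vert_U$ est engendr\'e par la section $X_u := \mathrm{Ad}(x(u)^{-1}) X_0$, o\`u $X_0$ est un g\'en\'erateur de l'alg\`ebre $\mathfrak{n}$ standard.

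Ensuite, on exploite le fait que la multiplication $N \times B \to G$ est un isomorphisme analytique sur un voisinage ouvert de $1$. Quitte \`a augmenter $n$, on en d\'eduit un isomorphisme $G_n \simeq N_n \times B_n$ avec $N_n = N \cap G_n$ et $B_n = B \cap G_n$. En tordant par la conjugaison par $x$, on obtient sur $U$ un isomorphisme d'espaces adiques relatifs
$$G_n \times U \simeq (x^{-1}N_n x) \times_U (x^{-1}B_n x),$$
o\`u les facteurs correspondent aux sous-groupes tordus $N_u = x(u)^{-1}N x(u)$ et $B_u = x(u)^{-1}B x(u)$. Toute section $f$ de $\oscr_{G_n} \hat{\otimes} \oscr_U$ s'\'ecrit alors comme une fonction de trois variables $(n_u, b_u, u)$. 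L'action $\star_1$ de $X_u \in \mathfrak{n}_u$ correspond \`a la translation infinit\'esimale \`a gauche par $N_u$, qui n'affecte que la coordonn\'ee $n_u$. Les sections annul\'ees par $\mathfrak{n}^0$ sont donc exactement celles ind\'ependantes de $n_u$, et on obtient $\mathcal{C}^{n-an}\vert_U \simeq \oscr_U \hat{\otimes}_{\C_p} \oscr_{B_n}$. Comme $B$ est un groupe alg\'ebrique lisse de dimension $3$ dans $GL_2$ (triangulaires sup\'erieures), $\oscr_{B_n}$ s'identifie \`a l'alg\`ebre de Tate \`a trois variables $\C_p\langle x_1, x_2, x_3\rangle$ convenablement normalis\'ee, d'o\`u l'isomorphisme annonc\'e.

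Le point technique le plus d\'elicat sera de rendre pr\'ecise la d\'ecomposition $g = n_u b_u$ en famille sur $U$: la conjugaison par $x(u)$ ne pr\'eserve pas exactement la normalisation de $G_n$, mais $U$ \'etant quasi-compacte, on peut trouver $n$ assez grand pour que toutes les op\'erations restent dans le cadre des alg\`ebres de Banach affino\"ides sur $U$, ce qui suffit \`a \'etablir l'isomorphisme topologique cherch\'e.
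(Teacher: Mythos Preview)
Your overall strategy---untwist the varying nilpotent $\mathfrak{n}_u$ by conjugation with the section $x$, then read off the invariants as functions on a three-dimensional quotient---is exactly the paper's approach. The paper sends $f(g,x)$ to $f(x^{-1}gx,x)$, which converts left $U_{x}$-invariance into left $U_n$-invariance and identifies $\mathcal{C}^{n-an}\vert_U$ with $\oscr_{U_n\backslash G_n}\hat{\otimes}\oscr_U$; since $U_n\backslash G_n$ is a three-dimensional affinoid ball, this is $\oscr_U\langle x_1,x_2,x_3\rangle$.

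However, your written argument contains a genuine error in the decomposition step. You write ``la d\'ecomposition multiplicative $N\times B\to G$ au voisinage de $1$'', with $N$ the group whose Lie algebra is the standard $\mathfrak{n}$ (hence $N=U$, the upper unipotent radical of $B$). But then $N\subset B$, so $N\times B\to G$ has image $B$ and is \emph{not} a local isomorphism at $1$: dimensions do not match ($1+3\neq 4$ is fine, but $\mathfrak{n}\oplus\mathfrak{b}\neq\mathfrak{g}$ because $\mathfrak{n}\subset\mathfrak{b}$). The decomposition you need is $N\times\bar{B}\to G$ (or equivalently $\bar{N}\times B\to G$), the big-cell factorisation with the \emph{opposite} Borel. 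With that correction, your $\mathcal{C}^{n-an}\vert_U\simeq\oscr_U\hat{\otimes}\oscr_{\bar{B}_n}$ is correct and coincides with the paper's description, since $\bar{B}_n\hookrightarrow G_n\to U_n\backslash G_n$ is an isomorphism of affinoids.

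One further remark: your reduction ``$\mathfrak{n}^0$ agit trivialement sur $\oscr_{\mathcal{FL}}$ via $\star_3$'' is correct and useful (it follows exactly as you say from $\mathfrak{n}^0\subset\mathfrak{b}^0=\ker(\mathfrak{g}^0\to\mathcal{T}_{\mathcal{FL}})$), and it makes the $\oscr_{\mathcal{FL}}$-linearity of the remaining problem transparent. The paper leaves this implicit by passing directly to the integrated invariance condition $f(n_x g,x)=f(g,x)$.
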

\begin{proof} Voyons une section de $\mathcal{C}^{n-an}$ comme une fonction analytique $f(g,x)$ où $g \in G_n$ et $f(n_x g , x) = f(g,x)$ pour tout $n_x \in U_x = x^{-1}U_nx$. On construit alors un isomorphisme : $\mathcal{C}^{n-an}\vert_{U} \rightarrow \oscr_{U_n \backslash G_n} \hat{\otimes} \oscr_U$ en envoyant $f(g,x)$ sur $f( x^{-1} gx, x)$. On a alors $\oscr_{U_n \backslash G_n} = \C_p \langle x_1,x_2,x_3\rangle$. 
\end{proof}

Notons $\pi_{HT}^{-1}(U) = \pi_{K_p}^{-1}(U_{K_p})$ pour tout $K_p$ assez petit. 

\begin{lem} Il existe un sous-groupe ouvert compact $K_p(n)' \subseteq K_p(n)$ tel qu'on possède un isomorphisme :
$$\tilde{VB}_{K_p(n)}(\oscr_{G_n} \otimes \oscr_{\mathcal{FL}}) \hat{\otimes}_{\oscr_{X_{K_p(n)K^p}}} \oscr_{U_{K_p(n)}}=  \oscr_{U_{K_p(n)}}\langle x'_1,x'_2,x'_3\rangle$$
\end{lem}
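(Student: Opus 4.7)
The plan is to exhibit explicit coordinates $x'_1, x'_2, x'_3$ that descend the polydisc trivialization $\mathcal{C}^{n-an}|_U \simeq \oscr_U\langle x_1, x_2, x_3\rangle$ from the previous lemma to a finite level $K_p(n)'$. Combining that lemma with the preceding proposition yields, after base change to $\oscr_{X_{K^p}}$, an isomorphism
$$\tilde{VB}_{K_p(n)}(\oscr_{G_n}\otimes\oscr_{\mathcal{FL}})\,\hat\otimes_{\oscr_{X_{K_p(n)K^p}}}\oscr_{X_{K^p}}|_{\pi_{HT}^{-1}(U)} \;\simeq\; \oscr_{X_{K^p}}|_{\pi_{HT}^{-1}(U)}\langle x_1, x_2, x_3\rangle,$$
so the problem becomes one of descending the right-hand polydisc structure through $K_p(n)'$-invariants to $\oscr_{U_{K_p(n)'}}$.

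First I would unpack the $K_p(n)$-action on the right-hand side. Under the isomorphism $\mathcal{C}^{n-an}|_U \simeq \oscr_{U_n\backslash G_n}\hat\otimes\oscr_U$ of the previous lemma, the $\star_{1,3}$-action translates into a conjugation-twisted right translation of $G_n$ on $\oscr_{U_n\backslash G_n}=\C_p\langle x_1,x_2,x_3\rangle$; combined with the natural $K_p(n)$-action on $\oscr_{X_{K^p}}$ this yields the diagonal action whose invariants, by theorem \ref{thm2}, recover $\tilde{VB}_{K_p(n)}(\oscr_{G_n}\otimes\oscr_{\mathcal{FL}})\hat\otimes_{\oscr_{X_{K_p(n)K^p}}}\oscr_{U_{K_p(n)}}$. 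Since this action is locally analytic (in particular continuous), for $K_p(n)'\subseteq K_p(n)$ sufficiently small, the elements $k\cdot x_i - x_i$ can be made $p$-adically arbitrarily small in the natural Banach norm for all $k\in K_p(n)'$ and $i=1,2,3$.

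Next, I would construct the invariants $x'_i$ by an averaging/Tate--Sen argument in the spirit of the proof of lemma \ref{lem-calculcoho}. The existence of Tate traces $\mathrm{Tr}_{K_p(n)'}$ on $\oscr_{X_{K^p}}|_{\pi_{HT}^{-1}(U)}$ landing in $\oscr_{U_{K_p(n)'}}$ (valid for $K_p(n)'$ small enough) allows one to replace each $x_i$ by a $K_p(n)'$-invariant correction $x'_i = x_i + \delta_i$, where $\delta_i$ is obtained by a convergent geometric-series in $(k-1)$ on the kernel of the trace. The closeness of $x'_i$ to $x_i$ ensures that the $x'_i$ remain in the descended submodule and satisfy the Banach-algebra relations needed to generate a completed polynomial algebra.

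The main obstacle is verifying that the induced morphism $\oscr_{U_{K_p(n)'}}\langle x'_1,x'_2,x'_3\rangle \to \tilde{VB}_{K_p(n)}(\oscr_{G_n}\otimes\oscr_{\mathcal{FL}})\hat\otimes_{\oscr_{X_{K_p(n)K^p}}}\oscr_{U_{K_p(n)'}}$ is an isomorphism. One checks this after the flat base change to $\oscr_{X_{K^p}}|_{\pi_{HT}^{-1}(U)}$, where it reduces to the statement that a small perturbation of the standard generators of $\oscr_{X_{K^p}}\langle x_1,x_2,x_3\rangle$ still generates the same completed polynomial algebra; faithful flatness of the descent $\oscr_{U_{K_p(n)'}}\to (\oscr_{X_{K^p}}|_{\pi_{HT}^{-1}(U)})^{K_p(n)'}$ and the full faithfulness of the $(\pi_{HT}^{sm})^\star$-type functors established in proposition \ref{prop-coherence} then promote the isomorphism to finite level. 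The delicate point is the simultaneous bookkeeping: the averaging producing $x'_i$, the compatibility with the Peter--Weyl-type direct-sum decomposition used in the previous proposition, and the Schauder-basis structure must all be controlled by a single choice of $K_p(n)'$.
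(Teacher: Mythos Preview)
Your overall shape is right: one wants to replace the $x_i$ by nearby $K_p(n)'$-fixed elements $x'_i$ and then argue that a small perturbation of the generators of a relative closed polydisc is still a generating system. That is exactly what the paper does. The difference is in \emph{how} the $x'_i$ are produced.

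The paper does not average. It uses instead the chain of inclusions
\[
\oscr_{X_{K^p}}^{sm}\otimes_{\oscr_{X_{K_p(n)K^p}}}\oscr_{X_{K^p}}^{sm}\ \subseteq\ \tilde{VB}_{K_p(n)}(\oscr_{G_n}\otimes\oscr_{\mathcal{FL}})\,\hat\otimes_{\oscr_{X_{K_p(n)K^p}}}\oscr_{X_{K^p}}\ \subseteq\ \oscr_{X_{K^p}}\,\hat\otimes_{\oscr_{X_{K_p(n)K^p}}}\oscr_{X_{K^p}},
\]
together with the fact that on the affinoid perfectoid $\pi_{HT}^{-1}(U)$ the smooth vectors $\colim_{K_p}\oscr_{X_{K_pK^p}}(U_{K_p})$ are dense in $\hat\oscr(U)$. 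This gives, with no further work, smooth elements $x'_i$ arbitrarily close to $x_i$; being smooth, each $x'_i$ is fixed by some open compact, and one takes $K_p(n)'$ fixing all three. Then $\phi' : \oscr_{\pi_{HT}^{-1}(U)}\langle x'_1,x'_2,x'_3\rangle\to\tilde{VB}_{K_p(n)}\hat\otimes\oscr_{\pi_{HT}^{-1}(U)}$ is an isomorphism (small perturbation of $\phi$), and taking $K_p(n)'$-invariants on both sides finishes, since on the right $K_p(n)'$ acts only through the second tensor factor.

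Your mechanism has a gap. The ``Tate traces $\mathrm{Tr}_{K_p(n)'}$ on $\oscr_{X_{K^p}}$ landing in $\oscr_{U_{K_p(n)'}}$'' that you invoke, by analogy with lemma \ref{lem-calculcoho}, are not the objects available here: in that lemma (and throughout the Sen machinery of \S\ref{sect-TSEN}) the normalized traces $\mathrm{Tr}_{n'}$ are along the auxiliary $\Gamma\simeq\ZZ_p$-tower $B_{K_p,n'}\subset B_{K_p,\infty}$, not along the $K_p$-tower. There is no normalized trace $\hat\oscr\to\oscr_{K_p(n)'}$ with the contraction property ($\lvert(\gamma-1)^{-1}\rvert$ bounded on the kernel) that your geometric-series construction needs; in the $K_p$-direction one only has almost-\'etale descent, which is why the paper always passes through $B_{K_p,\infty}$ first and then uses $\Gamma$-traces. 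Consequently your construction of the $x'_i$ does not go through as written. The density-of-smooth-vectors argument bypasses this entirely and is both shorter and already available from the perfectoid description of $X_{K^p}$.
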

\begin{proof} On possède un isomorphisme   $$\phi : \oscr_{\pi_{HT}^{-1}(U)}\langle x_1,x_2,x_3\rangle \rightarrow \tilde{VB}_{K_p(n)}( \oscr_{G_n} \otimes \oscr_{\mathcal{FL}})\hat{\otimes}_{\oscr_{X_{K_p(n)K^p}}} \oscr_{\pi_{HT}^{-1}(U)}$$
qui est $\oscr_{\pi_{HT}^{-1}(U)}$-linéaire et  $K_p(n)$-équivariant. L'idée est d'approximer les variables $x_1,x_2,x_3$ par des variables qui sont stables pour un sous-groupe ouvert compact $K_p(n)'$. 

On a des inclusions  des faisceaux $ \oscr_{X_{K^p}}^{sm} \otimes_{\oscr_{X_{K_p(n)K^p}}} \oscr_{X_{K^p}}^{sm} \subseteq \tilde{VB}_{K_p(n)}( \oscr_{G_n} \otimes \oscr_{\mathcal{FL}})\hat{\otimes}_{\oscr_{X_{K_p(n)K^p}}} \oscr_{X_{K^p}} \subseteq  \oscr_{X_{K^p}} \otimes_{\oscr_{X_{K_p(n)K^p}}} \oscr_{X_{K^p}}$. On évalue sur $\pi_{HT}^{-1}(U)$ (qui est affinoide pefectoide et satisfait la propriété d'approximation que  $\colim_{K_p} \HH^0(U_{K_p}, \oscr_{X_{K_pK^p}})$ est dense dans $\hat{\oscr}(U)$). Il en résulte qu'il existe un suite d'éléments $x_{i,n} \in  \oscr_{X_{K^p}}^{sm} \otimes_{\oscr_{X_{K_p(n)K^p}}} \oscr_{X_{K^p}}^{sm} (\pi_{HT}^{-1}(U))$ tels que $x_{i,n} \rightarrow x_i$ quand $n \rightarrow \infty$ pour $i=1,2,3$. 

On peut alors définir des applications $$\phi_n : \oscr_{\pi_{HT}^{-1}(U)}\langle x_{1,n},x_{2,n},x_{3,n}\rangle \rightarrow \tilde{VB}_{K_p(n)}( \oscr_{G_n} \otimes \oscr_{\mathcal{FL}})\hat{\otimes}_{\oscr_{X_{K_p(n)K^p}}} \oscr_{\pi_{HT}^{-1}(U)}$$ et $\phi_n \rightarrow \phi$ quand $n\rightarrow \infty$. Il en résulte que $\phi_n$ est un isomorphisme pour $n$ assez grand. Mais il existe un compact $K'_p(n)$ qui fixe $x_{1,n}, x_{2,n}, x_{3,n}$. On conclut en prenant les invariants par $K'_p(n)$. 
\end{proof}

Quitte à rapetisser $K_p(n)$ on peut supposer que $K_p(n) = K_p(n)'$. Posons alors $(\pi_{HT})_\star \tilde{VB}_{K_p(n)}(\oscr_{G_n} \otimes \oscr_{\mathcal{FL}})  =   \oscr^{n-an}_{K_p(n)}$. 
La formule de projection montre  qu'on possède un isomorphisme : 
$$\Psi_n : {\oscr}^{n-an}_{K_p(n)} \hat{\otimes}_{\oscr_{K_{p}(n)}} \hat{\oscr} = \mathcal{C}^{n-an} \hat{\otimes}_{\oscr_{\mathcal{FL}}} \hat{\oscr}.$$
Le morphisme $\Psi$ du théorème \ref{thm3} s'obtient par passage à la limite sur $n$. 

Vérifions à présent l'équivariance pour $G(\qq_p)$ et $\mathfrak{g}$.   Pour tout $h \in G(\qq_p)$, $f$  section locale de $\mathcal{C}^{n-an}$  et $k \in K_p$ assez petit,  on a $$h\star_{1,2,3} . k \star_{1,3} . f = (h k h^{-1}) \star_{1,3} . h \star_{1,2,3}.f$$
Il en résulte qu'on possède une action de $G(\qq_p)$ sur $VB(\mathcal{C}^{la}) = \oscr^{la}$ et le morphisme : $\oscr^{la} \rightarrow  \mathcal{C}^{la} \hat{\otimes}_{\oscr_{\mathcal{FL}}} \hat{\oscr}$ est $G(\qq_p)$-équivariant. 
On possède aussi un morphisme d'évaluation en $1$ (déduit de l'augmentation $\oscr_{G} \rightarrow \C_p$): 
$$ ev_{1} :  \mathcal{C}^{la} \hat{\otimes}_{\oscr_{\mathcal{FL}}} \hat{\oscr} \rightarrow \oscr_{G,1} \hat{\otimes}_{\C_p} \hat{\oscr}
\rightarrow \hat{\oscr}$$
qui est $G(\qq_p)$-équivariant (pour l'action $\star_{1,2,3}$ sur la source). Comme $ev_{1} \circ \Psi : \oscr^{la} \rightarrow \hat{\oscr}$ est l'inclusion naturelle,  on déduit  que l'action de $G(\qq_p)$ construite sur $\oscr^{la}$ via le foncteur $VB$ est bien celle déduite de l'inclusion $\oscr^{la} \subseteq \hat{\oscr}$. 

\begin{lem} Le morphisme $\oscr^{la} \rightarrow \mathcal{C}^{la} \hat{\otimes}_{\oscr_{\mathcal{FL}}} \hat{\oscr}$ induit un morphisme $\mathfrak{g}$-equivariant où $\mathfrak{g}$ agit sur 
$\oscr^{la}$ en dérivant l'action de $G(\qq_p)$ et sur $\mathcal{C}^{la} \hat{\otimes}_{\oscr_{\mathcal{FL}}} \hat{\oscr}$ via l'action $\star_2$. 
\end{lem}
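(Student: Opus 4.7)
Le plan est de réduire l'énoncé à un calcul direct en identifiant le morphisme en question à l'application orbite. Je commencerais par rappeler que le morphisme $\oscr^{la} \rightarrow \mathcal{C}^{la} \hat{\otimes}_{\oscr_{\mathcal{FL}}} \hat{\oscr}$ obtenu par restriction de $\Psi$ à $\oscr^{la} \otimes 1$ n'est autre que l'application orbite $f \mapsto \phi_f$, où $\phi_f(g) = g \cdot f$ pour $g$ dans un voisinage de $1 \in G$ : c'est explicitement la manière dont $\Psi$ est construit juste avant le théorème \ref{thm3}, et ce dernier garantit que ce germe prend valeurs dans $\mathcal{C}^{la} \hat{\otimes}_{\oscr_{\mathcal{FL}}} \hat{\oscr}$.

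Ensuite, je fixerais $X \in \mathfrak{g}$. L'action $\star_2$ sur $\mathcal{C}^{la}$, étendue $\hat{\oscr}$-linéairement et continûment au produit tensoriel complété, se calcule par $(X \star_2 \phi)(g) = \frac{d}{dt}\vert_{t=0}\, \phi(g \exp(tX))$, tandis que l'action dérivée de $G(\qq_p)$ sur $\oscr^{la}$ est $X \cdot f = \frac{d}{dt}\vert_{t=0}\, \exp(tX) \cdot f$, bien définie car $f$ est localement analytique. Le calcul central est alors
\begin{eqnarray*}
\Psi(X \cdot f)(g) &=& g \cdot (X \cdot f) \;=\; g \cdot \lim_{t \to 0} \frac{\exp(tX) f - f}{t} \\
&=& \lim_{t \to 0} \frac{\phi_f(g \exp(tX)) - \phi_f(g)}{t} \;=\; (X \star_2 \Psi(f))(g),
\end{eqnarray*}
l'interversion de la multiplication par $g$ et de la limite en $t$ étant licite car $g$ agit de façon continue et $\C_p$-linéaire sur $\hat{\oscr}$.

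L'obstacle principal sera la formalisation topologique précise de ces manipulations : il faut vérifier que l'action $\star_2$ étendue à $\mathcal{C}^{la} \hat{\otimes}_{\oscr_{\mathcal{FL}}} \hat{\oscr}$ par continuité correspond bien, au niveau des germes de fonctions analytiques à valeurs dans $\hat{\oscr}$, à la translation à droite dans la variable $g$, et que la description de $\Psi(f)$ comme orbite reste valable au niveau du complété. Ces points suivront de la construction explicite des isomorphismes $\Psi_n$ au niveau fini effectuée dans la preuve du théorème \ref{thm3}, et de la stabilité de chaque $\mathcal{C}^{n-an}$ sous $\star_2$ (qui commute avec l'action $\star_{1,3}$ définissant $\mathcal{C}^{n-an}$). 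Une fois ces vérifications faites, l'identité $\Psi(X \cdot f) = X \star_2 \Psi(f)$ établit l'équivariance souhaitée.
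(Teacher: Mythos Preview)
Your argument is correct, but the paper takes a shorter and more structural route. Instead of identifying $\Psi$ with the orbit map and computing directly, the paper observes that on $\mathcal{C}^{n-an}$ the action $\star_{1,2,3}$ of $K_p \subseteq G_n$ factors as the composite of $\star_{1,3}$ and $\star_2$. Since $VB(\mathcal{C}^{n-an})$ is, by construction, a colimit of $K_p$-invariants for $\star_{1,3}$ (tensored against $\hat{\oscr}$), the $\star_{1,3}$-action on $VB(\mathcal{C}^{n-an})$ is \emph{smooth}, hence has vanishing derivative. Differentiating $\star_{1,2,3}$ therefore yields $\star_2$ alone, and this is exactly the $\mathfrak{g}$-action obtained by deriving the $G(\qq_p)$-action on $\oscr^{la}$.

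Your approach has the advantage of being self-contained once the orbit-map description of $\Psi$ is granted, and it makes the equivariance transparent at the level of germs. The paper's approach avoids the topological verifications you flag as the main obstacle: it never needs to interpret $\Psi(f)$ explicitly as $g \mapsto gf$ inside the completed tensor product, because the smoothness of $\star_{1,3}$ on $VB$ is immediate from the definition of $VB$ as a colimit of invariants. In practice the two arguments are close cousins --- your computation $\Psi(X\cdot f)(g) = \frac{d}{dt}\big\vert_{t=0}\,\phi_f(g\exp(tX))$ is essentially the infinitesimal unpacking of the decomposition $\star_{1,2,3} = \star_{1,3}\circ\star_2$ --- but the paper's phrasing sidesteps the analytic bookkeeping entirely.
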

\begin{proof}Sur $\mathcal{C}^{n-an}$, l'action $\star_{1,2,3}$ de $K_p   \subseteq G_n$ se décompose en le  composé  des actions $\star_{1,3}$ et $\star_2$. Il en résulte que l'action $\star_{1,2,3}$ de $K_p$ sur $VB(\mathcal{C}^{n-an})$ se décompose en le composé d'une action lisse  $\star_{1,3}$ (donc de dérivée nule !) et de l'action $\star_2$. \end{proof}

On possède une action $\star_{hor}$ du Cartan horizontal $\mathfrak{h}$ sur $\mathcal{C}^{la}$. Celle-ci induit une action $\star_{hor}$ de $\mathfrak{h}$ sur $VB(\mathcal{C}^{la}) = \oscr^{la}$.

\begin{lem} L'action de $\mathfrak{g}$ sur $\oscr^{la}$ obtenue en dérivant l'action de $G(\qq_p)$ induit une action de $\mathfrak{g}^0$ qui se factorise en une action de $\mathfrak{g}^0/\mathfrak{n}^0$. L'action induite de $\mathfrak{h} \hookrightarrow \mathfrak{g}^0/\mathfrak{n}^0$ est égale à $-\star_{hor}$. 
\end{lem}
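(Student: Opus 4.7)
The plan is to leverage Theorem~\ref{thm3}, specifically the $\mathfrak{g}$-equivariant isomorphism $\Psi$ together with the fact that the orbit map $\sigma : \oscr^{la} \to \mathcal{C}^{la} \hat{\otimes}_{\oscr_{\mathcal{FL}}} \hat{\oscr}$, sending $f$ to $g \mapsto g f$, has image in $\mathcal{C}^{la} \hat{\otimes}_{\oscr_{\mathcal{FL}}} \hat{\oscr}$. The key computational input is the infinitesimal identity $n \star_{1,2,3} = n \star_1 + n \star_2 + n \star_3$ on $\oscr_{G,1} \hat{\otimes}_{\C_p} \oscr_{\mathcal{FL}}$, combined with the fiberwise vanishing of $\star_3$ on $\mathfrak{b}_x$ at the point $x$, which reflects the infinitesimal stabilization of $x$ by $\mathfrak{b}_x$.

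First I would extend the $\mathfrak{g}$-action to a $\mathfrak{g}^0$-action. The pullback $\pi_{HT}^{\#} : \oscr_{\mathcal{FL}} \to \hat{\oscr}$ factors through $\oscr^{la}$, because $G$ acts algebraically on $\mathcal{FL}$ and thus sections of $\oscr_{\mathcal{FL}}$ are locally analytic vectors. This makes $\oscr^{la}$ a sheaf of $\oscr_{\mathcal{FL}}$-algebras, and the derivation of the $G(\qq_p)$-action satisfies the Leibniz rule $n(sf) = n(s) f + s\, n(f)$, which is precisely a Lie algebroid action of $\mathfrak{g}^0 = \oscr_{\mathcal{FL}} \otimes \mathfrak{g}$ with anchor given by the infinitesimal $G$-action on $\mathcal{FL}$.

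To show $\mathfrak{n}^0$ acts trivially, take $f \in \oscr^{la}$ and a local section $n$ of $\mathfrak{n}^0$. By Theorem~\ref{thm3}, $\sigma(f) \in \mathcal{C}^{la} \hat{\otimes}_{\oscr_{\mathcal{FL}}} \hat{\oscr}$, so $n \star_{1,3} \sigma(f) = 0$. At any $x \in \mathcal{FL}$, $n(x) \in \mathfrak{n}_x$ stabilizes $x$, so $n \star_3$ vanishes at $x$, yielding $n \star_1 \sigma(f) = 0$ at the fiber of $\pi_{HT}$ over $x$. Evaluating $\sigma(f)(g, y) = f(g^{-1}y)$ at $g = 1$ and $y \in \pi_{HT}^{-1}(x)$, I find $(n \star_1 \sigma(f))(1, y) = -(n \cdot f)(y)$, so $(n \cdot f)(y) = 0$; varying $x$ and $y$, I conclude $n \cdot f = 0$, and the $\mathfrak{g}^0$-action factors through $\mathfrak{g}^0/\mathfrak{n}^0$. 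For the identification of the induced $\mathfrak{h}$-action with $-\star_{hor}$, I would lift $h \in \mathfrak{h}$ locally to $\tilde{h} \in \mathfrak{b}^0 \subseteq \mathfrak{g}^0$; by construction $\star_{hor}(h)$ on $\mathcal{C}^{la}$ is the action of $\tilde{h} \star_{1,3}$, well-defined modulo the trivial $\mathfrak{n}^0 \star_{1,3}$ action. The same fiberwise argument applies: $\tilde{h}(x) \in \mathfrak{b}_x$ stabilizes $x$, so $(\tilde{h} \star_{1,3} \sigma(f))(1, y) = (\tilde{h} \star_1 \sigma(f))(1, y) = -(h \cdot f)(y)$. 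Since evaluation at $(1, y)$ identifies $\sigma$-images with their underlying sections in $\oscr^{la}$, this reads $(\star_{hor}(h) f)(y) = -(h \cdot f)(y)$, establishing the desired equality on $\oscr^{la}$.

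The main obstacle is the bookkeeping of conventions: recalling (from the preceding lemma) that the $\mathfrak{g}^0$-action on $\oscr^{la}$ corresponds under $\Psi$ to the $\star_2$-action on $\mathcal{C}^{la}$, managing the signs arising from left versus right translation, and ensuring that the fiberwise computation propagates to a global identity. Once these are aligned, both the triviality of $\mathfrak{n}^0$ and the identification of the $\mathfrak{h}$-action reduce to the same elementary fiberwise calculation, rooted in the infinitesimal stabilization of $x$ by $\mathfrak{b}_x$.
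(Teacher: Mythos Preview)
Your argument is correct and follows essentially the same route as the paper's proof: both exploit that the orbit map $\sigma=\Psi$ lands in $\mathcal{C}^{la}\hat{\otimes}_{\oscr_{\mathcal{FL}}}\hat{\oscr}$, and then evaluate at $1\in G$ to recover the derived action with a minus sign. The paper packages the last step via the identity $ev_1(g\star_1\,-)=-ev_1(g\star_2\,-)$ combined with the preceding lemma (which identifies $\star_2$ with the derived $G(\qq_p)$-action through $\Psi$), whereas you compute $(n\star_1\sigma(f))(1,y)=-(n\cdot f)(y)$ directly; these are equivalent.

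One small simplification: your ``fiberwise'' justification that $n\star_3$ vanishes is more than you need. For any section $n$ of $\mathfrak{b}^0$ the linearized action $n\star_3$ on $\oscr_{\mathcal{FL}}$ is globally zero, since the anchor $\mathfrak{g}^0\to\mathcal{T}_{\mathcal{FL}}$ has kernel exactly $\mathfrak{b}^0$ (Exemple~\ref{exemple-basique}(5)). So for $n\in\mathfrak{n}^0$ one has $n\star_{1,3}=n\star_1$ as an $\oscr_{\mathcal{FL}}$-linear operator on $\oscr_{G,1}\hat{\otimes}\oscr_{\mathcal{FL}}$, which then extends to $\mathcal{C}^{la}\hat{\otimes}_{\oscr_{\mathcal{FL}}}\hat{\oscr}$ and vanishes there; no pointwise argument is required. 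The same remark applies to your treatment of $\tilde h\in\mathfrak{b}^0$ in the $\star_{hor}$ step.
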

\begin{proof} Considérons  le morphisme  d'évaluation en $1$, $ev_1 : \oscr_{G,1} \hat{\otimes}_{\C_p} \hat{\oscr}
\rightarrow \hat{\oscr}$. Pour tout section $f \in  \oscr_{G,1} \hat{\otimes}_{\C_p} \hat{\oscr}
$ et tout  $g \in \mathfrak{g}$,   on a $ev_1 (g \star_1 f) = - ev_1 (g \star_2 f)$.  On linéarise  pour en déduire que pour tout  $g \in \mathfrak{g}^0$,   on a $ev_1 (g \star_1 f) = - ev_1 (g \star_2 f)$.
Soit $f \in \oscr^{la}$,  on obtient $ev_1 (g \star_1 \Psi( f)) = - ev_1 (g \star_2 \Psi( f)) = -ev_1  \circ \Psi( g. f)$. On déduit donc que si $g \in \mathfrak{n}^0$, $g.f = 0$, puis que l'action induite de $\mathfrak{h}$ est $-\star_{hor}$. 
\end{proof}

\section{Cohomologie des faisceaux $\mathcal{C}^{la}$ et $\oscr^{la}$}\label{sectiononcohomology}
Dans cette section, on reprend les calculs de la section 5 de \cite{pan2021locally}. On commence par faire des calculs de cohomologie sur le faisceau $\mathcal{C}^{la}$, puis on transfert ces calculs au faisceau $\oscr^{la}$ à l'aide du théorème \ref{thm3}. 

On adopte la notation suivante: Soit $\chi$ un caractère de $\mathfrak{h}$. Si $M$ est un $\C_p$-espace vectoriel (voir même un faisceau) muni d'une action $\star_{hor} $ du Cartan horizontal $\mathfrak{h}$, on note $M^{\chi} = \mathrm{Hom}_{\mathfrak{h}, \star_{hor}}( \chi, M)$. Si $M$ est un $\C_p$-espace vectoriel (voir même un faisceau) muni d'une action $\star_2 $ de $\mathfrak{h}$ on note $M[\chi] = \mathrm{Hom}_{\mathfrak{h}, \star_2}( \chi, M)$.

\subsection{Cohomologie du faisceau $\mathcal{C}^{la}$}
Pour tout $\chi : \mathfrak{h} \rightarrow \C_p$, on note $\mathcal{C}^{la,\chi}$ le sous-faisceau de $\mathcal{C}^{la}$ donné  par définition $\mathcal{C}^{la, \chi} = \mathrm{Hom}_{\mathfrak{h}, \star_{hor}} ( \chi, \mathcal{C}^{la})$. 
Les faisceaux $\mathcal{C}^{la}$ et $\mathcal{C}^{la,\chi}$ sont $(\mathfrak{g}, G(\qq_p))$-equivariants, et possèdent une action supplémentaire $\star_2$ de $\mathfrak{g}$ (qui commute à l'action de $\mathfrak{g}$ donnée par la structure équivariante). 
On va s'intéresser aux faisceaux de  $\mathfrak{n}$-cohomologie (pour l'action $\star_2$), qui se définissent ainsi: 

$$ 0 \rightarrow \mathcal{C}^{la, \mathfrak{n}} \rightarrow \mathcal{C}^{la} \stackrel{\mathfrak{n}} \rightarrow \mathcal{C}^{la} \otimes \mathfrak{n}^\vee \rightarrow \mathcal{C}^{la}_{\mathfrak{n}} \rightarrow 0$$ et 
$$ 0 \rightarrow \mathcal{C}^{la, \chi, \mathfrak{n}} \rightarrow \mathcal{C}^{la, \chi} \stackrel{\mathfrak{n}} \rightarrow \mathcal{C}^{la,\chi} \otimes \mathfrak{n}^\vee \rightarrow \mathcal{C}^{la,\chi}_{\mathfrak{n}} \rightarrow 0$$ 

Les faisceaux de cohomologie  $\mathcal{C}^{la, \mathfrak{n}}, \mathcal{C}^{la, \chi, \mathfrak{n}}, \mathcal{C}^{la}_{ \mathfrak{n}}$ et $\mathcal{C}^{la,\chi}_{ \mathfrak{n}}$ sont $(\mathfrak{g}, B(\qq_p))$-equivariants et  possèdent une action $\star_2$ induite de $\mathfrak{h} = \mathfrak{b}/\mathfrak{n}$. 

Notons $j = \mathcal{FL} \setminus \{\infty\} = U_{w_0} \hookrightarrow \mathcal{FL}$ l'inclusion et $i_{\infty} : \{ \infty\} \hookrightarrow \mathcal{FL}$. On possède des suites exactes: $$0 \rightarrow  j_! j^\star \mathcal{C}^{la} \rightarrow \mathcal{C}^{la} \rightarrow (i_{\infty})_\star i_{\infty}^{-1} \mathcal{C}^{la} \rightarrow 0 $$
et 
$$0 \rightarrow  j_! j^\star \mathcal{C}^{la,\chi} \rightarrow \mathcal{C}^{la,\chi} \rightarrow (i_{\infty})_\star i_{\infty}^{-1} \mathcal{C}^{la,\chi} \rightarrow 0 .$$

On rappelle aussi la définition suivante :

\begin{defi} Un nombre complexe p-adique $\alpha \in \C_p$ est un nombre de Liouville si :
$$\mathrm{liminf}_{n \rightarrow \infty} \vert \alpha + n \vert^{\frac{1}{n}} = 0$$
\end{defi}

On note $\mathrm{Liouv}$ l'ensemble des nombres de Liouville, et $-\mathrm{Liouv}$ son opposé. La proposition suivante décrit essentiellement la $\mathfrak{n}$-cohomologie des faisceaux $\mathcal{C}^{la}$ et $\mathcal{C}^{la,\chi}$.  
 \begin{prop}\label{prop-computingcoho} \begin{enumerate}
 \item On possède deux morphismes   de faisceaux $(\mathfrak{g}, B(\qq_p))$-équivariants  : $ES_{w_0} :  \mathcal{C}^{la}_{w_0} \rightarrow \mathcal{C}^{la}$ et $ES_{\infty} : \mathcal{C}^{la}  \rightarrow \mathcal{C}^{la}_{\infty}$.
 \item Pour tout caractère $\chi : \mathfrak{h} \rightarrow \C_p$, on a $\mathcal{C}^{la, \chi}_{w_0} =  j_! \oscr_{U_{w_0}}^{w_0\chi}$,  $\mathcal{C}^{la, \chi}_{\infty} = (i_{\infty})_\star \oscr_{\infty}^{\omega_0\chi} $ (voir les section \ref{sec-fsurcon} et \ref{sec-fsurcon2}). 
 \item On a $\mathrm{Ext}_{\mathfrak{h}, \star_{hor}}^i ( \chi, \mathcal{C}^{la}) =  \mathrm{Ext}_{\mathfrak{h}, \star_{hor}}^i ( \chi, \mathcal{C}^{la}_\infty) = \mathrm{Ext}_{\mathfrak{h}, \star_{hor}}^i ( \chi, \mathcal{C}^{la}_{w_0})  = 0$ si $i>0$.
 
 \item L'application $ES_{w_0}$ se factorise en un morphisme 
 $ES_{w_0} : \mathcal{C}^{la}_{w_0} \rightarrow  \mathcal{C}^{la, \mathfrak{n}}$
 On possède une action du Cartan horizontal $\star_{hor}$ sur $\mathcal{C}^{la}_{w_0}$ et le morphisme $ES_{w_0}$   est équivariant pour les actions de $-w_0 \star_{hor}$  et  $\star_2$ de $\mathfrak{h}$. 
 \item L'application $ES_{\infty} $ se factorise en un morphisme 
 $ES_{\infty} : \mathcal{C}^{la}_{\mathfrak{n}} \rightarrow  \mathcal{C}^{la}_{\infty} \otimes \mathfrak{n}^\vee$.  On possède une action du Cartan horizontal $\star_{hor}$ sur $\mathcal{C}^{la}_{\infty}$ et le morphisme $ES_{\infty}$ est équivariant pour les actions  $\star_2$ et  $- \star_{hor} - 2\rho$  de $\mathfrak{h}$.
  \item Si $\chi( (1,-1)) \notin \Z_{\geq 0}$, le morphisme $ES_{w_0}$ induit   un isomorphisme de faisceaux $(\mathfrak{g},B(\qq_p))$-équivariants $$ \mathcal{C}^{la, \chi}_{w_0} \rightarrow \mathcal{C}^{la,\chi, \mathfrak{n}} $$
  l'action de $\star_2$ de $\mathfrak{h}$ se fait à travers le caractère $-w_0\chi$.

  \item Si $\chi( (1,-1)) \in \Z_{\geq 0}$,  on possède un isomorphisme de faisceaux  $(\mathfrak{g},B(\qq_p))$-equivariants $\mathcal{C}^{la,\chi, \mathfrak{n}} =  \oscr_{\mathcal{FL}}^{w_0\chi} \otimes_{\C_p} \C_p(-w_0\chi)$ (le produit tensoriel par $\C_p(-w_0\chi)$ signifie que l'action de $B(\qq_p)$ est tordue),
  et  l'action de $\star_2$ de $\mathfrak{h}$ se fait à travers le caractère $-w_0\chi$. Le morphisme $ES_{w_0}$ se factorise à travers le  morphisme naturel $ j_! \oscr_{U_{w_0}}^{w_0\chi} \rightarrow \oscr_{\mathcal{FL}}^{w_0\chi} \otimes_{\C_p} \C_p(-w_0\chi)$.
  
  \item  Si $\chi( (1,-1)) \notin \Z_{\geq 0} \cup -\mathrm{Liouv}$,  l'application $ES_\infty$ induit un isomorphisme de faisceaux $(\mathfrak{g},B(\qq_p))$-équivariants 
  $$ES_\infty : \mathcal{C}^{la,\chi}_{ \mathfrak{n}}  \rightarrow \mathcal{C}^{la,\chi}_\infty \otimes \mathfrak{n}^\vee$$
  l'action de $\star_2$ de $\mathfrak{h}$ se fait à travers le caractère $-\chi-2\rho$.
  \item  Si $\chi( (1,-1)) \in \Z_{\geq 0} $,  on possède une suite exacte   $$0 \rightarrow (i_{\infty})_\star \oscr_{\infty}^{\omega_0\chi} \otimes_{\C_p} \C_p(\chi-w_0\chi) \rightarrow  \mathcal{C}^{la,\chi}_{ \mathfrak{n}}  \stackrel{ES_{\infty}}\rightarrow \mathcal{C}^{la,\chi}_\infty \otimes \mathfrak{n}^\vee.$$ L'action   $\star_2$ de $\mathfrak{h}$ se fait à travers le caractère $-w_0\chi$ sur le premier facteur et  $-\chi-2\rho$  sur  le second facteur.
  \end{enumerate}
  \end{prop}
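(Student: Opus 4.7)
Le plan est de procéder par analyse locale sur les deux strates de Bruhat $U_{w_0}$ et $\{\infty\}$ de $\mathcal{FL}$, en exploitant la classification des faisceaux inversibles $(\mathfrak{g}, B(\qq_p))$-équivariants obtenue aux sections \ref{sec-fsurcon} et \ref{sec-fsurcon2}. Rappelons que la fibre de $\mathcal{C}^{la}$ en un point $x$ est (une complétion de) le dual du module de Verma universel $M_x = \C_p \otimes_{\mathcal{U}(\mathfrak{n}_x)} \mathcal{U}(\mathfrak{g})$; les calculs de $\mathfrak{n}$-cohomologie se ramèneront donc essentiellement à des questions classiques sur les modules de Verma, mais tordues par des phénomènes d'analyse $p$-adique.

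\textbf{Construction des morphismes $ES_{w_0}$ et $ES_\infty$ (points (1)--(2)).} D'abord, les identifications du point (2) résultent d'un calcul local. Sur $U_{w_0}$, en fixant une section de la projection $\pi : G \rightarrow \mathcal{FL}$, l'espace $\mathcal{C}^{la,\chi}\vert_{U_{w_0}}$ s'identifie aux germes de fonctions analytiques $f : G \rightarrow \oscr_{U_{w_0}}$ vérifiant la $\mathfrak{n}^0$-équivariance et le caractère $\chi$ pour $\star_{hor}$; la décomposition d'Iwahori ramène cet espace au faisceau $\oscr_{U_{w_0}}^{w_0\chi}$ décrit à la section \ref{sec-fsurcon}. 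Le cas de $\infty$ est analogue via la section \ref{sec-fsurcon2}. On définit alors $ES_{w_0}$ et $ES_\infty$ comme les adjoints naturels $j_! j^\star \mathcal{C}^{la} \rightarrow \mathcal{C}^{la}$ et $\mathcal{C}^{la} \rightarrow (i_{\infty})_\star i_{\infty}^{-1} \mathcal{C}^{la}$, transportés via ces identifications.

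\textbf{Annulation des Ext supérieurs (point (3)) et factorisation (points (4)--(5)).} Pour (3), on observe que sur chacune des strates, les actions $\star_{hor}$ sur $\mathcal{C}^{la}_{w_0}$ et $\mathcal{C}^{la}_{\infty}$ se factorisent à travers un unique caractère (car ces faisceaux sont inversibles à action $\star_{hor}$ par un caractère); les Ext supérieurs pour $\mathfrak{h}$ s'annulent donc par un calcul élémentaire de complexe de Koszul. Pour $\mathcal{C}^{la}$ lui-même, on utilise la décomposition semi-simple induite par l'action $\star_{hor}$ sur les fibres des modules de Verma. Les factorisations (4)--(5) résultent d'une inspection directe des actions: sur $\mathcal{C}^{la}_{w_0}$ (resp.\ $\mathcal{C}^{la}_\infty$), l'action $\star_2$ de $\mathfrak{n}$ coincide au signe près avec l'action horizontale qui stabilise les strates, d'où le fait que l'image de $ES_{w_0}$ est tuée par $\mathfrak{n}$ et que le noyau de $ES_\infty$ contient l'image de $\mathfrak{n}\cdot$; les signes et caractères $-w_0$ et $-2\rho$ s'obtiennent par un calcul précis de l'action adjointe sur $\mathfrak{n}^\vee$.

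\textbf{Calculs explicites (points (6)--(9)).} Le coeur du travail est ici. Fibre par fibre au point $\infty$, $\mathcal{C}^{la,\chi}_{\infty}$ est le dual (complété) du module de Verma $M_{w_0\chi}$ de plus haut poids $w_0\chi$. Si $\chi((1,-1)) \notin \Z_{\geq 0}$, $M_{w_0\chi}$ est irréductible, et l'opérateur $\mathfrak{n}$ (agissant par $\star_2$) admet alors une "réciproque formelle" par inversion $p$-adique de l'action triangulaire sur la base PBW. C'est à ce stade qu'apparaît la condition de Liouville: pour que l'inverse formel converge dans la complétion $p$-adique considérée, on a besoin d'une borne inférieure sur $\vert \chi((1,-1)) + n \vert$ pour $n$ parcourant $\N$, ce qui exclut précisément $-\mathrm{Liouv}$ (voir \cite{pan2021locally}, sect.\ 5). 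Dans le cas $\chi((1,-1)) \in \Z_{\geq 0}$, le sous-module propre de $M_{w_0\chi}$ isomorphe à la représentation irréductible de plus haut poids $\chi$ produit le terme supplémentaire $\oscr_{\mathcal{FL}}^{w_0\chi}$ (resp.\ $(i_\infty)_\star \oscr_\infty^{w_0\chi}$) dans (7) et (9); les torsions par $\C_p(-w_0\chi)$ et $\C_p(\chi - w_0\chi)$ reflètent la différence entre l'action algébrique et l'action déduite de $B(\qq_p)$ (cf.\ remarque \ref{rem-fortement}).

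L'obstacle principal est le contrôle $p$-adique précis des séries formelles intervenant dans l'inversion de $\mathfrak{n}$ au point (8). C'est là que la condition de Liouville apparaît et qu'il faut choisir soigneusement les normes sur les complétions des modules de Verma pour que les constructions convergent. Les autres points sont essentiellement formels ou se ramènent à la théorie classique des modules de Verma.
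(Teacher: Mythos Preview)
Your overall strategy --- local analysis on each Bruhat stratum, reduction to dual Verma module computations in explicit coordinates, and the appearance of the Liouville condition for $p$-adic convergence of the formal $\mathfrak{n}$-inverse --- is the one the paper follows. Two points, however, are genuine gaps rather than shortcuts.

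First, the morphism $ES_\infty$ is \emph{not} the adjunction $\mathcal{C}^{la}\to(i_\infty)_\star i_\infty^{-1}\mathcal{C}^{la}$. The stalk $i_\infty^{-1}\mathcal{C}^{la,\chi}$ is (a completion of) the full dual Verma, hence of infinite rank over $\oscr_{\mathcal{FL},\infty}$, whereas $\mathcal{C}^{la,\chi}_\infty=(i_\infty)_\star\oscr_\infty^{w_0\chi}$ is rank one. To get the projection the paper first performs a nontrivial change of variables (the ``isomorphisme utile''): after base change along $\oscr_{\mathcal{FL},\infty}\to\oscr_{U\backslash G,1}$ one identifies $i_\infty^{-1}\mathcal{C}^{la}$ with $\oscr_{U\backslash G,1}\hat\otimes_{\C_p}\oscr_{U\backslash G,1}$ via $f(g,x)\mapsto f(xg,x)$, then restricts the first factor along $T\hookrightarrow U\backslash G$, and finally descends back by taking diagonal $\mathfrak{h}$-invariants. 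It is this change of variables that makes the equivariance in (5) and the explicit formula $n\cdot x^r=(\chi((1,-1))-r)x^{r+1}$ for the $\mathfrak{n}$-action transparent; without it one does not see why $ES_\infty$ should factor through the $\mathfrak{n}$-coinvariants at all.

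Second, your argument for (3) is incorrect as stated: $\mathcal{C}^{la}_{w_0}$ and $\mathcal{C}^{la}_\infty$ are \emph{not} rank one with $\star_{hor}$ acting through a single character --- only their $\chi$-isotypic pieces are. The full sheaves look like $j_!(\oscr_{T^0,1}|_{U_{w_0}})$ (germs of functions on the universal torus), and the Ext-vanishing is the statement that for $h\in\mathfrak{h}$ the operator $h-\chi(h)$ acts surjectively on germs of analytic functions near $1\in T$; this is elementary but not a ``single character'' Koszul computation.

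Finally, for (7) the paper does more than cite the finite-dimensional submodule of the Verma: it identifies the extension class of $0\to j_!j^\star\mathcal{C}^{la,\chi,\mathfrak{n}}\to\mathcal{C}^{la,\chi,\mathfrak{n}}\to(i_\infty)_\star i_\infty^{-1}\mathcal{C}^{la,\chi,\mathfrak{n}}\to 0$ by exhibiting a \emph{global} splitting morphism $\oscr_{\mathcal{FL}}^{w_0\chi}\otimes\C_p(-w_0\chi)\to\mathcal{C}^{la,\chi,\mathfrak{n}}$ coming from the matrix-coefficient embedding $V_\kappa\otimes V_\kappa^\vee\hookrightarrow\oscr_G^{alg}$ (with $\kappa=\chi$), and then argues it is an isomorphism by $\mathfrak{g}$-equivariance. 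Your sketch does not explain why the extension is the canonical one rather than some twist of it.
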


La preuve de ce résultat est donnée dans les sections suivantes. 

\subsubsection{Calcul de la  $\mathfrak{n}$-cohomologie sur l'ouvert $U_{w_0}$}
Soit $B^0 \hookrightarrow G \times \mathcal{FL}$ le sous-groupe de Borel universel. Notons $U^0 \subseteq B^0$ le radical unipotent universel et $T^0 = U^0\backslash B^0$.  Soit $ e : \mathcal{FL} \rightarrow B^0$ la section neutre. On note $\oscr_{B^0,1}= e^{-1} \oscr_{B^0}$. C'est le faisceau des germes de fonctions analytiques sur $B^0$ au voisinage de la section neutre. On définit de même $\oscr_{U^0,1}$ et  $\oscr_{T^0,1}$. On note aussi $\oscr_{U,1}$ le $\C_p$-espace vectoriel des germes de fonctions analytiques en l'identité sur $U$. 


\begin{lem} Le morphism :
$$ B^0 \times U \rightarrow G\times \mathcal{FL}$$ induit sur l'ouvert $U_{w_0}$ un isomorphisme :
$$ \oscr_{G,1} \otimes \oscr_{U_{w_0}}= (\oscr_{B^0,1}\hat{ \otimes} \oscr_{U,1})\vert_{U_{w_0}}$$
\end{lem}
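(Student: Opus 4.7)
Le morphisme en question est la multiplication $m : B^0 \times U \to G \times \mathcal{FL}$ d\'efinie par $((b,x),u) \mapsto (bu,x)$, qui envoie la section $e \times \{1\}$ sur la section $\{1\} \times \mathcal{FL}$. La strat\'egie est de montrer que, au-dessus de l'ouvert $U_{w_0}$, $m$ est un isomorphisme local d'espaces analytiques au voisinage de la section neutre; par pullback sur les germes de fonctions analytiques le long de ces sections, cela fournira l'isomorphisme annonc\'e entre $\oscr_{G,1} \otimes \oscr_{U_{w_0}}$ et $(\oscr_{B^0,1} \hat{\otimes} \oscr_{U,1})\vert_{U_{w_0}}$.

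Premi\`erement, on calcule la diff\'erentielle. Fibre par fibre au-dessus d'un point $x \in \mathcal{FL}$, $m$ se sp\'ecialise en la multiplication $B_x \times U \to G$, dont la diff\'erentielle en $(1,1)$ est l'application lin\'eaire $\mathfrak{b}_x \oplus \mathfrak{n} \to \mathfrak{g}$, $(X,Y) \mapsto X+Y$. Deuxi\`emement, on v\'erifie que cette diff\'erentielle est un isomorphisme en tout point de $U_{w_0}$. Ecrivant $x = [w_0 u_0]$ pour un $u_0 \in U$, on a $B_x = u_0^{-1} w_0^{-1} B w_0 u_0 = u_0^{-1} B^- u_0$ o\`u $B^-$ d\'esigne le Borel oppos\'e; un calcul matriciel direct (traduisant le fait que $U_{w_0}$ est pr\'ecis\'ement la cellule de Bruhat ouverte oppos\'ee au point $\infty$) montre que $B_x \cap U = \{1\}$, d'o\`u $\mathfrak{b}_x \cap \mathfrak{n} = 0$; par comptage de dimensions on en d\'eduit $\mathfrak{b}_x \oplus \mathfrak{n} = \mathfrak{g}$.

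Troisi\`emement, on applique le th\'eor\`eme d'inversion locale en g\'eom\'etrie analytique rigide (ou adique) : la diff\'erentielle \'etant un isomorphisme le long de toute la section neutre au-dessus de $U_{w_0}$, le morphisme $m$ induit un isomorphisme d'un voisinage analytique de la section $e \times \{1\}$ dans $B^0 \times U$ sur un voisinage de la section $\{1\} \times U_{w_0}$ dans $G \times \mathcal{FL}$. En prenant les germes le long de ces sections et en utilisant $e^{-1}\oscr_{B^0} = \oscr_{B^0,1}$, on obtient l'isomorphisme de faisceaux voulu.

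L'obstacle principal n'est pas la v\'erification infinit\'esimale ---essentiellement tautologique vu la d\'efinition de $U_{w_0}$--- mais l'articulation finale entre l'isomorphisme local d'espaces analytiques et l'\'egalit\'e de faisceaux annonc\'ee. Il faudra notamment identifier proprement les deux membres comme faisceaux de germes de fonctions analytiques le long de sections se correspondant par $m$, et v\'erifier que les produits tensoriels (compl\'et\'es ou non) utilis\'es dans les deux \'ecritures co\"incident : le fait que la section $e \times \{1\}$ soit propre dans les directions germifi\'ees rend cette identification routini\`ere, mais il faut la rendre explicite pour pouvoir exploiter ult\'erieurement l'isomorphisme obtenu.
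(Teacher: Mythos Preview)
Your proof is correct and follows essentially the same approach as the paper. The paper's proof consists of the single sentence ``En tout point $x \in U_{w_0}$, le morphisme $\mathfrak{b}_x \oplus \mathfrak{n} \rightarrow \mathfrak{g}$ est un isomorphisme'', which is exactly your steps 2--3; you have simply spelled out the surrounding framework (the interpretation of $m$, the inverse function theorem, and the passage to germs) that the paper leaves implicit.
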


\begin{proof} En tout point, $x \in U_{w_0}$, le morphisme $\mathfrak{b}_x \oplus \mathfrak{n} \rightarrow \mathfrak{g}$ est un isomorphisme.
\end{proof}  

\begin{coro}\label{coro-calculcoho}  On a :
\begin{enumerate} 
\item $j^\star \mathcal{C}^{la, \mathfrak{n}} \simeq j^\star \oscr_{T^0,1}$ de plus, pour tout $h \in \mathfrak{h}$ on a $h \star_{hor}( -)  = -w_0h \star_2( -)$,
\item  $j^\star \mathcal{C}^{la}_{\mathfrak{n}} = 0$,
\item  On possède un isomorphisme de faisceaux $(\mathfrak{g}, B(\qq_p))$-équivariants 
$$j^\star \mathcal{C}^{la, \chi, \mathfrak{n}}  = \oscr_{U_{w_0}}^{w_0\chi}$$
et l'action $\star_2$ de $\mathfrak{h}$ est donnée par le caractère $-w_0\chi$. 
\item  $j^\star \mathcal{C}^{la, \chi}_{\mathfrak{n}} =0$.
\end{enumerate}
\end{coro}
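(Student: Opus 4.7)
The plan is to reduce every computation to a one-variable power series calculation, using the decomposition provided by the preceding lemma together with the factorization $B^0 = T^0\ltimes U^0$. First I would apply the preceding lemma to identify on $U_{w_0}$
$$\oscr_{G,1}\hat\otimes \oscr_{\mathcal{FL}}|_{U_{w_0}} = (\oscr_{B^0,1}\hat\otimes \oscr_{U,1})|_{U_{w_0}}.$$
Under this identification, $\star_{1,3}$ of $\mathfrak{n}^0$ acts only on the $\oscr_{B^0,1}$-factor by left translation, because left multiplication by $\hat{U}^0\subset\hat{B}^0$ preserves the local decomposition $G = \hat{B}^0\cdot U$ valid on $U_{w_0}$; dually, $\star_2$ of $\mathfrak{n}$ acts only on $\oscr_{U,1}$ by right translation. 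The analogous statements for $\mathfrak{b}^0$, $\mathfrak{h}$ and $\mathfrak{b}$ follow from the same decomposition.

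Next I would use the further factorization $\oscr_{B^0,1}\simeq \oscr_{U^0,1}\hat\otimes \oscr_{T^0,1}$ coming from $B^0 = T^0\ltimes U^0$. Left translation by $\mathfrak{n}^0$ acts only on $\oscr_{U^0,1}\simeq \C_p\{\{x\}\}$ as the translation-invariant derivation $\partial_x$, which is surjective with one-dimensional kernel $\C_p$. Therefore $\mathcal{C}^{la}|_{U_{w_0}} = (\oscr_{T^0,1}\hat\otimes\oscr_{U,1})|_{U_{w_0}}$, and the higher $\mathfrak{n}^0$-cohomology vanishes. Exactly the same argument for the $\star_2$-$\mathfrak{n}$-cohomology (now using $\oscr_{U,1}\simeq \C_p\{\{y\}\}$) yields $j^\star \mathcal{C}^{la,\mathfrak{n}} = j^\star \oscr_{T^0,1}$ and $j^\star \mathcal{C}^{la}_{\mathfrak{n}} = 0$, proving (1) and (2).

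For (3) and (4) I would track the $\mathfrak{h}$-weights on $\oscr_{T^0,1}|_{U_{w_0}}$. The action $\star_{hor}$ is $\star_1$ of $\mathfrak{t}^0 = \mathfrak{b}^0/\mathfrak{n}^0$, hence left translation. The action $\star_2$ of $\mathfrak{h}\subset\mathfrak{g}$ restricted to the piece $\oscr_{T^0,1}$ comes from right translation by $\hat{T}$ on the $\hat{B}^0$-factor (its contribution on $\oscr_{U,1}$ being killed by $\star_2$-$\mathfrak{n}$-invariants). Since $T^0$ is abelian, left and right translation differ by a sign; the extra Weyl twist arises because the canonical map $\mathfrak{h}\to \mathfrak{b}^0/\mathfrak{n}^0 = \mathfrak{t}^0_x$ at a point $x\in U_{w_0}$ factors through $\mathrm{Ad}_{w_0^{-1}}$ (since $\mathfrak{b}^0_x$ is the opposite Borel). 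A direct check on a basis character of $T^0$ then gives the identity $h\star_{hor}(-) = -w_0 h\star_2(-)$. Decomposing $\oscr_{T^0,1}|_{U_{w_0}}$ into $\chi$-eigenspaces for $\star_{hor}$ produces an invertible subsheaf which, via the torsor $\mathcal{T}_{HT}$ of section \ref{sec-fsurcon}, identifies with $\oscr_{U_{w_0}}^{w_0\chi}$, giving (3); part (4) is immediate from (2).

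The main obstacle is purely bookkeeping: tracking how the abstract $\mathfrak{h}$ identifies with $\mathfrak{t}^0_x$ at points of $U_{w_0}$ (where the $w_0$-twist is introduced), and matching the $\chi$-eigenspace of $\star_{hor}$ on $\oscr_{T^0,1}|_{U_{w_0}}$ with the concrete line bundle $\oscr_{U_{w_0}}^{w_0\chi}$ constructed in section \ref{sec-fsurcon}. The cohomological input is nothing more than the fact that $\partial_x$ on $\C_p\{\{x\}\}$ is surjective with one-dimensional kernel.
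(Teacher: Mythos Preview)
Your argument is correct and runs parallel to the paper's on points (1), (2), (4): the paper also declares these ``\'evident'' from the decomposition lemma, so your power-series elaboration via $\partial_x$ on $\C_p\{\{x\}\}$ is just a fuller version of what the paper leaves implicit.

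There are two places where your route differs from the paper's. For the identity $h\star_{hor}(-)=-w_0h\star_2(-)$, you argue conceptually (left versus right translation on the abelian $T^0$, plus the Weyl twist coming from the identification $\mathfrak h\to\mathfrak b^0_x/\mathfrak n^0_x$ at a point of $U_{w_0}$). The paper instead writes $x=w_0\begin{pmatrix}1&u\\0&1\end{pmatrix}$ and checks directly that $h-x^{-1}hx\in\mathfrak n$ for every $h\in\mathfrak h$; this one-line matrix computation pins down the sign and the $w_0$ simultaneously and avoids having to trace through the various identifications. For part (3), you propose to pick out the $\chi$-eigenspace of $\star_{hor}$ in $\oscr_{T^0,1}$ and identify it with $\oscr_{U_{w_0}}^{w_0\chi}$ via the torsor. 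The paper takes a shorter route: since $\mathcal C^{la,\chi,\mathfrak n}$ is visibly invertible of weight $\chi$, the classification of invertible $\mathfrak g$-equivariant sheaves on $U_{w_0}$ (already proved in section \ref{sec-fsurcon}) forces it to be $\oscr_{U_{w_0}}^{w_0\chi}$ as a $\mathfrak g$-equivariant sheaf; the $B(\qq_p)$-equivariance is then checked by computing the $T(\qq_p)$-action on the fibre at $w_0$ and seeing it is trivial. Your approach works too, but the paper's use of the classification lemma is cleaner and makes the $B(\qq_p)$-equivariance check---which you flag as ``bookkeeping'' but do not carry out---a localized fibre calculation rather than a global matching of torsor constructions.
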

\begin{proof} Il est évident que $j^\star \mathcal{C}^{la, \mathfrak{n}} = j^\star \oscr_{T^0,1}$ et $j^\star \mathcal{C}^{la}_{\mathfrak{n}} = 0$ vu la description du lemme. Posons $x = w_0    \begin{pmatrix} 
      1 & u \\
      0 & 1 \\
   \end{pmatrix}$. 

Un  calcul direct montre que pour tout  $h \in \mathfrak{h}$ on a $ h - x^{-1}  h x \in \mathfrak{n}$. Cela montre immédiatement que $h \star_{hor}- = -w_0h \star_2 -$.

 Il est clair que $\mathcal{C}^{la, \chi, \mathfrak{n}}$ est un fibré inversible 
Comme il est de poids $\chi$, il en résulte que $\mathcal{C}^{la, \chi, \mathfrak{n}} \simeq \oscr_{U_{w_0}}^{w_0\chi}$ comme faisceau $\mathfrak{g}$-équivariant. Vérifions que c'est un isomorphisme de faisceaux $(\mathfrak{g}, B(\qq_p))$-équivariant. Il suffit de calculer l'action de $T(\qq_p)$ en la fibre en $w_0$ de $\mathcal{C}^{la, \chi, \mathfrak{n}}$ et de voir qu'elle est triviale. Une section de $\mathcal{C}^{la, \chi, \mathfrak{n}}$ est une fonction $f(g,x)$ tel que $f(b^{-1}_xg,x) = \chi(h_x)f(g,x)$ et $f(gn,x) = 0$  pour $b_x = h_x n_x \in \mathfrak{b}_x$, $n\in \mathfrak{n}$, $g \in G$ arbitrairement proche de $1$ et $x \in U_{w_0}$.  Soit $t \in T(\qq_p)$. On a $t. f(g,w_0) = f(t^{-1} g t, w_0)$. Posons $g = u_{w_0}t_{w_0} u$ avec $b_{w_0} = u_{w_0} t_{w_0} \in B_{w_0}$ et $u \in U$. On voit alors que $f(t^{-1} g t, w_0) = f( t^{-1} t_{w_0} t , w_0) = f(g,w_0)$.

\end{proof}

On pose donc $ ES_{w_0} : \mathcal{C}^{la}_{w_0} = j_! j^\star \mathcal{C}^{la,\mathfrak{n}} \rightarrow \mathcal{C}^{la}$. 
\subsubsection{Un isomorphisme utile}
 Notons $\oscr_{U \backslash G,1}$ les germes de fonctions sur $U \backslash G$ au voisinage de $U.1$.  Notons $\oscr_{U \backslash G,1}^{\chi}$ le sous-espace des fonctions $f$ qui vérifient $h. f = f( h^{-1}-) = \chi(h) f$. On observe que $\oscr_{U \backslash G,1}^{\chi=0} = \oscr_{\mathcal{FL}, \infty}$ . Les espaces $\oscr_{U \backslash G,1}$ et $\oscr_{U \backslash G,1}^{\chi}$ possèdent une action  $\star_d$ de $\mathfrak{g}$ obtenue en dérivant la translation à droite sur $G$ et une action $\star_g$  de $\mathfrak{h}$ obtenue en dérivant l'action par translation à gauche de $T$ sur $G$.

 On note aussi qu'on possède une action $\star_{gd}$ de $B(\qq_p)$, donnée par $b.f(-) = f(b^{-1}-b)$. 

\begin{lem} On possède des isomorphismes naturelles:  

\begin{enumerate}
\item $i^{-1}_{\infty} \mathcal{C}^{la} \hat{\otimes}_{\oscr_{\mathcal{FL}, \infty}} \oscr_{U\backslash G,1} \rightarrow   \oscr_{U \backslash G,1} \hat{\otimes}_{\C_p} \oscr_{U \backslash G,1}$.
\item $i^{-1}_{\infty} \mathcal{C}^{la,\chi} \hat{\otimes}_{\oscr_{\mathcal{FL}, \infty}} \oscr_{U\backslash G,1} \rightarrow \oscr_{U \backslash G,1}^{\chi} \hat{\otimes}_{\C_p} \oscr_{U \backslash G, 1}$
\end{enumerate}
Ces isomorphismes entrelacent les actions $\star_{1,3} \otimes \star_d$ et $Id \otimes  \star_d$ de $\mathfrak{g}$,  et les actions $\star_2$ et   $\star_d \otimes Id$. 
Ils entrelacent les actions $\star_{1,2,3} \otimes \star_{gd}$ et $\star_{gd} \otimes \star_{gd}$ de $B(\qq_p)$. L'action de $\mathfrak{h}$ donnée par $\star_{hor} \otimes Id$ devient $\star_g \otimes Id$. 
Enfin les actions $Id \otimes \star_g$ et $\star_g \otimes \star_g$ de $\mathfrak{h}$ se correspondent aussi. 
\end{lem}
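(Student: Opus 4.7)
Le plan est de construire explicitement l'isomorphisme et son inverse via des changements de variables naturels. \'Etant donn� $F \in i^{-1}_\infty \mathcal{C}^{la} \hat{\otimes}_{\oscr_{\mathcal{FL},\infty}} \oscr_{U\backslash G,1}$, vue comme fonction $F(g, Uh)$ avec $g$ proche de $1$ dans $G$ et $Uh$ proche de $U\cdot 1$ dans $U\backslash G$, on lui associe $\hat F \in \oscr_{U\backslash G,1} \hat{\otimes}_{\C_p} \oscr_{U\backslash G,1}$ d�finie par
\[ \hat F(Ug', Uh) := F(h^{-1}g', Uh), \]
et l'inverse est donn� par $F(g, Uh) := \hat F(Uhg, Uh)$. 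La bonne d�finition par rapport au choix du rel�vement $h$ de $Uh$ est imm�diate pour $\hat F\mapsto F$ (si $h\mapsto uh$ avec $u\in U$, alors $U(uh)g = Uhg$ et $U(uh) = Uh$). Dans le sens $F\mapsto \hat F$, elle r�sulte d'une condition d'invariance de $F$ par translation � gauche en $g$ que la condition $\mathfrak{n}^0$-invariante va pr�cis�ment fournir.

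Le point clef, sp�cifique � $\mathrm{GL}_2$ o� $U$ est ab�lien, est que $h^{-1}\mathfrak{n} h$ ne d�pend que de $Uh$ et d�crit le tir� en arri�re de $\mathfrak{n}^0$ � $U\backslash G$ au voisinage de $U\cdot 1$. Comme $\mathfrak{n}_{Bh} = h^{-1}\mathfrak{n} h$ stabilise $Bh$ dans $\mathcal{FL}$, l'action $\star_{1,3}$ de $\mathfrak{n}^0$ au point $Uh$ est $\oscr_{\mathcal{FL}}$-lin�aire et se r�duit � l'action $\star_1$ seule (la partie $\star_3$ est nulle). Ainsi, la condition $\mathfrak{n}^0$-invariante s'�crit: pour tout $X\in\mathfrak{n}$, $\frac{d}{dt}\big\vert_0 F(e^{-th^{-1}Xh}g, Uh) = 0$, i.e., $F(g, Uh)$ est invariante par translation � gauche de $g$ par $h^{-1}Uh$. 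C'est exactement ce qu'il faut pour que $\hat F(Ug', Uh) := F(h^{-1}g', Uh)$ soit ind�pendante du rel�vement de $Ug'$, et pour que $\hat F(Uhg, Uh)$ d�finisse une section $\mathfrak{n}^0$-invariante.

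Les v�rifications d'�quivariance d�coulent ensuite par substitution directe. Par exemple, l'action diagonale de $X\in\mathfrak{g}$ via $\star_{1,3}\otimes \star_d$ s'�crit $F\mapsto \frac{d}{dt}\big\vert_0 F(e^{-tX}g, Uhe^{tX})$; appliqu�e � $F(g,Uh)=\hat F(Uhg, Uh)$, cela donne $\frac{d}{dt}\big\vert_0 \hat F(U(he^{tX})(e^{-tX}g), Uhe^{tX}) = \frac{d}{dt}\big\vert_0 \hat F(Uhg, Uhe^{tX})$, soit $Id\otimes \star_d$ sur le membre de droite. De m�me, $\star_2$ devient $\star_d\otimes Id$ (via $hg\mapsto (hg)e^{tX}$), et pour $b\in B(\qq_p)$, l'action diagonale $(b\cdot F)(g,Uh)=F(b^{-1}gb, Ub^{-1}hb)$ se lit $\hat F(Ub^{-1}hgb, Ub^{-1}hb)$, soit $\star_{gd}\otimes\star_{gd}$.

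Pour la seconde partie du lemme (faisant intervenir $\mathcal{C}^{la,\chi}$ et $\oscr_{U\backslash G,1}^{\chi}$), il suffit de v�rifier que $\star_{hor}$ sur $\mathcal{C}^{la}$ correspond � $\star_g\otimes Id$ sur le membre de droite, puis de prendre les composantes $\chi$-isotypiques. L'obstacle principal est ici de nature calculatoire et concerne l'explicitation de cette derni�re action: on utilise que le Cartan horizontal $H\in\mathfrak{h}=\HH^0(\mathcal{FL},\mathfrak{b}^0/\mathfrak{n}^0)$ prend la valeur $h^{-1}Hh\pmod{\mathfrak{n}_{Bh}}$ au point $Bh$, ce qui donne $\star_{hor}(H)F(g,Uh)=\frac{d}{dt}\big\vert_0 \hat F(Ue^{-tH}hg, Uh)$, soit pr�cis�ment $\star_g\otimes Id$. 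Enfin, l'action $Id\otimes \star_g$ correspond � $\star_g\otimes\star_g$ car la translation $Uh\mapsto Ue^{-tH}h$ se propage simultan�ment aux deux arguments de $\hat F$ via la formule $F(g,Uh)=\hat F(Uhg, Uh)$.
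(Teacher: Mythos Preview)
Your proof is correct and follows essentially the same approach as the paper. The paper's proof defines the inverse map by $f(g,x)\mapsto f(xg,x)$ (using a lift of $x\in U\backslash G$ to $G$), which is exactly your formula $F(g,Uh)=\hat F(Uhg,Uh)$, and then dismisses all the equivariance checks as ``un simple exercice''; you have carried these out explicitly and correctly. One small remark: the fact that $h^{-1}\mathfrak{n}h$ depends only on $Uh$ follows simply from $U$ normalizing its own Lie algebra $\mathfrak{n}$, so it is not specific to $U$ being abelian.
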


\begin{proof} Voyons une section de $\mathcal{C}^{la}$ comme une fonction $f(g,x)$ avec $g \in G$ et $x \in \mathcal{FL}$ et voyons donc une section de $i^{-1}_{\infty} \mathcal{C}^{la} \otimes_{\oscr_{\mathcal{FL}, \infty}} \oscr_{U\backslash G,1}$ comme une fonction $f(g,x)$ avec $g \in G$, $x \in U\backslash G$  arbitrairement proches de $1$. Au voisinage de $\infty$, on peut relever $x$ en un élément de $G$ tel que $x(\infty) = 1$ (on vérifiera que l'application que nous allons définir ne dépend pas du choix d'un tel relèvement). 
On définit alors l'application $f(g,x) \mapsto f(xg,x)$. On voit  que cette application réalise un isomorphisme
$i^{-1}_{\infty} \mathcal{C}^{la} \otimes_{\oscr_{\mathcal{FL}, \infty}} \oscr_{U\backslash G,1} \rightarrow  \oscr_{U \backslash G,1} \otimes_{\C_p} \oscr_{U \backslash G,1}.$
et la compatibilité aux différentes actions est un simple exercice. 

\end{proof}
\subsubsection{Le morphisme $ES_\infty$}
On possède un morphisme de restriction naturel $\oscr_{U \backslash G, 1} \rightarrow \oscr_{T,1}$, déduit de l'inclusion $T \hookrightarrow U\backslash G$.. On définit  un morphisme surjectif $$i^{-1}_{\infty} \mathcal{C}^{la} \hat{\otimes}_{\oscr_{\mathcal{FL}, \infty}} \oscr_{U\backslash G,1} \rightarrow   \oscr_{U \backslash G,1} \hat{\otimes}_{\C_p} \oscr_{U \backslash G,1} \rightarrow  \oscr_{T,1} \hat{\otimes}_{\C_p} \oscr_{U \backslash G,1}$$

Ce morphisme entrelace les actions $\star_{1,3} \otimes \star_d$ et $Id \otimes  \star_d$ de $\mathfrak{g}$,  et les actions $\star_2 \otimes Id$ et   $\star_d \otimes Id$ de $\mathfrak{h}$. 
Il entrelace les actions $\star_{1,2,3} \otimes \star_{gd}$ et $\star_{gd} \otimes \star_{gd}$ de $B(\qq_p)$. L'action de $\mathfrak{h}$ donnée par $\star_{hor} \otimes Id$ devient $\star_g \otimes Id$. 
Enfin les actions $Id \otimes \star_g$ et $\star_g \otimes \star_g$ de $\mathfrak{h}$ se correspondent aussi. 

On en déduit un morphisme surjectif de faisceaux $(\mathfrak{g}, B(\qq_p))$-équivariants :

$$i^{-1}_{\infty} \mathcal{C}^{la} \rightarrow  (\oscr_{T,1} \hat{\otimes}_{\C_p} \oscr_{U \backslash G,1})^{\mathfrak{h}\star_g \otimes {\star_g} = 0}$$

On note $ \mathcal{C}^{la}_{\infty} = (i_{\infty})_\star (\oscr_{T,1} \hat{\otimes}_{\C_p} \oscr_{U \backslash G,1})^{\mathfrak{h}\star_g \otimes {\star_g} = 0}$. On observe que ce quotient est encore muni d'une action $\star_2$ de $\mathfrak{h}$ et que par construction $\star_2 = - \star_{hor}$. 

On possède une application $ES_\infty :   \mathcal{C}^{la} \rightarrow \mathcal{C}^{la}_{\infty}$. 

\subsubsection{La $\mathfrak{n}$-cohomologie d'un module de Verma dual}

Un élément de $U\backslash G$ au voisinage de $1$ peut être uniquement représenté par un produit de matrices:
 $$  \begin{pmatrix} 
      a & 0 \\
      0 & d \\
   \end{pmatrix} \begin{pmatrix} 
      1& 0 \\
      x & 1\\
   \end{pmatrix}$$
   
 et on voit un élément de $\oscr_{U \backslash G,1}$ comme une serie formelle en les variables $a,d,x$ dont le rayon de convergence est strictement positif.  Dans ces coordonnées, l'action par translation à droite de $U$ est donnée par :  
 
 $$\begin{pmatrix} 
 {a} & 0 \\
      0 & d \\
   \end{pmatrix} \begin{pmatrix} 
      1& 0 \\
      x & 1\\
   \end{pmatrix}. \begin{pmatrix} 
      1& u \\
      0 & 1\\
   \end{pmatrix}   = \begin{pmatrix} 
      \frac{a}{xu+1} & 0 \\
      0 & d(xu+1) \\
   \end{pmatrix} \begin{pmatrix} 
      1& 0 \\
      \frac{x}{xu+1} & 1\\
   \end{pmatrix} $$

 On fixe le générateur standard $n \in \mathfrak{n}$. On vérifie alors 
 $$n . f(a,d,x) =( -ax \partial_a f + dx \partial_d - x^2 \partial_x).f$$ Il en résulte que $n.a^{k_1} d^{k_2} x^{k_3}  = ((k_2-k_1)-k_3) a^{k_1} d^{k_2} x^{k_3+1}.$
 
 Si $f$ est une section de $\oscr_{U\backslash G,1}^{\chi}$ elle est bien determinée par la fonction $g(x) = f(1,1,x)$ et on trouve $$n.g(x) = \chi((1,-1)) x g(x) - x^2 g'(x)$$
  Il en resulte que $n.x^r =(\chi((1,-1))-r)x^{r+1}$. 
  \begin{lem}\label{lem-calculcoho}
  \begin{enumerate}
  \item Si $\chi( (1,-1)) \notin \Z_{\geq 0}$, $\oscr_{U\backslash G,1}^{\chi, \mathfrak{n}} = 0$,
  \item Si $\chi( (1,-1)) \in \Z_{\geq 0}$, $\mathrm{dim}_{\C_p} \oscr_{U\backslash G,1}^{\chi, \mathfrak{n}} =1 $ et l'action $\star_d$ de $\mathfrak{h}$ se fait à travers $-w_0\chi$.
  \item La restriction à $T$ induit  un morphisme $\oscr_{U \backslash G,1} \rightarrow \oscr_{T,1}$ et celui-ci passe au quotient en un morphisme $ (\oscr_{U \backslash G,1})_{\mathfrak{n}} \rightarrow \oscr_{T,1} \otimes \mathfrak{n}^\vee$. 
 \item Si $\chi((1,-1)) \notin \Z_{\geq 0} \cup \mathrm{Liouv}$, $\mathrm{dim}_{\C_p} (\oscr_{U\backslash G,1}^{\chi})_ {\mathfrak{n}} =1$, l'action $\star_d$ de $\mathfrak{h}$ se fait à travers le caractère $-\chi-2\rho$ et l'action $\star_{1,2}$ de $B(\qq_p)$ se fait à travers le caractère $-2\rho$.
 \item Si $\chi( (1,-1)) \in \Z_{\geq 0}$, $\mathrm{dim}_{\C_p} (\oscr_{U\backslash G,1}^{\chi})_{ \mathfrak{n}} =2 $ et l'action  $\star_d$ de $\mathfrak{h}$ se fait à travers les caractères $-w_0 \chi$ et $-\chi-2\rho$, et l'action $\star_{gd}$ de $B(\qq_p)$ se fait à travers les caractères $\chi-\omega_0 \chi$ et  $-2\rho$. 
  \end{enumerate}
  \end{lem}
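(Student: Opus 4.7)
The plan is to reduce the entire lemma to elementary power series calculations, using the identification of $\oscr_{U\backslash G, 1}^\chi$ with $\oscr_{U, 1}$ and the formula $n\cdot x^r = (\chi((1,-1))-r) x^{r+1}$ already established in the text. Writing $\chi = (k_1, k_2)$, the $\star_g$-equivariance identifies $\oscr_{U\backslash G, 1}^\chi$ with the space of germs of the form $f(a,d,x) = a^{-k_1} d^{-k_2} g(x)$ for $g \in \oscr_{U,1}$; I shall work throughout with $g$ rather than $f$.

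For the kernel in (1) and (2), I write $g = \sum_r b_r x^r$ and solve termwise: the relation $(\chi((1,-1))-r)\, b_r = 0$ forces $g = 0$ when $\chi((1,-1)) \notin \Z_{\geq 0}$, and leaves only $b_k$ free when $\chi((1,-1)) = k \in \Z_{\geq 0}$, giving a one-dimensional space generated by $a^{-k_1} d^{-k_2} x^k$. The $\star_d$-weight on this generator is read off by differentiating right translation in Iwasawa coordinates: from $u \cdot \exp(sT_i) = \exp(sT_i)\bigl(\exp(-sT_i)\,u\,\exp(sT_i)\bigr)$ one gets $T_1 \star_d = a\partial_a + x\partial_x$ and $T_2\star_d = d\partial_d - x\partial_x$, so the weight on $a^{-k_1}d^{-k_2} x^k$ is $(k-k_1,\,-k-k_2) = (-k_2,-k_1) = -w_0\chi$, as claimed. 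Part (3) is then immediate: the explicit formula $n = -ax\partial_a + dx\partial_d - x^2\partial_x$ has $x$ as a factor in every term, hence $n\cdot\oscr_{U\backslash G,1}$ vanishes upon evaluation at $x=0$, and the restriction map $f(a,d,x)\mapsto f(a,d,0)$ tensored with $\mathrm{id}_{\mathfrak{n}^\vee}$ descends to the announced morphism of cokernels.

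For the cokernel in (4) and (5), the image of $n$ on $\oscr_{U,1}$ consists of those series $\sum_{s\geq 1} c_s x^s$ for which the formal inverse $b_r = c_{r+1}/(\chi((1,-1))-r)$ actually converges. When $\chi((1,-1))\notin \Z_{\geq 0}$, all denominators are nonzero, and the non-Liouville hypothesis in (4) ensures that $|\chi((1,-1))-r|$ is bounded below by a polynomial in $r$, so dividing by these denominators preserves positive radius of convergence; the cokernel is then one-dimensional, spanned by the class of the constant $1$. In the resonant case (5), the equation at $r = k$ is unsolvable and $x^{k+1}$ survives as an additional generator, making the cokernel two-dimensional. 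The weight computations then follow by the same recipe as in (2): on $1 \otimes n^\vee$ the $\star_d$-weight is $-\chi$ plus the coadjoint weight $-\alpha = -2\rho$ on $\mathfrak{n}^\vee$, giving $-\chi-2\rho$; on $x^{k+1}\otimes n^\vee$ the analogous computation gives $-w_0\chi$. For the $\star_{gd}$-action of $B(\qq_p)$, the explicit conjugation $t^{-1}ut$ shows that $\star_{gd}$ sends $x\mapsto (t_1/t_2)x$, hence acts on $x^r$ with weight $r\alpha$; combined with the coadjoint $-\alpha$ on $\mathfrak{n}^\vee$ this yields $-2\rho$ on $1\otimes n^\vee$ and $k\alpha = \chi - w_0\chi$ on $x^{k+1}\otimes n^\vee$.

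The only step that is not a direct formal manipulation is the convergence analysis in (4): the main obstacle is to show that, outside the Liouville exceptional set, division by the scalars $\chi((1,-1))-r$ is bounded in operator norm by a polynomial factor in $r$, so that the formal inverse of $n$ actually takes convergent germs to convergent germs. Everything else is bookkeeping on $\star_d$, $\star_{gd}$ and the coadjoint action on $\mathfrak{n}^\vee$.
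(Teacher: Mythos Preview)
Your proof is correct and follows essentially the same approach as the paper: both analyze the action of $n$ on the topological basis $(x^r)_{r\geq 0}$ via the scalar $\chi((1,-1))-r$, identify the kernel and cokernel generators accordingly, and read off the $\mathfrak{h}$- and $B(\qq_p)$-weights. You supply considerably more detail than the paper (which dispatches the weight computations with ``le reste suit facilement'' and does not spell out the convergence argument for the cokernel at all).

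One small imprecision: in part~(4) you say the non-Liouville hypothesis gives a lower bound on $|\chi((1,-1))-r|$ that is \emph{polynomial} in $r$. In fact the non-Liouville condition only yields an exponential lower bound of the form $|\chi((1,-1))-r|\geq c^{r}$ for some $c>0$ and all large $r$; this is weaker than polynomial, but it is exactly what is needed, since dividing the $r$-th coefficient by $c^{-r}$ shrinks the radius of convergence by at most a factor $c$ and hence keeps it positive. Your conclusion is right; only the word ``polynomial'' should be replaced.
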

 
  \begin{proof} Dans la base topologique $1,x, x^2, \cdots$, la matrice de $n$ est sous-diagonale avec coefficients 
  $\chi(1,-1), \chi(1,-1)-1, \chi(1,-1)-2, \cdots$. 
  Le point $(1)$ est clair. Dans le cas $(2)$, on voit que $\oscr_{U\backslash G,1}^{\chi, \mathfrak{n}} = \C_p x^{\chi( (1,-1))}$. Le point $(3)$ signifie que l'image de $\mathfrak{n}$ est inclue dans l'idéal engendré par le monôme $x$. Dans le cas $(4)$, on voit que  $(\oscr_{U\backslash G,1}^{\chi})_ {\mathfrak{n}}$ est engendré par la fonction constante. Dans le cas $(5)$, $(\oscr_{U\backslash G,1}^{\chi})_ {\mathfrak{n}}$ est engendré par la fonction constante et la fonction $x^{\chi( (1,-1))+1}$. Le reste suit facilement. 
  \end{proof}

  \subsubsection{Fin de la démonstration de la proposition \ref{prop-computingcoho}} 
 Il s'agit de  mettre bout à bout les différents résultats qu'on vient d'obtenir. Il reste ensuite simplement le point $(7)$ à vérifier (il faut identifier l'extension $0 \rightarrow  j_! j^\star \mathcal{C}^{la,\chi,\mathfrak{n}} \rightarrow \mathcal{C}^{la,\chi, \mathfrak{n}} \rightarrow (i_{\infty})_\star i_{\infty}^{-1} \mathcal{C}^{la,\chi,\mathfrak{n}} \rightarrow 0$).  On possède naturellement un morphisme injectif $V_\kappa \otimes V_{\kappa}^\vee \rightarrow \oscr_G$.  Il induit un morphisme $V_\kappa \otimes V_{\kappa}^\vee \otimes \oscr_{\mathcal{FL}} \rightarrow \oscr_{G,1} \otimes \oscr_{\mathcal{FL}}$. Prenons les invariants par $\mathfrak{n}^0$ (agissant sur $V_\kappa\otimes \oscr_{\mathcal{FL}}$)  et par $\mathfrak{n}$ (agissant sur $V_\kappa^\vee$). On déduit une application injective  $\oscr_{\mathcal{FL}}^{\omega_0\kappa} \otimes_{\C_p} \C_p(-w_0\kappa)  \rightarrow  \mathcal{C}^{la ,\kappa, \mathfrak{n}}$. C'est clairement un isomorphisme générique, comme c'est un morphisme de faisceau $\mathfrak{g}$-équivariants, c'est un isomorphisme.


\subsection{Poids singuliers}\label{sect-poids-singulier}   On dit qu'un poids $\chi$ est singulier si  $w_0 \chi = \chi + 2\rho$. Autrement dit, $\chi = (k_1,k_2)$ avec $k_2 = k_1 + 1$.  On rappelle que $\mathcal{C}^{la, \chi}_{\mathfrak{n}} \simeq \oscr_{\mathcal{FL}, \infty}$ et que $i_\infty^{-1} \mathcal{C}^{la, \chi}_{\mathfrak{n}} \simeq i_{\infty}^{-1} \oscr_{U_{w_0}}$. 
On possède un isomorphisme  $$\oscr_{\mathcal{FL}, \infty} = \{ f = \sum_{n\geq 0} a_n z^n,~\exists \rho(f) >0,~\lim_{n =\infty}\vert a_n \vert \rho(f)^n = 0\}$$ et un isomorphisme  
$$i_{\infty}^{-1} j^\star \oscr_{U_{w_0}} = \{ f = \sum_{n\geq -\infty}^\infty a_n z^n,~\exists \rho(f) >0,~\lim_{n=\infty}\vert a_n \vert \rho(f)^n= 0,~ \forall r>0, \lim_{n=-\infty}\vert a_n \vert r^{-n}= 0 \}$$ 

On  construit un morphisme canonique de faisceaux $\mathfrak{g}$-équivariants. 
$$d : \mathcal{C}^{la, \chi}_{\mathfrak{n}} \rightarrow i_\infty^{-1}j_\star j^\star\mathcal{C}^{la, \chi, \mathfrak{n}}$$ de la façon suivante. On fixe un générateur $n$ de $\mathfrak{n}$ et on note $n^\vee$ le générateur dual de $\mathfrak{n}^\vee$. Soit $h_0=  \begin{pmatrix} 
      1 & 0 \\
      0 & 0 \\
   \end{pmatrix}$.

Soit $f  \in  \mathcal{C}^{la, \chi}_{\mathfrak{n}}$.  On va vérifier que le morphisme :
$i^{-1}_\infty \mathcal{C}^{la, \chi, \bar{\mathfrak{n}}} \otimes \mathfrak{n}^\vee \rightarrow \mathcal{C}^{la, \chi}_{\mathfrak{n}}$ est un isomorphisme, équivariant pour l'action de $\mathfrak{h}$. 

On possède donc  $\tilde{f} \otimes n^\vee$, un relèvement de $f$ dans $i_\infty^{-1} \mathcal{C}^{la,\chi}$ qui vérifie $ (h_0 + k_1+1)\tilde{f} \otimes n^\vee = 0$. Soit $\mathrm{Res}(\tilde{f} \otimes n^\vee)$ l'image de $\tilde{f} \otimes n^\vee$ par le morphisme $i_\infty^{-1} \mathcal{C}^{la,\chi} \rightarrow j^{-1} i_\infty^{-1} \mathcal{C}^{la,\chi}$.  Le morphisme de faisceaux $$\mathfrak{n} : j^{-1} i_\infty^{-1} \mathcal{C}^{la,\chi} \rightarrow j^{-1} i_\infty^{-1} \mathcal{C}^{la,\chi} \otimes \mathfrak{n}^\vee $$ est surjectif. Localement, on possède donc un élément  $g$  qui vérifie $n g = \mathrm{Res}(\tilde{f} \otimes n^\vee)$.  On pose alors $d(f) = (h_0+k_1+1)  g$. Le lemme qui suit montre que l'élement $d(f)$ est bien défini.

   \begin{lem} Le morphisme $d : \mathcal{C}^{la, \chi}_{\mathfrak{n}}\otimes \mathfrak{n}^\vee \rightarrow i_\infty^{-1}j_\star j^\star \mathcal{C}^{la, \chi, \mathfrak{n}}$  est bien défini. 
   \end{lem}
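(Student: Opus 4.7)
The plan is to verify three things in sequence: (a) that $d(f) = (h_0+k_1+1)g$ is $n$-invariant, hence lies in $j^\star\mathcal{C}^{la,\chi,\mathfrak{n}}$; (b) that $d(f)$ is independent of the choice of local $\mathfrak{n}$-primitive $g$; (c) that $d(f)$ is independent of the choice of weighted lift $\tilde{f}$. The essential inputs will be the bracket relation $[h_0,n] = n$ (equivalently $h_0\cdot n^\vee = -n^\vee$), the singular-case identity $-\chi - 2\rho = -w_0\chi = -(k_1+1,k_1)$ which forces the $\star_2$-weights of $\mathcal{C}^{la,\chi,\mathfrak{n}}$ and $\mathcal{C}^{la,\chi}_{\mathfrak{n}}$ to coincide, and the vanishing $i_\infty^{-1}\mathcal{C}^{la,\chi,\mathfrak{n}} = 0$ which follows from Proposition~\ref{prop-computingcoho} (since $\chi((1,-1)) = -1 \notin \ZZ_{\geq 0}$, giving $\mathcal{C}^{la,\chi,\mathfrak{n}} \simeq j_!\oscr_{U_{w_0}}^{w_0\chi}$, whose stalk at $\infty$ vanishes).

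For (a), I would rewrite $n\cdot(h_0+k_1+1) = (h_0+k_1)\cdot n$ via the bracket and compute $n\cdot d(f) = (h_0+k_1)\mathrm{Res}(\tilde{f})$. The hypothesis $(h_0+k_1+1)\tilde{f}\otimes n^\vee = 0$ combined with $h_0\cdot n^\vee = -n^\vee$ forces $(h_0+k_1)\tilde{f} = 0$, hence $n\cdot d(f) = 0$. For (b), two primitives $g,g'$ with $ng = ng' = \mathrm{Res}(\tilde{f})$ differ by an element of $\ker(n)$ inside $j^{-1}i_\infty^{-1}\mathcal{C}^{la,\chi}$, i.e.\ by an element of $j^{-1}i_\infty^{-1}\mathcal{C}^{la,\chi,\mathfrak{n}}$; on this sheaf the $\star_2$-weight is $-w_0\chi$, so $h_0$ acts by $-(k_1+1)$ and $(h_0+k_1+1)(g-g') = 0$. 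For (c), two weighted lifts differ by $\tilde{f}_1-\tilde{f}_2 = nh$ with $h \in i_\infty^{-1}\mathcal{C}^{la,\chi}$; the common weight constraint translates into $n\cdot (h_0+k_1+1)h = 0$, i.e.\ $(h_0+k_1+1)h \in i_\infty^{-1}\mathcal{C}^{la,\chi,\mathfrak{n}}$. The vanishing of this stalk forces $(h_0+k_1+1)h = 0$, so adjusting the primitive to $g_2 = g_1 - \mathrm{Res}(h)$ gives $(h_0+k_1+1)g_2 = (h_0+k_1+1)g_1 - \mathrm{Res}((h_0+k_1+1)h) = (h_0+k_1+1)g_1$, as required.

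The hard part I expect is verifying the existence of a weighted lift $\tilde{f}$ for every $f \in \mathcal{C}^{la,\chi}_{\mathfrak{n}}$. Because $T(\qq_p)$ acts locally analytically on $\mathcal{C}^{la}$, the operator $h_0$ (through $\star_2$) should decompose $i_\infty^{-1}\mathcal{C}^{la,\chi}\otimes \mathfrak{n}^\vee$ into generalized eigenspaces; one then projects $f$ onto the generalized $(h_0+k_1+1)$-nilpotent summand and uses the same vanishing $i_\infty^{-1}\mathcal{C}^{la,\chi,\mathfrak{n}} = 0$ to reduce a generalized eigenvector to a strict one. Once existence of such a lift is in hand, the three steps above assemble into the well-definedness of $d$.
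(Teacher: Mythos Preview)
Your argument is correct, and steps (a) and (b) are exactly the paper's proof: the paper checks that $(h_0+k_1+1)$ annihilates $j^{-1}i_\infty^{-1}\mathcal{C}^{la,\chi,\mathfrak{n}}$ (your (b)), and that $n(h_0+k_1+1)g = 0$ via the bracket relation and the weight condition on $\tilde f$ (your (a)).

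The one point of divergence is the handling of the lift $\tilde f$. The paper does not prove independence of the choice of weighted lift (your step (c)); instead, just before the lemma, it asserts an isomorphism $i_\infty^{-1}\mathcal{C}^{la,\chi,\bar{\mathfrak n}}\otimes\mathfrak{n}^\vee \to \mathcal{C}^{la,\chi}_{\mathfrak n}$ and takes $\tilde f$ to be the canonical $\bar{\mathfrak n}$-invariant representative, which automatically has the required $h_0$-eigenvalue. With a canonical lift in hand, your step (c) and your final paragraph on existence become unnecessary. Conversely, your route avoids appealing to the $\bar{\mathfrak n}$-decomposition and instead uses the vanishing $i_\infty^{-1}\mathcal{C}^{la,\chi,\mathfrak n}=0$ (valid since $\chi((1,-1))=-1\notin\Z_{\geq 0}$) to kill the ambiguity. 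Either approach works; yours is slightly more self-contained, the paper's slightly shorter once the $\bar{\mathfrak n}$-isomorphism is granted.
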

   
   \begin{proof}   L'élément  $g$ est unique à un élément de $j^{-1} i_\infty^{-1} \mathcal{C}^{la,\chi, \mathfrak{n}}   $ près.  Comme ${h}_0 + k_1+1$ tue  $j^{-1} i_\infty^{-1} \mathcal{C}^{la,\chi, \mathfrak{n}}   $, on trouve donc que $d(f) = (h_0+k_1+1)  g$ est bien défini. C'est un élément de $i_\infty^{-1} \mathfrak{C}^{la, \chi, \mathfrak{n}}$ car $n(h_0+k_1+1)g = (h_0+k_1+1) f \otimes n^\vee= 0$. 
   \end{proof}

\begin{prop}\label{prop-bordmaps} Dans des bases convenables, le morphisme $d :  \mathcal{C}^{la, \chi}_{\mathfrak{n}}\otimes \mathfrak{n}^\vee \rightarrow i_\infty^{-1}\mathcal{C}^{la, \chi, \mathfrak{n}}$ s'identifie à l'inclusion canonique $\oscr_{\mathcal{FL}, \infty}  \rightarrow i_{\infty}^{-1} \oscr_{U_{w_0}}$. 
\end{prop}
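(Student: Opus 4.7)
The plan is to compute the map $d$ explicitly in local analytic coordinates near $\infty$, checking that it sends a chosen generator of the free rank-one module $\mathcal{C}^{la,\chi}_\mathfrak{n} \simeq \oscr_{\mathcal{FL},\infty}$ to the corresponding generator of $i_\infty^{-1} \oscr_{U_{w_0}}$; by $\oscr_{\mathcal{FL},\infty}$-linearity of $d$ (which itself is a consequence of its $\mathfrak{g}$-equivariance and the derivation action of $\mathcal{T}_{\mathcal{FL}} \simeq \mathfrak{g}^0/\mathfrak{b}^0$) this will force the identification with the canonical inclusion. Note that the singularity condition $w_0\chi = \chi + 2\rho$ makes the $\mathfrak{h}$-weights of both $j_! j^\star \mathcal{C}^{la,\chi,\mathfrak{n}}$ and $(i_\infty)_\star i_\infty^{-1}\mathcal{C}^{la,\chi,\mathfrak{n}}$ coincide as sub/quotient of the extension describing $\mathcal{C}^{la,\chi}_\mathfrak{n}$, which is precisely what allows $d$ to be nonzero.

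First I would fix the section $z \mapsto w_0 \bigl(\begin{smallmatrix} 1 & 0 \\ z & 1 \end{smallmatrix}\bigr)$ of $\pi \colon G \to \mathcal{FL}$ over $U_{w_0}$ (with $\infty$ at $z \to \infty$), and view a local section of $i_\infty^{-1} \mathcal{C}^{la,\chi}$ as a germ of analytic function $f(g,z)$ on $G_n \times V_\infty$, satisfying the $\chi$-equivariance under $\star_{1,3}$ by the stabilizer $B_z$ and the $\mathfrak{n}^0$-invariance condition. Trivializing the $\mathfrak{g}$-equivariant line bundles $\oscr_{\mathcal{FL},\infty}$ and the sheaf underlying $\oscr_{U_{w_0}}^{w_0\chi}$ (both are free of rank one), one obtains the same type of explicit formulas for $n$ acting by $\star_2$ and for $h_0$ as were used in the proof of Lemma~\ref{lem-calculcoho}: namely an operator of the form $n \cdot g = \chi((1,-1))\,z\,g - z^2 g'(z)$, and $h_0$ acting by the scalar $-k_1-1$ on the weight subspaces relevant to $d$.

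I would then take the generator $1 \in \mathcal{C}^{la,\chi}_\mathfrak{n} \simeq \oscr_{\mathcal{FL},\infty}$, lift it to $\tilde f \otimes n^\vee \in i_\infty^{-1}\mathcal{C}^{la,\chi}$ (a constant in the chosen trivialization satisfying $(h_0+k_1+1)(\tilde f\otimes n^\vee) = 0$ by the weight computation), restrict to the punctured germ $j^{-1} i_\infty^{-1}\mathcal{C}^{la,\chi}$, and solve $n \cdot g = \mathrm{Res}(\tilde f \otimes n^\vee)$ using the explicit differential operator above — this is solvable since $n$ is surjective on the punctured germ (by Lemma~\ref{lem-calculcoho}, part (3)). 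Applying $(h_0+k_1+1)$ to $g$ and reading off the result in the trivialization, the answer is (up to normalization, which we can absorb into the choice of trivialization) the constant $1 \in i_\infty^{-1} \oscr_{U_{w_0}}$, establishing $d(1)=1$.

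The main obstacle will be the precise bookkeeping: tracking the weight shifts through the sequence of identifications (between $\mathfrak{h}$-weights under $\star_{hor}$ and $\star_2$, between sub and quotient in the extension, and between the trivializations of $\oscr_{\mathcal{FL}}^{w_0\chi}$ on the source and target), and verifying that the lift and the choice of $g$ do not contribute logarithmic or unwanted correction terms after $(h_0+k_1+1)$ is applied. Once this is done, the computation of $d(1)$ combined with $\oscr_{\mathcal{FL},\infty}$-linearity (deduced from compatibility of $d$ with the $\star_{1,3}$ action of $\mathfrak{g}^0/\mathfrak{b}^0$, which is exactly the tangent sheaf of $\mathcal{FL}$) pins down $d$ as the canonical inclusion on all of $\oscr_{\mathcal{FL},\infty}$.
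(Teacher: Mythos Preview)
Your plan is the same as the paper's: trivialize near $\infty$, compute $d$ on the generator $1\otimes n^\vee$, and conclude. Two points deserve correction.

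First, the formula you quote from Lemma~\ref{lem-calculcoho}, $n\cdot g=\chi((1,-1))\,zg - z^2 g'(z)$, is a \emph{one}-variable formula (the $\star_d$-action on the fibre $\oscr_{U\backslash G,1}^{\chi}$). To compute $d$ you must keep track of both the $G$-coordinate and the $\mathcal{FL}$-coordinate, because the step ``restrict to $j^{-1}$'' is a condition on the $\mathcal{FL}$-variable. In the paper's coordinates one writes $x=\bigl(\begin{smallmatrix}1&0\\ u&1\end{smallmatrix}\bigr)$ for the point of $\mathcal{FL}$ (so $u=0$ is $\infty$) and $g=\bigl(\begin{smallmatrix}a&0\\ (b-a)u&b\end{smallmatrix}\bigr)\bigl(\begin{smallmatrix}1&0\\ z&1\end{smallmatrix}\bigr)$; the $\star_2$-action then reads
\[
n\cdot g(z,u)=-(z+u)g-(z+u)^2\partial_z g,\qquad h_0\cdot g(z,u)=-k_1 g+(z+u)\partial_z g.
\]
This $u$-dependence is essential: at $u=0$ (your one-variable formula with $\chi((1,-1))=-1$) the constant $1$ is \emph{not} in the image of $n$ even on Laurent series, so the equation $n\cdot g=1$ has no solution there. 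It is exactly the passage to $u\neq 0$ (i.e.\ to $j^{-1}i_\infty^{-1}$) that makes $n$ surjective.

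Second, rather than solving the ODE $n\cdot g=1$ and then applying $(h_0+k_1+1)$, the paper observes the operator identity
\[
-(z+u)\,(h_0+k_1+1)=n
\]
on $i_\infty^{-1}\mathcal{C}^{la,\chi}$ (immediate from the two displayed formulas, using $k_1-k_2=-1$). This gives at once $(h_0+k_1+1)g=-\dfrac{n\cdot g}{z+u}=-\dfrac{1}{z+u}$, which is precisely the generator of $j^{-1}i_\infty^{-1}\mathcal{C}^{la,\chi,\mathfrak{n}}$. Your expectation that $d(1)$ comes out as ``the constant $1$'' is thus off: the answer is $-\frac{1}{z+u}$, a generator of $i_\infty^{-1}\oscr_{U_{w_0}}$ under the chosen trivialization (not a unit of $\oscr_{\mathcal{FL},\infty}$), and that is what identifies $d$ with the canonical inclusion. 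Your appeal to $\oscr_{\mathcal{FL},\infty}$-linearity via $\mathfrak g$-equivariance is correct and matches the paper's implicit use of ``dans des bases convenables''.
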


\begin{proof}  On considère l'élément, 
$x=  \begin{pmatrix} 
      1 & 0 \\
      u & 1 \\
   \end{pmatrix}$. On y pense comme à la coordonnée au voisinage de $\infty$. 
On a $$U_x = x^{-1} U x =  \{\begin{pmatrix} 
      1+cu & c \\
      -cu^2 & 1-cu \\
   \end{pmatrix}\}$$ et $$T_x = x^{-1} T  x = \{\begin{pmatrix} 
      a & 0 \\
      (b-a)u & b \\
   \end{pmatrix}\}$$

Une section de $i_\infty^{-1}\mathcal{C}^{la}$ s'écrit de la forme $f(g, x) = f(a,b, z, x)$ où 

$$g = \begin{pmatrix} 
      a & 0 \\
      (b-a)u & b \\
   \end{pmatrix} \begin{pmatrix} 
      1 & 0 \\
      z & 1 \\
   \end{pmatrix}$$
   
   La fonction $f$ est bien déterminée par ces valeurs. 
   
   On calcule ensuite que 
   
   $$ \begin{pmatrix} 
      1 & 0 \\
      z & 1 \\
   \end{pmatrix} \begin{pmatrix} 
      1 & t \\
      0& 1 \\
   \end{pmatrix} = \begin{pmatrix} 
      \frac{1}{(z+u)t+1} & 0 \\
      z + \frac{ut(z+u)}{(z+u)t+1}& (z+u)t+1 \\
   \end{pmatrix}  $$

Il en résulte que $$\begin{pmatrix} 
      1 & t \\
      0& 1 \\
   \end{pmatrix}. f(a,b, z, x) = f( \frac{a}{(z+u)t+1}, b((z+u)t+1) , - u + \frac{(z+u)}{t(z+u)+1})$$ puis que 
    $$n f(a,b,z,x) = -a(z+u) \partial_a + b(z+u) \partial_b - (z+u)^2 \partial_z.$$ 
   
  On se restreint maintenant à des éléments de $i_\infty^{-1}\mathcal{C}^{la, \chi}$. On pose donc $g(z,x) = f(1,1, z,x)$. 
  On a $n g(z,x) = -(z+u) g(z,x) - (z+u)^2 \partial_z g(z,x)$ (car $k_1-k_2=-1$). 
  
  On vérifie que $j^{-1}i_\infty^{-1}\mathcal{C}^{la} = \frac{1}{z+u} i_{\infty}^{-1} \oscr_{U_{w_0}}$. 
  
  On calcule maintenant l'action de 
 $h_0$. Un calcul simple montre que 
 
 $$\begin{pmatrix} 
      t_1 & 0 \\
      0& t_2 \\
   \end{pmatrix}. f(a,b, z, x) = f( t_1 a, t_2 b , t_1 t_2^{-1} z +\frac{t_1-t_2}{t_2} u, x)$$ puis que 
   $h_0 f(a,b,z,x) = a \partial_a  +  (z+u) \partial_z$. Finalement, $h_0 g(z,x) = -k_1 g(z,x)  +  (z+u) \partial_z g(z,x)$.
   
   On prend pour $f$ la classe de $1 \otimes n^\vee$ et on a donc   $\tilde{f} = 1 \otimes n^\vee$. Il en résulte que $g$ vérifie l'équation différentielle $n g(z,x) = -(z+u) g(z,x) - (z+u)^2 \partial_z g(z,x) = 1$. On  a la relation  $-(z+u)(h_0 + k_1 + 1) = n$, si bien que $(h_0 + k_1+1) g = - \frac{1}{z+u}$. 

  \end{proof}
  
  \subsection{Cohomologie de $\oscr^{la}$}

  La proposition qui suit est une reformulation de certains résultats de la section 5 de \cite{pan2021locally}. 
  
  \begin{prop}\label{prop-coho-compute-ola} \begin{enumerate}
 
 \item On possède deux morphismes (d'Eichler-Shimura)  de faisceaux   : $ES_{w_0} :  \oscr^{la}_{w_0} \rightarrow \oscr^{la}$ et $ES_{\infty} : \oscr^{la}  \rightarrow \oscr^{la}_{\infty}$.
 \item Pour tout caractère $\chi : \mathfrak{h} \rightarrow \C_p$, on a $\oscr^{la, \chi}_{w_0} =  j_! \omega_{U_{w_0}}^{-w_0\chi, sm}$,  $\oscr^{la, \chi}_{\infty} = (i_{\infty})_\star \omega_{\infty}^{-\omega_0\chi,sm} $. 
 \item On a $\mathrm{Ext}_{\mathfrak{h}, \star_{hor}}^i ( \chi, \oscr^{la}) =  \mathrm{Ext}_{\mathfrak{h}, \star_{hor}}^i ( \chi, \oscr^{la}_\infty) = \mathrm{Ext}_{\mathfrak{h}, \star_{hor}}^i ( \chi, \oscr^{la}_{w_0})  = 0$ si $i>0$.
 
 \item L'application $ES_{w_0}$ se factorise en un morphisme injectif : 
 $ES_{w_0} : \oscr^{la}_{w_0} \rightarrow  \oscr^{la, \mathfrak{n}}$.
 Sur le sous-faisceau $\oscr^{la}_{w_0}$ on a l'identité entre  actions de $\mathfrak{h}$ : pour tout $h \in \mathfrak{h}$,  $w_0 h \star_{hor}(-) = h \star_2(-)$. 
 \item L'application $ES_{\infty} $ se factorise en un morphisme  surjectif : 
 $ES_{\infty} : \oscr^{la}_{\mathfrak{n}} \rightarrow  \oscr^{la}_{\infty} \otimes \mathfrak{n}^\vee$. Sur le quotient $\oscr^{la}_{\infty} \otimes \mathfrak{n}^\vee$  on a l'identité entre  actions de $\mathfrak{h}$ : pour tout $h \in \mathfrak{h}$,  $ h \star_{hor}(-) - 2\rho(h) (-) = h \star_2(-)$.
  \item Si $\chi( (1,-1)) \notin \Z_{\leq 0}$, le morphisme $ES_{w_0}$ induit   un isomorphisme  $$ \oscr^{la, \chi}_{w_0} \rightarrow \oscr^{la,\chi, \mathfrak{n}} $$
  l'action de $\star_2$ de $\mathfrak{h}$ se fait à travers le caractère $w_0\chi$.

  \item Si $\chi( (1,-1)) \in \Z_{\leq 0}$,  on possède un isomorphisme $\oscr^{la,\chi, \mathfrak{n}} =  \omega_{\mathcal{FL}}^{-w_0\chi, sm} \otimes_{\C_p} \C_p(w_0\chi)$,
   l'action de $\star_2$ de $\mathfrak{h}$ se fait à travers le caractère $w_0\chi$. Le morphisme $ES_{w_0}$ induit le morphisme $ES_{w_0} : j_!  \omega_{U_{w_0}}^{-w_0\chi, sm} \rightarrow \omega_{\mathcal{FL}}^{-w_0\chi, sm} \otimes_{\C_p} \C_p(w_0\chi)$.
  
  \item  Si $\chi( (1,-1)) \notin \Z_{\leq 0} \cup \mathrm{Liouv}$,  l'application $ES_\infty$ induit un isomorphisme de faisceaux 
  $$ES_\infty : \mathscr{O}^{la,\chi}_{ \mathfrak{n}}  \rightarrow \mathscr{O}^{la,\chi}_\infty \otimes \mathfrak{n}^\vee$$
  l'action de $\star_2$ de $\mathfrak{h}$ se fait à travers le caractère $\chi-2\rho$.
  \item  Si $\chi( (1,-1)) \in \Z_{\leq 0} $,  on possède une suite exacte   $$0 \rightarrow (i_{\infty})_\star \omega_{\infty}^{-\omega_0\chi,sm} \otimes_{\C_p} \C_p(\chi-w_0\chi) \rightarrow  \oscr^{la,\chi}_{ \mathfrak{n}}  \stackrel{ES_{w_0}}\rightarrow \mathcal{C}^{la,\chi}_\infty \otimes \mathfrak{n}^\vee \rightarrow 0.$$ L'action   $\star_2$ de $\mathfrak{h}$ se fait à travers le caractère $w_0\chi$ sur le premier facteur et  $\chi-2\rho$  sur  le second facteur.
  \end{enumerate}
  \end{prop}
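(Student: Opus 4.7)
La strat�gie est de transporter int�gralement les r�sultats de la proposition \ref{prop-computingcoho} par le foncteur $VB$, en utilisant l'isomorphisme du th�or�me \ref{thm3} et les propri�t�s de compatibilit� d�j� �tablies entre l'action de $\mathfrak{h}$ par $\star_{hor}$ sur $\mathcal{C}^{la}$ et l'action d�riv�e de $G(\qq_p)$ sur $\oscr^{la}$.

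\emph{�tape 1 (transport des morphismes et identification des faisceaux aux extr�mit�s).} Le foncteur $VB$ appliqu� aux morphismes de faisceaux $(\mathfrak{g},B(\qq_p))$-�quivariants $ES_{w_0}:\mathcal{C}^{la}_{w_0}\rightarrow \mathcal{C}^{la}$ et $ES_{\infty}:\mathcal{C}^{la}\rightarrow \mathcal{C}^{la}_\infty$ fournit les morphismes d'Eichler-Shimura $ES_{w_0}:\oscr^{la}_{w_0}\rightarrow \oscr^{la}$ et $ES_\infty:\oscr^{la}\rightarrow\oscr^{la}_\infty$ de $(1)$. Pour $(2)$, on combine l'identification $\mathcal{C}^{la,\chi}_{w_0}=j_!\oscr^{w_0\chi}_{U_{w_0}}$ (resp.~$\mathcal{C}^{la,\chi}_\infty=(i_\infty)_\star\oscr^{w_0\chi}_\infty$) avec le calcul, d�j� fait dans la section des exemples, $VB(\oscr^{\kappa}_{U_{w_0}})=\omega^{\kappa,sm}_{U_{w_0}}$ et $VB(\oscr^{\kappa}_\infty)=\omega^{\kappa,sm}_\infty$, modulo le changement de signe dans le caract�re qui d�coule du dernier lemme de la section 4 affirmant que l'action de $\mathfrak{h}$ obtenue en d�rivant $G(\qq_p)$ sur $\oscr^{la}$ coincide avec $-\star_{hor}$. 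Ce m�me signe explique le renversement de la condition $\chi((1,-1))\in\ZZ_{\geq 0}$ en $\chi((1,-1))\in\ZZ_{\leq 0}$ dans les points $(6)$-$(9)$.

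\emph{�tape 2 (compatibilit� avec la $\mathfrak{n}$-cohomologie).} L'action $\star_2$ de $\mathfrak{g}$ sur $\mathcal{C}^{la}$ se transporte, via $\Psi$, en l'action d�riv�e de $\mathfrak{g}$ sur $\oscr^{la}$ (c'est le lemme de la fin de la section 4). Il faut alors v�rifier que $VB$ commute avec le passage � $\HH^0(\mathfrak{n},-)$ et $\HH^1(\mathfrak{n},-)$ pour cette action. La clef est l'exactitude de $VB$ sur $\mathbf{Coh}_\mathfrak{g}(U)^{\mathfrak{n}^0}$ (th�or�me \ref{thm}, point $(3)$), qui s'�tend par passage � la limite aux faisceaux $\mathcal{C}^{la}$ et $\mathcal{C}^{la,\chi}$ puisque ces derniers s'�crivent comme colimites filtrantes le long d'une tour de sous-groupes compacts ouverts. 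Ceci donne les points $(4)$ et $(5)$ par application directe de $VB$ aux suites correspondantes pour $\mathcal{C}^{la}$. L'identification des actions de $\mathfrak{h}$ au but et � la source se fait en suivant le m�me dictionnaire $\star_{hor}\leftrightarrow -\star_{derive}$, $\star_2 \leftrightarrow \star_{derive,VB}$.

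\emph{�tape 3 (conclusion dans les cas g�n�rique et exceptionnel).} Les isomorphismes et suites exactes $(6)$-$(9)$ r�sultent alors en appliquant $VB$ aux �nonc�s correspondants de la proposition \ref{prop-computingcoho}, en utilisant une derni�re fois l'exactitude de $VB$ sur les faisceaux $\mathfrak{n}^0$-invariants pour pr�server la suite exacte du point $(9)$, ainsi que l'identification $VB(\oscr_{\mathcal{FL}}^\kappa\otimes_{\C_p}\C_p(\chi))=\omega^{\kappa,sm}_{\mathcal{FL}}\otimes_{\C_p}\C_p(\chi)$ pour traiter les tordus par des caract�res de $B(\qq_p)$.

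\emph{Point d�licat pr�vu.} La difficult� principale se situe dans la v�rification soigneuse que $VB$ commute avec le calcul de la $\mathfrak{n}$-cohomologie pour des faisceaux \emph{non coh�rents} comme $\mathcal{C}^{la}$ : il faut justifier le passage � la colimite sur les niveaux $n$ des sous-espaces $\mathcal{C}^{n-an}$ et contr�ler que les actions de $K_p(n)$ (via $\star_{1,3}$) se r�duisent bien lors de la prise de $K_p$-invariants sans introduire de cohomologie sup�rieure parasite. Une fois ce point technique �tabli (en combinant la d�composition de $\Psi_n$ de la section 4 et les propri�t�s d'acyclicit� de $(\pi_{HT})_\star$ sur les faisceaux pertinents), les �nonc�s suivent formellement. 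L'identification de l'extension du point $(9)$ requiert en outre de reprendre le calcul explicite de la proposition \ref{prop-bordmaps}, transport� par $VB$.
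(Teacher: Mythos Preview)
Ton approche globale est correcte et co\"incide avec celle du papier : transporter la proposition~\ref{prop-computingcoho} vers $\oscr^{la}$ via le th\'eor\`eme~\ref{thm3}, en utilisant le dictionnaire $\star_{hor}\leftrightarrow -\star_2$ \'etabli \`a la fin de la section~4. Tu identifies d'ailleurs correctement le point d\'elicat : faire commuter $VB$ (ou plut\^ot le passage $\mathcal{C}^{la}\rightsquigarrow\oscr^{la}$) avec la $\mathfrak{n}$-cohomologie.

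La diff\'erence est dans la r\'esolution de ce point d\'elicat. Tu proposes de passer par les colimites des $\mathcal{C}^{n-an}$ et des arguments d'acyclicit\'e de $(\pi_{HT})_\star$, ce qui est plausible mais vague. Le papier proc\`ede plus directement par un lemme de descente : si $\mathscr{F}_1\to\mathscr{F}_2\to\mathscr{F}_3$ est un complexe de $\oscr^{sm}$-modules coh\'erents qui devient une suite exacte courte apr\`es $\hat{\otimes}_{\oscr^{sm}}\hat{\oscr}$, alors le complexe de d\'epart est d\'ej\`a exact. La preuve de ce lemme est courte : l'exactitude \`a gauche s'obtient en prenant les vecteurs lisses ; pour la surjectivit\'e \`a droite, on utilise que $\colim_{K_p}\HH^1(K_p,\hat{\oscr})=VB((\mathfrak{n}^0)^\vee)$ est un $\oscr^{sm}$-module \emph{inversible}, donc tensoriser par lui est fid\`ele. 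Ce lemme, appliqu\'e de fa\c{c}on r\'ep\'et\'ee aux suites de la proposition~\ref{prop-computingcoho} tensoris\'ees par $\hat{\oscr}$ (identifi\'ees via $\Psi$ \`a des suites c\^ot\'e $\oscr^{la}$), donne tout sans avoir \`a g\'erer de colimites non coh\'erentes.

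Remarque mineure : la proposition~\ref{prop-bordmaps} n'intervient pas ici ; elle sert seulement plus loin, dans la preuve du corollaire~\ref{coropan}, pour calculer une application de bord dans le cas singulier.
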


\begin{proof} C'est la traduction de  la proposition \ref{prop-computingcoho} grace à une  application répétée du lemme ci-dessous.
\end{proof}
  
  \begin{lem} Soit $\mathscr{F}_1 \rightarrow \mathscr{F}_2 \rightarrow \mathscr{F}_3$ un complexe de faisceaux cohérent   de $\oscr^{sm}$-modules. Supposons que la suite :
 $$0 \rightarrow  \hat{\oscr}\otimes_{\oscr^{sm}}\mathscr{F}_1 \rightarrow \hat{\oscr} \otimes_{\oscr^{sm}}\mathscr{F}_2 \rightarrow \hat{\oscr} \otimes_{\oscr^{sm}}\mathscr{F}_3 \rightarrow 0$$ soit exact. Alors la suite   $0 \rightarrow \mathscr{F}_1 \rightarrow \mathscr{F}_2 \rightarrow \mathscr{F}_3 \rightarrow 0$ est exacte. 
  \end{lem}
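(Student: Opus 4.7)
La strat\'egie consiste \`a ramener l'assertion \`a un probl\`eme de fid\`ele platitude \`a un niveau fini en utilisant l'\'equivalence $(\pi_{HT}^{sm})^\star : \mathbf{Coh}(\oscr^{sm}) \stackrel{\sim}{\rightarrow} \mathbf{Coh}(\oscr^{sm}_{X_{K^p}})$ \'etablie dans la section pr\'ec\'edente. Je commencerais par transporter le complexe $\mathscr{F}_1 \rightarrow \mathscr{F}_2 \rightarrow \mathscr{F}_3$ sur $X_{K^p}$, obtenant un complexe $\mathscr{G}_1 \rightarrow \mathscr{G}_2 \rightarrow \mathscr{G}_3$ de faisceaux coh\'erents d'$\oscr^{sm}_{X_{K^p}}$-modules. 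Par la formule de projection et par l'exactitude de $(\pi_{HT})_\star$ sur les modules coh\'erents, l'identification $\hat{\oscr}\otimes_{\oscr^{sm}}\mathscr{F}_i = (\pi_{HT})_\star (\mathscr{G}_i \otimes_{\oscr^{sm}_{X_{K^p}}} \oscr_{X_{K^p}})$ traduit l'hypoth\`ese en l'exactitude sur $X_{K^p}$ de la suite $0 \rightarrow \mathscr{G}_1 \otimes \oscr_{X_{K^p}} \rightarrow \mathscr{G}_2 \otimes \oscr_{X_{K^p}} \rightarrow \mathscr{G}_3 \otimes \oscr_{X_{K^p}} \rightarrow 0$.

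Ensuite, par la proposition \ref{prop-coherence}, on peut descendre \`a un niveau fini : il existe un sous-groupe compact ouvert $K_p$ et un complexe $\mathscr{G}_{1,K_p} \rightarrow \mathscr{G}_{2,K_p} \rightarrow \mathscr{G}_{3,K_p}$ de faisceaux coh\'erents sur $X_{K_pK^p}$ tel que $\mathscr{G}_i = (\pi_{K_p}^{sm})^\star \mathscr{G}_{i,K_p}$ pour chaque $i$, les morphismes de transition provenant \'egalement du niveau $K_p$. On a alors l'identification $\mathscr{G}_i \otimes_{\oscr^{sm}_{X_{K^p}}} \oscr_{X_{K^p}} = \pi_{K_p}^\star \mathscr{G}_{i,K_p}$ au sens usuel des faisceaux coh\'erents, de sorte que l'hypoth\`ese devient l'exactitude de la suite $0 \rightarrow \pi_{K_p}^\star \mathscr{G}_{1,K_p} \rightarrow \pi_{K_p}^\star \mathscr{G}_{2,K_p} \rightarrow \pi_{K_p}^\star \mathscr{G}_{3,K_p} \rightarrow 0$ sur la courbe perfecto\"ide $X_{K^p}$.

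L'\'etape finale consiste \`a descendre cette exactitude \`a $X_{K_pK^p}$, ce qui revient \`a \'etablir la fid\`ele platitude du morphisme $\pi_{K_p} : X_{K^p} \rightarrow X_{K_pK^p}$ sur les faisceaux coh\'erents. Par la proposition \ref{prop-coherence}(4), je v\'erifierais cette exactitude au niveau des stalks aux $\C_p$-points de $X_{K_pK^p}$. En un tel point $x$ ayant un rel\`evement $\tilde{x} \in X_{K^p}$, le morphisme $\oscr_{X_{K_pK^p},x} \rightarrow \oscr_{X_{K^p},\tilde{x}}$ s'obtient comme la compl\'etion d'une colimite filtr\'ee d'extensions finies \'etales $\oscr_{X_{K_pK^p},x} \rightarrow \oscr_{X_{K'_pK^p},\tilde{x}_{K'_p}}$ pour $K'_p \subseteq K_p$, toutes fid\`element plates; reste \`a faire passer la fid\`ele platitude \`a la compl\'etion.

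Le point le plus d\'elicat sera pr\'ecis\'ement la fid\`ele platitude du passage \`a la compl\'etion perfecto\"ide : il faudra invoquer les propri\'et\'es structurelles des stalks d'espaces perfecto\"ides et les r\'esultats de presque puret\'e de \cite{MR3090258}, similairement aux arguments d\'ej\`a utilis\'es dans la preuve du lemme \ref{lem-calculcoho}. Une fois cela \'etabli, on obtient l'exactitude du complexe $\mathscr{G}_{\bullet, K_p}$ au niveau fini; en remontant par le foncteur $(\pi_{K_p}^{sm})^\star$ puis $(\pi_{HT})_\star$, qui sont tous deux exacts sur les modules coh\'erents, on conclut que le complexe initial $\mathscr{F}_1 \rightarrow \mathscr{F}_2 \rightarrow \mathscr{F}_3$ est bien une suite exacte courte.
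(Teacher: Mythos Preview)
Ton approche est correcte dans l'esprit mais emprunte une route nettement diff\'erente de celle du papier, et le point que tu identifies comme d\'elicat est effectivement non trivial et reste incomplet dans ta proposition.

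Le papier ne cherche pas \`a \'etablir la fid\`ele platitude de $\hat{\oscr}$ sur $\oscr^{sm}$. Sa preuve exploite directement la structure \'equivariante : on prend les vecteurs lisses (c'est-\`a-dire $\colim_{K_p}(-)^{K_p}$) dans la suite exacte donn\'ee par l'hypoth\`ese, ce qui redonne imm\'ediatement $0 \to \mathscr{F}_1 \to \mathscr{F}_2 \to \mathscr{F}_3$ exacte. Pour la surjectivit\'e \`a droite, le papier utilise le calcul clef issu de la th\'eorie de Sen et de l'extension de Faltings : $\colim_{K_p} \HH^1(K_p, \hat{\oscr}) = VB((\mathfrak{n}^0)^\vee)$ est un faisceau \emph{inversible} de $\oscr^{sm}$-modules. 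La suite exacte longue en cohomologie de $K_p$ donne alors que $VB((\mathfrak{n}^0)^\vee) \otimes \mathscr{F}_2 \to VB((\mathfrak{n}^0)^\vee) \otimes \mathscr{F}_3$ est surjective, et l'inversibilit\'e permet de conclure. C'est donc la structure de Hodge--Tate qui fait tout le travail, et la preuve est enti\`erement interne au papier.

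Ta strat\'egie --- descendre \`a un niveau fini puis invoquer la fid\`ele platitude de $\pi_{K_p} : X_{K^p} \to X_{K_pK^p}$ --- est raisonnable et donnerait un \'enonc\'e plus g\'en\'eral, ind\'ependant de la th\'eorie de Sen. Mais la fid\`ele platitude de $B_{K_p} \to B$ (o\`u $B$ est la compl\'etion perfecto\"ide) n'est pas une cons\'equence imm\'ediate de la presque puret\'e : celle-ci contr\^ole la cohomologie galoisienne de la tour, pas la platitude de la compl\'etion sur la base noeth\'erienne. La platitude de $\colim_{K'_p} B_{K'_p}$ sur $B_{K_p}$ est claire (colimite de morphismes finis plats), mais le passage \`a la compl\'etion $B = \widehat{\colim B_{K'_p}}$ demande un argument suppl\'ementaire que tu n'as pas fourni. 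La partie ``fid\`ele'' se traiterait effectivement par surjectivit\'e sur les $\C_p$-points, mais c'est la platitude qui manque. Tant que ce point n'est pas justifi\'e, ta preuve a une lacune r\'eelle au point que tu as toi-m\^eme signal\'e.
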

   \begin{proof} On prend les vecteurs lisses dans $$0 \rightarrow  \hat{\oscr}\otimes_{\oscr^{sm}}\mathscr{F}_1 \rightarrow \hat{\oscr} \otimes_{\oscr^{sm}}\mathscr{F}_2 \rightarrow \hat{\oscr} \otimes_{\oscr^{sm}}\mathscr{F}_3 \rightarrow 0.$$ On déduit que $0 \rightarrow \mathscr{F}_1 \rightarrow \mathscr{F}_2 \rightarrow \mathscr{F}_3 $ est exacte. Comme   $\colim_{K_p} \HH^1(K_p, \hat{\oscr}) = VB((\mathfrak{n}^0)^\vee)$  est un faisceau inversible de $\oscr^{sm}$-modules, on déduit que 
  $ VB((\mathfrak{n}^0)^\vee) \otimes_{\oscr^{sm}} \mathscr{F}_2 \rightarrow  VB((\mathfrak{n}^0)^\vee) \otimes_{\oscr^{sm}}\mathscr{F}_3 $ est surjective puis que $\mathscr{F}_2 \rightarrow \mathscr{F}_3$ est surjective. 
  \end{proof}

\subsection{La $\mathfrak{n}$-cohomologie de $\oscr^{la, \chi}$}
  On rappelle que si $M$ est un $\C_p$-espace vectoriel muni d'une action $\star_{hor} $ du Cartan horizontal $\mathfrak{h}$ on note $M^{\chi} = \mathrm{Hom}_{\mathfrak{h}, \star_{hor}}( \chi, M)$. Si $M$ est un $\C_p$-espace vectoriel muni d'une action $\star_2 $ de $\mathfrak{h}$ on note $M[\chi] = \mathrm{Hom}_{\mathfrak{h}, \star_2}( \chi, M)$.

  \begin{coro}[\cite{pan2021locally}, thm. 5.3.11, 5.3.12, 5.3.16, 5.3.18] \label{coropan}
  
  \begin{enumerate}
 
  \item Si $\chi( (1,-1)) \notin \Z_{\leq 0} \cup \mathrm{Liouv}$, on possède une suite exacte :

 $$ 0 \rightarrow \HH^1_c( U_{w_0}, \omega_{U_{w_0}}^{-w_0 \chi, sm}) \rightarrow \mathrm{H}^1(\mathcal{FL}, \oscr^{la, \chi})^{\mathfrak{n}} \rightarrow \HH^0(\{\infty\},  \omega_{\infty}^{-w_0 \chi, sm})\otimes \mathfrak{n}^\vee \rightarrow 0$$
 
\item  Si $\chi( (1,-1)) \notin \Z_{\leq 0} \cup \mathrm{Liouv} \cup \{1\}$, l'action de $\star_2$ de $\mathfrak{h}$ est semi-simple et: 
\begin{eqnarray*}
 \mathrm{H}^1(\mathcal{FL}, \oscr^{la, \chi})^{\mathfrak{n}}[w_0\chi] & =& \HH^1_c( U_{w_0}, \omega_{U_{w_0}}^{-w_0 \chi, sm}) \\ 
\mathrm{H}^1(\mathcal{FL}, \oscr^{la, \chi})^{\mathfrak{n}}[\chi-2\rho] & =& \HH^0(\{\infty\},  \omega_{\infty}^{-w_0 \chi, sm})\otimes \mathfrak{n}^\vee 
\end{eqnarray*}

  \item  Si  $\chi( (1,-1)) = 1$ alors $w_0\chi = \chi -2\rho$ et l'action $\star_2$ de $\mathfrak{h}$ n'est pas semi-simple.  On possède un diagramme commutatif de suites exactes courtes :  
  
  \begin{eqnarray*}
  \xymatrix{ 0 \ar[r] & \HH^1_c( U_{w_0}, \omega_{U_{w_0}}^{-w_0 \chi, sm}) \ar[r] & \mathrm{H}^1(\mathcal{FL}, \oscr^{la, \chi})^{\mathfrak{n}} \ar[r] & \HH^0(\{\infty\},  \omega_{\infty}^{-w_0 \chi, sm})\otimes \mathfrak{n}^\vee \ar[r] & 0\\
  0 \ar[r] & \HH^1_c( U_{w_0}, \omega_{U_{w_0}}^{-w_0 \chi, sm}) \ar[r] \ar[u]  &  \mathrm{H}^1(\mathcal{FL}, \oscr^{la, \chi})^{\mathfrak{n}}[w_0\chi] \ar[r]\ar[u] & \HH^0(\mathcal{FL},  \omega_{\mathcal{FL}}^{-w_0 \chi, sm})\otimes \C_p(w_0\chi) \ar[r] \ar[u] & 0}
  \end{eqnarray*}

  \item Si $\chi( (1,-1)) \in \Z_{\leq -1} $,  on possède une suite exacte :

 $$ 0 \rightarrow \HH^1( \mathcal{FL}, \omega_{\mathcal{FL}}^{-w_0 \chi, sm})\otimes \C_p(w_0 \chi) \rightarrow \mathrm{H}^1(\mathcal{FL}, \oscr^{la, \chi})^{\mathfrak{n}} \rightarrow $$ $$ \HH^0(\{\infty\},  \omega_{\infty}^{-w_0 \chi, sm})\otimes \C_p(w_0\chi-\chi) \oplus  \HH^0(\{\infty\},  \omega_{\infty}^{-w_0 \chi, sm})\otimes \mathfrak{n}^\vee \rightarrow 0$$

 l'action de $\mathfrak{h}$ est semi-simple et : 
 
 $$ 0 \rightarrow \HH^1( \mathcal{FL}, \omega_{\mathcal{FL}}^{-w_0 \chi, sm})\otimes \C_p(w_0 \chi) \rightarrow \mathrm{H}^1(\mathcal{FL}, \oscr^{la, \chi})^{\mathfrak{n}}[w_0\chi] \rightarrow $$ $$ \HH^0(\{\infty\},  \omega_{\infty}^{-w_0 \chi, sm})\otimes \C_p(w_0\chi-\chi) \rightarrow 0$$
 
 $$ \mathrm{H}^1(\mathcal{FL}, \oscr^{la, \chi})^{\mathfrak{n}} [\chi - 2\rho] = \HH^0(\{\infty\},  \omega_{\infty}^{-w_0 \chi, sm})\otimes \mathfrak{n}^\vee.$$
 \item Si $\chi( (1,-1)) =0 $,  on possède une suite exacte :

 $$ 0 \rightarrow \HH^1( \mathcal{FL}, \omega_{\mathcal{FL}}^{-w_0 \chi, sm})\otimes \C_p(w_0 \chi) \rightarrow \mathrm{H}^1(\mathcal{FL}, \oscr^{la, \chi})^{\mathfrak{n}} \rightarrow $$ $$ \HH^0(\{\infty\},  \omega_{\infty}^{-w_0 \chi, sm})\otimes \C_p(w_0\chi-\chi) \oplus  \big(\HH^0(\{\infty\},  \omega_{\infty}^{-w_0 \chi, sm})/ \HH^0(\mathcal{FL}, \omega_{\mathcal{FL}}^{-w_0 \chi, sm}) \otimes \C_p(\chi)\big)\otimes \mathfrak{n}^\vee \rightarrow 0$$

 l'action de $\mathfrak{h}$ est semi-simple et : 
 
 $$ 0 \rightarrow \HH^1( \mathcal{FL}, \omega_{\mathcal{FL}}^{-w_0 \chi, sm})\otimes \C_p(w_0 \chi) \rightarrow \mathrm{H}^1(\mathcal{FL}, \oscr^{la, \chi})^{\mathfrak{n}}[w_0\chi] \rightarrow $$ $$ \HH^0(\{\infty\},  \omega_{\infty}^{-w_0 \chi, sm})\otimes \C_p(w_0\chi-\chi) \rightarrow 0$$
 
 $$ \mathrm{H}^1(\mathcal{FL}, \oscr^{la, \chi})^{\mathfrak{n}} [\chi - 2\rho] = \big(\HH^0(\{\infty\},  \omega_{\infty}^{-w_0 \chi, sm})/ \HH^0(\mathcal{FL}, \omega_{\mathcal{FL}}^{-w_0 \chi, sm})\otimes \C_p(\chi)\big) \otimes \mathfrak{n}^\vee.$$

\end{enumerate}
\end{coro}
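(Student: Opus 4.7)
The plan is to deduce the corollary from Proposition \ref{prop-coho-compute-ola} by analysing the two spectral sequences attached to the two-term complex computing Lie algebra cohomology on the sheaf level. Since $\dim \mathfrak{n}=1$, the complex
\begin{equation*}
\mathcal{K}^\bullet_\chi : \quad \oscr^{la,\chi} \xrightarrow{\;\mathfrak{n}\;} \oscr^{la,\chi}\otimes \mathfrak{n}^\vee
\end{equation*}
has cohomology sheaves $\mathcal{H}^0=\oscr^{la,\chi,\mathfrak{n}}$ and $\mathcal{H}^1=\oscr^{la,\chi}_{\mathfrak{n}}$, which are identified explicitly by Proposition \ref{prop-coho-compute-ola}.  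The hypercohomology $\mathbf{H}^*(\mathcal{FL},\mathcal{K}^\bullet_\chi)$ can be computed by two spectral sequences: one whose $E_2$-page is $\HH^p(\mathcal{FL},\mathcal{H}^q)$ and converges to $\mathbf{H}^{p+q}$, giving the local terms at $U_{w_0}$ and $\infty$; and one whose $E_2$-page is $\HH^p(\mathfrak{n},\HH^q(\mathcal{FL},\oscr^{la,\chi}))$ and converges to the same thing, giving the $\mathfrak{n}$-cohomology of global sections.

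First I would verify, via the $\HH^0$-part of the first spectral sequence and the fact that $\oscr^{la,\chi,\mathfrak{n}}$ is supported away from the generic point (or is the extension by zero from $U_{w_0}$) in the regular cases, that $\HH^0(\mathcal{FL},\oscr^{la,\chi})$ is concentrated in a single $\mathfrak{h}$-weight and that the image of the differential $\mathfrak{n}$ acting on it vanishes; equivalently, that $\HH^1(\mathfrak{n},\HH^0(\mathcal{FL},\oscr^{la,\chi}))$ is killed whenever the second spectral sequence is supposed to degenerate. With this in hand, the comparison of the two spectral sequences at total degree $1$ yields the four-term exact sequences of the corollary: the contribution $\HH^1(\mathcal{FL},\oscr^{la,\chi,\mathfrak{n}})$ becomes either $\HH^1_c(U_{w_0},\omega_{U_{w_0}}^{-w_0\chi,sm})$ (when the sheaf is $j_!\omega_{U_{w_0}}^{-w_0\chi,sm}$) or splits further into an $\HH^1(\mathcal{FL},\omega_{\mathcal{FL}}^{-w_0\chi,sm})$-part plus a skyscraper summand (when $\chi((1,-1))\in\ZZ_{\leq 0}$ and the structure of $\oscr^{la,\chi,\mathfrak{n}}$ changes), while the contribution $\HH^0(\mathcal{FL},\oscr^{la,\chi}_{\mathfrak{n}})$ gives the boundary term at $\infty$.

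The $\mathfrak{h}$-weight decomposition and semi-simplicity claims I would read off directly from the weight data in Proposition \ref{prop-coho-compute-ola}: the $\star_2$-action on $\oscr^{la,\chi,\mathfrak{n}}$ is $w_0\chi$ and on $\oscr^{la,\chi}_\infty\otimes\mathfrak{n}^\vee$ it is $\chi-2\rho$, so the two extreme terms of the long exact sequence lie in distinct $\mathfrak{h}$-weights as soon as $w_0\chi\neq \chi-2\rho$, i.e.\ $\chi((1,-1))\neq 1$, which automatically forces semisimplicity and hence the splitting of the exact sequence into isotypic components.

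The main obstacle will be the singular and resonant cases: when $\chi((1,-1))=1$, the weights coincide and one cannot split, so one must analyse the non-trivial extension and show it comes from the boundary operator $d$ of Proposition \ref{prop-bordmaps}, producing the sub-sequence coming from the classical modular forms $\HH^0(\mathcal{FL},\omega_{\mathcal{FL}}^{-w_0\chi,sm})\otimes\C_p(w_0\chi)$; and when $\chi((1,-1))\in\ZZ_{\leq 0}$, the identification $\oscr^{la,\chi,\mathfrak{n}}\simeq \omega_{\mathcal{FL}}^{-w_0\chi,sm}\otimes\C_p(w_0\chi)$ of point (7) of Prop.\ \ref{prop-coho-compute-ola} must be fed into the first spectral sequence and combined with the kernel/cokernel of the natural map $j_!\omega_{U_{w_0}}^{-w_0\chi,sm}\to \omega_{\mathcal{FL}}^{-w_0\chi,sm}\otimes\C_p(w_0\chi)$ (whose cokernel lives at $\infty$) to recover the extra summands $\HH^1(\mathcal{FL},\omega_{\mathcal{FL}}^{-w_0\chi,sm})\otimes\C_p(w_0\chi)$ and, for $\chi((1,-1))=0$, the quotient $\HH^0(\{\infty\},\omega_\infty^{-w_0\chi,sm})/\HH^0(\mathcal{FL},\omega_{\mathcal{FL}}^{-w_0\chi,sm})$. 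The Liouville hypothesis enters only through Proposition \ref{prop-coho-compute-ola}(8) to ensure that $ES_\infty$ remains an isomorphism in the statements (3)--(5).
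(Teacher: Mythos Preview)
Your hypercohomology framework is a reasonable repackaging of the paper's argument, which instead passes through the open--closed triangle $0 \to j_! j^\star \oscr^{la,\chi} \to \oscr^{la,\chi} \to (i_\infty)_\star i_\infty^{-1} \oscr^{la,\chi} \to 0$, takes sheaf cohomology on $\mathcal{FL}$, and then applies $\mathfrak{n}$-cohomology to the resulting four-term exact sequence. Both routes feed in the identifications of Proposition~\ref{prop-coho-compute-ola}, and for points (1)--(2) your weight-separation argument is correct and matches the paper's observation that these cases are a direct consequence.

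There is, however, a genuine gap in your treatment of semi-simplicity for points (4) and (5). Your claim that distinct $\mathfrak{h}$-weights on the extreme terms force the sequence to split only works when the sub and the quotient carry different $\star_2$-characters. But when $\chi((1,-1)) \in \Z_{\leq 0}$, Proposition~\ref{prop-coho-compute-ola}(9) says that $\oscr^{la,\chi}_{\mathfrak{n}}$ has two pieces, and the first of them carries the character $w_0\chi$ --- the \emph{same} character as $\oscr^{la,\chi,\mathfrak{n}} \simeq \omega_{\mathcal{FL}}^{-w_0\chi,sm}\otimes\C_p(w_0\chi)$ from point (7). Hence on the $w_0\chi$-isotypic part you are left with a genuine extension of $\HH^0(\{\infty\},\omega_\infty^{-w_0\chi,sm})\otimes\C_p(w_0\chi-\chi)$ by $\HH^1(\mathcal{FL},\omega_{\mathcal{FL}}^{-w_0\chi,sm})\otimes\C_p(w_0\chi)$ whose triviality cannot be read off from weights alone. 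The paper handles this by computing the relevant $\mathrm{Ext}^1$-class as a boundary map $d_0$ via a \v{C}ech cover $\mathcal{FL}=U_1\cup U_2$, transporting it through Theorem~\ref{thm3} to a computation on $\mathcal{C}^{la,-\chi}$, and observing (via Lemma~\ref{lem-calculcoho}) that in this integral case the cocycle $x^{-\chi((1,-1))+1}$ admits an $\mathfrak{n}$-antiderivative $x^{-\chi((1,-1))}$ already on $U_2$ rather than only on the annulus $U_{1,2}$, so that $d_0=0$. This integrability step is precisely what your sketch is missing. Your plan for point (3) via Proposition~\ref{prop-bordmaps} is, on the other hand, correct and coincides with the paper's argument.
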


\begin{proof} 

On considère la suite exacte :
 
 $$ 0 \rightarrow j_! j^\star \oscr^{la, \chi} \rightarrow \oscr^{la,\chi} \rightarrow (i_\infty)_\star i^{-1} \oscr^{la,\chi} \rightarrow 0$$
 
 En prenant la cohomologie, on obtient  une suite exacte : 
 $$0 \rightarrow \mathrm{H}^0(\mathcal{FL}, \oscr^{la,\chi}) \rightarrow \mathrm{H}^0( \{\infty\}, i_{\infty}^{-1} \oscr^{la,\chi}) \rightarrow \mathrm{H}^1_c( U_{w_0}, \oscr^{la,\chi})  \rightarrow \HH^1(  \mathcal{FL}, \oscr^{la,\chi}) \rightarrow 0$$
 Prenant la cohomologie de $\mathfrak{n}$, on déduit la suite exacte suivante :
 $$ 0 \rightarrow \mathrm{H}^0(\mathcal{FL}, \oscr^{la,\chi}) \rightarrow \mathrm{H}^0( \{\infty\}, i_{\infty}^{-1} \oscr^{la, \chi, \mathfrak{n}}) \rightarrow \mathrm{H}^1_c( U_{w_0}, \oscr^{la, \chi, \mathfrak{n}}) $$ $$ \rightarrow \HH^1(  \mathcal{FL}, \oscr^{la,\chi})^{\mathfrak{n}} \rightarrow \mathrm{H}^0( \{\infty\}, i_{\infty}^{-1} \oscr^{la,\chi}_{ \mathfrak{n}})/\mathrm{H}^0(\mathcal{FL}, \oscr^{la,\chi})\otimes \mathfrak{n}^\vee \rightarrow 0$$
  
Le corollaire est donc une conséquence directe de la proposition  \ref{prop-coho-compute-ola} à l'exception du point $3)$ et de la semi-simplicité de $\mathfrak{h}$ dans les points $4)$ et $5)$. 

Vérifions ces points.  Soit $h_0 = \mathrm{diag} (1,0)$. On pose  $\chi(h_0) = k_1$.   En appliquant le foncteur $\mathrm{Ext}_{ \C_ph^0, \star_2}(w_0\chi, -)$ à la suite exacte $$ 0 \rightarrow \HH^1_c( U_{w_0}, \omega_{U_{w_0}}^{-w_0 \chi, sm}) \rightarrow \mathrm{H}^1(\mathcal{FL}, \oscr^{la, \chi})^{\mathfrak{n}} \rightarrow \HH^0(\{\infty\},  \omega_{\infty}^{-w_0 \chi, sm})\otimes \mathfrak{n}^\vee \rightarrow 0$$ on obtient une application de bord : 
$d_0  : \HH^0(\{\infty\},  \omega_{\infty}^{-w_0 \chi, sm})\otimes \mathfrak{n}^\vee  \rightarrow \HH^1_c( U_{w_0}, \omega_{U_{w_0}}^{-w_0 \chi, sm})$ et on prétend que cette application est à un scalaire près l'application de bord $ d_1 : \HH^0(\{\infty\}, i_\infty^{-1} \omega_{\mathcal{FL}}^{-w_0 \chi, sm})  \rightarrow \HH^1_c( U_{w_0}, \omega_{\mathcal{FL}}^{-w_0 \chi, sm}).$ 
dans la suite exacte longue associée à la suite exacte courte:  
$$  0 \rightarrow j_! j^\star \omega_{\mathcal{FL}}^{-w_0 \chi, sm} \rightarrow \omega_{\mathcal{FL}}^{-w_0 \chi, sm} \rightarrow (i_\infty)_\star i_\infty^{-1} \omega_{\mathcal{FL}}^{-w_0 \chi, sm} \rightarrow 0$$

On peut calculer la cohomologie $\mathrm{H}^1(\mathcal{FL}, \oscr^{la, \chi})$ à la Chech en utilisant un recouvrement  $\mathcal{FL} = U_1 \cup U_2$, où $U_2$ est un voisinage ouvert affinoide arbitrairement petit de $\infty$. On pose $U_{1,2} = U_1 \cap U_2$. 

L'application de bord $d_0$  se décrit alors ainsi. Soit $f \in \HH^0(\{\infty\},  \omega_{\infty}^{-w_0 \chi, sm})\otimes \mathfrak{n}^\vee$, qu'on peut supposer définie sur $U_2$. On peut relever $f$ en une section $\tilde{f}\otimes n^\vee$ de  $\oscr^{la, \chi} \otimes \mathfrak{n}^\vee$ sur $U_2$.  On note $\mathrm{Res}(\tilde{f} \otimes n^\vee)$ sa restriction à l'ouvert $U_{1,2}$. On peut alors localement sur $U_{1,2}$ trouver une section $g$ qui vérifie $n.g = \mathrm{Res}(\tilde{f} \otimes n^\vee)$. Puis on considère $(h_0-k_1+1) g$ qui définit un élément de $\HH^1_c( U_{w_0}, \omega_{U_{w_0}}^{-w_0 \chi, sm})$. 

D'autre part, dans la section \ref{sect-poids-singulier}, on a défini un morphisme $d : \mathcal{C}^{la, -\chi}_{\mathfrak{n}}\otimes \mathfrak{n}^\vee \rightarrow i_\infty^{-1}j_\star j^\star \mathcal{C}^{la, -\chi, \mathfrak{n}}$. En tensorisant par $\hat{\oscr}$ on obtient un morphisme : 
$$\hat{\oscr} \hat{\otimes}_{\oscr_{\mathcal{FL}}} \mathcal{C}^{la, -\chi}_{\mathfrak{n}}\otimes \mathfrak{n}^\vee \rightarrow \hat{\oscr} \hat{\otimes}_{\oscr_{\mathcal{FL}}} i_\infty^{-1}j_\star j^\star \mathcal{C}^{la, -\chi, \mathfrak{n}}$$
puis en prenant les vecteurs lisses on déduit un morphisme: 
$VB(d) :  \omega_{\infty}^{-w_0 \chi, sm} \rightarrow i_{\infty}^{-1} \omega_{U_{w_0}}^{-w_0 \chi, sm}$. D'une part il est clair par construction que le morphisme $VB(d)$ représente le morphisme de bord $d_0$. D'autre part, d'après la proposition \ref{prop-bordmaps}, il représente également le morphisme $d_1$ à un scalaire près.  

Les points $4)$ et $5)$ se démontrent de la même façon. Il s'agit à nouveau  de calculer un morphisme de bord   $$d_0  : \HH^0(\{\infty\},  \omega_{\infty}^{-w_0 \chi, sm})\otimes \C_p(w_0\chi-\chi)  \rightarrow \HH^1( \mathcal{FL}, \omega_{\mathcal{FL}}^{-w_0 \chi, sm})\otimes \C_p(w_0 \chi) $$  associé à la suite exacte courte: 
$$ 0 \rightarrow \HH^1( \mathcal{FL}, \omega_{\mathcal{FL}}^{-w_0 \chi, sm})\otimes \C_p(w_0 \chi) \rightarrow \mathrm{H}^1(\mathcal{FL}, \oscr^{la, \chi})^{\mathfrak{n}}[w_0\chi] \rightarrow $$ $$ \HH^0(\{\infty\},  \omega_{\infty}^{-w_0 \chi, sm})\otimes \C_p(w_0\chi-\chi) \rightarrow 0.$$ en applicant  $\mathrm{Ext}_{ \C_ph^0, \star_2}(w_0\chi, -)$.
La différence par rapport au cas précédent  est  que l'on peut choisir un relêvement de $f\otimes n^\vee $, noté $\tilde{f} \otimes n^\vee$, qui est intégrable pour l'action de $\mathfrak{n}$ sur l'ouvert $U_2$ (et non seulement sur $U_{1,2}$). En effet, ceci se traduit en un calcul sur le faisceau $\mathcal{C}^{la,-\chi}$: comme on a vu dans la preuve du lemme \ref{lem-calculcoho}, la fonction $x^{-\chi((1,-1))+1}$ s'intègre en la fonction $x^{-\chi((1,-1))}$. Il en résulte que $g$ est un cobord et donc l'application $d_0$ est nulle. 
 \end{proof}

 \subsection{Théorie de Harish-Chandra}\label{sect-theorie-Harish-Chandra}
 
 Soit $\mathcal{Z}(\mathfrak{g})$ le centre de $\mathcal{U}(\mathfrak{g})$. On possède un morphisme 
 $ HC : \mathcal{Z}(\mathfrak{g}) \rightarrow \mathcal{U}(\mathfrak{h})$ dont voici la définition. Dans le module de Verma universel  $\mathcal{U}(\mathfrak{g}) \otimes_{\mathcal{U}(\mathfrak{b})} \mathcal{U}(\mathfrak{h})$ on a $$ z \otimes 1 = 1 \otimes HC(z).$$
 Le morphisme $HC$ induit un isomorphisme $\mathcal{Z}(\mathfrak{g}) \rightarrow \mathcal{U}(\mathfrak{h})^{W,.}$, où $W,.$ est l'action tordue du groupe de Weyl (qui induit l'action  $w. \chi = w(\chi + \rho)-\rho$ sur $X^\star(T)_{\C_p} = \Spec~\mathcal{U}(\mathfrak{h})$).

On note $\mathcal{D} = \oscr_{\mathcal{FL}} \otimes \mathcal{U}(\mathfrak{g})/ \mathfrak{n}^0( \oscr_{\mathcal{FL}} \otimes \mathcal{U}(\mathfrak{g}))$ le faisceau des opérateurs différentiels étendus sur $\mathcal{FL}$.

Le morphisme $\mathfrak{h} \rightarrow \mathfrak{g}^0/\mathfrak{n}^0$ induit un morphisme $\mathcal{U}(\mathfrak{h}) \rightarrow \mathcal{D}$. On possède de plus un morphisme $\mathcal{Z}(\mathfrak{g}) \rightarrow \mathcal{U}(\mathfrak{g}) \rightarrow \mathcal{D}$. On montre que le morphisme $\mathcal{Z}(\mathfrak{g}) \rightarrow \mathcal{D}$ se factorise à travers $\mathcal{U}(\mathfrak{h})$. On possède donc un second morphisme $HC' : \mathcal{Z}(\mathfrak{g}) \rightarrow \mathcal{U}(\mathfrak{h})$. 

\begin{lem} Le diagramme suivant est commutatif  (où $w_0 : \mathcal{U}(\mathfrak{h}) \rightarrow \mathcal{U}(\mathfrak{h})$ est l'action non tordue):
\begin{eqnarray*}
\xymatrix{ \mathcal{Z}(\mathfrak{g}) \ar[r]^{HC} \ar[rd]^{HC'} & \mathcal{U}(\mathfrak{h})\ar[d]^{w_0} \\
 & \mathcal{U}(\mathfrak{h})}
 \end{eqnarray*}
 \end{lem}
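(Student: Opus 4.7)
Le plan consiste à ramener l'égalité $HC' = w_0 \circ HC$ à une vérification à la fibre du faisceau $\mathcal{D}$ au point $\infty$, où les deux applications s'identifient à des projections de Harish-Chandra explicites.

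On commence par identifier la fibre $\mathcal{D}_{\infty}$. Puisque le stabilisateur de $\infty$ est $B$, on a $\mathfrak{n}^0_{\infty} = \mathfrak{n}$, $\mathfrak{b}^0_{\infty} = \mathfrak{b}$, d'où $\mathcal{D}_{\infty} = \mathcal{U}(\mathfrak{g})/\mathfrak{n}\mathcal{U}(\mathfrak{g})$. La composition $\mathfrak{h} \hookrightarrow \HH^0(\mathcal{FL}, \mathfrak{b}^0/\mathfrak{n}^0) \to (\mathfrak{b}^0/\mathfrak{n}^0)_{\infty} = \mathfrak{b}/\mathfrak{n} = \mathfrak{h}$ est l'identité, ce qui fait du morphisme $\mathcal{U}(\mathfrak{h}) \to \mathcal{D}_{\infty}$ l'inclusion naturelle, injective par PBW pour l'ordre $\mathcal{U}(\mathfrak{n})\mathcal{U}(\mathfrak{h})\mathcal{U}(\bar{\mathfrak{n}})$. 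Cette injectivité permet de déterminer $HC'(z)$ par sa valeur à la fibre en $\infty$.

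On introduit alors la décomposition PBW $\mathcal{U}(\mathfrak{g}) = \mathcal{U}(\mathfrak{h}) \oplus \big(\mathfrak{n}\mathcal{U}(\mathfrak{g}) + \mathcal{U}(\mathfrak{g})\bar{\mathfrak{n}}\big)$ et on note $\beta' : \mathcal{U}(\mathfrak{g}) \to \mathcal{U}(\mathfrak{h})$ la projection associée. Puisque $z \in \mathcal{Z}(\mathfrak{g})$ est de poids zéro, son image dans $\mathcal{D}_{\infty}$ tombe dans $\mathcal{U}(\mathfrak{h}) \subset \mathcal{D}_{\infty}$ et s'identifie à $\beta'(z)$; par injectivité, on conclut $HC'(z) = \beta'(z)$. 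De manière analogue, en déroulant la définition de $HC$ à travers le module de Verma universel $\mathcal{U}(\mathfrak{g})/\mathcal{U}(\mathfrak{g})\mathfrak{n}$, on obtient $HC(z) = \beta(z)$ pour la projection $\beta$ attachée à la décomposition symétrique $\mathcal{U}(\mathfrak{g}) = \mathcal{U}(\mathfrak{h}) \oplus \big(\bar{\mathfrak{n}}\mathcal{U}(\mathfrak{g}) + \mathcal{U}(\mathfrak{g})\mathfrak{n}\big)$.

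Enfin, on fixe un relèvement $\tilde{w}_0 \in G(k)$ de $w_0$. L'automorphisme $\mathrm{Ad}(\tilde{w}_0)$ de $\mathcal{U}(\mathfrak{g})$ échange $\mathfrak{n}$ et $\bar{\mathfrak{n}}$ et agit sur $\mathcal{U}(\mathfrak{h})$ par $w_0$ (action non tordue); il transporte donc la décomposition définissant $\beta$ sur celle définissant $\beta'$, d'où la relation $\beta' \circ \mathrm{Ad}(\tilde{w}_0) = w_0 \circ \beta$. Comme $z$ est central, $\mathrm{Ad}(\tilde{w}_0)(z) = z$, ce qui donne
\[ HC'(z) = \beta'(z) = w_0(\beta(z)) = w_0(HC(z)). \]
La difficulté principale est d'identifier proprement l'évaluation en $\infty$ du Cartan horizontal $\mathfrak{h} \hookrightarrow \HH^0(\mathcal{FL}, \mathfrak{b}^0/\mathfrak{n}^0)$ avec la projection canonique $\mathfrak{b} \to \mathfrak{h}$, via la trivialisation $G$-équivariante du fibré $\mathfrak{b}^0/\mathfrak{n}^0$ (fait que $B$ agit trivialement sur $\mathfrak{b}/\mathfrak{n}$); une fois cette compatibilité acquise, l'énoncé se ramène à la symétrie classique reliant les deux projections de Harish-Chandra.
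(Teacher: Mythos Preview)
Votre démonstration est correcte et suit essentiellement la même stratégie que celle de l'article~: spécialiser au point $\infty$ pour identifier $i_\infty^\star \mathcal{D} = \mathcal{U}(\mathfrak{g})/\mathfrak{n}\,\mathcal{U}(\mathfrak{g}) = \mathcal{U}(\mathfrak{h}) \otimes_{\mathcal{U}(\mathfrak{b})} \mathcal{U}(\mathfrak{g})$, puis relier les deux projections de Harish--Chandra via l'action de $w_0$. La seule différence est de présentation~: vous travaillez avec les projections PBW explicites $\beta$ et $\beta'$ et l'automorphisme $\mathrm{Ad}(\tilde{w}_0)$, tandis que l'article invoque l'anti-isomorphisme ``transposition suivie de conjugaison par $w_0$'' entre $\mathcal{U}(\mathfrak{g}) \otimes_{\mathcal{U}(\mathfrak{b})} \mathcal{U}(\mathfrak{h})$ et $\mathcal{U}(\mathfrak{h}) \otimes_{\mathcal{U}(\mathfrak{b})} \mathcal{U}(\mathfrak{g})$ --- la transposition servant à échanger les côtés du produit tensoriel, ce que votre formulation en termes de projections évite.
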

 \begin{proof} On a $i_{\infty}^\star \mathcal{D} = \mathcal{U}(\mathfrak{h}) \otimes_{\mathcal{U}(\mathfrak{b})} \mathcal{U}(\mathfrak{g})$ et on a par specialization la formule $1 \otimes z = HC'(z) \otimes 1$. 
 La transposition suivie de la conjugaison par $w_0$ induit un isomorphisme :
  $ \mathcal{U}(\mathfrak{g}) \otimes_{\mathcal{U}(\mathfrak{b})} \mathcal{U}(\mathfrak{h}) \rightarrow \mathcal{U}(\mathfrak{h}) \otimes_{\mathcal{U}(\mathfrak{b})} \mathcal{U}(\mathfrak{g})$.
  Cette opération  envoie $z \otimes 1 = 1 \otimes HC(z)$ sur $1 \otimes z = w_0 HC(z) \otimes 1$. 
 \end{proof}
 \subsection{Etude de la cohomologie de $\mathfrak{b}$ et décomposition d'Eichler-Shimura}
 
 On considère la suite exacte :
 
 $$ 0 \rightarrow j_! j^\star \oscr^{la} \rightarrow \oscr^{la} \rightarrow (i_\infty)_\star i^{-1} \oscr^{la} \rightarrow 0$$
 
 En prenant la cohomologie, on obtient  une suite exacte : 
 $$0 \rightarrow \mathrm{H}^0(\mathcal{FL}, \oscr^{la}) \rightarrow \mathrm{H}^0( \{\infty\}, i_{\infty}^{-1} \oscr^{la}) \rightarrow \mathrm{H}^1_c( U_{w_0}, \oscr^{la})  \rightarrow \HH^1(  \mathcal{FL}, \oscr^{la}) \rightarrow 0$$
 Prenant la cohomologie de $\mathfrak{n}$, on déduit la suite exacte suivante :
 
 $$ 0 \rightarrow \mathrm{H}^0(\mathcal{FL}, \oscr^{la}) \rightarrow \mathrm{H}^0( \{\infty\}, i_{\infty}^{-1} \oscr^{la, \mathfrak{n}}) \rightarrow \mathrm{H}^1_c( U_{w_0}, \oscr^{la, \mathfrak{n}}) $$ $$ \rightarrow \HH^1(  \mathcal{FL}, \oscr^{la})^{\mathfrak{n}} \rightarrow \mathrm{H}^0( \{\infty\}, i_{\infty}^{-1} \oscr^{la}_{ \mathfrak{n}})/\mathrm{H}^0(\mathcal{FL}, \oscr^{la}) \rightarrow 0$$
On peut en fait  considérer le complexe d'Eichler Shimura suivant : 
$$ES :  \mathrm{H}^1_c( U_{w_0}, \oscr^{la}_{w_0})  \stackrel{ES_{w_0}}\rightarrow \HH^1(  \mathcal{FL}, \oscr^{la})^{\mathfrak{n}} \stackrel{ES_{\infty}}\rightarrow \big(\mathrm{H}^0( \{\infty\},  \oscr^{la}_{\infty}) )/\mathrm{H}^0(\mathcal{FL}, \oscr^{la}))\otimes \mathfrak{n}^\vee $$
où on rappelle que (comme prédit par la théorie de Harish-Chandra, voir la section \ref{sect-theorie-Harish-Chandra}),  pour tout $h \in \mathfrak{h}$ : 
\begin{eqnarray*}
 h \star_2(-) &=& w_0 h \star_{hor}(-)~\textrm{sur $\mathrm{H}^1_c( U_{w_0}, \oscr^{la}_{w_0})$},\\
 h \star_2(-) &=& h \star_{hor}(-)-2\rho(h)~\textrm{sur~$\big(\mathrm{H}^0( \{\infty\},  \oscr^{la}_{\infty}) )/\mathrm{H}^0(\mathcal{FL}, \oscr^{la}))\otimes \mathfrak{n}^\vee$}. 
\end{eqnarray*}

Le complexe $ES$ est  "génériquement"  une suite exacte et donne une  décomposition de Sen-Eichler-Shimura de $\HH^1(  \mathcal{FL}, \oscr^{la})^{\mathfrak{n}}$ comme le montre le théorème suivant qui découle assez facilement du corollaire \ref{coropan} (on renvoie à \cite{pan2021locally} pour la démonstration et des compléments).

 
 \begin{thm}[\cite{pan2021locally}, thm. 5.4.2, 5.4.6] \begin{enumerate} 
 \item Si $\chi((1,-1)) \notin \ZZ_{\geq 0} \cup -\mathrm{Liouv} \cup \{-2\}$, le complexe $ES[\chi]$ induit une suite exacte : 
 $$ 0 \rightarrow \mathrm{H}^1_c( U_{w_0}, \omega^{-\chi, sm}_{U_{w_0}}) \rightarrow \HH^1(  \mathcal{FL}, \oscr^{la})^{\mathfrak{n}}[\chi] \rightarrow \mathrm{H}^0(\{\infty\}, \omega^{-w_0\chi + 2\rho,sm}_{\infty}) \otimes \mathfrak{n}^\vee \rightarrow 0$$
 
 \item Si $\omega_0 \chi = \chi +2\rho$ (i.e $\chi((1,-1)) = -1$), on a 
 
 $$ 0 \rightarrow \mathrm{H}^1_c( U_{w_0}, \omega^{-\chi, sm}_{U_{w_0}}) \rightarrow \HH^1(  \mathcal{FL}, \oscr^{la})^{w_0\chi, \mathfrak{n}}[\chi] \rightarrow \mathrm{H}^0(\mathcal{FL}, \omega^{-w_0\chi +2\rho ,sm}_{\mathcal{FL}})\otimes_{\C_p} \C_p(\chi)\rightarrow 0$$
 
  \item Si $\chi((1,-1)) \in \ZZ_{\geq 1}$,   le complexe $ES[\chi]$ induit un complexe (en degré $0,1,2$) : 
 
  $$ 0 \rightarrow \mathrm{H}^1_c( U_{w_0}, \omega^{-\chi, sm}_{U_{w_0}}) \rightarrow \HH^1(  \mathcal{FL}, \oscr^{la})^{\mathfrak{n}}[\chi] \rightarrow \mathrm{H}^0(\{\infty\}, \omega^{-w_0\chi+2\rho,sm}_{\infty})\otimes \mathfrak{n}^\vee \rightarrow 0$$
 et 
 \begin{eqnarray*}
 \HH^0(ES[\chi])& =& \mathrm{H}^0( \{\infty\}, \omega^{-\chi, sm}_{\infty}) \otimes_{\C_p} \C_p( \chi-w_0\chi) \\ 
 \HH^1(ES[\chi])& = &\mathrm{H}^0( \{\infty \}, \omega^{-\chi, sm}_{\infty})\otimes_{\C_p} \C_p( \chi-w_0\chi) \\ \HH^2(ES[\chi]) &=& 0.
 \end{eqnarray*}

  \end{enumerate}
 \end{thm}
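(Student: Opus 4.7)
The strategy is to decompose $\oscr^{la} = \bigoplus_{\chi'} \oscr^{la,\chi'}$ by $\star_{hor}$-eigenspaces and read off the $[\chi]$-eigenspace for $\star_2$ from Corollary \ref{coropan} componentwise. Inspection of that corollary shows that the $\star_2$-weights occurring on $\HH^1(\mathcal{FL},\oscr^{la,\chi'})^{\mathfrak{n}}$ are $w_0\chi'$ (on the summand from $\HH^1_c(U_{w_0},-)$) and $\chi'-2\rho$ (on the summand $\HH^0(\{\infty\},-)\otimes\mathfrak{n}^\vee$); requiring $\star_2=\chi$ therefore selects the two $\star_{hor}$-weights $\chi' = w_0\chi$ and $\chi' = \chi+2\rho$. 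After the substitutions $-w_0\cdot w_0\chi = -\chi$ and $-w_0(\chi+2\rho) = -w_0\chi+2\rho$, these two contributions recover exactly the outer terms of the sequences in the statement.

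\textbf{Case (1), generic.} Corollary \ref{coropan}(2) at $\chi'=w_0\chi$ requires $\chi((1,-1))\notin\Z_{\geq 0}\cup(-\mathrm{Liouv})$. At $\chi'=\chi+2\rho$ one applies either Corollary \ref{coropan}(2) or, when $(\chi+2\rho)((1,-1))\in\Z_{\leq -1}$, Corollary \ref{coropan}(4); in the latter regime the extra summand $\HH^1(\mathcal{FL},\omega^{-w_0\chi',sm}_{\mathcal{FL}})\otimes\C_p(w_0\chi')$ lies in the $[w_0\chi'] = [w_0\chi-2\rho]$-eigenspace, which differs from $[\chi]$ precisely when $\chi((1,-1))\neq -1$, and similarly for the other extra summand. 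Thus the $[\chi]$-piece of the $\chi'=\chi+2\rho$ contribution is simply $\HH^0(\{\infty\},\omega^{-w_0\chi+2\rho,sm}_\infty)\otimes\mathfrak{n}^\vee$. The isolated excluded value $\chi((1,-1))=-2$ corresponds to $(\chi+2\rho)((1,-1))=0$, where Corollary \ref{coropan}(5) replaces the $[\chi]$-summand by a proper quotient. In the remaining range, the short exact sequence follows from the long exact cohomology sequence of $0 \to j_!j^\star\oscr^{la,\chi'} \to \oscr^{la,\chi'} \to (i_\infty)_\star i_\infty^{-1}\oscr^{la,\chi'}\to 0$ after taking $\mathfrak{n}$-invariants and the $[\chi]$-isotypic component on each summand, the boundary maps being identified with $ES_{w_0}$ and $ES_\infty$ as in the proof of Corollary \ref{coropan}.

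\textbf{Cases (2) and (3).} In case (2) one has $w_0\chi = \chi+2\rho$, the two weights $\chi'$ collide, and Corollary \ref{coropan}(3) provides a commutative diagram of short exact sequences whose bottom row is precisely the $[w_0\chi] = [\chi]$-piece; reading that row off yields the required sequence, once the classical factor $\HH^0(\mathcal{FL},\omega^{-w_0\chi+2\rho,sm}_{\mathcal{FL}})\otimes\C_p(\chi)$ is recognized as the global sections contribution to $\mathcal{C}^{la,w_0\chi,\mathfrak{n}}$ predicted by Proposition \ref{prop-computingcoho}(7). In case (3) one instead uses Corollary \ref{coropan}(4) at $\chi'=w_0\chi$: the $[\chi] = [w_0\chi']$-eigenspace of $\HH^1(\mathcal{FL},\oscr^{la,w_0\chi})^{\mathfrak{n}}$ becomes an extension of $\HH^0(\{\infty\},\omega^{-\chi,sm}_\infty)\otimes\C_p(\chi-w_0\chi)$ by $\HH^1(\mathcal{FL},\omega^{-\chi,sm}_{\mathcal{FL}})\otimes\C_p(\chi)$, while the $\chi'=\chi+2\rho$ contribution to $[\chi]$ remains clean. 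The map $ES_{w_0}$ factors through the natural compactly-supported-to-global map $\HH^1_c(U_{w_0},\omega^{-\chi,sm}_{U_{w_0}})\to\HH^1(\mathcal{FL},\omega^{-\chi,sm}_{\mathcal{FL}})\otimes\C_p(\chi)$, whose kernel and cokernel are described by the open-closed sequence for $\omega^{-\chi,sm}_{\mathcal{FL}}$ and give the residual $\HH^0(\{\infty\},\omega^{-\chi,sm}_\infty)\otimes\C_p(\chi-w_0\chi)$ in both $\HH^0(ES[\chi])$ and $\HH^1(ES[\chi])$, while $\HH^2(ES[\chi])=0$ follows from the surjectivity of $ES_\infty$ onto the clean $[\chi'-2\rho]$-slot at $\chi'=\chi+2\rho$.

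\textbf{Main obstacle.} The bookkeeping above is essentially formal once Corollary \ref{coropan} is in hand, since the honest difficulty of identifying the geometric boundary maps in the long exact sequence with the explicit Eichler--Shimura maps $ES_{w_0}$ and $ES_\infty$ has already been absorbed into the proof of that corollary through the morphism $d$ of Proposition \ref{prop-bordmaps}. The remaining technical point in case (3) is to trace carefully through which portion of $\HH^1(\mathcal{FL},\oscr^{la,w_0\chi})^{\mathfrak{n}}[\chi]$ is hit by $ES_{w_0}$ and which survives as the image of the extra contribution produced by Corollary \ref{coropan}(4), so as to produce the predicted coincidence $\HH^0(ES[\chi]) = \HH^1(ES[\chi])$ with the common value $\HH^0(\{\infty\},\omega^{-\chi,sm}_\infty)\otimes\C_p(\chi-w_0\chi)$.
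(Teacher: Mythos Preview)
Your approach is the same as the paper's---reduce everything to Corollary~\ref{coropan}---and the paper itself gives no more than that, deferring the details to \cite{pan2021locally}. Your case analysis and weight bookkeeping (identifying $\chi'=w_0\chi$ and $\chi'=\chi+2\rho$ as the two relevant $\star_{hor}$-weights, and the substitutions $-w_0(w_0\chi)=-\chi$, $-w_0(\chi+2\rho)=-w_0\chi+2\rho$) are correct.

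There is one genuine gap. Your opening sentence asserts $\oscr^{la}=\bigoplus_{\chi'}\oscr^{la,\chi'}$, and this is false: $\oscr^{la}$ is a completion, not the algebraic direct sum of its $\star_{hor}$-weight spaces. Consequently one cannot simply write $\HH^1(\mathcal{FL},\oscr^{la})^{\mathfrak n}[\chi]$ as a sum over $\chi'$ of $\HH^1(\mathcal{FL},\oscr^{la,\chi'})^{\mathfrak n}[\chi]$ without further argument. The honest route is the five-term exact sequence displayed just before the theorem,
\[
\cdots \to \HH^1_c(U_{w_0},\oscr^{la,\mathfrak n}) \to \HH^1(\mathcal{FL},\oscr^{la})^{\mathfrak n} \to \HH^0(\{\infty\},i_\infty^{-1}\oscr^{la}_{\mathfrak n})/\HH^0(\mathcal{FL},\oscr^{la})\otimes\mathfrak n^\vee \to 0,
\]
on whose outer terms the relation between $\star_2$ and $\star_{hor}$ is the explicit formula recorded there ($\star_2=w_0\star_{hor}$ on the left, $\star_2=\star_{hor}-2\rho$ on the right). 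One then takes the $[\chi]$-eigenspace for $\star_2$, which on each outer term becomes a single $\star_{hor}$-eigenspace; the vanishing $\mathrm{Ext}^{>0}_{\mathfrak h,\star_{hor}}(\chi',\oscr^{la})=\mathrm{Ext}^{>0}_{\mathfrak h,\star_{hor}}(\chi',\oscr^{la}_{w_0})=\mathrm{Ext}^{>0}_{\mathfrak h,\star_{hor}}(\chi',\oscr^{la}_\infty)=0$ of Proposition~\ref{prop-coho-compute-ola}(3) is exactly what makes this passage to eigenspaces exact and reduces you to Corollary~\ref{coropan} at the two values $\chi'=w_0\chi$ and $\chi'=\chi+2\rho$. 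Equivalently, Section~\ref{sect-theorie-Harish-Chandra} forces the $\star_{hor}$-weight on $\HH^1(\mathcal{FL},\oscr^{la})^{\mathfrak n}[\chi]$ to lie in the twisted $W$-orbit determined by the central character of $\chi$, which gives the same two values. Once you replace the direct-sum sentence by either of these justifications, the rest of your argument goes through.
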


\bibliographystyle{amsalpha}
\bibliography{eqsheaves}

\renewcommand{\MR}[1]{}
\providecommand{\bysame}{\leavevmode\hbox to3em{\hrulefill}\thinspace}
\providecommand{\MR}{\relax\ifhmode\unskip\space\fi MR }
\providecommand{\MRhref}[2]{%
  \href{http://www.ams.org/mathscinet-getitem?mr=#1}{#2}
}
\providecommand{\href}[2]{#2}
\begin{thebibliography}{{Sta}13}

\bibitem[AIS14]{MR3265287}
Fabrizio Andreatta, Adrian Iovita, and Glenn Stevens, \emph{Overconvergent
  modular sheaves and modular forms for {${\bf GL}_{2/F}$}}, Israel J. Math.
  \textbf{201} (2014), no.~1, 299--359. \MR{3265287}

\bibitem[BB83]{MR733805}
Alexander Be\u{\i}linson and Joseph Bernstein, \emph{A generalization of
  {C}asselman's submodule theorem}, Representation theory of reductive groups
  ({P}ark {C}ity, {U}tah, 1982), Progr. Math., vol.~40, Birkh\"{a}user Boston,
  Boston, MA, 1983, pp.~35--52. \MR{733805}

\bibitem[BC08]{MR2493221}
Laurent Berger and Pierre Colmez, \emph{Familles de repr\'esentations de de
  {R}ham et monodromie {$p$}-adique}, Ast\'erisque (2008), no.~319, 303--337,
  Repr{\'e}sentations $p$-adiques de groupes $p$-adiques. I.
  Repr{\'e}sentations galoisiennes et $(\phi,\Gamma)$-modules. \MR{2493221
  (2010g:11091)}

\bibitem[BC16]{MR3552018}
\bysame, \emph{Th\'{e}orie de {S}en et vecteurs localement analytiques}, Ann.
  Sci. \'{E}c. Norm. Sup\'{e}r. (4) \textbf{49} (2016), no.~4, 947--970.
  \MR{3552018}

\bibitem[BP20]{Boxer-Pilloni}
George Boxer and Vincent Pilloni, \emph{Higher hida and coleman theories on the
  modular curves}, preprint, 2020.

\bibitem[CHJ17]{MR3609197}
Przemys\l~aw Chojecki, D.~Hansen, and C.~Johansson, \emph{Overconvergent
  modular forms and perfectoid {S}himura curves}, Doc. Math. \textbf{22}
  (2017), 191--262. \MR{3609197}

\bibitem[Fal87]{MR909221}
Gerd Faltings, \emph{Hodge-{T}ate structures and modular forms}, Math. Ann.
  \textbf{278} (1987), no.~1-4, 133--149. \MR{909221}

\bibitem[Gro57]{MR102537}
Alexander Grothendieck, \emph{Sur quelques points d'alg\`ebre homologique},
  Tohoku Math. J. (2) \textbf{9} (1957), 119--221. \MR{102537}

\bibitem[Hub94]{MR1306024}
R.~Huber, \emph{A generalization of formal schemes and rigid analytic
  varieties}, Math. Z. \textbf{217} (1994), no.~4, 513--551. \MR{1306024}

\bibitem[Jan03]{MR2015057}
Jens~Carsten Jantzen, \emph{Representations of algebraic groups}, second ed.,
  Mathematical Surveys and Monographs, vol. 107, American Mathematical Society,
  Providence, RI, 2003. \MR{MR2015057 (2004h:20061)}

\bibitem[KST20]{MR4166998}
Moritz Kerz, Shuji Saito, and Georg Tamme, \emph{Towards a non-archimedean
  analytic analog of the {B}ass-{Q}uillen conjecture}, J. Inst. Math. Jussieu
  \textbf{19} (2020), no.~6, 1931--1946. \MR{4166998}

\bibitem[Pan21]{pan2021locally}
Lue Pan, \emph{On locally analytic vectors of the completed cohomology of
  modular curves}, 2021.

\bibitem[Pil13]{MR3097946}
Vincent Pilloni, \emph{Overconvergent modular forms}, Ann. Inst. Fourier
  (Grenoble) \textbf{63} (2013), no.~1, 219--239. \MR{3097946}

\bibitem[Sch12]{MR3090258}
Peter Scholze, \emph{Perfectoid spaces}, Publ. Math. Inst. Hautes \'{E}tudes
  Sci. \textbf{116} (2012), 245--313. \MR{3090258}

\bibitem[Sch13]{MR3090230}
\bysame, \emph{{$p$}-adic {H}odge theory for rigid-analytic varieties}, Forum
  Math. Pi \textbf{1} (2013), e1, 77. \MR{3090230}

\bibitem[Sch15]{scholze-torsion}
\bysame, \emph{On torsion in the cohomology of locally symmetric varieties},
  Ann. of Math. (2) \textbf{182} (2015), no.~3, 945--1066. \MR{3418533}

\bibitem[{Sta}13]{stacks-project}
The {Stacks Project Authors}, \emph{\itshape {S}tacks {P}roject},
  \url{http://stacks.math.columbia.edu}, 2013.

\end{thebibliography}

\end{document}